\newcommand{\Z}{\mathbb Z}                % Integers
\newcommand{\R}{\mathbb R}                % Real numbers
\newcommand{\A}{\mathbb A}                % Affine space
\newcommand{\V}{\mathbb V}                % Vector bundle
\newcommand{\N}{\mathrm N}                % Normal bundle
\newcommand{\T}{\mathrm T}                % Tangent bundle
\newcommand{\CC}{\mathscr C}              % Category C
\newcommand{\OO}{\mathscr O}              % Structure sheaf
\newcommand{\PP}{\mathscr P}              % Principal parts
\newcommand{\Spec}{\operatorname{Spec}}   % Spectrum
\newcommand{\Gr}{\mathrm{Gr}}             % Grassmannian
\newcommand{\Sym}{\operatorname{Sym}}     % Symmetric power
\newcommand{\Hom}{\operatorname{Hom}}     % Hom set
\newcommand{\sheafHom}{\mathscr H\! \mathit{om}}  % Hom sheaf
\newcommand{\coker}{\operatorname{coker}} % Cokernel
\newcommand{\im}{\operatorname{im}}       % Image
\newcommand{\pr}{\mathrm{pr}}             % Projection
\newcommand{\id}{\mathrm{id}}             % Identity
\newcommand{\charac}{\operatorname{char}} % Characteristic
\newcommand{\varemptyset}{\varnothing}    % Empty set
\newcommand{\dd}{\mathbf d}               % Intrinsic diff.
\newcommand{\Hess}{\operatorname{Hess}}   % Hessian
\newcommand{\jac}{\operatorname{jac}}     % Jacobian ideal
\newtheorem{theorem}{Theorem}
\newtheorem{proposition}[theorem]{Proposition}
\newtheorem{lemma}[theorem]{Lemma}
\newtheorem{corollary}[theorem]{Corollary}
\theoremstyle{definition}
\newtheorem{definition}[theorem]{Definition}
\theoremstyle{remark}
\newtheorem{remark}[theorem]{Remark}
\newtheorem{example}[theorem]{Example}
\newtheorem{setup}[theorem]{Setup}
\numberwithin{theorem}{section}
\numberwithin{equation}{section}
\begin{document}

\begin{abstract}
The critical loci of a map $f:X\to Y$ between smooth schemes over a field $k$ are the locally closed subschemes $\Sigma^i(f)\subseteq X$ where the differential of $f$ has constant rank.
We prove that if $f : X\to \A^r$ is the general member of a suitably large linear family of maps from a smooth $k$-scheme $X$ to affine space, then the critical loci $\Sigma^i(f)$ are smooth, except in characteristic 2 where the first critical locus $\Sigma^1(f)$ may be singular at a finite set of points.
Moreover, we compute the codimensions of the loci of second order singularities of such general maps $f :X \to \A^r$.
In characteristics different from 2, the codimensions we find agree with those found by Levine in the context of differential topology. 
Finally, assuming that $k$ is an algebraically closed and $\dim X\ge \dim Y$, we give a local description of an arbitrary map $f :X \to Y$ at points of its first critical locus $\Sigma^1(f)$.
In the case of functions and nondegenerate critical points, this description recovers the usual one from Morse theory.
\end{abstract}

% Top matter
\title[Critical loci and second-order singularities]
  {Critical loci and  \\
  second-order singularities \\
  in arbitrary characteristic}
\author{Lucas Braune}
\address{University of Washington, Department of Mathematics, Box 354350, Seattle, WA 98195-4350, USA}
\email{lvhb@uw.edu}
\date{\today}
%\commby{} % Provide editor's name; required by J. Alg. Geom.

\maketitle
\tableofcontents

\section{Introduction}

In this paper we are concerned with the critical loci of maps $f : X\to Y$ between smooth schemes over a field of arbitrary characteristic.
By definition, if $i$ is a nonnegative integer, then the \emph{$i$th critical locus} of such a map is the locally closed determinantal subscheme $\Sigma^i(f)\subseteq X$ where the differential $df : \T_X\to f^* \T_Y$ has rank exactly
$\min(\dim X, \dim Y)-i$.

Our main result asserts the smoothness of the critical loci of the general member of a linear family of maps from a smooth scheme to affine space.
It is an algebraic analogue of a classical result of Thom \cite{Thom1954} according to which the critical loci of a suitably generic map between smooth manifolds are themselves manifolds.

\begin{theorem}
\label{thm:intro-smoothness}
Let $k$ be an infinite field.
Let $X$ be a smooth scheme of finite type and pure dimension $n$ over $k$.
Let 
\begin{equation*}
W\subseteq \Gamma(X, \OO_X^{\oplus r}) = \Hom_k(X,\A^r)
\end{equation*}
be a finite-dimensional linear subspace that separates principal parts of order $2$
(if $k$ is algebraically closed, this means that the natural map $W\to (\OO_X/\mathfrak m_x^{3})^{\oplus r}$ is surjective for all closed points $x\in X$; see Definition \ref{def:sep-pp}).
Let $f\in W$ be a general element and let $i$ be a nonnegative integer.
Then the critical locus $\Sigma^i(f)\subseteq X$ is either empty or of pure codimension $i(|n-r|+i)$ in $X$.
Moreover, $\Sigma^i(f)$ is smooth over $k$, with two exceptions:
\begin{enumerate}
\item The case where $\charac(k)=2$, $i=1$ and $r\ge n$.
\item The case where $\charac(k)=2$, $i=1$, $r=1$ and $n$ is odd.
\end{enumerate}
In both cases, the singular locus of $\Sigma^i(f) = \Sigma^1(f)$ consists of a (possibly empty) finite set of points.
\end{theorem}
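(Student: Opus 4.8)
The plan is to prove the theorem by a jet-transversality argument, reducing it to a computation of the codimensions of certain loci of ``second-order singularities'' inside a bundle of $2$-jets.

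First I would reduce to the case where $k$ is algebraically closed and $X$ is irreducible: the set of ``good'' $f$ is cut out by conditions defined over $k$, and both smoothness over $k$ and finiteness of the singular locus descend from $\bar k$ to $k$, while, $k$ being infinite, a general $k$-point of $W$ lies in any prescribed dense open. Then I would introduce the second-jet bundle $J := J^2(X,\A^r)$, a smooth $k$-scheme with projections to the first-jet bundle $J^1$ and to $X$, whose fibre over a closed point $x\in X$ is canonically $(\OO_X/\mathfrak m_x^3)^{\oplus r}$. The assumption that $W$ separates principal parts of order $2$ says exactly that the evaluation map $\operatorname{ev}\colon W\times X\to J$, $(w,x)\mapsto j^2_x w$, surjects on each fibre $W\times\{x\}$ onto the fibre of $J\to X$; since $\operatorname{ev}$ followed by $J\to X$ is the second projection, the differential of $\operatorname{ev}$ is surjective everywhere, so $\operatorname{ev}$ is a smooth morphism (both source and target being smooth over $k$).

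Inside $J^1$ lie the locally closed determinantal strata $\Sigma^i$ along which the differential of the universal map has corank exactly $i$; each is irreducible, smooth, of pure codimension $c_i := i(|n-r|+i)$, and the same holds for its preimage $\widetilde\Sigma^i\subseteq J$. The central local computation is, via the Schur complement, a description of $\Sigma^i(f)$ near a point $x_0$ of corank exactly $i$: there it agrees with $\Sigma^{\ge i}(f)$ and is cut out by the entries of a Schur complement, whose differentials at $x_0$ are encoded by the intrinsic second derivative $d^2 f\colon K\otimes\T_{X,x_0}\to C$, where $K = \ker df_{x_0}$, $C = \coker df_{x_0}$. This bilinear map is symmetric on $K\otimes K$, hence \emph{alternating} there when $\charac(k)=2$; and $\Sigma^i(f)$ is smooth of pure codimension $c_i$ at $x_0$ precisely when the adjoint $\T_{X,x_0}\to\Hom(K,C)$ is surjective. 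Write $B_i\subseteq\widetilde\Sigma^i$ for the locus where this fails. Then, for general $f$ — more precisely, whenever the critical loci all have the expected codimensions — one has $\operatorname{Sing}\Sigma^i(f)=(j^2 f)^{-1}(B_i)$.

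It remains to combine this with a dimension count. Applying the theorem on fibre dimensions to $\operatorname{ev}^{-1}(\widetilde\Sigma^i)\to W$ (which, as $\operatorname{ev}$ is smooth, has pure codimension $c_i$ in $W\times X$) gives $\dim\Sigma^i(f)\le n-c_i$ for general $f$; with the standard lower bound for determinantal loci this yields ``empty or pure codimension $c_i$'', and it forces $\Sigma^i(f)=\varemptyset$ for general $f$ once $c_i>n$, so only finitely many $i$ need attention. For the smoothness statement one computes $\operatorname{codim}_J B_i$ and applies the fibre-dimension theorem to $\operatorname{ev}^{-1}(B_i)\to W$: when $\operatorname{codim}_J B_i>n$ this makes $\operatorname{Sing}\Sigma^i(f)$ empty for general $f$, and when $\operatorname{codim}_J B_i=n$ it is at worst a finite set. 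One finds $\operatorname{codim}_J B_i\ge n$ always, with equality exactly in cases (1) and (2); the passage to equality is driven by the characteristic-$2$ phenomenon above, namely that the space of admissible second-order data (bilinear maps that are \emph{alternating}, not merely symmetric, on $K\otimes K$) is smaller, which lowers the codimension of $B_i$ by exactly the amount that eats into the expected codimension when $i=1$.

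I expect the main obstacle to be precisely this codimension computation for $B_i$: pinning down the space of admissible second-order data and the codimension inside it of the non-surjectivity locus requires a careful case analysis in $n$, $r$ and $\charac(k)$, and it is here that the exceptional cases are isolated and the codimensions matching Levine's appear.
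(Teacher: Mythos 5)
Your outline follows essentially the same route as the paper: universal corank strata in the first jet bundle, the intrinsic second-order differential over them, its symmetry --- hence alternation in characteristic $2$ --- the identification of $\operatorname{Sing}\Sigma^i(f)$ with the pullback under $j^2f$ of a universal bad locus $B^i\subseteq J^2(X,\A^r)$, and a generic-pullback argument using the smooth surjective evaluation map that separation of second-order principal parts provides (this is exactly the role of Lemma \ref{serre-bertini} applied to $d_X^2(W)$). All of that matches the paper's architecture and is sound.

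The genuine gap is the sentence ``one finds $\operatorname{codim}_J B_i\ge n$ always, with equality exactly in cases (1) and (2)'': this is asserted rather than proved, and it is the decisive step --- without it neither the bound that kills the singular locus generically nor the isolation of the two exceptions is available. In the paper this is Theorem \ref{thm:univ-bad-locus}, resting on Proposition \ref{prop:sym-deg-loci-2}, Theorem \ref{thm:univ-deg-bil} and Lemma \ref{lemma:minimal}, and two ideas are needed beyond what you wrote. First, over the preimage in $J^2$ of the universal critical stratum, the intrinsic differential defines a morphism $D$ to the bundle $\V(\sheafHom(\T_X\boxempty K,C))$ of maps with the correct (skew-)symmetry, and $D$ is smooth and surjective because it is equivariant for the translation action of $\V((\Sym^2\Omega_X)^{\oplus r})$ (additivity of the Hessian in the pure second-order part of the jet); this is what transfers the codimension of a linear-algebra locus to $B^i$. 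Second, that linear-algebra locus --- where a map $\T_X\to\sheafHom(K,C)$, symmetric or alternating on $K$, fails to be surjective --- is analysed by stratifying by the pair consisting of the corank and $\dim(\ker\cap K)$, computing each stratum's codimension via the Grassmannian (Tjurina) trick in explicit charts, and then minimizing the resulting quadratic codimension function over the admissible strata; in characteristic $2$ the alternating condition forces even rank on $K$, which removes strata and lowers the minimum from $n+1$ to $n$ precisely when $i=1$ and $r\ge n$, or $i=1$, $r=1$ and $n$ odd. The case analysis you defer lives entirely in this computation, so as it stands the proposal establishes the framework of the proof but not its core.
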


Theorem \ref{thm:intro-smoothness} is vacuous when the $k$-scheme $X$ is projective and positive-dimensional, since in this case no linear subspace of $\Gamma(X,\OO_X^{\oplus r})$ can separate principal parts of positive order.
On the other hand, if $X$ is affine, then subspaces of $\Gamma(X,\OO_X^{\oplus r})$ that separate principal parts of any order are guaranteed to exist.

Theorem \ref{thm:intro-smoothness} may be proved in characteristic zero by a simple argument reminiscent of the one used by Thom, see Proposition \ref{prop:gen-crit-dim} below.
It is much more interesting in positive characteristic due to the failure of Sard's lemma.

The only previously known case of Theorem \ref{thm:intro-smoothness} is the case where $r=1$, which is a restatement of Koll\'ar's algebraic Morse lemma \cite[Proposition 18]{Kollar1995}.
To see the connection between Theorem \ref{thm:intro-smoothness} and Morse theory, consider a function $f : X\to \A^1$ and a rational point $x\in X(k)$, and fix \'etale coordinates $x_1,\dotsc, x_n\in \OO_{X,x}$ around $x$.
The ideal in $\OO_{X,x}$ corresponding to the subscheme of critical points $\Sigma^1(f)\subseteq X$ is generated by the partial derivatives
\begin{equation*}
\dfrac{\partial f}{\partial x_1}, \dotsc, 
\dfrac{\partial f}{\partial x_n} \in \OO_{X,x}.
\end{equation*}
Thus, if $x\in \Sigma^1(f)$, then the following are equivalent:
\begin{itemize}
\item Hessian matrix of $f$ is invertible at $x$.
\item The differentials of the partials $\partial f/\partial x_j$ are independent at $x$. 
\item $\Sigma^1(f)$ is nonsingular of codimension $n$ at $x$.
\end{itemize}

In characteristic 2, the Hessian matrix has the peculiarity of being skew-symmetric, hence of even rank.
It can therefore never be invertible when the dimension $n$ of $X$ is odd, which explains exception (2) in Theorem \ref{thm:intro-smoothness}.
The necessity of exception (1) in Theorem \ref{thm:intro-smoothness} is illustrated by the following example:

\begin{example}
Suppose that the base field is algebraically closed of characteristic 2.
Let
\begin{equation*}
W\subseteq  \Gamma(\A^2, \OO_{\A^2}^{\oplus 2})=\Hom(\A^2, \A^2)
\end{equation*}
be the linear space of cubic maps, that is, of maps $\A^2\to \A^2$ whose components are inhomogeneous cubic polynomials in the coordinates of the source $\A^2$. Then $W$ separates principal parts of order $2$. Let $f\in W$ be a general element. By a direct computation,  the critical locus $\Sigma^1(f)\subseteq \A^2$ is a degree-4 curve in $\A^2$ that is singular at exactly one point.
\end{example}

Beyond critical loci, in this paper we consider second-order singularities of maps in the sense of Thom \cite{Thom1954}.
Given a morphism $f : X\to Y$ of smooth schemes over a field $k$ and nonnegative integers $i$ and $j$, we define a locally closed subcheme $\Sigma^{i,j}(f)\subseteq \Sigma^i(f)$ with the property that, if $\Sigma^i(f)$ is smooth over $k$ and of codimension $i(|n-r|+i)$ in $X$, then
\begin{equation*}
\Sigma^{i,j}(f) = \Sigma^j(f|_{\Sigma^i(f)}). 
\end{equation*}
Our definition a scheme-theoretic analogue of the one used by Porteous \cite{Porteous71} in the context of manifolds and nonsingular varieties in characteristic zero.

Our main result about loci of second-order singularities extends Levine's computation \cite[p. 55]{Levine71} of the codimensions of these loci, from suitably generic maps between smooth manifolds, to general members of linear families of maps from a smooth scheme to affine space.

\begin{theorem}
\label{thm:intro-2nd-sings}
Let $k$ be an infinite field. 
Let $X$ be a smooth scheme of finite type and pure dimension $n$ over $k$.
Let 
\begin{equation*}
W\subseteq \Gamma(X, \OO_X^{\oplus r}) = \Hom_k(X,\A^r)
\end{equation*}
be a finite-dimensional linear subspace that separates principal parts of order $2$.
Let $f\in W$ be a general section and let $i$ and $j$ be nonnegative integers. 
Then the locus of second-order singularities $\Sigma^{i,j}(f) \subseteq X$ is either empty or of pure codimension
\begin{equation*}
i(|n-r|+i) +
j(n-m+i-j)(r-m+i-1) +\tfrac 1 2 j(j\pm1)(r-m+i)
\end{equation*}
in $X$, where $m := \min(n, r)$ and the symbol $\pm$ should be read as ``plus'' if $\charac(k)\ne 2$ and as ``minus'' otherwise.
Moreover, if $k$ has characteristic zero, then $\Sigma^{i,j}(f)$ is smooth.
\end{theorem}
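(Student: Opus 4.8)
The plan is to realise $\Sigma^{i,j}(f)$ as the pull-back, along the order-$2$ jet section of $f$, of a universal locus of second-order singularities inside a jet bundle over $X$, and then to transfer dimension and smoothness statements across that pull-back. Let $\mathcal P$ be the locally free sheaf of principal parts of order $2$ of $\OO_X^{\oplus r}$ relative to $k$, let $J\to X$ be its total space, and for $f\in W$ let $j^2 f\colon X\to J$ be the section $x\mapsto j^2_x f$. Since the conditions cutting out $\Sigma^{i,j}(f)$ depend only on the order-$2$ principal part of $f$ at each point, there is a locally closed subscheme $\widetilde\Sigma^{i,j}\subseteq J$ with $\Sigma^{i,j}(f)=(j^2 f)^{-1}(\widetilde\Sigma^{i,j})$ for every $f$. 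The hypothesis that $W$ separates principal parts of order $2$ says exactly that the evaluation map $\mathrm{ev}\colon X\times W\to J$, $(x,f)\mapsto j^2_x f$, is a surjective morphism of vector bundles over $X$, hence smooth and surjective; write $\mathcal Z:=\mathrm{ev}^{-1}(\widetilde\Sigma^{i,j})\subseteq X\times W$, whose fibres over $W$ are the subschemes $\Sigma^{i,j}(f)$.

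Next I would pin down $\widetilde\Sigma^{i,j}$, which is a purely local question on $X$. Over an open set on which $\T_X$ is free and \'etale coordinates exist, $J$ is a product whose fibre is $M\times H$, where $M$ is the space of $n\times r$ matrices (the $1$-jet part) and $H$ the space of $k^r$-valued quadratic forms on the tangent space (the $2$-jet part). The corank-$i$ stratum $M_i\subseteq M$ is smooth of codimension $i(|n-r|+i)$ --- the determinantal computation underlying Theorem~\ref{thm:intro-smoothness} --- and over it a matrix $A$ determines bundles $\mathcal K:=\ker A$ and $\mathcal C:=\coker A$ of ranks $e:=n-m+i$ and $\phi:=r-m+i$ (with $m=\min(n,r)$), together with a well-defined intrinsic second derivative obtained from the $2$-jet part by restriction to $\mathcal K$ and projection to $\mathcal C$, this being well-defined because the ambiguity in $H$ dies under that operation. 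This exhibits, over $M_i$, a surjective morphism of vector bundles $M_i\times H\to\widetilde V$, where $\widetilde V$ has fibre $\Sym^2\mathcal K^\vee\otimes\mathcal C$ over $A$ when $\charac(k)\ne 2$, and is the bundle of $\mathcal C$-valued quadratic forms on $\mathcal K$ together with their polar bilinear forms when $\charac(k)=2$; and $\widetilde\Sigma^{i,j}$ is locally the preimage of the corank-exactly-$j$ stratum of the fibres of $\widetilde V$. Everything thus reduces to a linear-algebra lemma: the corank-exactly-$j$ stratum of $\Sym^2(k^e)^\vee\otimes k^\phi$ (resp.\ of its characteristic-$2$ variant) is smooth of pure codimension $j(e-j)(\phi-1)+\tfrac12 j(j\pm1)\phi$, or empty.

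For the lemma in characteristic $\ne 2$ I would run the usual incidence-variety count over the Grassmannian $\Gr(j,k^e)$ of candidate radicals: the fibre over a $j$-plane is the space of forms vanishing on it, of codimension $\phi\bigl(\binom{e+1}{2}-\binom{e-j+1}{2}\bigr)=\phi\bigl(je-\binom j2\bigr)$; subtracting $\dim\Gr(j,k^e)=j(e-j)$ and using the identity $\binom{j+1}{2}+\binom{j}{2}=j^2$ rewrites the codimension in the stated form, with smoothness of the stratum read off from the incidence variety. In characteristic $2$ the same template applies but with the quadratic-form model and the correct notion of corank of a $k^\phi$-valued quadratic form, and the count produces $\tfrac12 j(j-1)\phi$ in place of $\binom{j+1}{2}\phi$ while smoothness genuinely fails, consistently with Theorem~\ref{thm:intro-smoothness}. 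Granting the lemma, $\widetilde\Sigma^{i,j}$ is pure of codimension $c:=i(|n-r|+i)+j(e-j)(\phi-1)+\tfrac12 j(j\pm1)\phi$ in $J$ (smooth when $\charac(k)=0$), or empty --- in which case every $\Sigma^{i,j}(f)$ is empty. As $\mathrm{ev}$ is smooth and surjective, $\mathcal Z$ is likewise pure of codimension $c$ in $X\times W$, so $\dim\mathcal Z=n+\dim W-c$; since $\mathcal Z$ is pure-dimensional there is a dense open $U\subseteq W$ over which every fibre of $\mathcal Z\to W$ is empty or pure of dimension $n-c$, which is the codimension assertion. Finally, when $\charac(k)=0$ the scheme $\mathcal Z$ is smooth, being the preimage of the smooth subscheme $\widetilde\Sigma^{i,j}$ under the smooth map $\mathrm{ev}$, so generic smoothness applied to $\mathcal Z\to W$ makes $\Sigma^{i,j}(f)$ smooth for general $f$.

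The main obstacle I anticipate is the characteristic-$2$ input: identifying the intrinsic second derivative at a corank-$i$ point as a quadratic- rather than symmetric-bilinear-form-valued object, verifying that passage from the full $2$-jet to this object is still a submersion (so the local model is exactly the one above), and carrying out the incidence count with the subtler characteristic-$2$ notion of corank so as to obtain $\tfrac12 j(j-1)\phi$. By contrast, the reduction to the jet bundle, the transfer of codimension across the smooth surjection $\mathrm{ev}$, and the characteristic-zero smoothness via generic smoothness should be routine.
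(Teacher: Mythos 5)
Your proposal follows essentially the same route as the paper: a universal locus of second-order singularities inside the second jet bundle $J^2(X,\A^r)=\V((\PP_X^2)^{\oplus r})$, pulled back along $j^2f$, with the codimension reduced to the corank stratification of (skew-)symmetric-form-valued homomorphism bundles computed by a Grassmannian-of-radicals incidence construction (the paper's Tjurina-transform proof of Proposition \ref{prop:sym-deg-loci-1} via Theorem \ref{thm:univ-deg-bil}), and then the Atiyah--Serre lemma applied to the surjective evaluation map $X\times W\to J^2(X,\A^r)$, with generic smoothness giving the characteristic-zero statement. The characteristic-$2$ obstacle you flag is resolved exactly as you anticipate, since the paper's $\Sigma^{i,j}$ is defined through the polarized (alternating) form only, so the $\wedge^2$ count $\tfrac12 j(j-1)(r-m+i)$ is the right one; note however that, contrary to your aside, this stratum is still smooth in characteristic $2$ (Proposition \ref{prop:sym-deg-loci-1} and Theorem \ref{thm:univ-2nd-sings}) --- what fails in positive characteristic is only generic smoothness of the projection to $W$, which is why smoothness of $\Sigma^{i,j}(f)$ is claimed only in characteristic zero.
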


If $\charac(k)\ne 2$, then the generic codimensions of Theorem \ref{thm:intro-2nd-sings} agree with those found by Levine in the context of differential topology.
In fact, to prove Theorem \ref{thm:intro-2nd-sings} we are able to use arguments similar to those employed by Golubitsky and Guillemin in their exposition of Levine's result \cite[Chap. VI, \S 3-4]{GG73}.
This is possible because in Theorem \ref{thm:intro-2nd-sings} we do not make smoothness claims outside of characteristic zero. 

The proofs of Theorems \ref{thm:intro-smoothness} and \ref{thm:intro-2nd-sings} are initiated in section \ref{sec:generic-maps} of this paper, where we explain how both theorems follow from results proved later in the paper.
We prove the last of the results upon which Theorem \ref{thm:intro-2nd-sings} depends in section \ref{sec:sym-bil}, and do the same for Theorem \ref{thm:intro-smoothness} in section \ref{sec:min-codim}.

The proofs of both theorems make crucial use of the the second jet scheme $J^2(X,\A^r)$, which we define to be a vector bundle over $X$ whose fiber over a rational point $x\in X(k)$ is the vector space $(\OO_X/\mathfrak m_x^3)^{\oplus r}$.
The sheaf of sections of this vector bundle is Grothendieck's sheaf of principal parts $\PP_X^2(\OO_X^{\oplus r})$. 

In a forthcoming paper we use sheaves of principal parts to construct jet schemes $J^m(X,Y)$ for smooth schemes $Y$ other than affine space.
When carried out in these jet schemes, the arguments of this paper yield generalizations of Theorems \ref{thm:intro-smoothness} and \ref{thm:intro-2nd-sings} to families of maps between any two smooth $k$-schemes.

Although we have not investigated this, it would be interesting to see how the notions of jet schemes and second-order singularities used in this paper compare with the ones introduced by Mount and Villamayor \cite{MV74}.
Mount and Villamayor define jet schemes without reference to sheaves of principal parts, and define singularities using a construction of Boardman \cite{Boardman1967} instead of the one due to Porteous used here.

Another crucial tool in our proof of Theorems \ref{thm:intro-smoothness} and \ref{thm:intro-2nd-sings} is the intrinsic differential of $df : \T_X\to f^* \T_Y$ at $\Sigma^i(f)$. This is a map of locally free $\OO_{\Sigma^i(f)}$-modules
\begin{equation*}
\dd_{\Sigma^i(f)}(df) :
\T_X|_{\Sigma^i(f)} \to 
\sheafHom_{\Sigma^i(f)}(\ker(df|_{\Sigma^i(f)}), \coker(df|_{\Sigma^i(f)}))
\end{equation*}
that generalizes the Hessian bilinear form of a function at at critical point, see section \ref{sec:2nd-order-sings} below.
This map has the property of being surjective if, and only if, the critical locus $\Sigma^i(f)$ is nonsingular and of codimension $i(|n-r|+i)$ in $X$.

Like the Hessian matrix of a function, the second-order differential $\dd_{\Sigma^i(f)}(df)$ exhibits symmetries.
To control its rank in the proofs of Theorems \ref{thm:intro-smoothness} and \ref{thm:intro-2nd-sings}, we compute the dimension of schemes parametrizing linear maps satisfying certain symmetry and rank conditions.
The result of computation is Theorem \ref{thm:univ-deg-bil} below.
For simplicity, here we state a special case that captures the main features of that theorem.

\begin{theorem}
\label{thm:intro-sym-deg}
Let $E$ and $F$ be finite-dimensional vector spaces over a field, and let $A\subseteq E$ be a linear subspace.
Write $e := \dim E$, $f := \dim F$ and $a := \dim A$.
Let $H$ be the vector space of linear maps
\begin{equation*}
h : E\to \Hom(A,F)
\end{equation*}
such that the bilinear map $A\times A\to F$ that sends $(v, w) \mapsto h(v)(w)$ is symmetric.
Let $i$ and $p$ be nonnegative integers.
Let $\Delta^{i,p} \subseteq H$ be the locally closed subscheme parametrizing linear maps $h\in H$ such that
\begin{enumerate}
\item $h$ has rank $\min(e,af)-i$, and
\item $\dim (\ker(h)\cap A) = p$.
\end{enumerate}
If $\Delta^{i,p}$ is nonempty (see Lemma \ref{lemma:nonempty}), then $\Delta^{i,p}$ is smooth of pure codimension
\begin{equation*}
p(n-a+p) +f\cdot 
[ \tfrac 1 2 (-p^2 + p)
+ (e-n)a ] - n(e-n)
\end{equation*}
in $H$, where $n:= \min(e,af)-i$.
\end{theorem}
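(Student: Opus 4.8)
The plan is to realize $\Delta^{i,p}$ as the image of a smooth incidence variety under a surjective map with equidimensional fibers, which is the standard way to compute codimensions of determinantal-type loci while simultaneously proving smoothness. Concretely, I would introduce the parameter space
\begin{equation*}
Z = \{ (h, K, L) : K\subseteq A \text{ of dimension } p,\ L\subseteq \Hom(A,F) \text{ of dimension } n,\ \ker(h)\cap A = K,\ \im(h)\subseteq L \},
\end{equation*}
where $K$ ranges over the Grassmannian $\Gr(p,A)$ and $L$ over $\Gr(n,\Hom(A,F))$. The first projection $Z\to \Delta^{i,p}$ is surjective (given $h\in\Delta^{i,p}$, take $K=\ker(h)\cap A$ and $L=\im(h)$, which has dimension $n$ since $h$ has rank $n$), and it is in fact an isomorphism onto $\Delta^{i,p}$ because $K$ and $L$ are determined by $h$; so this step only reorganizes the problem. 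The real work is to project instead onto $\Gr(p,A)\times\Gr(n,\Hom(A,F))$ and show that the fiber over a fixed pair $(K,L)$ is an open subscheme of a vector space of a dimension independent of $(K,L)$, then subtract.

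Next I would compute that fiber dimension. Fixing $K$ and $L$, the condition on $h\in H$ is: (i) $h$ is symmetric as a bilinear form on $A$ (this is the defining linear condition cutting out $H\subseteq\Hom(E,\Hom(A,F))$); (ii) $h$ kills $K$, i.e.\ $h$ factors through $E/K$ — wait, more precisely $h(v)(w)=0$ whenever $w\in K$, together with symmetry this also forces $h(w)=0$ for $w\in K$; (iii) $h(E)\subseteq L$. Condition (ii) says the bilinear form $b_h$ on $A$ descends to a symmetric form on $A/K$ valued in $F$, and condition (iii) says the induced linear map $E\to\Hom(A,F)$ lands in $L$. Counting: a symmetric bilinear map $A/K\times A/K\to F$ together with the remaining freedom of $h$ on a complement of $A$ in $E$, all subject to landing in $L$. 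This is a linear-algebra bookkeeping exercise; the output should be an affine space, and the open condition "$\ker(h)\cap A$ is exactly $K$ and $\im h$ is exactly $L$" (rather than merely contained/containing) carves out a dense open subset, preserving smoothness and dimension. Adding $\dim\Gr(p,A)+\dim\Gr(n,\Hom(A,F)) = p(a-p) + n(af-n)$ to the fiber dimension and subtracting from $\dim H$ should yield the claimed codimension $p(n-a+p)+f[\tfrac12(-p^2+p)+(e-n)a]-n(e-n)$.

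The smoothness of $\Delta^{i,p}$ then follows formally: $Z'\to\Gr\times\Gr$ is a fibration with smooth fibers (open in affine space) over a smooth base, hence $Z'$ is smooth; and $Z'\cong\Delta^{i,p}$ via the first projection, so $\Delta^{i,p}$ is smooth of the computed pure dimension. One should also note that $\Delta^{i,p}$ is irreducible when nonempty, since $Z'$ fibers over an irreducible base with irreducible fibers, which justifies "pure" dimension.

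The main obstacle I anticipate is the fiber dimension count in the second step, specifically handling the interaction between the symmetry constraint and the kernel constraint correctly — in characteristic $2$ a symmetric bilinear form need not vanish on the diagonal, and the condition "$h$ kills $K$" interacts with symmetry differently than in odd characteristic, which is presumably the source of the $\pm$ discrepancy flagged in Theorem \ref{thm:intro-2nd-sings}. In this particular statement (Theorem \ref{thm:intro-sym-deg}) the formula has no $\pm$, so presumably the characteristic-$2$ subtlety either does not arise here or has already been absorbed; still, I would be careful to check whether "symmetric" should be taken to include or exclude a condition on the diagonal, and whether the relevant count of symmetric forms on a $d$-dimensional space valued in $F$ is $\binom{d+1}{2}\dim F$ uniformly or drops in characteristic $2$. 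Getting this one coefficient right is the crux; the rest is Grassmannian dimension arithmetic.
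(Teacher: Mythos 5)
Your central step does not work as stated. You propose to fiber the incidence variety over $\Gr(p,A)\times\Gr(n,\Hom(A,F))$, claim the fibers are open subsets of vector spaces of constant dimension, and then add $p(a-p)+n(af-n)$. But the second projection is far from dominant, and its fibers over its image are not equidimensional. The symmetry constraint forces $h(A)\subseteq\{\phi\in\Hom(A,F):\phi|_K=0\}$, so only pairs $(K,L)$ satisfying nontrivial incidence conditions occur; in the extreme case $A=E$, $f=1$ the image $L$ is completely determined by $K$ (it is the annihilator of $K$), so the image of your correspondence in the product of Grassmannians has dimension $p(e-p)$, not $p(e-p)+n(e-n)$, and your recipe overcounts $\dim\Delta^{i,p}$ by exactly $n(e-n)$ (it would give codimension $\tfrac12p(p+1)-n(e-n)$ instead of the correct $\tfrac12p(p+1)$). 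Moreover, even over the image the fiber dimension is not a function of $(\dim K,\dim L)$ alone: for $f\ge 2$ the space of symmetric $F$-valued forms on $A/K$ whose associated map lands in a fixed subspace $M\subseteq\Hom(A/K,F)$ depends on finer invariants of $M$ (e.g.\ for $\dim(A/K)=2$, $\dim F=2$, $\dim M=1$ it is $1$-dimensional if $M$ is spanned by a rank-one map and $0$ otherwise). So both the dominance and the equidimensionality you need are false, and the smoothness and codimension conclusions do not follow from this construction.

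The repair is to record only the kernel, not the image, and this is essentially what the paper does. The proof of Theorem \ref{thm:univ-deg-bil} passes to the Tjurina transform inside $G=\Gr_n(E)$, i.e.\ the incidence variety of pairs $(h,K')$ with $K'\subseteq\ker(h)$ of dimension $e-n$, intersected with the Schubert cell where $\dim(K'\cap A)=p$; this maps isomorphically onto $\Delta^{i,p}$ away from the deeper degeneracy locus (Proposition \ref{prop:tjurina}), and the paper then writes explicit equations (\ref{eqn:deg-bil-1})--(\ref{eqn:deg-bil-5}) in the standard affine charts of Remark \ref{rmk:chart-grass} and Lemma \ref{lemma:prepared-chart}, exhibiting the locus as a coordinate affine subspace; smoothness of the \emph{scheme} $\Delta^{i,p}$ (which carries a determinantal scheme structure, so a set-theoretic parametrization is not enough) and the codimension count both fall out of those equations, with the symmetry of the $\alpha_{22}^\tau$-block producing the $\pm\tfrac12 p(p\pm1)f$ term. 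A fibration-style count in your spirit does succeed over this single Grassmannian: for $K'$ in the Schubert stratum one checks that $\dim\{h\in H: h(K')=0\}=f\binom{a-p+1}{2}+(n+p-a)af$ depends only on $p$ (symmetry kills $h(v)(w)$ for $v\in A$, $w\in K'\cap A$, but imposes nothing on vectors outside $A$), and adding the stratum dimension $n(e-n)-p(n-a+p)$ reproduces exactly the stated codimension. Your two-Grassmannian version cannot be salvaged without first determining the image of the correspondence and its stratified fiber dimensions, which is essentially the original problem again.
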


The last result that we state in this introduction concerns the local structure of a map $f : X\to Y$ between smooth over an algebraically closed field.
At points of the zeroth critical locus $\Sigma^0(f)\subseteq X$, where the differential of $f$ is either injective or surjective, this local structure is completely determined by the inverse function theorem.
Theorem \ref{thm:intro-morse} provides a description of $f$ at points of its first critical locus $\Sigma^1(f)$, assuming that $\dim Y\le \dim X$. 
The description at a given point depends on the unique stratum $\Sigma^{1,j}(f) \subseteq \Sigma^1(f)$ that contains it. 
In the case of functions $f : X\to \A^1$ and nondegenerate critical points, that is, points of $\Sigma^{1,0}(f)$, Theorem \ref{thm:intro-morse} reduces to Morse's Lemma.

\begin{theorem}
\label{thm:intro-morse}
Let $f : X\to Y$ be a morphism of smooth schemes over an algebraically closed field $k$.
Let $x\in \Sigma^1(f)$ be a closed point.
Suppose that $\dim_{f(x)} Y \le \dim_x X$. 

Let $y_1,\dotsc, y_r\in \OO_{Y,f(x)}$ be \'etale coordinates around $f(x)\in Y$, that is, function germs whose differentials form a basis for $\Omega_{Y, y}$ as an $\OO_{Y,f(x)}$-module. 
Let $f_\ell := y_\ell \circ f\in \OO_{X,x}$, where $\ell=1,\dotsc, r$, be the components of $f$ with respect to these coordinates. 
Let $x_1,\dotsc, x_n\in \OO_{X,x}$ be a regular system of parameters such that
\begin{equation*}
(f_1,\dotsc,f_r) = (c_1 + x_1, \dotsc, c_{r-1} + x_{r-1}, f)
\end{equation*}
for suitable constants $c_1,\dotsc, c_{r-1}\in k$;
such a system of parameters is guaranteed to exist after a reordering of $y_1,\dotsc, y_r$. 

Let $j$ be the unique nonnegative integer such that $x\in \Sigma^{1,j}(f)$.
Then $n-r+1-j$ is nonnegative, and is moreover even if $\charac(k)=2$. Furthermore there exists an automorphism of $\widehat \OO_{X,x} = k[[x_1,\dotsc,x_n]]$ as a local $k[[x_1,\dotsc,x_{r-1}]]$-algebra that sends
$
f_r \mapsto  q + h, 
$
where 
\begin{equation*}
q :=
\begin{cases}
x_r^2 + \dotsb + x_{n-j}^2 &
\text{if $\charac(k)\ne 2$} \\
x_r x_{r+1} + \dotsb + x_{n-j-1} x_{n-j} &
\text{if $\charac(k)=2$}
\end{cases}
\end{equation*}
and $h\in k[[x_1,\dotsc,x_{r-1}, x_{n-j+1},\dotsc, x_n]]$ is a power series that does not involve the variables occurring in $q$.
\end{theorem}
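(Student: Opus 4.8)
The plan is to reduce Theorem \ref{thm:intro-morse} to a statement about a single power series $f_r\in\widehat\OO_{X,x}$, modulo the regular sequence $x_1,\dots,x_{r-1}$, by working in the quotient $R := \widehat\OO_{X,x}/(x_1,\dots,x_{r-1}) = k[[x_r,\dots,x_n]]$. Since the automorphism we seek is one of $\widehat\OO_{X,x}$ as a $k[[x_1,\dots,x_{r-1}]]$-algebra and $h$ is allowed to involve $x_1,\dots,x_{r-1}$ freely, the content is entirely about the image $\bar f_r\in R$: I would first prove a ``relative Morse lemma'' stating that after a coordinate change fixing $x_1,\dots,x_{r-1}$, the series $f_r$ splits as a nondegenerate quadratic part in some of the remaining variables plus a remainder in the complementary variables, and then lift this back to $\widehat\OO_{X,x}$. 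Concretely, after subtracting the constant $c_r := f_r(x)$ (which I will need to verify can be absorbed, or rather kept track of — note the statement does not claim $h(0)=0$), and using that $x\in\Sigma^1(f)$ with the normalization $f_\ell = c_\ell+x_\ell$ for $\ell<r$, the differential $d\bar f_r$ vanishes at the origin of $\Spec R$, so $\bar f_r\in\mathfrak m_R^2$ and its leading term is a quadratic form $Q$ on the $(n-r+1)$-dimensional space spanned by $x_r,\dots,x_n$.

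The heart of the argument is then the classification of $\bar f_r$ up to formal coordinate change by the rank of $Q$, together with the identification of that rank with the invariant $j$. For the rank-$Q = n-r+1-j$ piece, I would invoke the splitting lemma over complete local rings: if $Q$ has rank $\rho$, there is a coordinate change making $\bar f_r = q(x_r,\dots,x_{r+\rho-1}) + g$ with $g\in\mathfrak m^3$ a series in the remaining $n-r+1-\rho$ variables. In characteristic $\ne 2$ this is the standard completion-of-squares argument (Hensel/Newton iteration diagonalizing $Q$, then successively absorbing higher-order cross terms); in characteristic $2$ one replaces diagonalization of a quadratic form by the normal form $x\cdot y$-blocks for the associated (necessarily alternating, on the radical-free part) bilinear form, which forces $\rho$ to be even — this is exactly the parity assertion. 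The key link to the paper's machinery is that this rank $\rho$ equals $n-r+1-j$: here I would use that the quadratic part $Q$ of $\bar f_r$ \emph{is} the intrinsic second-order differential $\dd_{\Sigma^1(f)}(df)$ evaluated at $x$, restricted to $\ker(df|_x)$ — indeed $\ker(df|_x)$ is, up to the coordinates $x_1,\dots,x_{r-1}$, the span of $x_r,\dots,x_n$ modulo a correction, and the definition of $\Sigma^{1,j}(f)$ via $\Sigma^j(f|_{\Sigma^1})$ (or the scheme-theoretic Porteous-style definition) says precisely that the corank of this quadratic form is $j$, i.e.\ $\rho = (n-r+1)-j$. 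Nonnegativity of $n-r+1-j$ is then automatic since a rank cannot be negative.

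The main obstacle I anticipate is making the characteristic-$2$ splitting lemma rigorous over the ring $k[[x_1,\dots,x_n]]$ in the \emph{relative} (parametrized by $x_1,\dots,x_{r-1}$) setting, and in particular checking that the automorphism produced is genuinely a $k[[x_1,\dots,x_{r-1}]]$-algebra automorphism and that no ``spurious'' dependence on $x_r,\dots,x_{n-j}$ leaks into the remainder $h$. The clean way to organize this is an iterative normalization: after the initial linear change bringing $Q$ into the block form $x_rx_{r+1}+\dots+x_{n-j-1}x_{n-j}$, write $f_r = q + (\text{terms of degree}\ge 3)$, and kill the higher-order terms involving $x_r,\dots,x_{n-j}$ one graded piece at a time by completing the hyperbolic square — for a pair $x_sx_{s+1}$, any monomial $x_s\cdot M$ with $M\in\mathfrak m$ can be absorbed by the substitution $x_{s+1}\mapsto x_{s+1}+M$ (this is where characteristic $2$ actually helps, since there is no factor of $2$ obstruction), and symmetrically for $x_{s+1}\cdot M$; iterating and passing to the limit (which converges $\mathfrak m$-adically) leaves a remainder in $k[[x_1,\dots,x_{r-1},x_{n-j+1},\dots,x_n]]$. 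In characteristic $\ne 2$ the same iteration works with ordinary completion of squares. Finally I would remark that when $r=1$ and $j=0$ this is exactly Kollár's algebraic Morse lemma \cite[Proposition 18]{Kollar1995}, and when additionally $\charac(k)\ne 2$ it is the classical Morse lemma, so Theorem \ref{thm:intro-morse} is the promised common generalization; one should also note the subtlety flagged in Theorem \ref{thm:intro-smoothness}(2), that for $r=1$, $\charac(k)=2$, and $n$ odd one cannot have $j=0$, consistent with the parity constraint $n-r+1-j\in 2\Z_{\ge0}$ derived above.
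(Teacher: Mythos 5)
Your proposal is correct in substance, and the reduction step coincides with the paper's: from the normalization $f_\ell=c_\ell+x_\ell$ for $\ell<r$ one sees that $\ker(df(x))$ is spanned by $\partial/\partial x_r,\dotsc,\partial/\partial x_n$, and by Remark \ref{rmk:2nd-id-locally} the second intrinsic differential $\dd_x^2f$ is represented by the bottom-right $(n-r+1)\times(n-r+1)$ block of the Hessian of $f_r$; its rank is $n-r+1-j$, which gives nonnegativity and, via skew-symmetry, the parity claim in characteristic $2$. On one point be careful to argue exactly this way: base the identification of $j$ on the scheme-theoretic Definition \ref{def:2nd-order-sings} together with Remark \ref{rmk:2nd-id-locally}, not on $\Sigma^{1,j}(f)=\Sigma^j(f|_{\Sigma^1(f)})$, since Proposition \ref{prop:tb-vs-porteous} only holds away from the bad locus and the theorem assumes no genericity at $x$ (you do mention the right alternative, so this is a matter of emphasis). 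Where you genuinely diverge is the parametrized splitting itself. The paper deduces it from the general machinery of its last sections: $q$ has $\jac(q)=\langle x\rangle$, hence is $2$-determined (Proposition \ref{DeterminacyBound}), $q+t$ is a versal unfolding (Proposition \ref{versal-deform}), and Morse's Lemma with Parameters (Proposition \ref{prop:morse-params}) follows; you instead propose a direct Tougeron-style iteration, absorbing each monomial $x_sM$ through the hyperbolic partner $x_{s+1}\mapsto x_{s+1}+M$ (resp.\ completing the square when $\charac(k)\ne 2$) and passing to the $\mathfrak m$-adic limit. That iteration does converge and respects the $k[[x_1,\dotsc,x_{r-1}]]$-algebra structure, so your route is more elementary and self-contained; the paper's route buys reusable general statements (finite determinacy and versality of unfoldings in positive characteristic) at the price of more apparatus.

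One detail in characteristic $2$ needs to be stated more carefully than ``bringing $Q$ into the block form $x_rx_{r+1}+\dotsb+x_{n-j-1}x_{n-j}$.'' The polar form of the quadratic part is alternating of rank $n-r+1-j$, but the quadratic form itself may carry an extra square supported on the radical: after a linear change one gets $q$ or $q+c\,x_{n-j+1}^2$ (this is exactly Lemma \ref{morse-approximation} in the paper, and the subsequent proof of Proposition \ref{prop:morse-params} shunts the square into the remainder). That square involves only a complementary variable, so it lands harmlessly in $h$ and does not obstruct your absorption step, but your initial normalization must allow for it. Likewise, note that your absorption with $M\in\mathfrak m$ of degree $1$ (needed to kill the cross terms $x_ix_s$ with $i<r$ and $s$ a hyperbolic variable) perturbs the other quadratic terms involving $x_{s+1}$ only by parameter-parameter contributions, which again go into $h$; spelling this out is what guarantees that nothing involving $x_r,\dotsc,x_{n-j}$ leaks into the remainder.
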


The analogue of Theorem \ref{thm:intro-morse} in differential topology is well known. 
It follows from a generalization of Morse's Lemma called ``Morse's Lemma with Parameters''.
This generalization is in turn a consequece of standard results from the theory of finitely deteremined map germs \cite[Theorems 1.2 and 3.4]{Wall81}.

In section \ref{sec:proof-intro-morse} of this paper, we deduce Theorem \ref{thm:intro-morse} from a version of Morse's Lemma with Parameters that holds in positive characteristics, namely Proposition \ref{prop:morse-params} below.
For another version, see \cite[Lemmas 3.9 and 3.12]{GN16}.
We derive Proposition \ref{prop:morse-params} from general statements about power series with finite Milnor number, namely Propositions \ref{DeterminacyBound} and \ref{versal-deform}.
These propositions seem to be folklore, but follow from standard arguments, as we note below.
Their analogues in differential topology are very special cases of \cite[Theorems 1.2 and 3.4]{Wall81}.

\subsection*{Acknowledgements}
Theorems  \ref{thm:intro-2nd-sings} and \ref{thm:intro-morse} first appeared in my Ph.D. thesis \cite{braune-thesis}, where they are used to prove an irrationality result.
It is a pleasure to thank my Ph.D. advisors Eduardo Esteves and S\'andor Kov\'acs for their guidance and support.
Without them this work would not have been possible.
Part of the research described here was conducted during a visit to Leibniz Universit\"at Hannover.
I warmly thank Klaus Hulek and the \emph{Institut f\"ur Algebraische Geometrie} for their hospitality during this visit.
Finally, I would like to thank Daniel Santana for helpful discussions during intial stages of this project.

\section{Vector bundles and degeneracy loci}

In this section we collect definitions and basic results that will be used throughout this paper. 
We begin with our convention for the correspondence between locally free sheaves and vector bundles, which is different from Grothendieck's \cite[Définition II.1.7.8]{EGA}.

Let $X$ be a scheme and let $E$ be a locally free $\OO_X$-module of finite rank.

\begin{definition}
The \emph{vector bundle} associated to $E$ is the $X$-scheme
\begin{equation*}
\V(E) := \Spec_X \Sym(E^\vee).
\end{equation*}
\end{definition}

With this definition, there is a natural isomorphism between $E$ and the sheaf of sections of the projection $\pi : \V(E)\to X$.
In fact, given a morphism of schemes $t : T\to X$, there exists a natural bijection 
\begin{equation}
\label{eqn:vb-sections}
\Hom_X(T, \V(E)) \xrightarrow\sim \Gamma(T, t^* E)
\end{equation}
by the universal mapping properties of the relative spectrum and the symmetric algebra.

\begin{definition}
\label{def:taut-sect}
The \emph{tautological section} $\tau\in \Gamma(\V(E),E_{\V(E)})$ is the section corresponding to the identity morphism of $\V(E)$ under (\ref{eqn:vb-sections}).
\end{definition}

The natural bijection (\ref{eqn:vb-sections}) coincides with the pullback map $f\mapsto f^*\tau$.

\begin{example}
\label{vb-trivial}
Suppose that the $\OO_X$-module $E$ is free with basis $\{v_1,\dotsc,v_e\}\subseteq \Gamma(X,E)$.
Let $\A^e$ be the affine space over $\Spec \Z$ with coordinates $t_1,\dotsc, t_e$.
Then there exists a unique isomorphism of schemes $\V(E) \cong X\times \A^e$ over $X$ with respect to which
\begin{equation*}
\tau = t_1 \cdot \pi^* v_1 + \dotsb + t_e \cdot \pi^* v_e.
\end{equation*}
\end{example}

\begin{remark}
\label{vb-differentials}
The map $E^\vee \to \pi_* \Omega_{\V(E)/X}$ that sends $\sigma \mapsto d(\sigma \cdot \tau)$ is linear over $\OO_X$.
Its adjoint is an $\OO_{\V(E)}$-linear isomorphism
\begin{equation*}
\pi^* E^\vee \xrightarrow\sim \Omega_{\V(E)/X}
\end{equation*}
by Example \ref{vb-trivial} and the computation of the sheaf of differentials on affine space.
\end{remark}

The following simple lemma is at the heart of our main results, Theorems \ref{thm:intro-smoothness} and \ref{thm:intro-2nd-sings}.

\begin{lemma}[Atiyah-Serre]
Let $k$ be an infinite field and let $f : X\to \Spec k$ be a morphism of finite type. 
Suppose that $X$ is pure-dimensional.
Let $Z\subseteq \V(E)$ be a locally closed subscheme of pure codimension $c$.
Let $W\subset \Gamma(X,E)$ be a $k$-linear subspace of finite dimension that generates $E$ as an $\OO_X$-module.  
If $s\in W$ is a general section, then $s^{-1}Z$ is either empty or of pure codimension $c$ in $X$.
Moreover, if $k$ has characteristic zero and $Z$ is smooth over $k$, then $s^{-1}Z$ is smooth over $k$.
\label{serre-bertini}
\end{lemma}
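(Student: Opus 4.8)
The plan is to reduce to the classical Bertini-type theorem over a field by fibering $W$ over $X$ via evaluation. Consider the evaluation morphism $\operatorname{ev}\colon X\times W \to \V(E)$ defined on $T$-points by $(t,s)\mapsto s\circ t$, or equivalently the section of $\operatorname{pr}_X^*E$ on $X\times W$ whose value at $(x,s)$ is $s(x)$; here $W$ is regarded as the affine space $\operatorname{Spec} k[W^\vee]$. Because $W$ generates $E$ as an $\OO_X$-module, for every point $x\in X$ the linear map $W\otimes_k \kappa(x)\to E\otimes_{\OO_X}\kappa(x)$ is surjective, so $\operatorname{ev}$ is a smooth surjective morphism (locally, after trivializing $E$ and choosing sections $v_1,\dots,v_e\in W$ restricting to a basis, $\operatorname{ev}$ looks like a linear projection $X\times W\to X\times\A^e$ followed by the trivialization, hence is smooth of relative dimension $\dim W - e$). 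First I would form the preimage $\widetilde Z := \operatorname{ev}^{-1}(Z)\subseteq X\times W$. Since $\operatorname{ev}$ is flat (being smooth), $\widetilde Z$ has pure codimension $c$ in $X\times W$, and if $\charac(k)=0$ and $Z$ is smooth over $k$, then $\widetilde Z$ is smooth over $k$ as well, being smooth over the smooth $k$-scheme $Z$.

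Next I would analyze the second projection $q\colon \widetilde Z\to W$. The fiber of $q$ over a section $s\in W(k)$ is exactly $s^{-1}Z\subseteq X$ (identifying $X\times\{s\}$ with $X$), so the lemma amounts to controlling the generic fiber of $q$. I would invoke generic flatness (and, in characteristic zero, generic smoothness, i.e. the existence of a dense open $U\subseteq W$ over which $q$ is smooth — this is where $k$ infinite and of characteristic zero is used, via the theorem on generic smoothness for dominant morphisms of varieties in characteristic zero). Over such a $U$, for every $s\in U(k)$ the fiber $s^{-1}Z$ either is empty or has pure dimension $\dim\widetilde Z-\dim W$. A dimension count gives $\dim\widetilde Z = \dim(X\times W)-c = \dim X+\dim W - c$, hence the nonempty fibers have pure dimension $\dim X - c$, i.e. pure codimension $c$ in $X$; and in characteristic zero smoothness of $q$ over $U$ yields smoothness of $s^{-1}Z$ over $k$ for $s\in U(k)$. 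Since $k$ is infinite and $U$ is a nonempty open subset of the affine space $W$, the set $U(k)$ is nonempty (indeed Zariski-dense), so a general $s\in W$ works.

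The one subtlety — and the main obstacle — is the passage from ``the generic fiber of $q$ has the desired codimension'' to ``$k$-rational fibers over a dense open have it'': over a non-algebraically-closed field one must ensure the good locus in $W$ is open (not merely dense or constructible) so that it meets $W(k)$. This is handled by the following standard facts: the locus in $W$ over which $q$ has fibers of the expected dimension $\dim X - c$ is open (upper semicontinuity of fiber dimension combined with the fact that $\widetilde Z\to W$, being a morphism from a scheme all of whose components dominate the expected-codimension locus, cannot have fiber dimension jump down on a dense set — more precisely one uses that a flat morphism has equidimensional fibers and restricts $q$ to the flat locus, which is open and dense by generic flatness); and in characteristic zero the smooth locus of $q$ is open by definition. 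A minor point: one should first pass to the closure $\overline{Z}$ or otherwise arrange that $Z$ is equidimensional with all components contributing, which is already in the hypothesis ($Z$ of pure codimension $c$), and note that ``locally closed'' $Z$ causes no trouble since one may intersect with its open-in-its-closure locus and codimension is preserved. With these openness statements in place, $U(k)\neq\varemptyset$ because $k$ is infinite, completing the proof.
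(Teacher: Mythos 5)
Your proposal is correct and follows essentially the same route as the paper: the evaluation morphism $X\times_k W\to \V(E)$ (the paper's $\tilde\alpha$, induced by the surjection $W\otimes_k\OO_X\to E$) is smooth and surjective, so pulling back $Z$ preserves codimension and (in characteristic zero) smoothness, and one concludes by generic flatness, respectively generic smoothness, applied to the projection of $\tilde\alpha^{-1}Z$ to $W$, using that $k$ is infinite to find a rational point in the good open set. The extra care you take about openness over a non-algebraically-closed field is fine but not needed beyond what the paper's appeal to these generic results already provides.
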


\begin{proof}
Let $\alpha : W\otimes_k \OO_X \to E$ be the $\OO_X$-linear map that sends $s\otimes f\mapsto fs$.
Let $\tilde \alpha : X\times_k W\to \V(E)$ be the map of vector bundles over $X$ induced by $\alpha$.
For each section $s\in W$, we have a commutative diagram with Cartesian squares:
\begin{equation*}
\begin{tikzcd}
s^{-1}Z \ar{r} \ar[d] &
\tilde\alpha^{-1} Z \ar[r] \ar[d] &
Z \ar[d] \\
X \ar[r] \ar[d] &
X\times_k W  \ar[r, "\tilde \alpha"]  \ar[d,"\pr_2"] &
\V(E)\\
\Spec k \ar[r, "s"] & W
\end{tikzcd}
\end{equation*}
There is nothing to show if the second projection $\tilde\alpha^{-1}Z\to W$ is not dominant.
Suppose that it is. 
By hypothesis $\alpha$ is surjective, so $\tilde \alpha$ is smooth and surjective.
Thus the inverse image $\tilde\alpha^{-1}Z$ has pure codimension $c$ in $X\times_k W$, and is smooth over $k$ if $Z$ is.
Applying generic flatness or, in characteristic zero, generic smoothness, to the second projection $\tilde\alpha^{-1}Z\to W$, the result follows.
\end{proof}

We now turn to degeneracy loci.
Let $\alpha : E\to F$ be a map of locally free $\OO_X$-modules of finite rank. 
Let $e$ and $f$ respectively denote the ranks of $E$ and $F$.
Let $m=\min(e,f)$. 
Let $i$ be a nonnegative integer.

\begin{definition}
\label{degeneracy-locus-def}
The \emph{$i$th degeneracy locus} of $\alpha$ is defined to be the subscheme $\Sigma^i(\alpha)\subseteq X$ where exterior power
\begin{equation*}
\wedge^{m-i+1}\alpha : \wedge^{m-i+1} E\to \wedge^{m-i+1} F
\end{equation*}
vanishes if $i\le m+1$, and the empty scheme otherwise.
\end{definition}

A point $x\in X$ lies in $\Sigma^i(\alpha)$ if, and only if, the $k(x)$-linear map $\alpha(x)$ has rank at most $m-i$.
By the Laplace expansion of the determinant, we have closed immersions
\begin{equation*}
\varemptyset = \Sigma^{m+1}(\alpha)\subseteq \Sigma^m(\alpha) \subseteq \dotsb \subseteq 
\Sigma^0(\alpha) = X.
\end{equation*}

\begin{remark}
\label{deg-loci-functor}
If $t: T\to X$ be a morphism of schemes, then
\begin{equation*}
t^{-1}\Sigma^i(\alpha) = \Sigma^i(t^* \alpha)
\end{equation*}
as closed subschemes of $T$.
\end{remark}

Suppose that $0\le i \le m$. Let $\Sigma$ denote the locally closed subscheme $\Sigma^i(\alpha)\setminus \Sigma^{i+1}(\alpha)\subseteq X$.

\begin{proposition}
\label{prop:pts-deg-loc}
A morphism of schemes $t : T\to X$ factors through $\Sigma$ if, and only if, the cokernel of $t^*\alpha : t^*E\to t^*F$ is a locally free $\OO_T$-module of rank $f-m+i$.
\end{proposition}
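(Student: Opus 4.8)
The plan is to reduce the statement to a local computation and then to the well-known structure theory of maps of free modules over a local ring, specifically the fact that the rank of a matrix and the local freeness of its cokernel are detected by minors. Since both conditions in the statement---factoring through $\Sigma$, and local freeness of $\coker(t^*\alpha)$ of the prescribed rank---are local on $T$, I would immediately pass to the case where $T = \Spec R$ is affine and $E$, $F$ are free, so that $t^*\alpha$ is given by an $f\times e$ matrix $M$ with entries in $R$. The assertion to prove is then: the composite $T\to X$ lands in $\Sigma^i(\alpha)\setminus\Sigma^{i+1}(\alpha)$ if and only if $\coker(M)$ is free of rank $f - m + i$.

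First I would unwind what it means for $t$ to factor through $\Sigma$. By Remark \ref{deg-loci-functor}, $t^{-1}\Sigma^i(\alpha) = \Sigma^i(t^*\alpha)$ and similarly for $i+1$; by Definition \ref{degeneracy-locus-def}, $t$ factors through $\Sigma^i(t^*\alpha)$ precisely when all $(m-i+1)$-minors of $M$ vanish, i.e.\ the ideal $I_{m-i+1}(M)$ is zero in $R$, and $t$ avoids $\Sigma^{i+1}(t^*\alpha)$ precisely when the ideal $I_{m-i}(M)$ of $(m-i)$-minors is the unit ideal. So the claim becomes: $I_{m-i+1}(M) = 0$ and $I_{m-i}(M) = R$ together are equivalent to $\coker(M)$ being locally free of rank $f-m+i$.

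For the forward direction, the condition $I_{m-i}(M) = R$ means that Zariski-locally on $T$ some $(m-i)\times(m-i)$ minor is a unit; after localizing and changing bases over that open set, $M$ has an invertible $(m-i)\times(m-i)$ block in the corner, so by the usual row/column reduction $M$ is equivalent to $\mathrm{id}_{m-i}\oplus M'$ where $M'$ has size $(f-m+i)\times(e-m+i)$. The vanishing of all $(m-i+1)$-minors of $M$ forces every entry of $M'$ to vanish. Hence $\coker(M)\cong R^{f-m+i}$ locally, giving local freeness of the stated rank. For the converse, if $\coker(t^*\alpha)$ is locally free of rank $f-m+i$ then, working locally, one can split the surjection $t^*F\to\coker(t^*\alpha)$ and arrange $M = 0 \oplus M''$ with $M''$ of size $(m-i)\times(e-m+i)$ and surjective cokernel $0$, i.e.\ $M''$ itself surjective; surjectivity of $M''$ onto $R^{0}$ is automatic, but what I actually need is that $\im(M)$ is a free direct summand of rank $m-i$, which follows because the map $t^*E \to \ker(t^*F \to \coker)$ is surjective onto a free module hence splits. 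Then $I_{m-i+1}(M) = I_{m-i+1}(M'') = 0$ since $M''$ has only $m-i$ rows, and $I_{m-i}(M)$ contains $I_{m-i}(M'')$, which is the unit ideal because $M''$ is surjective. This establishes both conditions.

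The main obstacle I anticipate is handling the scheme-theoretic subtleties cleanly: the subscheme $\Sigma^i(\alpha)$ is defined by the vanishing of an exterior power map, which cuts out the ideal $I_{m-i+1}(M)$ possibly with a non-reduced structure, so I must be careful that ``$t$ factors through $\Sigma^i(\alpha)$'' is genuinely equivalent to ``$I_{m-i+1}(t^*M) = 0$ in $\OO_T$'' rather than to some weaker set-theoretic statement---this is where Definition \ref{degeneracy-locus-def} and Remark \ref{deg-loci-functor} do the real work, and where I should be explicit. A secondary point requiring care is that splitting a surjection or extracting an invertible minor is only possible after passing to an open cover of $T$, so I must check that the conclusion (local freeness of the cokernel, or equivalently the pair of ideal conditions) is one that can be verified locally and then glued---which it is, since both freeness and the conditions $I_{m-i+1}=0$, $I_{m-i}=(1)$ are local in nature. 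Once these bookkeeping issues are dispatched, the algebra is the standard Fitting-ideal argument and poses no difficulty.
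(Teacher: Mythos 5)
Your argument is correct in substance, and it takes a genuinely different (more explicit) route than the one the paper intends. The paper leaves the proof to the reader, hinting that the key point is to show that $\coker(\alpha|_\Sigma)$ is locally free of rank $f-m+i$ with the help of Lemma \ref{kleiman}: once the image locally contains a free direct summand of rank $m-i$, that lemma (a coordinate-free exterior-algebra statement) shows the vanishing of the $(m-i+1)$st exterior power forces the induced map into the quotient by that summand to vanish, and pullback compatibility (Remark \ref{deg-loci-functor}, Corollary \ref{coker-deg-loci}) then yields both implications. You instead work directly on $T$, translating ``factors through $\Sigma$'' into the two ideal conditions $I_{m-i+1}(M)=0$ and $I_{m-i}(M)=R$ and using the normal form $\mathrm{id}_{m-i}\oplus M'$ obtained after inverting a minor; this is self-contained and elementary, and it makes the scheme-theoretic content of the statement completely explicit, at the cost of choosing bases--it is essentially the matrix avatar of the Kleiman-lemma argument. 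Two small points to tidy in your converse: the block you call $M''$ has size $(m-i)\times e$, not $(m-i)\times(e-m+i)$ (it is the composite $t^*E\to \im(t^*\alpha)$ written in a basis of $\im(t^*\alpha)$), and the local freeness of $\im(t^*\alpha)$ of rank $m-i$ should be deduced from the splitting of $t^*F\twoheadrightarrow\coker(t^*\alpha)$ (a direct summand of a free module whose complement is locally free of rank $f-m+i$ is locally free of rank $m-i$), not from the splitting of $t^*E\to\im(t^*\alpha)$, which presupposes the freeness you are trying to establish. With those repairs the proof is complete.
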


\begin{proof}
Left to the reader.
The key point is to show that the cokernel of the $\OO_\Sigma$-linear map $\alpha|_\Sigma:E_\Sigma \to F_\Sigma$ is a locally free of rank $f-m+i$. This can be done with the help of Lemma \ref{kleiman} below.
\end{proof}

The next corollary describes canonical isomorphisms that we will often use without mention.

\begin{corollary}
\label{coker-deg-loci}
The kernel, image and cokernel of $\alpha|_\Sigma : E_\Sigma\to F_\Sigma$ are locally free $\OO_\Sigma$-modules of respective ranks $e-m+i$, $m-i$ and $f-m+i$.
If $t:T\to \Sigma$ is a map of schemes, then:
\begin{align*}
\ker (t^* (\alpha|_\Sigma)) &= t^* \ker(\alpha|_\Sigma)\\
\im (t^* (\alpha|_\Sigma)) &= t^* \im(\alpha|_\Sigma)\\
\coker (t^*(\alpha|_\Sigma)) &= t^* \coker(\alpha|_\Sigma)
\end{align*}
\end{corollary}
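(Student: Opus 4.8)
The plan is to reduce the entire statement to Proposition \ref{prop:pts-deg-loc} applied to the inclusion $\Sigma\hookrightarrow X$. Since that inclusion trivially factors through $\Sigma$, the proposition tells us that $\coker(\alpha|_\Sigma)$ is a locally free $\OO_\Sigma$-module of rank $f-m+i$. This is the only input carrying real content; everything else is formal bookkeeping with short exact sequences whose right-hand term is locally free, hence locally split and therefore preserved by arbitrary pullback.

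Granting the cokernel statement, I would first record the ranks and local freeness of the image and kernel. From the locally split sequence
\begin{equation*}
0 \to \im(\alpha|_\Sigma) \to F_\Sigma \to \coker(\alpha|_\Sigma)\to 0
\end{equation*}
one sees that $\im(\alpha|_\Sigma)$ is locally a direct summand of $F_\Sigma$, hence locally free of rank $f-(f-m+i)=m-i$. The quotient map $E_\Sigma\twoheadrightarrow\im(\alpha|_\Sigma)$ then has locally free target, so
\begin{equation*}
0 \to \ker(\alpha|_\Sigma) \to E_\Sigma \to \im(\alpha|_\Sigma)\to 0
\end{equation*}
is likewise locally split, and $\ker(\alpha|_\Sigma)$ comes out locally free of rank $e-(m-i)=e-m+i$.

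For the base-change identities I would take a morphism $t:T\to\Sigma$ and argue in the same order. The equality $\coker(t^*(\alpha|_\Sigma))=t^*\coker(\alpha|_\Sigma)$ is automatic from right-exactness of $t^*$. Pulling back the first displayed sequence (which stays short exact, being locally split) shows that $t^*\im(\alpha|_\Sigma)\to t^*F_\Sigma$ is injective; since $t^*E_\Sigma\to t^*\im(\alpha|_\Sigma)$ is surjective and its composite with that injection is $t^*(\alpha|_\Sigma)$, we get $\im(t^*(\alpha|_\Sigma))=t^*\im(\alpha|_\Sigma)$. Pulling back the second displayed sequence similarly keeps it short exact, so $t^*\ker(\alpha|_\Sigma)$ is the kernel of $t^*E_\Sigma\to t^*\im(\alpha|_\Sigma)$; since $t^*\im(\alpha|_\Sigma)\to t^*F_\Sigma$ is injective, this kernel agrees with $\ker(t^*(\alpha|_\Sigma))$.

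I do not expect a real obstacle once Proposition \ref{prop:pts-deg-loc} is in hand; the substance that remains is just the observation that images and kernels of $\alpha|_\Sigma$ \emph{do} commute with base change here, contrary to the general situation, precisely because the cokernel, and then the image, is locally free. The only point demanding care is to carry out the three steps — cokernel, then image, then kernel — in that order, since each step feeds the local splitness used by the next.
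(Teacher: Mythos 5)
Your proposal is correct and follows essentially the same route as the paper: the paper likewise extracts local freeness of the cokernel from Proposition \ref{prop:pts-deg-loc} and then invokes the standard fact that in a short exact sequence with locally free middle and right terms the left term is locally free and the sequence is preserved by arbitrary pullback, applied successively to the image and kernel sequences. Your write-up merely spells out the details (local splitness, the order cokernel--image--kernel) that the paper leaves as a ``familiar fact.''
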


\begin{proof}
This follows from Proposition \ref{prop:pts-deg-loc} and the following familiar fact.
Let $W$ be a scheme and let
\begin{equation}
\label{ses-locally-free}
\begin{tikzcd}
0 \ar{r} &
A \ar{r} &
B \ar{r} &
C \ar{r} &
0
\end{tikzcd}
\end{equation}
be a short exact sequence of $\OO_W$-modules. If $B$ and $C$ are locally free of finite rank, then $A$ is locally free of finite rank, and the sequence (\ref{ses-locally-free}) remains exact after pullback along any map $t:T\to W$.
\end{proof}

\begin{lemma}
\label{kleiman}
Let $R$ be a ring. 
Let $\beta : M\to N$ be a map of $R$-modules.
Let $A, B\subseteq M$ be submodules such that $\beta(A) \subseteq N$ is a free direct summand of finite rank $a$.
Let $q$ be a nonnegative integer.
The following are equivalent:
\begin{enumerate}
\item The map $\wedge^{q+a} (A+B) \to \wedge^{q+a}N$ induced by $\beta$ is zero.
\item The map $\wedge^q B \to \wedge^q (N/\alpha(A))$ induced by $\beta$ is zero.
\end{enumerate}
\end{lemma}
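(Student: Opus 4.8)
The plan is to reduce the statement to a concrete computation inside exterior algebras by choosing a basis adapted to the free direct summand $\beta(A)\subseteq N$. First I would write $N = \beta(A)\oplus N'$ for some submodule $N'\subseteq N$, and fix a basis $e_1,\dotsc,e_a$ of the free module $\beta(A)$. Let $\pi : N\to N/\beta(A)\cong N'$ be the projection. Pick elements $a_1,\dotsc,a_a\in A$ with $\beta(a_i)=e_i$; then the $a_i$ together with generators of $B$ generate $A+B$, and the key observation is that modulo $B$ the submodule $A$ contributes exactly the ``top'' piece $e_1\wedge\dotsb\wedge e_a$ in degree $a$. More precisely, I would analyze the $\beta$-image of a decomposable wedge $\omega = a_{i_1}\wedge\dotsb\wedge a_{i_s}\wedge b_{j_1}\wedge\dotsb\wedge b_{j_t}$ with $s+t = q+a$: applying $\beta$ and expanding each $\beta(b_{j_\ell}) = \beta(b_{j_\ell})^{\beta(A)} + \pi\beta(b_{j_\ell})$ into its $\beta(A)$-component and its $N'$-component, the wedge $\beta(\omega)$ lands in $\wedge^{q+a}N = \bigoplus_{p} \wedge^p\beta(A)\otimes\wedge^{q+a-p}N'$, and since $\wedge^p\beta(A) = 0$ for $p>a$, only terms with at most $a$ factors from $\beta(A)$ survive.

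The heart of the argument is then the following bookkeeping. Condition (1) says $\beta$ kills all of $\wedge^{q+a}(A+B)$. Testing this on wedges of the form $a_1\wedge\dotsb\wedge a_a\wedge b_{j_1}\wedge\dotsb\wedge b_{j_q}$, the component of $\beta$ of this element in $\wedge^a\beta(A)\otimes\wedge^q N'$ is exactly $\pm(e_1\wedge\dotsb\wedge e_a)\otimes(\pi\beta(b_{j_1})\wedge\dotsb\wedge\pi\beta(b_{j_q}))$, because any attempt to take fewer than $a$ factors from the $a_i$'s forces taking a $\beta(A)$-component from some $\beta(b_{j_\ell})$, and these land in lower $N'$-degree — so there is no cancellation in the top $\wedge^a\beta(A)$ slot. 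Hence condition (1) forces $\pi\beta(b_{j_1})\wedge\dotsb\wedge\pi\beta(b_{j_q}) = 0$ in $\wedge^q N'$ for all choices of $b_{j_\ell}\in B$, which is precisely condition (2) after identifying $N/\beta(A)$ with $N'$ (note the typo in the statement: ``$\alpha$'' should be ``$\beta$''). Conversely, if (2) holds, then in the expansion of $\beta$ on an arbitrary decomposable element of $\wedge^{q+a}(A+B)$, every surviving summand lies in $\wedge^p\beta(A)\otimes\wedge^{q+a-p}N'$ with $q+a-p\ge q$; since $q+a-p$ of the $N'$-factors come from distinct $\pi\beta(b_{j_\ell})$'s, and any $q$ of these already wedge to zero by (2), the whole expression vanishes. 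Linearity in the first several arguments lets one pass from decomposable wedges to all of $\wedge^{q+a}(A+B)$.

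The main obstacle I anticipate is the careful sign and multilinearity bookkeeping in the expansion of $\beta$ on a decomposable wedge when factors from $A$ and from $B$ are interleaved: one must be sure that the coefficient of $e_1\wedge\dotsb\wedge e_a$ in the relevant graded piece is a \emph{nonzero multiple} of the wedge of the $\pi\beta(b_{j_\ell})$'s, with no contributions from other terms that could conspire to cancel it. The cleanest way to handle this is to work one graded piece at a time in the decomposition $\wedge^{q+a}N = \bigoplus_p \wedge^p\beta(A)\otimes\wedge^{q+a-p}N'$ and observe that the projection to the $p=a$ summand is a well-defined $R$-linear map; evaluating this projection on $a_1\wedge\dotsb\wedge a_a\wedge(-)$ gives an isomorphism of $\wedge^q B$-images with $\wedge^q N'$-images, which makes the equivalence transparent. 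I would also remark that the hypotheses ``free'' and ``direct summand'' are used exactly to guarantee the splitting $N = \beta(A)\oplus N'$ and the Künneth-type decomposition of $\wedge^{q+a}N$, so that projecting onto the top $\wedge^a\beta(A)$ component is legitimate.
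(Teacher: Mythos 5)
Your argument is correct, but it is not the route the paper takes: the paper's proof is a one-line reduction, replacing $A$ and $B$ by their images $\beta(A)$, $\beta(B)$ (legitimate because the induced maps $\wedge^{q+a}(A+B)\to\wedge^{q+a}(\beta(A)+\beta(B))$ and $\wedge^q B\to\wedge^q\beta(B)$ are surjective), after which the statement for the inclusion of $\beta(A)+\beta(B)$ into $N$ is exactly Lemma 2.5 of Kleiman's \emph{Geometry on Grassmannians} paper, which is simply cited. What you do instead is give a self-contained proof of that cited lemma: split $N=\beta(A)\oplus N'$, use the decomposition $\wedge^{q+a}N=\bigoplus_p \wedge^p\beta(A)\otimes\wedge^{q+a-p}N'$, and observe that for the test elements $a_1\wedge\dotsb\wedge a_a\wedge b_{j_1}\wedge\dotsb\wedge b_{j_q}$ the projection to the $p=a$ summand is $(e_1\wedge\dotsb\wedge e_a)\otimes(\pi\beta(b_{j_1})\wedge\dotsb\wedge\pi\beta(b_{j_q}))$, with injectivity of $w\mapsto(e_1\wedge\dotsb\wedge e_a)\otimes w$ coming from $\wedge^a\beta(A)\cong R$; the converse direction works because any surviving summand of an expanded decomposable has at least $q$ factors of the form $\pi\beta(b_j)$ with distinct $j$, and a wedge containing a zero $q$-fold subwedge vanishes. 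This is sound (your remark that ``fewer than $a$ factors from the $a_i$'s'' enter is slightly off, since all $a$ of them are present in the test element and it is the $\beta(A)$-components of the $\beta(b_{j_\ell})$ that overshoot degree $a$, but the conclusion is unaffected), and you correctly flag the typo $\alpha$ for $\beta$ in the statement. The trade-off: the paper's proof is shorter and outsources the combinatorics to a standard reference, while yours makes the mechanism explicit and keeps the paper self-contained, at the cost of the basis and grading bookkeeping; note also that your direct argument never needs the reduction to injective $\beta$, since the splitting of $N$ along $\beta(A)$ handles a general $\beta$ at once.
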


\begin{proof}
Replacing $A$ and $B$ with their images under $\beta$, it suffices to consider the case where $\beta$ is injective, which is \cite[Lemma 2.5]{Kleiman69}.
\end{proof}

In the case where $X$ is the spectrum of a field, the next result asserts the smoothness and describes the normal bundle of the schemes of matrices with fixed rank and dimensions. 

\begin{proposition}
\label{univ-deg-loci}
Let $\pi : H\to X$ be the vector bundle corresponding to the locally free $\OO_X$-modules $\sheafHom_X(E,F)$.
In symbols,
\begin{equation*}
H = \V(\sheafHom_X(E,F)).
\end{equation*}
Let $h : E_H\to F_H$ be the tautological map, see Definition \ref{def:taut-sect}.
Let $Z$ denote the locally closed degeneracy locus $\Sigma^i(h)\setminus \Sigma^{i+1}(h) \subseteq H$.
Then $Z$ is smooth of relative dimension
\begin{equation*}
(e-m+i)(f-m+i)
\end{equation*}
over $X$.
The canonical $\OO_H$-linear isomorphism
$
\T_{H/X}\xrightarrow\sim \sheafHom_X(E,F)_H
$
of Remark \ref{vb-differentials} induces an $\OO_{\Sigma}$-linear isomorphism
\begin{equation*}
\N_{Z} := (\T_{H/X})_{Z}/\T_{Z/X} \xrightarrow\sim
\sheafHom_{Z}(\ker(h|_{Z}), \coker(h|_{Z})).
\end{equation*}
\end{proposition}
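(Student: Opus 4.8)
The plan is to work locally on $X$ and reduce to the case where $E$ and $F$ are free, so that $H$ becomes a trivial vector bundle $X \times \A^{ef}$ with fiber coordinates arranged into an $f \times e$ matrix of functions, and $h$ is the tautological matrix. The question then becomes the classical one about the variety of $f \times e$ matrices of rank exactly $m-i$: showing it is smooth of the stated dimension and identifying its normal bundle. I would first handle the statement over a field (i.e., the fiber of $\pi$), then promote it to the relative statement over $X$ by base change, using that smoothness and the formation of $\T_{H/X}$, $\T_{Z/X}$ and the normal bundle all commute with the localizations involved.

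For the fiberwise computation I would fix a point $z \in Z$, i.e. a matrix $\phi : E \to F$ of rank $m-i$ over the residue field, and choose bases adapting to $\ker\phi$ and a complement to $\im\phi$, so that $\phi$ becomes block-diagonal with an invertible $(m-i)\times(m-i)$ block in the corner and zeros elsewhere. In a neighborhood of $z$ in $H$ (now affine space of matrices), invertibility of the top-left block is an open condition, and on that open set one can use the standard Schur-complement manipulation: row and column operations (which are automorphisms of the ambient matrix space) bring an arbitrary nearby matrix to the form in which the only surviving block is the bottom-right $(f-m+i)\times(e-m+i)$ block, and $Z$ is cut out precisely by the vanishing of that block. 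Hence, locally near $z$, $Z$ is a graph over the complement of those $(f-m+i)(e-m+i)$ coordinates, which simultaneously proves smoothness and gives the relative dimension $(e-m+i)(f-m+i)$.

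For the normal bundle, the same local picture makes the identification transparent: the $(f-m+i)(e-m+i)$ equations cutting out $Z$ have differentials at $z$ spanning exactly the cotangent directions dual to the bottom-right block, and that block of the matrix space $\sheafHom(E,F)$ is canonically $\sheafHom(\ker\phi, F/\im\phi)$. So under the isomorphism $\T_{H/X} \xrightarrow{\sim} \sheafHom_X(E,F)_H$ of Remark \ref{vb-differentials}, the conormal space $\N_Z^\vee$ at $z$ is identified with $\sheafHom(\ker(h|_Z), \coker(h|_Z))^\vee$ at $z$. The content is to check this identification is independent of the chosen bases and therefore glues to a global $\OO_Z$-linear isomorphism; I would phrase this intrinsically by noting that, for any point $t : T \to Z$, pulling back the exact sequence $0 \to \T_{Z/X} \to (\T_{H/X})_Z \to \N_Z \to 0$ and the tangent-space description of $h$ shows $\N_Z$ represents first-order deformations of the tautological map modulo those preserving rank, which by Corollary \ref{coker-deg-loci} (functoriality of $\ker$ and $\coker$ over $\Sigma$) is exactly $\sheafHom_Z(\ker(h|_Z), \coker(h|_Z))$.

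The main obstacle is not the local linear algebra, which is routine, but making the normal-bundle isomorphism canonical and globally defined rather than chart-dependent. The cleanest way around this is to give the map $(\T_{H/X})_Z \to \sheafHom_Z(\ker(h|_Z), \coker(h|_Z))$ directly: a section of $\T_{H/X}$ over an open of $Z$ is, via Remark \ref{vb-differentials}, a map $\eta : E_Z \to F_Z$; send it to the composite $\ker(h|_Z) \hookrightarrow E_Z \xrightarrow{\eta} F_Z \twoheadrightarrow \coker(h|_Z)$. One then checks this is surjective (immediate from the local block form), that its kernel is $\T_{Z/X}$ (this is the statement that $\eta$ is tangent to $Z$ iff it does not change the rank to first order, again visible in the block form), and hence that it descends to the asserted isomorphism on $\N_Z$. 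This construction is manifestly basis-free, so gluing is automatic, and it is compatible with base change by Corollary \ref{coker-deg-loci}.
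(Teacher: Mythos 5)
Your argument is correct, and it is essentially the standard one; note that the paper itself offers no proof of this proposition (it simply declares the result well known), so what you have written is exactly the kind of folklore argument being invoked: trivialize $E$ and $F$ locally on $X$, pass to the open subscheme of $H$ where some $(m-i)\times(m-i)$ minor of the tautological matrix is invertible, use the Schur complement to exhibit $Z$ there as a graph over the remaining coordinates (relative over $X$, so smoothness over $X$ comes for free), and read off the normal directions as the bottom-right block. Your intrinsic description of the normal-bundle map, sending $\eta : E_Z\to F_Z$ to the composite $\ker(h|_Z)\hookrightarrow E_Z\xrightarrow{\eta} F_Z\twoheadrightarrow\coker(h|_Z)$, is the right way to make the identification chart-independent, and the appeal to Corollary \ref{coker-deg-loci} for local freeness and base-change compatibility of $\ker$ and $\coker$ is exactly what is needed. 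Two small points. First, the quantity $(e-m+i)(f-m+i)$ that your graph description produces is the relative \emph{codimension} of $Z$ in $H$ over $X$ (equivalently, relative dimension $ef-(e-m+i)(f-m+i)$); this is also how the paper uses the proposition later (e.g.\ in Remark \ref{rmk:id-transversality}, where it is quoted as the codimension $i(|e-f|+i)$), so the word ``dimension'' in the statement and in your write-up should be read accordingly. Second, you cannot in general choose $\OO_X$-bases making the matrix block-diagonal at the point $z$ (its residue field may be an extension of that of the point below); but this is harmless, since all the Schur-complement argument needs is that some $(m-i)\times(m-i)$ minor is invertible at $z$, which is arranged by a permutation of the chosen bases.
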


\begin{proof}
This result is well known. 
\end{proof}

\section{The intrinsic differential}

In this section we spell out the scheme-theoretic analogue of Porteous' notion of the intrinsic differential of a map between vector bundles over a smooth manifold \cite{Porteous71}.
The intrinsic differential is used in Porteous' definition of second-order singularities, which we adopt in this paper.
It also plays an important role in the proof of Theorem \ref{thm:intro-smoothness} for reasons that stem from Remark \ref{rmk:id-transversality} below.

Let $k$ be a field.
Let $X$ be a scheme over $k$.
Let $E$ and $F$ be locally free $\OO_X$-modules of ranks $e$ and $f$, respectively.
Let $\alpha : E\to F$ be an $\OO_X$-linear map.
Let $x : T\to X$ be a morphism of schemes.
We think of $x$ as a $T$-valued point of $X$.

\begin{proposition}
\label{prop:id-def}
Suppose the $\OO_X$-modules $E$ and $F$ are free.
Choose bases for $E$ and $F$.
Let
\begin{equation*}
\nabla : \sheafHom(E,F)\to \Omega_X\otimes \sheafHom(E,F)
\end{equation*}
be the $k$-linear map given by differentiation of matrix entries with respect to these bases.
Let
\begin{equation*}
\dd_x\alpha\in 
\Gamma(T, x^*\Omega_X\otimes \sheafHom_T(\ker(x^*\alpha), \coker(x^*\alpha)))
\end{equation*}
be the image of $\nabla \alpha$ under the $\OO_T$-linear map
\begin{equation*}
\begin{tikzcd}
x^*\Omega_X\otimes \sheafHom_T(x^*E,x^*F)
\ar[r,"{(\iota,q)}"] &
x^*\Omega_X\otimes \sheafHom_T(\ker(x^*\alpha), \coker(x^*\alpha))
\end{tikzcd}
\end{equation*}
induced by the inclusion $\iota : \ker(x^*\alpha)\hookrightarrow x^* E$ and the projection $q : x^* F\twoheadrightarrow \coker(x^*\alpha)$. 
Then $\dd_x\alpha$ is independent of the bases used to define it.
\end{proposition}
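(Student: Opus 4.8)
The plan is to show that if we replace the chosen bases of $E$ and $F$ by new ones, the resulting section $\dd_x\alpha$ does not change. Let $P\in \mathrm{GL}_e(\OO_X)$ and $Q\in \mathrm{GL}_f(\OO_X)$ be the change-of-basis matrices, so that if $M$ is the matrix of $\alpha$ in the old bases, then $M' := Q M P^{-1}$ is its matrix in the new bases (with suitable conventions). First I would record how the two connection operators are related: if $\nabla$ and $\nabla'$ denote differentiation of matrix entries with respect to the old and new bases respectively, then by the Leibniz rule applied to the product $Q M P^{-1}$ one gets
\begin{equation*}
\nabla'(M') = Q\,(\nabla M)\,P^{-1} + (dQ)\,M\,P^{-1} + Q M\,(dP^{-1}),
\end{equation*}
an identity of sections of $\Omega_X\otimes \sheafHom(E,F)$ after the obvious identifications. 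The first term is $Q(\nabla M)P^{-1}$, i.e.\ the image of $\nabla\alpha$ under the automorphism of $\sheafHom(E,F)$ induced by $P,Q$; the second and third terms are the ``correction terms'' coming from the fact that $\nabla$ is not $\OO_X$-linear.

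Next I would pull everything back along $x : T\to X$ and apply the map $(\iota,q)$. The point is that the two correction terms die. Indeed $(dQ)MP^{-1}$ has the matrix $M$ (hence $x^*\alpha$) sitting in the middle, so when we precompose with the inclusion $\iota : \ker(x^*\alpha)\hookrightarrow x^*E$ we are feeding it vectors killed by $x^*\alpha$, and the whole expression vanishes. Symmetrically, $QM(dP^{-1})$ has $x^*\alpha$ on the left (after using $d(P^{-1}) = -P^{-1}(dP)P^{-1}$, the relevant point is just that a factor of $M$ appears to the left of the differentiated matrix), so when we postcompose with the projection $q : x^*F\twoheadrightarrow\coker(x^*\alpha)$ it vanishes. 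Thus only the first term survives, and $(\iota,q)$ applied to $x^*(Q(\nabla M)P^{-1})$ equals $(\iota,q)$ applied to $x^*(\nabla M)$: the automorphisms induced by $P$ and $Q$ restrict to the identity on $\ker(x^*\alpha)$ and descend to the identity on $\coker(x^*\alpha)$, because $P$ preserves $\ker(x^*\alpha)$ and $Q$ preserves $\im(x^*\alpha)$ — more precisely, $\iota$ and $P\circ\iota$ differ by an automorphism of $\ker(x^*\alpha)$, and $q$ and $q\circ Q$ differ by an automorphism of $\coker(x^*\alpha)$, and one checks these cancel against the identifications $\ker(x^*\alpha) = \ker(x^*\alpha')$, $\coker(x^*\alpha)=\coker(x^*\alpha')$ used implicitly in the statement. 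Care is needed to pin down exactly which identifications are in play, but all of this is bookkeeping.

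I expect the main obstacle to be purely notational: keeping straight the canonical identifications of $\ker$ and $\coker$ before and after the change of basis, and the precise convention ($QMP^{-1}$ versus $P^{-1}MQ$ etc.) so that the ``left factor'' and ``right factor'' arguments in the previous paragraph are applied to the correct terms. A clean way to organize this is to avoid matrices altogether in the final write-up: phrase the change of basis as a pair of $\OO_X$-linear automorphisms $p : E\to E$, $q' : F\to F$, note that $\nabla$ transforms by conjugation plus a term in the image of $\nabla p$ and a term in the image of $\nabla q'$, and observe directly that the first correction term is killed by restriction to $\ker(x^*\alpha)\subseteq x^*E$ while the second is killed by projection to $\coker(x^*\alpha)$, since $\ker(x^*\alpha)$ and $\im(x^*\alpha)$ are exactly the subobjects that make those compositions zero. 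This makes the independence transparent and sidesteps index juggling.
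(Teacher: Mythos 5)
Your proposal is correct and is essentially the paper's own argument: reduce to matrices, apply the Leibniz rule to the transformed matrix, and annihilate the two correction terms by precomposing with the kernel inclusion and postcomposing with the cokernel projection; the paper packages the base change as automorphisms $\varphi,\psi$ of $E$ and $F$ with the standard bases fixed, and the induced maps $\bar\varphi,\bar\psi$ on kernel and cokernel carry out exactly the bookkeeping you describe. The only slip is your parenthetical justification that $P$ preserves $\ker(x^*\alpha)$ and $Q$ preserves $\im(x^*\alpha)$ --- false for a general base change --- but it is also unnecessary: since $\alpha$ itself is fixed, the difference $\nabla'\alpha-\nabla\alpha$, read canonically in $\Omega_X\otimes\sheafHom(E,F)$, consists only of the two correction terms, each factoring through $\alpha$ on one side, which is precisely the cleaner organization you sketch in your final paragraph.
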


\begin{proof}
We may assume that $E= \OO_X^{\oplus e}$ and $F=\OO_X^{\oplus f}$ and that the chosen bases on these $\OO_X$-modules are the standard ones.
Let $\varphi : E\xrightarrow\sim E$ and $\psi : F\xrightarrow\sim F$ be $\OO_X$-linear automorphisms.
Let $\tilde \alpha : x^*E \to x^*F$ be the composition $\psi \circ \alpha \circ \varphi^{-1}$.
Consider the following diagram of $\OO_T$-linear maps, where $\tilde \iota$ and $\tilde q$ are the natural inclusion and projection, and $\bar\varphi$ and $\bar\psi$ are induced by $\varphi$ and $\psi$. 
\begin{equation*}
\begin{tikzcd}
0 \ar[r] &
\ker(x^*\alpha) \ar[r, hook, "\iota"] \ar[d, "\bar\varphi"] &
x^* E \ar[r, "x^*\alpha"] \ar[d, "\varphi"] &
x^* F \ar[r, two heads, "q"] \ar[d, "\psi"] &
\coker(x^*\alpha) \ar[r] \ar[d, "\bar\psi"] &
0 \\
0 \ar[r] &
\ker(x^*\tilde \alpha) \ar[r, hook, "\tilde \iota"] &
x^* E \ar[r, "x^*\tilde \alpha"] &
x^* F \ar[r, two heads, "\tilde q"] &
\coker(x^*\tilde \alpha) \ar[r] &
0 
\end{tikzcd}
\end{equation*}
To prove the result, it suffices to show that
\begin{equation*}
\tilde q\cdot \nabla\tilde \alpha\cdot \tilde \iota
=
\bar \psi\cdot q \cdot \nabla\alpha \cdot \iota\cdot \bar \varphi^{-1}
\end{equation*}
as elements of $\Gamma(T, x^*\Omega_X\otimes \sheafHom_T(\ker(x^*\tilde \alpha), \coker(x^*\tilde \alpha)))$.

Identifying $\alpha$, $\varphi$ and $\psi$ with matrices with entries in $\Gamma(X,\OO_X)$ and applying the Leibniz rule, we find that 
\begin{equation*}
d\tilde \alpha   = d\psi \cdot \alpha\cdot \varphi^{-1} +
\psi \cdot d\alpha  \cdot \varphi^{-1} +
\psi\cdot \alpha\cdot d(\varphi^{-1}) 
\end{equation*}
as elements of $\Gamma(X,\Omega_X^{\oplus f\times e})$.
Now
\begin{equation*}
\alpha \cdot \varphi^{-1} \cdot \tilde\iota
= \psi^{-1}\cdot \tilde\alpha\cdot \tilde\iota
= 0
\end{equation*}
in $\Hom_T(\ker(x^*\tilde\alpha), x^* E)$, and
\begin{equation*}
\tilde q\cdot \psi\cdot \alpha =
\tilde q\cdot \tilde\alpha\cdot \varphi = 0
\end{equation*}
in $\Hom_T(x^*F, \coker(x^*\tilde\alpha))$.
Thus
\begin{align*}
\tilde q\cdot d\tilde \alpha  \cdot\tilde\iota
&= \tilde q\cdot\psi \cdot d \alpha  \cdot \varphi^{-1}\cdot\tilde\iota \\
&= \bar\psi \cdot q\cdot d \alpha  \cdot \iota\cdot \bar\varphi^{-1},
\end{align*}
which completes the proof.
\end{proof}

\begin{definition}
\label{def:id}
The \emph{intrinsic differential} of $\alpha : E\to F$ at $x$ is the unique section
\begin{equation*}
\dd_x \alpha \in \Gamma(T, x^* \Omega_X\otimes \sheafHom_T(\ker(x^*\alpha),\coker(x^*\alpha)))
\end{equation*}
such that, for each open subset $U \subseteq X$ over which $E$ and $F$ are free, the restriction of $\dd_x \alpha$ to $x^{-1}U\subseteq T$ coincides with the section of Proposition \ref{prop:id-def} applied to $\alpha|_U : E_U\to F_U$ and $x|_{x^{-1}U} : x^{-1}U\to U$. 
When $X$ is smooth over $k$, we will regard the intrinsic differential as an $\OO_T$-linear map 
\begin{equation*}
\dd_x \alpha : x^* \T_X \to \sheafHom_T(\ker(x^*\alpha),\coker(x^*\alpha)).
\end{equation*}
\end{definition}

Suppose that $X$ is smooth over $k$ and that the cokernel of $x^*\alpha : x^* E\to x^*F$ is a locally free $\OO_T$-module of constant rank.
Then the intrinsic differential $\dd_x\alpha$ may be constructed geometrically, as follows.
Let $\pi : V\to X$ be the vector bundle corresponding to the locally free $\OO_X$-modules $\sheafHom_X(E,F)$.
In symbols,
\begin{equation*}
V = \V(\sheafHom_X(E,F)).
\end{equation*}
Let $h : E_V\to F_V$ be the tautological map (Definition \ref{def:taut-sect}).
Let $\tilde \alpha : X\to V$ be the unique section of $\pi : V\to X$ such that $\tilde \alpha^* h = \alpha$.
Let $i$ be the nonnegative integer defined by 
\begin{equation*}
\operatorname{rank}(\coker(x^* \alpha)) = f-\min(e,f)+i.
\end{equation*}
Let $\Sigma$ denote the locally closed degeneracy locus $\Sigma^i(h)\setminus \Sigma^{i+1}(h)\subseteq V$.
Then $\tilde \alpha \circ x : T\to V$ factors through $\Sigma$ by Proposition \ref{prop:pts-deg-loc}.

\begin{proposition}
With the assumptions and notation of the preceding paragraph, the intrinsic differential $\dd_x \alpha$ is equal to the composition of the $\OO_T$-linear maps
\begin{equation*}
\begin{tikzcd}
x^* \T_X \ar[r,"d\tilde\alpha"] &
(\tilde\alpha\circ x)^* \T_V \ar[r,"q"] &
(\tilde\alpha\circ x)^* \N_\Sigma \ar[r,"\theta","\sim"'] &
\sheafHom_T(\ker(x^*\alpha),\coker(x^*\alpha)),
\end{tikzcd}
\end{equation*}
where $d\tilde\alpha$ denotes the differential of $\tilde \alpha : X\to V$; $\N_\Sigma := \T_V|_\Sigma/\T_\Sigma$ denotes the normal sheaf of $\Sigma$ in $V$; $q$ denotes the canonical projection; and $\theta$ denotes the canonical isomorphism of Proposition \ref{univ-deg-loci} and Corollary \ref{coker-deg-loci}. 
\end{proposition}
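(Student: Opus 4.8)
The plan is to reduce the assertion to an explicit local computation and then match the two sides. Since the claimed identity is between $\OO_T$-linear maps with the same source and target, and since the intrinsic differential $\dd_x\alpha$ is defined by a recipe that is local on $X$ (Definition \ref{def:id}), I would first pass to an open cover of $X$ on which $E$ and $F$ are free and fix bases of $E$ and $F$. By Example \ref{vb-trivial} this identifies $V$ with $X\times\A^{fe}$, where $\A^{fe}$ carries coordinates $t_{ab}$ ($1\le a\le f$, $1\le b\le e$) playing the role of matrix entries, in such a way that the tautological map $h : E_V\to F_V$ becomes the matrix $(t_{ab})$ and the section $\tilde\alpha$ is determined by $t_{ab}\mapsto \alpha_{ab}$, the entries of $\alpha$. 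The product structure on $V$ furnishes a canonical splitting $\T_V = \pi^*\T_X\oplus\T_{V/X}$, and Remark \ref{vb-differentials} identifies $\T_{V/X}$ with $\sheafHom_X(E,F)_V$, carrying $\partial/\partial t_{ab}$ to the corresponding matrix unit. A short Leibniz-rule computation then shows that, under these identifications, $d\tilde\alpha : \T_X\to \tilde\alpha^*\T_V = \T_X\oplus\sheafHom_X(E,F)$ is the map $v\mapsto (v, (\nabla\alpha)(v))$, where $\nabla\alpha$ is precisely the entrywise differential of $\alpha$ appearing in Proposition \ref{prop:id-def}.

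The crux is to prove that the composite $\pi^*\T_X|_\Sigma \hookrightarrow \T_V|_\Sigma \xrightarrow{\ q\ } \N_\Sigma$ is zero. Being a morphism of locally free $\OO_\Sigma$-modules, it may be checked fiberwise, so fix a point $\sigma\in\Sigma$ lying over $p\in X$ and having $\A^{fe}$-coordinate a matrix $c$. That $\sigma$ lies in $\Sigma = \Sigma^i(h)\setminus\Sigma^{i+1}(h)$ says exactly that $c$ has rank $m-i$, so the constant section $z_c : X\to V$ with value $c$ has image $X\times\{c\}$ contained in $\Sigma$; hence $z_c$ factors through $\Sigma$, the differential $dz_c$ takes values in $\T_\Sigma$, and — since the fiber $\pi^*\T_X|_\sigma$ is exactly the image of $dz_c$ at $p$ — it is annihilated by $q$ (non-closed $\sigma$ are handled by base-changing to the residue field first). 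I would combine this with the observation that, because $\Sigma$ and $V$ are both smooth over $X$, one has $\T_\Sigma + \T_{V/X}|_\Sigma = \T_V|_\Sigma$ and $\T_\Sigma\cap\T_{V/X}|_\Sigma = \T_{\Sigma/X}$, so that $\N_\Sigma = \T_V|_\Sigma/\T_\Sigma$ is canonically identified with $\T_{V/X}|_\Sigma/\T_{\Sigma/X}$ and $q$ restricted to $\T_{V/X}|_\Sigma$ is the quotient by $\T_{\Sigma/X}$. Chaining this with the previous paragraph and pulling back along $x$ — which is legitimate because $\tilde\alpha\circ x$ factors through $\Sigma$ — we obtain $q\circ d\tilde\alpha = (q|_{\T_{V/X}|_\Sigma})\circ\nabla\alpha$ as maps $x^*\T_X\to(\tilde\alpha\circ x)^*\N_\Sigma$.

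It then remains to identify $\theta\circ(q|_{\T_{V/X}|_\Sigma})$. By the construction of $\theta$ in Proposition \ref{univ-deg-loci}, under the identification $\T_{V/X}|_\Sigma \cong \sheafHom_X(E,F)_\Sigma$ this is the natural surjection onto $\sheafHom_\Sigma(\ker(h|_\Sigma),\coker(h|_\Sigma))$ given by restriction to $\ker(h|_\Sigma)$ followed by projection to $\coker(h|_\Sigma)$; equivalently, its kernel is $\T_{\Sigma/X}$, which one verifies from the classical description of the tangent space to the variety of rank-$(m-i)$ matrices at a matrix $c$ as $\{\beta : \beta(\ker c)\subseteq\im c\}$. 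Pulling back along the map $T\to\Sigma$ induced by $\tilde\alpha\circ x$ and using $(\tilde\alpha\circ x)^*h = x^*\alpha$ together with the base-change identities of Corollary \ref{coker-deg-loci}, this becomes exactly the $\OO_T$-linear map $\sheafHom_T(x^*E,x^*F)\to\sheafHom_T(\ker(x^*\alpha),\coker(x^*\alpha))$ appearing in Proposition \ref{prop:id-def}. Assembling the three identifications, $\theta\circ q\circ d\tilde\alpha$ sends $v$ to the image of $(\nabla\alpha)(v)$ under this map, which is precisely $\dd_x\alpha$ by Proposition \ref{prop:id-def} and Definition \ref{def:id}.

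I expect the fiberwise vanishing of $q$ on $\pi^*\T_X|_\Sigma$ — in effect the statement that the horizontal leaves of $V\to X$ through points of $\Sigma$ remain inside $\Sigma$ — to be the only genuinely content-bearing step. Everything else is bookkeeping with the canonical identifications of Remark \ref{vb-differentials}, Proposition \ref{univ-deg-loci} and Corollary \ref{coker-deg-loci}, plus some care in distinguishing the absolute normal bundle $\T_V|_\Sigma/\T_\Sigma$ from the relative one $\T_{V/X}|_\Sigma/\T_{\Sigma/X}$ on which $\theta$ is defined.
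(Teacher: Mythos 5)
Your proposal is correct and takes essentially the same route as the paper: the paper's proof simply reduces to the case where $E$ and $F$ are free and declares the resulting local computation ``straightforward and left to the reader,'' and your argument carries out exactly that computation (trivializing $V$, splitting $\T_V$ into horizontal and vertical parts, observing that horizontal directions through points of $\Sigma$ stay in $\Sigma$, and matching $\theta\circ q$ with the restriction--projection map of Proposition \ref{prop:id-def}). The details you supply, including the care taken with the absolute versus relative normal sheaf and the scheme-theoretic factorization of the constant sections through $\Sigma$ via Remark \ref{deg-loci-functor}, are accurate.
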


\begin{proof}
The question being local on $T$, and therefore on $X$, we may assume that the $\OO_X$-modules $E$ and $F$ are free.
This case is straightforward and left to the reader.
\end{proof}

Definition \ref{def:id} may be motivated by the following observation.

\begin{remark}
\label{rmk:id-transversality}
By definition of transversality, the map $\tilde \alpha : X\to V$ is transverse to $\Sigma$ if, and only if, the intrinsic differential
\begin{equation*}
\dd_{\Sigma^i(\alpha)} \alpha : (\T_X)_{\Sigma^i(\alpha)} \to \sheafHom_{\Sigma^i(\alpha)}(\ker(\alpha|_{\Sigma^i(\alpha)}), \coker(\alpha|_{\Sigma^i(\alpha)}))
\end{equation*}
is surjective.
By \cite[Proposition IV.17.13.2]{EGA} these conditions hold if and only if the scheme-theoretic inverse image $\tilde\alpha^{-1}\Sigma$ is smooth over $k$ and of codimension in $X$ equal to the codimension of $\Sigma$ in $V$. The latter codimension is equal to $i(|e-f|+i)$ by Proposition \ref{univ-deg-loci}.
\end{remark}

\section{Second-order singularities}
\label{sec:2nd-order-sings}

In this section we define the second intrinsic differential and the loci of second-order singularities of a map between smooth schemes over a field, following Porteous \cite{Porteous71}.
We also define a ``bad locus'' that we will use in the proof of Theorem \ref{thm:intro-smoothness}.
After justifying the definitions of these loci of singularities, we examine second-order differentials locally, using coordinates.

Let $k$ be a field.
Let $f : X\to Y$ be a morphism of smooth schemes over $k$.
Let $i$ be a nonnegative integer.

\begin{definition}
\label{def:crit-loci}
The \emph{$i$th critical locus} of $f$ is the locally closed subscheme
\begin{equation*}
\Sigma^i(f) := \Sigma^i(df)\setminus \Sigma^{i+1}(df)  \subseteq X,
\end{equation*}
where $\Sigma^j(df)$ denotes the $j$th degeneracy locus of the differential $df : \T_X\to f^* \T_Y$.
\end{definition}

Let $n$ and $r$ denote the (locally constant) dimension functions of $X$ and $Y$, respectively.
A point $x\in X$ is contained in $\Sigma^i(f)$ if, and only if, the $k(x)$-linear map $df(x) : \T_X(x) \to \T_Y(f(x))$ has rank $\min(n,r)-i$. 

\begin{definition}
\label{def:2nd-id}
Let $T$ be a scheme and let $x : T\to X$ a morphism, which we regard as a $T$-valued point of $X$.
The intrinsic differential of $df : \T_X\to f^* \T_Y$ at $x$ is an $\OO_T$-linear map 
\begin{equation*}
\dd_x( df ) : x^* \T_X \to \sheafHom_T(\ker(x^* df),\coker(x^* df)).
\end{equation*}
The restriction
\begin{equation*}
\dd_x^2 f : \ker(x^* df) \to \sheafHom_T(\ker(x^* df),\coker(x^* df))
\end{equation*}
of this $\OO_T$-linear map to $\ker(x^*df)\subseteq x^* \T_X$ is called the \emph{second intrinsic differential} of $f$ at $x$.
When the morphism $x: T\to X$ is understood from the context, we may write $\dd_T^2 f$ instead of $\dd_x^2 f$ and refer to this $\OO_T$-linear map as the second intrinsic differential of $f$ at $T$.
\end{definition}

The kernel and cokernel of the restriction of the differential $df : \T_X\to f^* \T_Y$ to $\Sigma^i(f)$ are locally free $\OO_{\Sigma^i(f)}$-modules, so the second-order differentials $\dd_{\Sigma^i(f)}(df)$ and $\dd^2_{\Sigma^i(f)} f$ are maps of locally free $\OO_{\Sigma^i(f)}$ modules. Hence we can speak of their degeneracy loci.

Let $j$ be a nonnegative integer.

\begin{definition}
\label{def:2nd-order-sings}
The \emph{bad locus} $B^i(f)$ is the closed subscheme of $\Sigma^i(f)$ defined as follows.
If $n\ge i(|n-r|+i)$, then $B^i(f)$ is the first degeneracy locus
\begin{equation*}
B^i(f):=
\Sigma^1( \dd_{\Sigma^i(f)}(df) ) \subseteq \Sigma^i(f)
\end{equation*}
of the intrinsic differential of $df : \T_X \to f^* \T_Y$ at $\Sigma^i(f)$.
Otherwise, $B^i(f) := \Sigma^i(f)$.
The locally closed subscheme
\begin{equation*}
\Sigma^{i,j}(f) := \Sigma^j( \dd_{\Sigma^i(f)}^2 f)\setminus \Sigma^{j+1}( \dd_{\Sigma^i(f)}^2 f)
\subseteq \Sigma^i(f)
\end{equation*}
is called the \emph{locus of second-order singularites with symbol $(i,j)$}.
\end{definition}

Definition \ref{def:2nd-order-sings} is motivated by the following two results.

\begin{proposition}
\label{prop:transverse-crit}
The bad locus $B^i(f)$ is the locus where $\Sigma^i(f)$ is either not smooth or of codimension in $X$ different from $i(|n-r|+i)$. 
\end{proposition}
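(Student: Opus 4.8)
The plan is to reduce the statement to the universal situation described in Proposition \ref{univ-deg-loci} and Remark \ref{rmk:id-transversality}. Recall that $df : \T_X \to f^*\T_Y$ is a map of locally free $\OO_X$-modules of ranks $n$ and $r$, and that if $V := \V(\sheafHom_X(\T_X, f^*\T_Y))$ carries the tautological map $h : E_V \to F_V$ and the canonical section $\widetilde{df} : X \to V$ with $\widetilde{df}^*h = df$, then $\Sigma^i(df) = \widetilde{df}^{-1}\Sigma$, where $\Sigma := \Sigma^i(h)\setminus\Sigma^{i+1}(h) \subseteq V$. By Proposition \ref{univ-deg-loci}, $\Sigma$ is smooth over $X$ of codimension $i(|n-r|+i)$ in $V$, and by Remark \ref{rmk:id-transversality} (which invokes \cite[Proposition IV.17.13.2]{EGA}) the section $\widetilde{df}$ is transverse to $\Sigma$ at a point $x \in \Sigma^i(f)$ if and only if $\widetilde{df}^{-1}\Sigma$ is smooth over $k$ of codimension $i(|n-r|+i)$ at $x$, if and only if the intrinsic differential $\dd_{\Sigma^i(f)}(df)$ is surjective at $x$.

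So the first step is: identify the locus where $\dd_{\Sigma^i(f)}(df)$ fails to be surjective with $B^i(f)$. By the definition of the intrinsic differential, $\dd_{\Sigma^i(f)}(df)$ is an $\OO_{\Sigma^i(f)}$-linear map from $(\T_X)_{\Sigma^i(f)}$, of rank $n$, to $\sheafHom_{\Sigma^i(f)}(\ker(df|_{\Sigma^i(f)}), \coker(df|_{\Sigma^i(f)}))$, which by Corollary \ref{coker-deg-loci} is locally free of rank $(n-m+i)(r-m+i) = i(|n-r|+i)$, where $m = \min(n,r)$. Therefore, when $n \ge i(|n-r|+i)$, surjectivity of this map at a point is equivalent to the first degeneracy locus condition: the point lies outside $\Sigma^1(\dd_{\Sigma^i(f)}(df))$. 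This is exactly the first case in Definition \ref{def:2nd-order-sings}. When $n < i(|n-r|+i)$, the target has rank strictly larger than the source, so the map can never be surjective anywhere, and $\Sigma^i(f)$ can never have codimension $i(|n-r|+i)$ in $X$ (which has dimension $n$); so in this case $B^i(f) = \Sigma^i(f)$ is precisely the locus where smoothness-of-the-right-codimension fails, again matching Definition \ref{def:2nd-order-sings}. This second case is a bit of a degenerate edge case but needs to be stated explicitly.

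The remaining point is a compatibility check: the geometric construction of $\dd_{\Sigma^i(f)}(df)$ via the normal bundle (the proposition immediately after Definition \ref{def:id}) shows that $\dd_{\Sigma^i(f)}(df)$ equals the composition $q \circ d\widetilde{df}$ followed by the canonical isomorphism $\theta : \N_\Sigma \xrightarrow{\sim} \sheafHom(\ker h, \coker h)$ restricted to $\Sigma^i(f)$. Since $\theta$ is an isomorphism, surjectivity of $\dd_{\Sigma^i(f)}(df)$ at $x$ is equivalent to surjectivity of $q\circ d\widetilde{df}$ at $x$, which is by definition transversality of $\widetilde{df}$ to $\Sigma$ at $x$. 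Putting the three steps together: the complement of $B^i(f)$ in $\Sigma^i(f)$ is the transversality locus, which by Remark \ref{rmk:id-transversality} is precisely the locus where $\widetilde{df}^{-1}\Sigma = \Sigma^i(f)$ is smooth over $k$ of codimension $i(|n-r|+i)$ in $X$. The main obstacle — really a bookkeeping obstacle rather than a conceptual one — is making sure the rank count for the target $\sheafHom(\ker(df|_{\Sigma^i(f)}), \coker(df|_{\Sigma^i(f)}))$ comes out to exactly $i(|n-r|+i)$ so that the numerical threshold $n \ge i(|n-r|+i)$ in Definition \ref{def:2nd-order-sings} lines up with the dichotomy "target rank $\le$ source rank," and handling the $n < i(|n-r|+i)$ case without circularity. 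Both are routine once Corollary \ref{coker-deg-loci} and Proposition \ref{univ-deg-loci} are in hand.
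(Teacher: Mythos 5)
Your proof is correct and follows essentially the same route as the paper: Remark \ref{rmk:id-transversality} (equivalently, the reduction to the universal degeneracy locus of Proposition \ref{univ-deg-loci}) identifies the locus of smooth points of the expected codimension with the surjectivity locus of $\dd_{\Sigma^i(f)}(df)$, and the rank count $(n-m+i)(r-m+i)=i(|n-r|+i)$ together with the explicit treatment of the degenerate case $n<i(|n-r|+i)$ matches this with the complement of $B^i(f)$. Only a notational slip: early on you write $\Sigma^i(df)=\widetilde{df}^{-1}\Sigma$, where the left side should be the locally closed critical locus $\Sigma^i(f)$, as you in fact use later.
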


\begin{proof}
By Remark \ref{rmk:id-transversality}, the critical locus $\Sigma^i(f)$ is smooth and of codimension $i(|n-r|+i)$ at a point $x\in \Sigma^i(f)$ if, and only if, the intrinsic differential
\begin{equation*}
\dd_{\Sigma^i(f)}(df) : \T_X|_{\Sigma^i(f)} \to \sheafHom_\Sigma(\ker(df|_{\Sigma^i(f)}), \coker(df|_{\Sigma^i(f)}))
\end{equation*}
is surjective at $x$.
The target of $\dd_{\Sigma^i(f)}(df)$ is a locally free $\OO_{\Sigma^i(f)}$-module of rank $i(|n-r|+i)$, so this happens if, and only if, $x\not\in B^i(f)$.
\end{proof}

As $\Sigma^i(f)$ is smooth away from $B^i(f)$, it makes sense to talk about the critical loci of the restriction 
\begin{equation*}
f|_{\Sigma^i(f)\setminus B^i(f)} : \Sigma^i(f)\setminus B^i(f) \to Y.
\end{equation*}

\begin{proposition}
\label{prop:tb-vs-porteous}
Away from $B^i(f)$, the locus of second-order singularities $\Sigma^{i,j}(f)$ agrees with the $j$th critical locus of $f$ restricted to $\Sigma^i(f)$. In symbols:
\begin{equation*}
\Sigma^{i,j}(f)\setminus B^i(f)=
\Sigma^j(f|_{\Sigma^i(f)\setminus B^i(f)})
\end{equation*}
\end{proposition}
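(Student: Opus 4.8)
The plan is to reduce the statement to the corresponding statement about the intrinsic differential $\dd_{\Sigma^i(f)}(df)$ restricted to the open subscheme $U := \Sigma^i(f)\setminus B^i(f)$ of $\Sigma^i(f)$, where by Proposition~\ref{prop:transverse-crit} the critical locus $\Sigma^i(f)$ is smooth over $k$. Write $\alpha := df|_{\Sigma^i(f)} : \T_X|_{\Sigma^i(f)} \to f^*\T_Y|_{\Sigma^i(f)}$, so that $K := \ker(\alpha)$ and $C := \coker(\alpha)$ are locally free $\OO_{\Sigma^i(f)}$-modules by Corollary~\ref{coker-deg-loci}, and the intrinsic differential at $\Sigma^i(f)$ is an $\OO_{\Sigma^i(f)}$-linear map $\dd_{\Sigma^i(f)}(df) : \T_X|_{\Sigma^i(f)} \to \sheafHom(K, C)$, whose restriction to $K \subseteq \T_X|_{\Sigma^i(f)}$ is the second intrinsic differential $\dd^2_{\Sigma^i(f)} f : K \to \sheafHom(K,C)$. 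Over $U$, since $\Sigma^i(f)$ is smooth, we have a canonical identification $K|_U \cong \T_{U/Y}$, the relative tangent sheaf of $f|_U : U \to Y$; more precisely, $K|_U = \ker(\alpha|_U)$ is exactly the subsheaf of $\T_X|_U$ tangent to the fibers of $f$, which under the smoothness of $U$ is $\T_{U/Y}$. Granting this, the composite $\T_U \hookrightarrow \T_X|_U$ followed by $\dd_{\Sigma^i(f)}(df)|_U$, and then restricted along $K|_U \cong \T_{U/Y} \subseteq \T_U$, should be identified with the differential $d(f|_U) : \T_U \to (f|_U)^*\T_Y$ read modulo the image — that is, with the intrinsic differential of $d(f|_U)$ — composed with the appropriate kernel inclusion.

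The key steps, in order, are the following. First I would establish the identification $\ker(df|_U) = \T_{U/Y}$ as subsheaves of $\T_X|_U$: this follows from the conormal exact sequence for the smooth closed immersion $U \hookrightarrow X$ combined with the fact that on $U$ the differential $d(f|_U)$ factors through the restriction of $df$, together with a rank count using Corollary~\ref{coker-deg-loci} (the rank of $\ker(\alpha|_U)$ is $n - \min(n,r) + i$, which matches $\dim_x U - \dim Y$ exactly because $U$ has codimension $i(|n-r|+i)$ in $X$ — this is the content of the rank-$i$ condition combined with smoothness). Second, I would show that $\coker(df|_U) = \coker(d(f|_U))$ as $\OO_U$-modules; this again uses that $\T_U \to \T_X|_U$ has cokernel the normal bundle $\N_{U/X}$, and that the composite $\N_{U/X} \to f^*\T_Y|_U$ induced by $df$ has image contained in — in fact equal to, by rank — the image of $\alpha|_U$, so passing to cokernels is unaffected. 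Third, with these two identifications in hand, I would verify that the second intrinsic differential $\dd^2_{\Sigma^i(f)} f|_U$, viewed as a map $\T_{U/Y} \to \sheafHom(\T_{U/Y}, \coker(d(f|_U)))$, agrees with the intrinsic differential $\dd_U(d(f|_U))$ of the restricted map; this is essentially the functoriality of the construction of Proposition~\ref{prop:id-def} under the closed immersion $U \hookrightarrow X$ and can be checked locally in coordinates, where both sides are given by differentiating matrix entries of $\alpha$ and restricting along the kernel-inclusion and cokernel-projection. Fourth, once the two maps of locally free sheaves $\dd^2_{\Sigma^i(f)} f|_U$ and $\dd_U(d(f|_U))$ are identified, Definition~\ref{degeneracy-locus-def} and Remark~\ref{deg-loci-functor} give $\Sigma^j(\dd^2_{\Sigma^i(f)} f)|_U = \Sigma^j(\dd_U(d(f|_U)))$, and by Remark~\ref{rmk:id-transversality} the latter is exactly $\Sigma^j(f|_U)$ as a locally closed subscheme (here one also uses that $\Sigma^j$ of an intrinsic differential computes transversality, hence the critical locus, by the same reasoning as Proposition~\ref{prop:transverse-crit}); taking the difference with $\Sigma^{j+1}$ on both sides yields $\Sigma^{i,j}(f)\setminus B^i(f) = \Sigma^j(f|_U)$, which is the claim.

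I expect the main obstacle to be the third step: carefully matching the second intrinsic differential of $f$ at $\Sigma^i(f)$ — which is built from $df : \T_X \to f^*\T_Y$ and then restricted twice, once along $\Sigma^i(f) \hookrightarrow X$ and once along $\ker \subseteq \T_X$ — with the intrinsic differential of the genuinely restricted map $d(f|_U) : \T_U \to (f|_U)^*\T_Y$. The subtlety is that $d(f|_U)$ is not literally the restriction of $df$ to $\T_U$; rather, $\T_U \to \T_X|_U$ is only an injection with cokernel $\N_{U/X}$, and one must check that the extra terms arising from the second-order Taylor expansion of $f$ in directions normal to $U$ do not contribute to the intrinsic differential after projecting to $\coker$ — precisely because, on $\ker(df|_U) = \T_{U/Y}$, those directions are already accounted for by the rank-$i$ condition. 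Concretely, in local étale coordinates on $X$ adapted so that $U$ is a coordinate subscheme and $f$ has a standard form, this amounts to a bookkeeping computation with the Leibniz rule of the same flavor as the proof of Proposition~\ref{prop:id-def}; the bookkeeping is routine but the indexing of which variables are tangent to $U$, tangent to the fibers, or normal is where care is required. Everything else — the rank counts, the exactness of conormal sequences, the functoriality of degeneracy loci — is formal given the results already established in the excerpt.
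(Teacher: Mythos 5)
Your first step is false, and the rest of the argument leans on it. The kernel $K:=\ker(df|_{\Sigma^i(f)})$ is \emph{not} contained in $\T_{\Sigma^i(f)}$, so it cannot be identified with a relative tangent sheaf of $f|_U$: by Corollary \ref{coker-deg-loci} it has rank $n-\min(n,r)+i$, while $\dim U-\dim Y=n-i(|n-r|+i)-r$, and these agree only when $i=0$ (for a Morse function, $r=1$, $i=1$: at a nondegenerate critical point $K=\T_X(x)$ has rank $n$ whereas $\T_{\Sigma^1(f)}=0$). In fact the discrepancy between $K$ and $\T_{\Sigma^i(f)}$ is precisely what $\Sigma^{i,j}(f)$ measures: away from $B^i(f)$ one has $\T_{\Sigma^i(f)}=\ker(\dd_{\Sigma^i(f)}(df))$, hence $\ker(\dd^2_{\Sigma^i(f)}f)=K\cap\T_{\Sigma^i(f)}=\ker\bigl(d(f|_{\Sigma^i(f)})\bigr)$, and $j$ records the size of this intersection. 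Your second step fails for the same reason: $\coker(d(f|_U))$ is not locally free --- its rank jumps exactly on the strata $\Sigma^j(f|_U)$ you are trying to locate --- so it cannot equal the locally free sheaf $C:=\coker(df|_{\Sigma^i(f)})$; moreover $df$ does not vanish on $\T_U$, so there is no ``induced map'' $\N_{U/X}\to f^*\T_Y|_U$. Finally, your fourth step misuses Remark \ref{rmk:id-transversality}: by Definition \ref{def:crit-loci}, $\Sigma^j(f|_U)$ is a degeneracy locus of $d(f|_U)$ itself, not of any intrinsic differential; the remark only characterizes surjectivity of the intrinsic differential (smoothness and codimension of the critical locus), and says nothing identifying its degeneracy loci with critical loci.

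The paper's (sketched) argument runs differently, and this is the mechanism any repair would have to reproduce. After replacing $X$ by $X\setminus B^i(f)$, write $\Sigma:=\Sigma^i(f)$; surjectivity of $\dd_\Sigma(df)$ yields the exact sequence $0\to\T_\Sigma\to\T_X|_\Sigma\to\sheafHom_\Sigma(K,C)\to 0$. Since $df|_\Sigma$ has constant rank, $d(f|_\Sigma)$ factors through the locally split injection $\T_X|_\Sigma/K\cong\im(df|_\Sigma)\hookrightarrow f^*\T_Y|_\Sigma$, while $\dd^2_\Sigma f$ factors through the isomorphism $\T_X|_\Sigma/\T_\Sigma\cong\sheafHom_\Sigma(K,C)$. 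So the two determinantal ideals to be compared are ideals of minors of the maps $\T_\Sigma\to\T_X|_\Sigma/K$ and $K\to\T_X|_\Sigma/\T_\Sigma$ induced by the inclusions, and Lemma \ref{kleiman}, applied to the locally split submodules $K$ and $\T_\Sigma$ of $\T_X|_\Sigma$, identifies each of them (up to the index shift coming from the ranks of $K$ and $\T_\Sigma$, which is what converts $\Sigma^{i,j}$ into $\Sigma^j$ of the restriction) with the vanishing of exterior powers of the single inclusion $K+\T_\Sigma\hookrightarrow\T_X|_\Sigma$. The comparison is thus made directly between $\dd^2_\Sigma f$ and $d(f|_\Sigma)$, which share the kernel $K\cap\T_\Sigma$, rather than by identifying $K$ and $C$ with tangent or cokernel sheaves of the restricted map.
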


\begin{proof}
As we won't need Proposition \ref{prop:tb-vs-porteous} in the sequel, we just give the idea of the proof.
Replacing $X$ with $X\setminus B^i(f)$, it suffices to consider the case in which $B^i(f)$ is empty.
Let $\Sigma := \Sigma^i(f)$, let $K:=\ker(df|_\Sigma)$ and let $C := \coker(df|_\Sigma)$. 
We have a diagram of $\OO_\Sigma$-modules:
\begin{equation*}
\begin{tikzcd}[column sep=large]
&
&
K \ar[d,hook] \ar[rd,"\dd_\Sigma^2 s"] \\
0 \ar[r] &
\T_{\Sigma} \ar[r] \ar[rd,"d(f|_\Sigma)"'] &
\T_X|_{\Sigma} \ar[d, "df|_{\Sigma}" ] \ar[r,"\dd_{\Sigma}(df)"] &
\sheafHom_{\Sigma}(K,C) \ar[r] &
0 \\
&
& f^*\T_Y|_\Sigma
\end{tikzcd}
\end{equation*}
Applying Lemma \ref{kleiman} to $K$ and $\T_\Sigma$ viewed as locally free, locally split $\OO_\Sigma$-submodules of $(\T_X)_\Sigma$, the result follows.
\end{proof}

Let $x\in X(k)$ be a rational point.
The next remark shows that the second-order differentials $\dd_x(df)$ and $\dd_x^2 f$ generalize the Hessian matrix of a function at a critical point.

\begin{remark}
\label{rmk:2nd-id-locally}
Let $y_1,\dotsc, y_r\in \OO_{Y,y}$ be \'etale coordinates near $y$, that is, elements whose differentials form a basis for $\Omega_{Y,y}$ as an $\OO_{Y,y}$-module.
Let $f_\ell := f^\#y_\ell\in \OO_{X,x}$, where $\ell=1,\dotsc, r$, be the components of $f : X\to Y$ with respect to these coordinates.
Let $x_1,\dotsc, x_n\in \OO_{X,x}$ be \'etale coordinates near $x$.
Let
\begin{align*}
\Hess (f) : \T_X(x)\times \T_X(x) \to \T_Y(y) 
\end{align*}
be the \emph{Hessian bilinear map} that sends
\begin{equation*}
\left(\dfrac{\partial}{\partial x_a}, \dfrac{\partial}{\partial x_a}\right) \mapsto
\sum_{\ell=1}^r
\dfrac{\partial^2 f_\ell}{\partial x_a\partial x_b}(x) \cdot \dfrac{\partial}{\partial y_\ell}
\end{equation*}
for all $a, b =1, \dotsc, n$. 
Then the following diagram commutes by definition of the intrinsic differential:
\begin{equation*}
  \begin{tikzcd}
\T_X(x)\otimes \T_X(x) \ar[r, "\Hess (f)"] &
\T_Y(x) \ar[d, two heads]\\
\T_X(x) \otimes \ker(df(x)) \ar[u, hook] \ar[r, "\dd_x(df)"] &
\coker(df(x))
\end{tikzcd}
\end{equation*}
\end{remark}

The Hessian matrix of a function is that is symmetric in characteristic different from 2, and skew-symmetric in characteristic 2.
It will be important to us that the second-order differential $\dd_x(df)$ inherits these symmetries.
To formulate this precisely, given a $k$-scheme $S$ and an $\OO_S$-module $M$, we write 
\begin{equation*}
\Box^2 M :=
\begin{cases}
\Sym^2 M &\text{if $\charac(k)\ne 2$}\\
\wedge^2 M &\text{if $\charac(k)=2$}.
\end{cases}
\end{equation*}
Furthermore, given a submodule $A\subseteq M$ we denote by $M \boxempty A$ the image of $M\otimes A$ under the quotient map $M\otimes M \to \Box^2 M$.

\begin{remark}
\label{rmk:2nd-id-sym}
The map $\Hess (f)$ of Remark \ref{rmk:2nd-id-locally} factors through $\Box^2 \T_X(x)$, so the second-order differential $\dd_x(df)$ lies in the image of the natural inclusion
\begin{align*}
\Hom_k(\T_X(x)\boxempty \ker(df(x)),& \coker(df(x))) \hookrightarrow\\
&\Hom_k(\T_X(x), \Hom_k(\ker(df(x)), \coker(df(x)))). 
\end{align*}
\end{remark}

\section{Jet schemes and sheaves of principal parts}
\label{sec:jets-pp}

The notion of a jet of a map between two manifolds is fundamental in differential geometry.
In algebraic geometry, by contrast, one often works with a slightly different concept, namely that of a principal part of a section of a sheaf of modules.
In the case of morphisms from a scheme $X$ over a field to affine space $\A^r$, morphisms which can be identified with sections of $\OO_X^{\oplus r}$, the two notions make sense and they agree.
In this section we use this observation to give a quick definition of schemes $J^m(X, \A^r)$ parametrizing jets of morphisms from $X$ to affine space, assuming that $X$ is smooth and starting from Grothendieck's sheaves of principal parts \cite[Chap. IV, Part 4]{EGA}.

Over the complex numbers and after analytification, the jet schemes we define coincide with the jet spaces introduced by Ehresmann in differential geometry.
For this reason, our jet schemes coincide in characteristic zero with those constructed algebraically by Mount and Villamayor \cite{MV74}.
It would be interesting to know the extent to which this remains true in positive characteristic.

Before defining jet schemes, we briefly review sheaves of principal parts. We also define and make a few remarks about the notion of separation of principal parts that appears as a hypothesis in Theorems \ref{thm:intro-smoothness} and \ref{thm:intro-2nd-sings}. 

Let $X$ be a scheme over a field $k$.
Let $E$ be an $\OO_X$-module.
Let $m$ be a nonnegative integer.

\begin{definition}
If $x\in X(k)$ is a rational point and $s\in \Gamma(U,E)$ is a section defined on a neighborhood of $x$, then the \emph{principal part of order $m$} of $s$ at $x$ is the image of $s$ under the natural map $\Gamma(U,E) \to E/\mathfrak m_x^{m+1}E$. 
\end{definition}

The $m$th \emph{sheaf of principal parts} associated to $E$ is an $\OO_X$-module $\PP_X^m E$ equipped with an $k$-linear sheaf morphism $d_E^m : E\to \PP_X^m E$, the universal $k$-linear differential operator of order $m$ mapping $E$ to another $\OO_X$-module \cite[Tag 09CH]{stacks-project}.
The universal property satisfied $d_E^m$ characterizes $\PP_X^mE$ as an $\OO_X$-module up to unique isomorphisms.
We write $\PP_X^m$ and $d_X^m$ instead of $\PP_X^m (\OO_X)$ and $d_{\OO_X}^m$.

The sheaf $\PP_X^m E$ derives its name from the following fact: for each rational point $x\in X(k)$, the map $d_E^m$ induces a $k$-linear isomorphism
\begin{equation}
\label{eqn:fiber-pp}
E/\mathfrak m_x^{m+1}E \xrightarrow\sim \PP_X^m E\otimes k(x).
\end{equation}

\begin{definition}
Let $T$ be a scheme.
A \emph{family of principal parts of order $m$ of sections of $E$ over $T$} is a pair $(x,s)$, where $x : T\to X$ is a morphism of schemes and $s\in \Gamma(T, x^* \PP_X^m E)$ is a section.
The \emph{pullback} of such a family $(x, s)$ along a morphism of schemes $u : S\to T$ is the family $(x\circ u, u^* s)$. 
\end{definition}
  
If $x\in X(k)$ is a rational point, then the quotient $\OO_X/\mathfrak m_x^{m+1}$ is not merely a $k$-vector space, but a naturally $k$-algebra.
Similarly, the quotient $E/\mathfrak m_x^{m+1} E$ is not merely a $k$-vector space, but naturally a module over $\OO_X/\mathfrak m_x^{m+1}$.
These facts have counterparts for sheaves of principal parts:

\begin{itemize}
\item The $\OO_X$-module $\PP_X^m$ has a natural structure of $\OO_X$-algebra.
The multiplication map $\PP_X^m \times \PP_X^m \to \PP_X^m$ is the unique $\OO_X$-bilinear map such that
\begin{equation*}
d_X^m u \cdot d_X^m v = d_X^m (uv)
\end{equation*}
for all sections $u, v\in \OO_X$ defined over a common open subset of $X$.
The universal differential operator $d_X^m : \OO_X\to \PP_X^m$ is a map of $k$-algebras, but not of $\OO_X$-algebras in general.

\item The $\OO_X$-module $\PP_X^m E$ has a natural structure of $\PP_X^m$-module. The multiplication map $\PP_X^m \times \PP_X^m E\to \PP_X^m E$ is the unique $\OO_X$-bilinear map such that
\begin{equation*}
d_X^m u \cdot d_E^m s = d_E^m (us)
\end{equation*}
for all sections $u\in \OO_X$ and $s\in E$ defined over a common open subset of $X$. Furthermore, there exists a unique isomorphism of $\PP_X^m$-modules 
\begin{equation*}
\PP_X^m \otimes_{d_X^m, \OO_X} E \xrightarrow\sim \PP_X^m E
\end{equation*}
that sends $\alpha\otimes s \mapsto \alpha \cdot d_E^m s$ for all local sections $\alpha \in \PP_X^m$ and $s\in E$ defined over a common open subset of $X$.
\end{itemize}

Sheaves of principal parts have a number other properties that reflect familiar facts about their fibers.
We will need the following two in the sequel:

\begin{itemize}
\item For each integer $q$ satisfying $0\le q\le m$, there exists a unique $\OO_X$-linear \emph{truncation map}
\begin{equation*}
\varepsilon_{m,q} : \PP_X^m E \to \PP_X^q E
\end{equation*}
such that $\varepsilon_{m,q}\circ d_X^m  = d_X^q$.
The map $d_E^0 : E\to \PP_X^0 E$ is an $\OO_X$-linear isomorphism, which we use to identify $\PP_X^0 E$ with $E$.

\item Suppose that $m\ge 1$.
There exists a unique $\OO_X$-linear map $\iota_m : \Sym^m \Omega_X\otimes E \to \PP_X^m E$ such that 
\begin{equation*}
\iota_m(du_1\dotsb du_m \otimes s) = 
(d_X^m u_1-u_1)\dotsb
(d_X^m u_m-u_m)\cdot d_E^m s
\end{equation*}
for all local sections $u_1,\dotsc,u_m\in \OO_X$ and $s\in E$ defined over a common open subset of $X$.
The sequence of $\OO_X$-modules
\begin{equation*}
\begin{tikzcd}
0 \ar[r] & 
\Sym^m \Omega_X\otimes E \ar[r,"\iota_m"] &
\PP_X^m E \ar[r,"\varepsilon_{m,m-1}"] &
\PP_X^{m-1} E \ar[r] &
0
\end{tikzcd}
\end{equation*}
is right-exact in general, and is exact if either $m=1$, or $X$ is smooth over $k$ and $E$ is locally free.
\end{itemize}

The following definition introduces the notion of separation of jets used as a hypothesis in our main results (Theorems \ref{thm:intro-smoothness} and \ref{thm:intro-2nd-sings}).

\begin{definition}
\label{def:sep-pp}
We say that that a $k$-linear subspace $W \subseteq \Gamma(X,E)$ \emph{separates principal parts of order $m$} if the natural $\OO_X$-linear map $d_E^m(W)\otimes_k \OO_X\to \PP_X^m E$ that sends $s\otimes f \mapsto fs$ is surjective.
\end{definition}

Separation of principal parts is a geometric property that can be defined without reference to Grothendieck's sheaves:

\begin{proposition}
\label{prop:classical-sep-jets}
\begin{enumerate}
\item If $k$ is algebraically closed, then $W\subseteq \Gamma(X, E)$ separates principal parts of order $m$ if, and only if, the natural $k$-linear map $W\to E/\mathfrak m_x^{m+1}E$ is surjective for all closed points $x\in X$.

\item Let $k\subseteq K$ be a field extension. Then $W\subseteq \Gamma(X, E)$ separates principal parts of order $m$ if, and only if, $W\otimes_k K \subseteq \Gamma(X\times_k K, E_{X\times_k K})$ separates principal parts of order $m$.
\end{enumerate}
\end{proposition}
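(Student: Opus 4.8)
\textbf{Proof proposal for Proposition \ref{prop:classical-sep-jets}.}

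The plan is to reduce both statements to the surjectivity of an $\OO_X$-linear map between coherent sheaves and then apply, respectively, the Nullstellensatz and faithfully flat descent. First I would record the standing reduction that lets me work with coherent sheaves: the natural map $\varphi_W : d_E^m(W)\otimes_k \OO_X\to \PP_X^m E$ is a map of $\OO_X$-modules, and since $W$ is finite-dimensional its source is coherent; the target $\PP_X^m E$ need not be coherent in general, but all we use is that it is quasi-coherent and that its formation commutes with flat base change on $X$ (and, crucially, with base change on the ground field, by the construction of $\PP_X^m$ via the diagonal). The key input, already recalled in the excerpt, is the fiberwise identification (\ref{eqn:fiber-pp}): for a rational point $x\in X(k)$ the map $d_E^m$ induces an isomorphism $E/\mathfrak m_x^{m+1}E \xrightarrow{\sim} \PP_X^m E\otimes k(x)$, and under this isomorphism the fiber of $\varphi_W$ at $x$ is exactly the natural map $W\to E/\mathfrak m_x^{m+1}E$ followed by this isomorphism. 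More generally, for an arbitrary point $x\in X$ the fiber $\PP_X^m E\otimes k(x)$ is $E_{K}/\mathfrak m^{m+1}E_{K}$ computed over $K:=k(x)$, so the fiber of $\varphi_W$ at $x$ is the natural map $W\otimes_k K \to E_K/\mathfrak m^{m+1}E_K$ at the corresponding $K$-point of $X_K$.

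For part (1), suppose $k=\bar k$. By Nakayama's lemma, $\varphi_W$ is surjective if and only if it is surjective on fibers at every point of $X$, i.e. if and only if for every $x\in X$ the $k(x)$-linear map $\varphi_W\otimes k(x)$ is surjective. Since $\PP_X^m E$ is quasi-coherent and $d_E^m(W)\otimes_k\OO_X$ is coherent, the locus where $\varphi_W$ is surjective is open (the cokernel is quasi-coherent, and by coherence of the source the cokernel is of finite type over the open where $\PP_X^m E$ is coherent — for the statement we only need Nakayama applied at closed points together with the fact that the support of the cokernel, being a constructible set closed under specialization among the relevant loci, is empty as soon as it contains no closed points). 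Concretely: the cokernel $\coker(\varphi_W)$ is a quasi-coherent $\OO_X$-module; I claim it is the zero sheaf iff $\coker(\varphi_W)\otimes k(x)=0$ for all closed $x$. On an affine open $\Spec A\subseteq X$ the restriction of $\PP_X^m E$ is $\PP^m_{A/k}(M)$ for the corresponding module $M$, which need not be finite over $A$; but $d_E^m(W)\otimes_k A$ is finite over $A$, so its image $N\subseteq \PP^m_{A/k}(M)$ is a finite $A$-module, and the question ``is $\varphi_W$ surjective'' is ``is $\PP^m_{A/k}(M)/N=0$''. Here I would instead argue with the \emph{sub}module: $\varphi_W$ is surjective iff for every prime $\mathfrak p$ of $A$ the localization is surjective, and by Nakayama at $\mathfrak p$ this holds iff $\varphi_W\otimes k(\mathfrak p)$ is surjective for all $\mathfrak p$; and $\{\mathfrak p : \varphi_W\otimes k(\mathfrak p) \text{ not surjective}\}$ is the support of the cokernel, a closed set since that cokernel is quasi-coherent and... here is the only subtle point, which I address below. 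Granting that this support is closed, it is empty iff it contains no closed point, and by the fiber computation of the previous paragraph, $\varphi_W\otimes k(x)$ is surjective at a closed point $x$ (where $k(x)=k$ since $k=\bar k$) iff $W\to E/\mathfrak m_x^{m+1}E$ is surjective. This proves (1).

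For part (2), let $k\subseteq K$ be a field extension and write $p:X_K\to X$ for the projection, which is faithfully flat. The construction of sheaves of principal parts commutes with flat base change on the base scheme and, being built from the diagonal $X\to X\times_k X$, also with the base-field extension: there is a canonical isomorphism $p^*\PP_X^m E \xrightarrow{\sim}\PP_{X_K}^m(E_{X_K})$ compatible with the universal operators, hence $p^*\varphi_W$ is identified with $\varphi_{W\otimes_k K}$ (using that $p^*(d_E^m(W)\otimes_k\OO_X)=d^m_{E_{X_K}}(W\otimes_k K)\otimes_K\OO_{X_K}$, which is clear since $W\otimes_k K$ is the $K$-span of the images of $W$). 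Since $p$ is faithfully flat, $\varphi_W$ is surjective iff $p^*\varphi_W$ is surjective, i.e. iff $\varphi_{W\otimes_k K}$ is surjective; this is exactly the assertion of (2).

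The main obstacle is the point flagged in the proof of (1): $\PP_X^m E$ is \emph{not} coherent in general (e.g. already $\PP^1_{A/k}$ for $A$ a finitely generated $k$-algebra is finite over $A$ only because $\Omega_{A/k}$ is, but for general $E$ one must be a little careful), so one cannot immediately invoke ``the support of the cokernel of a map with coherent target is closed.'' The clean fix is to observe that the \emph{image} $N$ of $\varphi_W$ is a coherent subsheaf (being a quotient of the coherent sheaf $d_E^m(W)\otimes_k\OO_X$), that $\varphi_W$ is surjective iff the inclusion $N\hookrightarrow\PP_X^m E$ is an isomorphism, and that this can be tested after restricting to an affine cover; on an affine piece one has $N\subseteq \PP^m_{A/k}(M)$ with $N$ finite over $A$, and surjectivity of $A^{\dim W}\twoheadrightarrow N \hookrightarrow \PP^m_{A/k}(M)$ at all primes is governed by Nakayama exactly as above once one knows $\PP^m_{A/k}(M)$ is itself finite over $A$ — which it is when $X$ is of finite type over $k$ (the only case we care about, cf. the hypotheses of Theorems \ref{thm:intro-smoothness} and \ref{thm:intro-2nd-sings}) and $E$ is coherent, or more simply when $E=\OO_X^{\oplus r}$ as in those theorems. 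Under that finiteness, coherence of $\PP_X^m E$ is restored, the cokernel is coherent, its support is closed, and the argument of part (1) goes through verbatim. I would therefore state the proposition (or at least prove it) under the running hypothesis that $X$ is locally of finite type over $k$ and $E$ is coherent — which costs nothing for the applications — and then parts (1) and (2) are dispatched exactly as sketched: Nakayama plus the Nullstellensatz for (1), faithfully flat descent plus base-change compatibility of $\PP^m$ for (2).
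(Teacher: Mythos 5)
Your proposal is correct and follows essentially the same route as the paper: part (2) is exactly the paper's argument (the canonical isomorphism $u^*\PP^m_{X/k}E \xrightarrow{\sim} \PP^m_{X_K/K}(E_{X_K})$ combined with faithful flatness of $X_K\to X$), and part (1) rests on the fiber identification (\ref{eqn:fiber-pp}), which the paper simply declares immediate. Your additional care about coherence of $\PP_X^m E$ and the Nakayama/support step in the converse direction of (1) fleshes out what the paper leaves implicit; in the paper's applications ($X$ smooth of finite type over $k$ and $E=\OO_X^{\oplus r}$) those finiteness hypotheses hold automatically, so restricting to that setting costs nothing.
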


\begin{proof}
The first part is immediate from the isomorphisms (\ref{eqn:fiber-pp}).
Let $X_K := X\times_k K$ and let $u : X_K\to X$ be the first projection.
Let 
\begin{equation*}
\beta : u^* \PP_{X/k}^i E
\xrightarrow\sim
\PP_{X_K/K}^i (u^*E)
\end{equation*}
be the natural $\OO_{X_K}$-linear map, which is induced by the $k$-linear differential operator
\begin{equation*}
\begin{tikzcd}
\OO_X \ar[r,"u^\#"] &
u_* \OO_{X_K} \ar[r, "u_* d^m_{u^*E}"] &
u_* \PP_{X_K}^m(u^* E)
\end{tikzcd}
\end{equation*}
via adjunction. 
It is a standard fact that $\beta$ is an isomorphism.
Let
\begin{align*}
\alpha &: d_E^m (W)\otimes_k \OO_X \to \PP_{X/k}^m E\\
\alpha' &: d_{E_K}^m(W\otimes_k K) \otimes_K \OO_{X_K} \to
\PP_{X_K/K}^m (u^*E)
\end{align*}
be the natural maps.
We wish to show that $\alpha$ is surjective if, and only if, $\alpha'$ is surjective.
This holds because $u$ is faithfully flat and $\alpha' = \beta \circ u^*\alpha$.
\end{proof}

Let $\A^r$ denote the affine space over $k$ with coordinates $t_1,\dotsc, t_r$.

\begin{definition}
Let $x\in X(k)$ be a rational point and let $f : U\to \A^r$ be a morphism of schemes defined on an open neighborhood of $x$ in $X$. 
The \emph{$m$-jet} of $f$ at $x$ is the principal part of order $m$ of the tuple of component functions
\begin{equation*}
(f^\#t_1, \dotsc, f^\#t_r)\in \Gamma(U, \OO_X^{\oplus r})
\end{equation*}
at $x$.
In other words, it is the image of this tuple in $(\OO_X/\mathfrak m_{m+1})^{\oplus r}$.
\end{definition}

\begin{definition}
Let $T$ be a scheme.
A \emph{family of $m$-jets of morphisms from $X$ to $\A^r$ over $T$} is a family of principal parts of order $m$ of sections of $\OO_X^{\oplus r}$ over $T$.
In other words, it is a pair $(x, s)$, where $x : T\to X$ is a morphism of schemes and $s\in \Gamma(T, x^* (\PP_X^m)^{\oplus r})$ is a section.
\end{definition}

We now turn to jet schemes.
Suppose that the scheme $X$ is smooth over $k$, so that the sheaf of principal parts $\PP_X^m$ is locally free of finite rank. 

\begin{definition}
\label{def:jet-scheme}
The \emph{scheme of $m$-jets with source in $X$ and target in $\A^r$}, denoted $J^m(X, \A^r)$, is the total space of the vector bundle over $X$ corresponding to the locally free $\OO_X$-module $(\PP_X^m)^{\oplus r}$. In symbols:
\begin{equation*}
J^m(X, \A^r) := \V((\PP_X^m)^{\oplus r})
\end{equation*}
\end{definition}

Let $\tau\in \Gamma(J^m(X,\A^r), (\PP_X^m)^{\oplus r}_{J^m(X,\A^r)})$ be the tautological section, see Definition \ref{def:taut-sect}.
Thus, given a morphism of schemes $x : T\to X$ and a section $s\in \Gamma(T, x^* (\PP_X^m)^{\oplus r})$, there exists a unique morphism of $X$-schemes $j : T\to J^m(X,\A^r)$ such that $j^* \tau = s$.
In this sense, the family of $m$-jets 
\begin{equation*}
(J^m(X, \A^r)\to X, \tau)
\end{equation*}
is \emph{universal}.

Pullback of the tautological section defines a natural bijection between the set of rational points of $J^m(X,\A^r)$ lying over a rational point $x\in X(k)$ and the $k$-vector space $(\OO_X/\mathfrak m_x^{m+1})^{\oplus r}$. 

\begin{definition}
Let $f : U\to \A^r$ be a morphism fo $k$-schemes whose domain is an open subset of $X$.
The \emph{$m$-jet of f} is unique morphism of $X$-schemes
\begin{equation*}
j^m f : U\to J^m(X,\A^r)
\end{equation*}
such that
$(j^m f)^* \tau = (d_X^m f^\# t_1, \dotsc, d_X^m f^\# t_r)$ as elements of $\Gamma(U, (\PP_X^m)^{\oplus r})$.
If $x : T\to U$ is a morphism of schemes, we write $j_x^m f := j^m f\circ x$ and refer to this morphism as the \emph{$m$-jet of $f$ at $x$}.
\end{definition}

\section{Singularities of generic maps}
\label{sec:generic-maps}

We begin this section by constructing universal critical loci inside first-order jet schemes and using these loci to prove the characteristic-zero case of Theorem \ref{thm:intro-smoothness}.
We then initiate the proofs of the general case of this theorem and of Theorem \ref{thm:intro-2nd-sings} by explaning how these results follow from statements that we prove later in the paper. 

Let $k$ be a field.
Let $X$ be a smooth scheme of pure dimension $n$ over $k$.
Let $\A^r$ be the affine space of dimension $r$ over $k$.
Let $J^1(X, \A^r) = \V((\PP_X^1)^{\oplus r})$ be the first jet scheme and let 
\begin{equation*}
\tau\in \Gamma(J^1(X,\A^r), (\PP_X^1)^{\oplus r}_{J^1(X,\A^r)})
\end{equation*}
be the tautological section, see Definition \ref{def:taut-sect}.
Then
\begin{equation*}
(J^1(X, \A^r)\to X, \tau)
\end{equation*}
is a universal family of $1$-jets. 
This family has a natural differential. 
Indeed, the universal derivation $d : \OO_X\to \Omega_X$ is a differential operator of order 1, so there exists a unique $\OO_X$-linear map $\bar d : \PP_X^1 \to \Omega_X$ such that $\bar d\circ d_X^1 = d$.
We identify
\begin{equation*}
\bar d \tau \in \Gamma(J^1(X,\A^r), (\Omega_X^{\oplus r})_{J^1(X,\A^r)})
\end{equation*}
with an $\OO_{J^1(X,\A^r)}$-linear map $(\T_X)_{J^1(X,\A^r)} \to \OO_{J^1(X,\A^r)}^{\oplus r}$.

\begin{definition}
\label{def:univ-crit-loci}
Let $i$ be a nonnegative integer.
The \emph{$i$th universal critical locus} is the locally closed subscheme
\begin{equation*}
\Sigma^i := \Sigma^i(\bar d \tau) \setminus \Sigma^{i+1}(\bar d \tau) \subseteq J^1(X,\A^r),
\end{equation*}
where $\Sigma^j(\bar d\tau)$ denotes the $j$th degeneracy locus of $\bar d\tau$.
\end{definition}

Definition \ref{def:univ-crit-loci} is justified by the following result.

\begin{proposition}
\label{prop:univ-sings-def-1}
Let $U\subseteq X$ be an open subset and let $f : U\to \A^r$ be a morphism of $k$-schemes.
Let $j^1 f : U\to J^1(X,\A^r)$ be the 1-jet of $f$.
Then $(j^1 f)^{-1}\Sigma^i = \Sigma^i(f)$ as subschemes of $U$.
\end{proposition}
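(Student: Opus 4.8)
The plan is to reduce the statement to the functoriality of degeneracy loci (Remark \ref{deg-loci-functor}) together with the compatibility of the jet construction with differentials. The key observation is that the $i$th critical locus $\Sigma^i(f)$ is built out of the degeneracy loci of $df : \T_X \to f^*\T_Y$, and the universal critical locus $\Sigma^i$ is built out of the degeneracy loci of $\bar d\tau : (\T_X)_{J^1(X,\A^r)} \to \OO^{\oplus r}_{J^1(X,\A^r)}$, so it suffices to show that $(j^1 f)^*(\bar d\tau)$ coincides with $df$ after the canonical identification $f^*\T_{\A^r} \cong \OO_U^{\oplus r}$. Once this identity of maps of locally free $\OO_U$-modules is established, Remark \ref{deg-loci-functor} gives $(j^1 f)^{-1}\Sigma^j(\bar d\tau) = \Sigma^j((j^1 f)^*(\bar d\tau)) = \Sigma^j(df)$ for every $j$, and taking $j = i$ and $j = i+1$ and subtracting yields $(j^1 f)^{-1}\Sigma^i = \Sigma^i(f)$ as subschemes of $U$.

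So the first step I would carry out is to unwind the definitions of the maps involved. By definition of the $1$-jet, $(j^1 f)^*\tau = (d_X^1 f^\# t_1, \dotsc, d_X^1 f^\# t_r) \in \Gamma(U, (\PP_X^1)^{\oplus r})$. Applying the pullback of the $\OO_X$-linear map $\bar d : \PP_X^1 \to \Omega_X$ componentwise and using that $\bar d \circ d_X^1 = d$, we get
\begin{equation*}
(j^1 f)^*(\bar d\tau) = (d(f^\# t_1), \dotsc, d(f^\# t_r)) \in \Gamma(U, (\Omega_X^{\oplus r})).
\end{equation*}
The second step is to identify the right-hand side with $df$. The components $y_\ell := t_\ell$ are étale (indeed global) coordinates on $\A^r$, and $f^\# t_\ell$ are the components of $f$; under the canonical trivialization $f^*\Omega_{\A^r} \cong \OO_U^{\oplus r}$ sending $f^*(dt_\ell)$ to the $\ell$th standard basis vector, the map $f^*(df^\vee)\colon f^*\Omega_{\A^r} \to \Omega_X$ — equivalently the transpose of $df\colon \T_X \to f^*\T_{\A^r}$ — sends the $\ell$th basis vector to $d(f^\# t_\ell)$. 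Hence $(j^1 f)^*(\bar d\tau)$, viewed as an $\OO_U$-linear map $(\T_X)_U \to \OO_U^{\oplus r}$, is exactly $df$ under these identifications.

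The only mild subtlety — and the step I expect to require the most care — is bookkeeping the two dualizations: $\bar d\tau$ is described in the text as a section of $(\Omega_X^{\oplus r})_{J^1(X,\A^r)}$ that we reinterpret as a map $(\T_X)_{J^1} \to \OO^{\oplus r}_{J^1}$, while $df$ is a map $\T_X \to f^*\T_{\A^r}$; one must check that the adjunction/trivialization conventions match up so that the degeneracy loci (defined via vanishing of exterior powers, Definition \ref{degeneracy-locus-def}) genuinely agree, using that a map and its transpose have the same rank at every point and hence the same degeneracy loci. This is routine linear algebra over $\OO_X$ but deserves an explicit sentence. With that in hand, the proof concludes by invoking Remark \ref{deg-loci-functor} as above. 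One may also remark that, since $j^1 f$ is a section of $J^1(X,\A^r) \to X$ over $U$, its underlying map on topological spaces is a locally closed immersion, so "inverse image" here really is an intersection, consistent with the displayed equality of subschemes.
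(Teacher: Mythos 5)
Your proposal is correct and follows essentially the same route as the paper: compute $(j^1 f)^*(\bar d\tau) = \bar d\,d_X^1 f = df$ (under the canonical identification $f^*\T_{\A^r}\cong\OO_U^{\oplus r}$) and then apply the functoriality of degeneracy loci from Remark \ref{deg-loci-functor} to both $\Sigma^i(\bar d\tau)$ and $\Sigma^{i+1}(\bar d\tau)$. The extra care you devote to the dualization bookkeeping is harmless but not needed beyond the standard identification $\Omega_X^{\oplus r}\cong\sheafHom_X(\T_X,\OO_X^{\oplus r})$, which is exactly how the paper reads $\bar d\tau$ as a map of bundles.
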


\begin{proof}
Identifying $f$ with an element of $\Gamma(U, \OO_X^{\oplus r})$, we have 
\begin{align*}
(j^1f)^* \bar d\tau =
\bar d ( (j^1f)^* \tau ) =
\bar d  d_X^1 f = df.
\end{align*}
Hence
\begin{align*}
(j^1f)^{-1}\Sigma^i
&= (j^1f)^{-1}(\Sigma^i(\bar d\tau)\setminus \Sigma^{i+1}(\bar d\tau)) \\
&= \Sigma^i((j^1f)^*\bar d\tau)\setminus \Sigma^{i+1}((j^1f)^*\bar d\tau) \\
&= \Sigma^i(df)\setminus \Sigma^{i+1}(df)\\
&= \Sigma^i(f).
\qedhere
\end{align*}
\end{proof}

\begin{proposition}
\label{prop:univ-crit-loci}
The universal critical locus $\Sigma^i\subseteq J^1(X,\A^r)$ is nonempty if, and only if, $0\le i\le \min(n,r)$.
In this case, $\Sigma^i$ is smooth over $X$ and of pure relative codimension
\begin{equation*}
i(|n-r|+i)
\end{equation*}
in $J^1(X,\A^r)$ over $X$.
\end{proposition}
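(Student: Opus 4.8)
The plan is to reduce the statement to the local model provided by Proposition \ref{univ-deg-loci}. The key observation is that $\bar d\tau : (\T_X)_{J^1(X,\A^r)} \to \OO_{J^1(X,\A^r)}^{\oplus r}$ is essentially the tautological map of the vector bundle $H = \V(\sheafHom_X(\T_X, \OO_X^{\oplus r}))$, pulled back along a suitable morphism. So the first step is to make this precise: using the short exact sequence
\begin{equation*}
\begin{tikzcd}
0 \ar[r] & \OO_X \ar[r] & \PP_X^1 \ar[r,"\bar d"] & \Omega_X \ar[r] & 0
\end{tikzcd}
\end{equation*}
(the $m=1$ case of the sequence recalled in section \ref{sec:jets-pp}, which is always exact), I would construct a morphism of $X$-schemes $\varphi : J^1(X,\A^r) \to H$ such that $\varphi^* h = \bar d\tau$, where $h$ is the tautological map of $H$. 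Concretely, $\bar d$ induces $(\bar d)^{\oplus r} : (\PP_X^1)^{\oplus r} \to \Omega_X^{\oplus r} = \sheafHom_X(\T_X,\OO_X^{\oplus r})$, hence a map of vector bundles $\V((\PP_X^1)^{\oplus r}) = J^1(X,\A^r) \to \V(\Omega_X^{\oplus r}) = H$ over $X$; one checks via the universal property (\ref{eqn:vb-sections}) that it pulls $h$ back to $\bar d\tau$.

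The second step is to identify this morphism $\varphi$ as a vector bundle surjection, in fact an affine-space bundle. Since $(\bar d)^{\oplus r}$ is a surjection of locally free $\OO_X$-modules with kernel $\OO_X^{\oplus r}$, the induced map $J^1(X,\A^r) \to H$ is a torsor under the vector bundle $\V(\OO_X^{\oplus r})$ over $H$; in particular it is smooth and surjective with fibers of dimension $r$. Then I invoke Remark \ref{deg-loci-functor} together with the definition of $\Sigma^i$: since $\bar d\tau = \varphi^* h$, we get $\Sigma^i = \varphi^{-1}(\Sigma^i(h)\setminus \Sigma^{i+1}(h)) = \varphi^{-1}Z$ in the notation of Proposition \ref{univ-deg-loci}. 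Smoothness of $\Sigma^i$ over $X$ and the codimension computation now follow by combining: (a) $Z$ is smooth over $X$ of relative dimension $(n-m+i)(r-m+i)$ by Proposition \ref{univ-deg-loci}, where $m=\min(n,r)$; (b) $\varphi$ is smooth of relative dimension $r$; (c) $J^1(X,\A^r)$ has relative dimension over $X$ equal to $\operatorname{rank}(\PP_X^1)^{\oplus r} = r(n+1)$. The relative codimension of $\Sigma^i$ in $J^1(X,\A^r)$ over $X$ is therefore $r(n+1) - r - \dim Z/X = rn - (n-m+i)(r-m+i)$, and a direct algebraic simplification (splitting into the cases $n\ge r$ and $n < r$, using $m = \min(n,r)$) shows this equals $i(|n-r|+i)$.

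For the nonemptiness claim: $\Sigma^i$ is nonempty iff $Z = \Sigma^i(h)\setminus\Sigma^{i+1}(h)$ is nonempty (since $\varphi$ is surjective), and $Z$ is nonempty iff there exists a matrix of the appropriate size with rank exactly $m-i$, which happens iff $0 \le m - i$ and $m-i \le m$, i.e. iff $0 \le i \le \min(n,r)$. I expect the main obstacle to be purely bookkeeping: verifying carefully that the map $\varphi$ induced by $\bar d$ genuinely pulls back the tautological map $h$ to $\bar d\tau$ (this requires unwinding the identification in (\ref{eqn:vb-sections}) and the fact that it coincides with $f\mapsto f^*\tau$), and then correctly matching up the degeneracy loci via Remark \ref{deg-loci-functor}. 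Once the geometric picture $\Sigma^i = \varphi^{-1}Z$ with $\varphi$ a smooth surjection is established, everything else is a routine dimension count and a case analysis on the sign of $n-r$.
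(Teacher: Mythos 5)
Your overall strategy is exactly the paper's: the paper likewise constructs the morphism $D : J^1(X,\A^r)\to H=\V(\sheafHom_X(\T_X,\OO_X^{\oplus r}))$ induced by $\bar d$ and characterized by $D^*h=\bar d\tau$, observes that $D$ is smooth and surjective because $\bar d$ is a surjection of locally free sheaves, identifies $\Sigma^i=D^{-1}\bigl(\Sigma^i(h)\setminus\Sigma^{i+1}(h)\bigr)$ via Remark \ref{deg-loci-functor}, and concludes by Proposition \ref{univ-deg-loci}. Your verification of $\varphi^*h=\bar d\tau$, the torsor structure under $\V(\OO_X^{\oplus r})$, and the nonemptiness criterion all match the intended argument.

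There is, however, a concrete slip in your final bookkeeping. The quantity $(n-m+i)(r-m+i)$ is the relative \emph{codimension} of $Z=\Sigma^i(h)\setminus\Sigma^{i+1}(h)$ in $H$ over $X$, not its relative dimension over $X$: the locus of $r\times n$ matrices of rank exactly $m-i$ has dimension $(m-i)(n+r-m+i)=nr-(n-m+i)(r-m+i)$. (The statement of Proposition \ref{univ-deg-loci} is loosely worded on this point, but the paper's own proof of the present proposition reads it as a codimension.) Moreover $(n-m+i)(r-m+i)=i(|n-r|+i)$ identically, as one sees from the two cases $n\le r$ and $n\ge r$; hence your expression $rn-(n-m+i)(r-m+i)$ equals $rn-i(|n-r|+i)$, which is \emph{not} $i(|n-r|+i)$ in general -- for $n=r=i=1$ it gives $0$ instead of the correct value $1$, so the ``direct algebraic simplification'' you invoke would fail. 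The repair is immediate and makes the dimension count unnecessary: since $(n-m+i)(r-m+i)=i(|n-r|+i)$ is the pure relative codimension of $Z$ in $H$ over $X$, and $\varphi$ is smooth and surjective (faithfully flat of constant relative dimension $r$), the preimage $\varphi^{-1}Z=\Sigma^i$ has the same pure relative codimension $i(|n-r|+i)$ in $J^1(X,\A^r)$ over $X$, and smoothness over $X$ pulls back as well. With that correction your proof coincides with the paper's.
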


\begin{proof}
Write $J := J^1(X,\A^r)$.
The scheme $\Sigma^i$ is the degeneracy locus of a map between locally free $\OO_J$-modules of ranks $n$ and $r$.
Hence it is empty if $i>\min(n,r)$.
Suppose that $i\le \min(n,r)$.

Let $H$ be the vector bundle corresponding to the locally free $\OO_X$-module $\sheafHom_X(\T_X, \OO_X^{\oplus r})$. In symbols:
\begin{equation*}
H := \V(\sheafHom_X(\T_X, \OO_X^{\oplus r}))
\end{equation*}
Let $\tau\in \Gamma(J, (\PP_X^1)^{\oplus r}_{J})$ be the tautological section, and let $h : (\T_X)_H\to \OO_H^{\oplus r}$ be the tautological linear map.
Let $D : J \to H$ be the morphism of vector bundles over $X$ induced by the $\OO_X$-linear map
\begin{equation*}
\bar d : (\PP_X^1)^{\oplus r}\to \Omega_X^{\oplus r}=\sheafHom_X(\T_X, \OO_X^{\oplus r}).
\end{equation*}
In other words, $D$ is the unique morphism of $X$-schemes such that $D^*h = \bar d \tau$.
We note that $D$ is smooth and surjective, since $\bar d : \PP_X^1 \to \Omega_X$ is surjective. 
Moreover, for each nonnegative integer $j$,
\begin{equation*}
D^{-1}\Sigma^j( h )
= \Sigma^j( D^* h ) 
= \Sigma^j( \bar d \tau ) 
= \Sigma^j.
\end{equation*}
Therefore the result follows from Proposition \ref{univ-deg-loci}, according to which the degeneracy locus $\Sigma^i(h)\setminus \Sigma^{i+1}(h)\subseteq H$ is smooth over $X$ and of pure relative codimension $i(|n-e|+i)$ in $H$ over $X$. 
\end{proof}

\begin{proposition}
\label{prop:gen-crit-dim}
Suppose that $k$ is infinite and $X$ is quasi-compact. 
Let
\begin{equation*}
W\subseteq \Gamma(X, \OO_X^{\oplus r}) = \Hom_k(X,\A^r)
\end{equation*}
be a finite-dimensional linear subspace that separates principal parts of order $1$.
Let $f\in W$ is a general element.
Then the critical locus $\Sigma^i(f) \subseteq X$ is either empty or has pure codimension $i(|n-r|+i)$ in $X$.
Furthermore, if $k$ has characteristic zero, then $\Sigma^i(f)$ is smooth.
\end{proposition}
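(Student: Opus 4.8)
The plan is to reduce Proposition \ref{prop:gen-crit-dim} to the Atiyah--Serre lemma (Lemma \ref{serre-bertini}) applied to the vector bundle $J^1(X,\A^r)\to X$ together with the universal critical locus $\Sigma^i\subseteq J^1(X,\A^r)$ described in Propositions \ref{prop:univ-sings-def-1} and \ref{prop:univ-crit-loci}. The basic observation is that taking the $1$-jet $f\mapsto j^1 f$ identifies an element $f\in\Gamma(X,\OO_X^{\oplus r})$ with a section of the bundle $J^1(X,\A^r)$: indeed $J^1(X,\A^r)=\V((\PP_X^1)^{\oplus r})$, and under the section--pullback correspondence (\ref{eqn:vb-sections}) the section of $J^1(X,\A^r)\to X$ corresponding to $f$ is exactly $j^1 f$, because $(j^1f)^*\tau = d_X^1 f$. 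Moreover, by Proposition \ref{prop:univ-sings-def-1} we have $(j^1 f)^{-1}\Sigma^i = \Sigma^i(f)$, so the critical locus $\Sigma^i(f)$ is precisely the ``$s^{-1}Z$'' of Lemma \ref{serre-bertini} with $Z=\Sigma^i$ and $s = j^1 f$.

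The first step is to check the hypotheses of Lemma \ref{serre-bertini}. Set $E := (\PP_X^1)^{\oplus r}$, a locally free $\OO_X$-module of finite rank since $X$ is smooth over $k$, and let $\V(E) = J^1(X,\A^r)$. The scheme $X$ is of finite type over $k$ (being smooth and quasi-compact) and pure-dimensional of dimension $n$. The subspace $W$ separates principal parts of order $1$, which by Definition \ref{def:sep-pp} means exactly that the natural $\OO_X$-linear map $d^1_{\OO_X^{\oplus r}}(W)\otimes_k\OO_X\to \PP_X^1(\OO_X^{\oplus r}) = E$ is surjective; so the image $W' := d^1_{\OO_X^{\oplus r}}(W)\subseteq \Gamma(X,E)$ is a finite-dimensional $k$-linear subspace that generates $E$ as an $\OO_X$-module. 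Taking $1$-jets is $k$-linear, so $f\mapsto j^1 f$ carries a general $f\in W$ to a general $s\in W'$ (one must note $W\to W'$ is surjective, hence takes a dense open subset to a dense open subset; and one can just as well apply the lemma with the possibly-smaller subspace $W'$, since a general $s\in W'$ is of the form $j^1 f$ for general $f\in W$).

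The second step is to locate the locally closed subscheme $Z$ and compute its codimension. Take $Z := \Sigma^i\subseteq J^1(X,\A^r)$. If $i>\min(n,r)$ then $\Sigma^i$ is empty and so is every $\Sigma^i(f)$, and there is nothing to prove; so assume $0\le i\le\min(n,r)$. By Proposition \ref{prop:univ-crit-loci}, $\Sigma^i$ is smooth over $X$ of pure relative codimension $i(|n-r|+i)$ in $J^1(X,\A^r)$; since $J^1(X,\A^r)\to X$ is itself smooth and $X$ is smooth over $k$, the scheme $\Sigma^i$ has pure codimension $c := i(|n-r|+i)$ in $J^1(X,\A^r)=\V(E)$, and moreover (when $\charac k = 0$, or in fact always) $\Sigma^i$ is smooth over $k$. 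Now Lemma \ref{serre-bertini} applies: for a general section $s\in W'$, the inverse image $s^{-1}\Sigma^i$ is either empty or of pure codimension $c$ in $X$, and if $\charac k = 0$ then $s^{-1}\Sigma^i$ is smooth over $k$. Unwinding, for general $f\in W$ the scheme $\Sigma^i(f) = (j^1f)^{-1}\Sigma^i$ is either empty or of pure codimension $i(|n-r|+i)$ in $X$, and smooth when $\charac k = 0$. This is the assertion of the proposition.

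There is no serious obstacle; the only point requiring care is the bookkeeping between the two subspaces $W\subseteq\Gamma(X,\OO_X^{\oplus r})$ and $W' = d^1(W)\subseteq\Gamma(X,(\PP_X^1)^{\oplus r})$, and the remark that ``$f$ general in $W$'' pushes forward to ``$s$ general in $W'$'' under the surjective linear map $d^1|_W$. (Alternatively one can bypass $W'$ and run Lemma \ref{serre-bertini} directly with the bundle map $W\otimes_k\OO_X\to E$, $f\otimes g\mapsto g\cdot d^1_X f$, whose surjectivity is the separation hypothesis; the induced map of vector bundles $X\times_k W\to \V(E)$ is then smooth surjective, and the proof of the lemma goes through verbatim with $W$ in place of $W'$.) The restriction to $\charac k = 0$ for the smoothness claim is exactly the restriction present in the generic-smoothness half of Lemma \ref{serre-bertini}, reflecting the failure of Sard's lemma in positive characteristic; recovering smoothness in that case is the work of the later sections of the paper and is not attempted here.
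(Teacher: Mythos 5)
Your proof is correct and is essentially the paper's own argument: the paper proves this proposition precisely by applying Lemma \ref{serre-bertini} to the subspace $d_X^1(W)\subseteq \Gamma(X,(\PP_X^1)^{\oplus r})$ and the universal critical locus $\Sigma^i\subseteq J^1(X,\A^r)$, citing Propositions \ref{prop:univ-sings-def-1} and \ref{prop:univ-crit-loci}. Your write-up just makes explicit the hypothesis-checking (separation of principal parts gives generation of $(\PP_X^1)^{\oplus r}$, general $f\in W$ maps to general $s\in d_X^1(W)$) that the paper leaves implicit.
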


\begin{proof}
This follows from Lemma \ref{serre-bertini} applied to the $k$-linear subspace
\begin{equation*}
d_X^1(W)\subseteq \Gamma(X, (\PP_X^1)^{\oplus r})
\end{equation*}
and the universal critical locus
\begin{equation*}
\Sigma^{i} \subseteq J^1(X,\A^r) = \V((\PP_X^1)^{\oplus r})
\end{equation*}
in view of Propositions \ref{prop:univ-sings-def-1} and \ref{prop:univ-crit-loci}.
\end{proof}

The proofs of our main theorems are based on that of Proposition \ref{prop:gen-crit-dim}.
We now explain how these theorems follow from results that we will prove later. 

\begin{proof}[Proof of Theorem \ref{thm:intro-smoothness}]
By Propositions \ref{prop:transverse-crit} and \ref{prop:gen-crit-dim}, for general $f\in W$, the singular locus of $\Sigma^i(f)$ is the bad locus $B^i(f)$.
Let 
\begin{equation*}
B^i \subseteq J^2(X,\A^r) = \V((\PP_X^2)^{\oplus r}).
\end{equation*}
be the universal bad locus of Definition \ref{def:univ-2nd-sings}.
For any $f\in W$, we have  $(j^m f)^{-1}B^i = B^i(f)$ by Proposition \ref{prop:univ-sings-def-2}.
The codimension of $B^i$ is established by Theorem \ref{thm:univ-bad-locus}.
Applying Lemma \ref{serre-bertini} to the $k$-linear subspace
\begin{equation*}
d_X^2(W)\subseteq \Gamma(X, (\PP_X^2)^{\oplus r})
\end{equation*}
and $B^i$, the result follows.
\end{proof}

\begin{proof}[Proof of Theorem \ref{thm:intro-2nd-sings}]
Let 
\begin{equation*}
\Sigma^{i,j} \subseteq J^2(X,\A^r) = \V((\PP_X^2)^{\oplus r}),
\end{equation*}
be the universal locus of second-order singularities of Definition \ref{def:univ-2nd-sings}.
For any $f\in W$, we have $(j^m f)^{-1} \Sigma^{i,j} = \Sigma^{i,j}(f)$ by Proposition \ref{prop:univ-sings-def-2}.
The smoothness and codimension of $\Sigma^{i,j}$ are established by Theorem \ref{thm:univ-2nd-sings}.
Applying Lemma \ref{serre-bertini} to the $k$-linear subspace
\begin{equation*}
d_X^2(W)\subseteq \Gamma(X, (\PP_X^2)^{\oplus r})
\end{equation*}
and $\Sigma^{i,j}$, the result follows.
\end{proof}

Our next task is to construct and study the universal loci of singularities used in the proofs of Theorems \ref{thm:intro-smoothness} and \ref{thm:intro-2nd-sings} outlined above.
The construction will be similar to the one given for universal critical loci.
First, we will show that the second-order differentials of Definition \ref{def:2nd-id} make sense for families of $2$-jets of maps from a smooth scheme to affine space.
Then we will define the universal loci of singularities as critical loci of these differentials applied to the universal family of $2$-jets over the second jet scheme.

\section{Intrinsic differentials of principal parts}

In this section we extend the notion of intrinsic differential from maps between locally free sheaves, to families of first-order principal parts of such maps.

Let $k$ be a field.
Let $X$ be a scheme over $k$.
Let $E$ and $F$ be locally free $\OO_X$-modules of finite ranks.
Let $x : T\to X$ be a morphism of schemes and 
\begin{equation*}
\alpha\in \Gamma(T,x^* \PP_X^1 \sheafHom_X(E,F))
\end{equation*}
be a section.
Let $\alpha_0 : x^* E\to x^* F$ be the image of $\alpha$ under the $\OO_X$-linear truncation map $\PP_X^1 \sheafHom_X(E,F)\to \sheafHom_X(E,F)$.

Our goal is to show that the family of first-order principal parts $(x, \alpha)$ has a natural intrinsic differential.
This is clear when $x$ is the inclusion of a rational point:

\begin{remark}
Suppose that $T=\Spec k$ and $x : \Spec k\to X$ is a morphism of $k$-schemes, so that $\alpha$ can be naturally identified with an element of
\begin{equation*}
\sheafHom_X(E, F)\otimes \OO_{X,x}/\mathfrak m_x^2.
\end{equation*}
Let $\tilde\alpha \in \sheafHom_X(E, F)_x$ be a lift of $\alpha$.
It is easy to see that the intrinsic differential
\begin{equation*}
\dd_x \tilde\alpha\in \Omega_X(x)\otimes_k \Hom_k(\ker
\alpha_0, \coker\alpha_0)
\end{equation*}
is independent of $\tilde\alpha$ in the sense that the intrinsic differential at $x$ of any other lift is equal to $\dd_x\tilde\alpha$.
Therefore it is natural to define $\dd_x \alpha := \dd_x \tilde \alpha$.
\end{remark}

\begin{proposition}
\label{prop:id-pp-def}
Suppose the $\OO_X$-modules $E$ and $F$ are free.
Choose bases for $E$ and $F$.
Let
\begin{equation*}
\nabla : \sheafHom_X(E,F)\to \Omega_X\otimes \sheafHom_X(E,F)
\end{equation*}
be the $k$-linear map given by differentiation of matrix entries with respect to these bases.
Let
\begin{equation*}
\overline \nabla : \PP_X^1 \sheafHom_X(E,F)\to \Omega_X\otimes \sheafHom_X(E,F)
\end{equation*}
be the unique $\OO_X$-linear map such that $\overline\nabla \circ d^1_{\sheafHom(E,F)} = \nabla$.
Let
\begin{equation*}
\dd_x \alpha
\in \Gamma( T, x^* \Omega_X\otimes\sheafHom_T(\ker\alpha_0,\coker\alpha_0))
\end{equation*}
be the image of $\overline\nabla \alpha$ under the $\OO_T$-linear map
\begin{equation*}
\begin{tikzcd}
x^*\Omega_X\otimes \sheafHom_T(x^*F, x^*F)
\ar[r,"{(\iota,q)}"] &
x^*\Omega_X\otimes
\sheafHom_T(\ker \alpha_0, \coker \alpha_0)
\end{tikzcd}
\end{equation*}
induced by the inclusion $\iota : \ker \alpha_0 \hookrightarrow x^*E$ and the projection $q: x^* F\to \coker\alpha_0$. Then $\dd_x \alpha$ is independent of the bases used to define it.
\end{proposition}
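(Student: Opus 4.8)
The plan is to reduce Proposition \ref{prop:id-pp-def} to the already-established Proposition \ref{prop:id-def}. The key observation is that the construction of $\dd_x\alpha$ given here factors through an honest lift. Concretely, the question being local on $X$, we may work over an open set where $E$ and $F$ are free with chosen bases, identify $\sheafHom_X(E,F)$ with the free module of $f\times e$ matrices, and then the exact sequence
\begin{equation*}
\begin{tikzcd}
0 \ar[r] & \Omega_X\otimes \sheafHom_X(E,F) \ar[r,"\iota_1"] & \PP_X^1\sheafHom_X(E,F) \ar[r,"\varepsilon_{1,0}"] & \sheafHom_X(E,F) \ar[r] & 0
\end{tikzcd}
\end{equation*}
is split by the $k$-linear (not $\OO_X$-linear) operator $d^1_{\sheafHom(E,F)}$. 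So I would first argue that, \'etale-locally on $X$ (or just Zariski-locally, since $E$, $F$ are already free), any section $\alpha\in\Gamma(T,x^*\PP_X^1\sheafHom_X(E,F))$ need not come from a global lift, but the \emph{value} $\overline\nabla\alpha$ only depends on $\alpha$ through the formula, and one checks directly that $\overline\nabla\circ d^1_{\sheafHom(E,F)} = \nabla$ forces $\overline\nabla(\alpha) = \nabla(\tilde\alpha) + (\text{correction from }\iota_1\text{-part})$ whenever $\alpha = d^1(\tilde\alpha) + \iota_1(\omega)$ for local data $\tilde\alpha,\omega$.

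The cleanest route, though, is the following. First I would reduce to the case $T = \Spec k$, $x$ a rational point — this is legitimate because both $\dd_x\alpha$ and the asserted independence are checked on stalks/fibers, and the formula defining $\dd_x\alpha$ commutes with base change along $x$ and with localization; more precisely, it suffices to prove the statement after passing to an arbitrary point of $T$ and its residue field, and functoriality of $\PP_X^1$, $\overline\nabla$, $\iota$, $q$ under such base change reduces us to that case. In that case the Remark immediately preceding the proposition already does the work: $\alpha$ lifts to some $\tilde\alpha\in\sheafHom_X(E,F)_x$, one has $\overline\nabla\alpha = \nabla\tilde\alpha$ modulo the image of $\iota_1$, and the key point is that $(\iota,q)$ annihilates that image — indeed $\iota_1(\Omega_X\otimes\sheafHom_X(E,F))$ maps under $\varepsilon_{1,0}$ to zero, i.e. consists of elements whose matrix-of-differentials contribution is $d(\text{entry})$ where the entry itself vanishes on $\ker\alpha_0$ after composing with $q$ — so that after applying $(\iota,q)$ we land on the honestly-defined $\dd_x\tilde\alpha$ of Proposition \ref{prop:id-def}. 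Basis-independence is then inherited verbatim from that proposition.

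Alternatively, and perhaps more transparently for the reader, I would give a direct proof mirroring the proof of Proposition \ref{prop:id-def}: identify all data with matrices, let $\varphi,\psi$ be automorphisms of $E,F$, set $\tilde\alpha_0 = \psi\alpha_0\varphi^{-1}$ and observe that the transformed principal part is $d^1(\psi)\cdot\alpha\cdot d^1(\varphi^{-1})$ (product in the $\PP_X^1$-algebra structure, using that $d^1$ is a ring map and that $\PP_X^1\sheafHom(E,F)$ is a $\PP_X^1$-bimodule via the multiplication maps recalled in section \ref{sec:jets-pp}); apply $\overline\nabla$ and the Leibniz-type identity $\overline\nabla(d^1(\psi)\cdot\alpha\cdot d^1(\varphi^{-1})) = d\psi\cdot\varepsilon_{1,0}(\alpha)\cdot\varphi^{-1} + \psi\cdot\overline\nabla(\alpha)\cdot\varphi^{-1} + \psi\cdot\varepsilon_{1,0}(\alpha)\cdot d(\varphi^{-1})$; and then run exactly the cancellation $\alpha_0\varphi^{-1}\tilde\iota = 0$ and $\tilde q\psi\alpha_0 = 0$ used in Proposition \ref{prop:id-def}.

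The main obstacle I anticipate is bookkeeping the $\PP_X^1$-module structure carefully enough to justify the Leibniz identity for $\overline\nabla$ on products — i.e. verifying that $\overline\nabla$, which is only the $\OO_X$-linearization of the $k$-linear operator $\nabla$, nonetheless satisfies the three-term product rule above when one factor is $d^1$ of a matrix-valued function. This follows from uniqueness in the universal property of $d^1_{\sheafHom(E,F)}$ applied entrywise, together with the explicit description of the $\PP_X^1$-module multiplication on $\PP_X^1\sheafHom_X(E,F)$ recalled in section \ref{sec:jets-pp}, but it is the one place where the verification is not purely formal. Once that identity is in hand, the rest of the argument is a copy of the proof of Proposition \ref{prop:id-def} and I would simply say so rather than repeat it.
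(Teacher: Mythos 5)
Your second, ``alternative'' argument is in substance the paper's own proof: the paper sets $\Phi:=d_X^1\varphi$, $\Psi:=d_X^1\psi$ (using that $\PP_X^1\sheafHom_X(E,F)\cong\PP_X^1\otimes_{d_X^1,\OO_X}\sheafHom_X(E,F)$, so $\varphi,\psi$ induce $\PP_X^1$-linear automorphisms), puts $\tilde\alpha:=\Psi\circ\alpha\circ\Phi^{-1}$, and establishes exactly the three-term identity you worry about via a separately stated Leibniz rule for the truncation $\bar d:\PP_X^1\to\Omega_X$ (Proposition \ref{prop:leibniz-pp}, proved by $\OO_X$-bilinearity plus the fact that $\PP_X^1$ is generated over $\OO_X$ by the image of $d_X^1$), and then runs the same cancellations $\alpha_0\varphi^{-1}\tilde\iota=0$ and $\tilde q\psi\alpha_0=0$ as in Proposition \ref{prop:id-def}. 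So the step you single out as the only non-formal one is precisely the lemma the paper isolates, and your sketch of how to prove it is adequate; if you write up that route you will have reproduced the paper's argument.

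Your preferred ``cleanest route,'' however, has a genuine gap. The proposition imposes no rank condition on $\alpha_0$, so $\ker\alpha_0$ and $\coker\alpha_0$ are arbitrary quasi-coherent $\OO_T$-modules, and the asserted equality is an equality of sections of $x^*\Omega_X\otimes\sheafHom_T(\ker\alpha_0,\coker\alpha_0)$. Such an equality cannot be checked after base change to the residue fields of points of $T$: already for $T=\Spec k[\varepsilon]/(\varepsilon^2)$ a nonzero section can vanish at the unique point, and moreover neither $\ker$, nor $\coker$-into-$\sheafHom$, nor $\sheafHom$ itself commutes with the base change $T\rightsquigarrow\Spec k(t)$, so the fiberwise comparison does not even compare the two candidate sections but only their images in a different group. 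The Remark preceding the proposition treats a single rational point and cannot be promoted to the general $T$ this way; the functoriality statement you would need (Remark \ref{rmk:id-funct}) is itself a consequence of the proposition, not an input to it. So either drop that reduction or carry out the direct matrix argument, which is self-contained.
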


\begin{proof}
Throughout this proof we will make use of the fact that, for each $\OO_X$-module $M$, there is a natural isomorphism of $\PP_X^1$-modules 
\begin{equation*}
\PP_X^1 M = \PP_X^1 \otimes_{d_X^1, \OO_X} M,
\end{equation*}
see section \ref{sec:jets-pp} of this paper.
This isomorphism shows that the construction $M\mapsto \PP_X^1 M$ extends naturally to a functor from the category of $\OO_X$-modules to that of $\PP_X^1$-modules.

To prove the result, we may assume that $E= \OO_X^{\oplus e}$ and $F=\OO_X^{\oplus f}$ and that the chosen bases on these $\OO_X$-modules are the standard ones.
Let $\varphi : E\xrightarrow\sim E$ and $\psi : F\xrightarrow\sim F$ be $\OO_X$-linear automorphisms.
Let $\Phi : \PP_X^1 E \xrightarrow\sim \PP_X^1 E$ and $\Psi : \PP_X^1 F\xrightarrow\sim \PP_X^1 F$ be the $\PP_X^1$-linear automorphisms induced by $\varphi$ and $\psi$.
Viewing $\varphi$ and $\psi$ as invertible matrices with entries in $\Gamma(X, \OO_X)$, and $\Phi$ and $\Phi$ as invertible matricies with entries in $\Gamma(X, \PP_X^1)$, we have $\Phi = d_X^1 \varphi$ and $\Psi = d_X^1 \psi$.

Let $\tilde \alpha$ denote the section
\begin{equation*}
\tilde \alpha := \Psi \circ \alpha \circ \Phi^{-1}\in \Gamma(T,x^* \PP_X^1 \sheafHom_X(E,F)).
\end{equation*}
Let $\tilde\alpha_0 : x^* E\to x^* F$ be the image of $\tilde\alpha$ under the $\OO_X$-linear truncation map $\PP_X^1 \sheafHom_X (E, F)\to \sheafHom_X(E, F)$.
Consider the following diagram of $\OO_T$-linear maps, where $\tilde \iota$ and $\tilde q$ are the natural inclusion and projection, and $\bar\varphi$ and $\bar\psi$ are induced by $\varphi$ and $\psi$. 
\begin{equation*}
\begin{tikzcd}
0 \ar[r] &
\ker(\alpha_0) \ar[r, hook, "\iota"] \ar[d, "\bar\varphi"] &
x^* E \ar[r, "\alpha_0"] \ar[d, "\varphi"] &
x^* F \ar[r, two heads, "q"] \ar[d, "\psi"] &
\coker(\alpha_0) \ar[r] \ar[d, "\bar\psi"] &
0 \\
0 \ar[r] &
\ker(\tilde \alpha_0) \ar[r, hook, "\tilde \iota"] &
x^* E \ar[r, "\tilde \alpha_0"] &
x^* F \ar[r, two heads, "\tilde q"] &
\coker(\tilde \alpha_0) \ar[r] &
0 
\end{tikzcd}
\end{equation*}
It suffices to show that
\begin{equation*}
\tilde q\cdot \overline\nabla\tilde \alpha\cdot \tilde \iota
=
\bar \psi\cdot q \cdot \overline\nabla\alpha \cdot \iota\cdot \bar \varphi^{-1}
\end{equation*}
as elements of $\Gamma(T, x^*\Omega_X\otimes \sheafHom_T(\ker(\tilde \alpha_0), \coker(\tilde \alpha_0)))$.

Let $d : \OO_X\to \Omega_X$ be the universal derivation.
Let $\bar d : \PP_X^1\to \Omega_X$ be the unique $\OO_X$-linear map such that $\bar d \circ d_X^1 = d$.
Identifying $\alpha$ with a matrix with entries in $\Gamma(T, x^* \PP_X^1)$ and applying the Leibniz rule (Proposition \ref{prop:leibniz-pp} below), we find that
\begin{align*}
\bar d\tilde \alpha  
&= \bar d (d_X^1 \psi) \cdot \alpha_0 \cdot \varphi^{-1} +
\psi \cdot \bar d \alpha \cdot \varphi^{-1} +
\psi \cdot \alpha_0 \cdot \bar d (d_X^1\varphi^{-1})
\end{align*}
as elements of $\Gamma(T,x^*\Omega_X^{\oplus f\times e})$.
Now 
\begin{equation*}
\alpha_0 \cdot\varphi^{-1}\cdot \tilde\iota =
\psi^{-1}\cdot \tilde\alpha_0\cdot \tilde\iota = 0
\end{equation*}
in $\Hom_T(\ker(\tilde\alpha_0), x^* E)$, and
\begin{equation*}
\tilde q\cdot \psi\cdot \alpha_0 =
\tilde q\cdot \tilde\alpha_0 \cdot \varphi = 0
\end{equation*}
in $\Hom_T(x^* F, \coker(\tilde\alpha_0))$. 
Thus
\begin{align*}
\tilde q\cdot \bar d\tilde \alpha  \cdot\tilde\iota
&= \tilde q\cdot\psi \cdot \bar d\alpha  \cdot \varphi^{-1}\cdot\tilde\iota \\
&= \bar\psi \cdot q\cdot \bar d\alpha  \cdot \iota\cdot \bar\varphi^{-1},
\end{align*}
which completes the proof.
\end{proof}

\begin{definition}
\label{def:id-pp}
The \emph{intrinsic differential} of the family of first-order principal parts $(x, \alpha)$ is the unique section
\begin{equation*}
\dd_x \alpha \in\Gamma(T, x^* \Omega_X\otimes\sheafHom_T(\ker\bar\alpha, \coker\bar\alpha))
\end{equation*}
such that, for each open subset $U \subseteq X$ over which $E$ and $F$ are free, the restriction of $\dd_x \alpha$ to $x^{-1}U\subseteq T$ coincides with the section of Proposition \ref{prop:id-pp-def} applied to the restriction (that is, the pullback) of $(x, \alpha)$ to $x^{-1}U$.
When $X$ is smooth over $k$, we will regard the intrinsic differential as an $\OO_T$-linear map
\begin{equation*}
\dd_x\alpha : x^* \T_X \to \sheafHom_T(\ker\bar\alpha, \coker\bar\alpha).
\end{equation*}
\end{definition}

The following result was used in the proof Proposition \ref{prop:id-pp-def}.

\begin{proposition}[Leibniz rule]
\label{prop:leibniz-pp}
Let $d : \OO_X\to \Omega_X$ be the universal derivation.
Let $\bar d : \PP_X^1\to \Omega_X$ be the unique $\OO_X$-linear map such that $\bar d \circ d_X^1 = d$.
Let $f,g \in \PP_X^1$ be sections defined over a common open subset of $X$, and let $\bar f, \bar g \in \OO_X$ be their respective images under the truncation map $\PP_X^1\to \OO_X$.
Then
\begin{equation}
\label{eqn:leibniz}
\bar d(fg) = \bar d f\cdot \bar g + \bar f \cdot \bar d g
\end{equation}
as sections of $\Omega_X$.
\end{proposition}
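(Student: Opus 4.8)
The plan is to identify $\PP_X^1$, as an $\OO_X$-algebra, with the split square-zero extension of $\OO_X$ by $\Omega_X$; once this is done, (\ref{eqn:leibniz}) is obtained simply by reading off the $\Omega_X$-component of a product in this algebra. Everything in sight is local on $X$, so I would assume $X$ affine throughout.

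First I would establish the structural picture. The fundamental exact sequence $0\to\Omega_X\xrightarrow{\iota_1}\PP_X^1\xrightarrow{\varepsilon_{1,0}}\OO_X\to 0$ (exact, being the case $m=1$) is split, as a sequence of $\OO_X$-modules, by the structure morphism $s\colon\OO_X\to\PP_X^1$ of the $\OO_X$-algebra $\PP_X^1$, because $\varepsilon_{1,0}$ is a ring homomorphism and hence $\varepsilon_{1,0}\circ s=\mathrm{id}$. Thus $\PP_X^1=s(\OO_X)\oplus\iota_1(\Omega_X)$; under this decomposition $\varepsilon_{1,0}$ is the first projection (I write $\bar\xi:=\varepsilon_{1,0}(\xi)$) and, I claim, $\bar d$ is the second projection followed by $\iota_1^{-1}$. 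Indeed $\bar d\circ s=0$, since $\bar d$ is $\OO_X$-linear and $s(a)=a\cdot d_X^1 1$, so $\bar d(s(a))=a\cdot d(1)=0$; and $\bar d\circ\iota_1=\mathrm{id}_{\Omega_X}$, obtained by applying $\bar d$ to $\iota_1(du)=d_X^1 u-s(u)$ and extending $\OO_X$-linearly. Finally, $\iota_1(\Omega_X)=\ker\varepsilon_{1,0}$ is an ideal that squares to zero: it is generated over $\OO_X$ by the elements $\iota_1(du)=d_X^1u-s(u)$, and a routine computation using that $d_X^1$ is a $k$-algebra map and a differential operator of order $\le 1$ (so that $d_X^1(uv)=s(u)\,d_X^1v+s(v)\,d_X^1u-s(uv)$) gives $\iota_1(du)\,\iota_1(dv)=0$.

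With this in hand the proof concludes as follows. For local sections $f,g$ of $\PP_X^1$ over a common open set one has $f=s(\bar f)+\iota_1(\bar d f)$ and likewise for $g$; multiplying these out, using that $s$ is a ring homomorphism, that $\iota_1(\Omega_X)$ squares to zero, and that $s(a)\cdot\iota_1(\omega)=\iota_1(a\omega)$, one finds $fg=s(\bar f\,\bar g)+\iota_1(\bar f\,\bar dg+\bar g\,\bar df)$, and applying $\bar d$ yields exactly (\ref{eqn:leibniz}). I do not expect a genuine obstacle here; the only real point of care is the bookkeeping around the two $\OO_X$-module structures on $\PP_X^1$ — the relevant one being multiplication by $s(a)$, which is the structure making $\bar d$, $\varepsilon_{1,0}$ and $\iota_1$ into $\OO_X$-linear maps — together with the elementary compatibility $s(a)\,\iota_1(\omega)=\iota_1(a\omega)$ between the algebra structure and the splitting. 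If one prefers to avoid the structural description altogether, the same conclusion follows by observing that both sides of (\ref{eqn:leibniz}) are additive and $\OO_X$-linear in $f$ and in $g$ separately; since $\PP_X^1$ is generated as an $\OO_X$-module by the sections $d_X^1u$, it then suffices to verify the identity for $f=d_X^1u$ and $g=d_X^1v$, where it collapses via $d_X^1u\cdot d_X^1v=d_X^1(uv)$, $\bar d\circ d_X^1=d$ and $\varepsilon_{1,0}\circ d_X^1=\mathrm{id}$ to the ordinary Leibniz rule $d(uv)=u\,dv+v\,du$ for the universal derivation.
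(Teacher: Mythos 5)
Your proof is correct, but your primary route is genuinely different from the paper's. The paper's own proof is exactly your closing alternative: both sides of (\ref{eqn:leibniz}) are $\OO_X$-bilinear in $f$ and $g$, $\PP_X^1$ is generated as an $\OO_X$-module by the image of $d_X^1$, so one may take $f=d_X^1u$, $g=d_X^1v$, where the identity collapses to the ordinary Leibniz rule for $d$ via $d_X^1u\cdot d_X^1v=d_X^1(uv)$ and $\bar d\circ d_X^1=d$ -- three lines, using only facts the paper has already recorded. Your main argument instead establishes the structural picture: the sequence $0\to\Omega_X\to\PP_X^1\to\OO_X\to 0$ (exact for $m=1$, as the paper notes) is split by the algebra structure map $s$, $\varepsilon_{1,0}$ and $\bar d$ are the two projections, and $\ker\varepsilon_{1,0}=\iota_1(\Omega_X)$ is a square-zero ideal; then (\ref{eqn:leibniz}) is just the $\Omega_X$-component of a product in the split square-zero extension. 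This checks out (including the identifications $\bar d\circ s=0$, $\bar d\circ\iota_1=\id$, and $s(a)\cdot\xi=a\cdot\xi$ reconciling the module structure with ring multiplication), but the square-zero step rests on the identity $d_X^1(uv)=s(u)\,d_X^1v+s(v)\,d_X^1u-s(uv)$, i.e.\ on $d_X^1$ being a differential operator of order $\le 1$ -- standard, and implicit in the universal property the paper cites, but not an identity the paper states; by contrast the bilinearity argument needs nothing beyond what is on the page. What your structural route buys is more than the proposition itself: it identifies $(\PP_X^1,\varepsilon_{1,0},\bar d)$ with the square-zero extension $\OO_X\oplus\Omega_X$ and its projections, from which the Leibniz rule is immediate; the paper's route is the minimal one tailored to the single identity needed.
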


\begin{proof}
Both sides of (\ref{eqn:leibniz}) are $\OO_X$-bilinear functions of $f$ and $g$. As $\PP_X^1$ is generated as an $\OO_X$-module by the image of the universal differential operator $d_X^1 : \OO_X\to \PP_X^1$, we may assume that $f= d_X^1 u$ and $g= d_X^1 v$ for some $u,v\in \OO_X$. In this case, (\ref{eqn:leibniz}) reduces to the usual Leibniz rule satisfied by $d$.
\end{proof}

The next two remarks show that the intrinsic differential introduced in this section is compatible with the one from Definition \ref{def:id} and behaves well under pullback. 

\begin{remark}
\label{rmk:id-compat}
If $\tilde \alpha: E\to F$ is an $\OO_X$-linear map, then
\begin{equation*}
\dd_x\tilde \alpha = \dd_x (x^* d_E^1 \tilde\alpha). 
\end{equation*}
\end{remark}

\begin{remark}
\label{rmk:id-funct}
If $t: T'\to T$ be a morphism of schemes, then $\dd_{x\circ t}(t^* \alpha)$ is equal to the image of $t^*\dd_x \alpha$ under the $\OO_{T'}$-linear map
\begin{equation*}
\begin{tikzcd}
t^*(x^*\Omega_X\otimes \sheafHom_T(\ker \alpha_0,
\coker\alpha_0))
\ar[r,"\mu"] &
t^*x^*\Omega_X\otimes
\sheafHom_{T'}(\ker(t^*\alpha_0), \coker(t^*\alpha_0))
\end{tikzcd}
\end{equation*}
induced by the natural $\OO_{T'}$-linear map $t^*\ker\alpha_0 \to \ker(t^*\alpha_0)$ and isomorphsim $\coker(t^*\alpha_0)= t^*\coker(\alpha_0)$. 
In cases where $\mu$ is an isomorphism, which by Corollary \ref{coker-deg-loci} happens for example when $\coker(\bar\alpha)$ is a locally free $\OO_T$-module, we will abuse notation and write
\begin{equation*}
\dd_{x\circ t}(t^*\alpha) = t^*\dd_x\alpha.
\end{equation*}
\end{remark}

\section{Second-order differentials of jets}

Here we show that families of $2$-jets of morphisms from a smooth scheme over a field to affine space have natural second-order differentials.
We also discuss properties of symmetry and additivity that these second-order differentials inherit from the Hessian matrix of a function.
These properties will play an important role in the next section of this paper.

Let $k$ be a field.
Let $X$ be a smooth scheme over $k$.
Let $r$ be a positive integer.
Let $x : T\to X$ be a morphism of schemes and $f \in \Gamma(T,x^* (\PP_X^2)^{\oplus r})$ be a section, so that $(x,f)$ is a family of $2$-jets of morphisms from $X$ to $\A^r$ over $T$.
Let $f_1 \in \Gamma(T,x^* (\PP_X^1)^{\oplus r})$ be the truncation of $f$ to first order.

Let $\bar d : \PP_X^1 \to \Omega_X$ be the unique $\OO_X$-linear map such that $\bar d\circ d_X^1 = d$.
Let $\tilde d : \PP_X^2 \to \PP_X^1 \Omega_X$ be the unique $\OO_X$-linear map such that $\tilde d\circ d_X^2 = d_\Omega^1 \circ d$. The following diagram, where the vertical arrows are the natural truncation maps, commutes:
\begin{equation*}
\begin{tikzcd}
\PP_X^2 \ar[r,"\tilde d"] \ar[d, two heads] &
\PP_X^1 \Omega_X \ar[d, two heads] \\
\PP_X^1 \ar[r,"\bar d"] &
\Omega_X
\end{tikzcd}
\end{equation*}

We identify $\tilde d f\in \Gamma(T, x^* (\PP_X^1\Omega_X)^{\oplus r})$ with a global section of
\begin{equation*}
x^* \PP_X^1 \sheafHom_X(\T_X, \OO_X^{\oplus r})
\end{equation*}
and $\bar d f_1\in \Gamma(T, x^* \Omega_X^{\oplus r})$ with an $\OO_T$-linear map $x^* \T_X\to \OO_T^{\oplus r}$.
Let $K := \ker(\bar df_1)$ and $C := \coker(\bar df_1)$.

\begin{definition}
\label{def:2d-id-jet}
The intrinsic differential of the pair $(x, \tilde d f)$, viewed as a family of principal parts of order $1$ of $\OO_X$-linear maps over $T$, is an $\OO_T$-linear map 
\begin{equation*}
\dd_x(\tilde d f) : x^* \T_X \to \sheafHom_T(K,C).
\end{equation*}
We call the restriction
\begin{equation*}
\dd^2_x f : K \to \sheafHom_T(K,C)
\end{equation*}
of this $\OO_T$-linear map to $K\subseteq x^* \T_X$ the \emph{second intrinsic differential} of the family of $2$-jets $(x,f)$.
\end{definition}

The next two remarks show that Definition \ref{def:2d-id-jet} is compatible with Definition \ref{def:2nd-id} and behaves well under pullback. 

\begin{remark}
\label{rmk:2nd-id-compat}
If $\tilde f : X\to \A^r$ be a morphism of $k$-schemes, which we identify with an element of $\Gamma(X, \OO_X^{\oplus r})$, then 
\begin{align*}
\dd_x(\tilde d (x^* d_X^2 \tilde f))
= \dd_x(x^* \tilde d d_X^2 \tilde f)
= \dd_x( x^* d_\Omega^1 d \tilde f )
= \dd_x (d\tilde f).
\end{align*}
The last equality follows from Remark \ref{rmk:id-compat}.
\end{remark}

\begin{remark}
\label{rmk:2nd-id-funct}
Let $t : T'\to T$ be a morphism of schemes. If the cokernel of $\bar df_1 : x^*\T_X\to \OO_T^{\oplus r}$ is a locally free $\OO_T$-module, then
\begin{equation*}
\dd_{x\circ t}(\tilde d (t^*f)) = 
\dd_{x\circ t}(t^* \tilde d f) = 
t^*\dd_x(\tilde d f)
\end{equation*}
as elements of
\begin{equation*}
\Hom_{T'}(t^*x^* \T_X, 
\sheafHom_{T'}( \ker(\bar d(t^* f_1)), \coker(\bar d(t^* f_1)) ) ).
\end{equation*}
In general, $\dd_x(\tilde d (t^* f))$ equals the image of $t^*\dd_x(\tilde d f)$ under a natural map, see Remark \ref{rmk:id-funct}.
\end{remark}

We now turn to properties of symmetry and additivity that the second-order differentials $\dd_x(\tilde df)$ and $\dd_x^2 f$ inherit from the Hessian matrix of a function.
We begin with symmetry: Proposition \ref{prop:2nd-id-symm} below extends Remark \ref{rmk:2nd-id-sym} to the family of $2$-jets $(x, f)$.
As in section \ref{sec:2nd-order-sings}, given a $k$-scheme $S$ and an $\OO_S$-module $M$, we write 
\begin{equation*}
\Box^2 M :=
\begin{cases}
\Sym^2 M &\text{if $\charac(k)\ne 2$}\\
\wedge^2 M &\text{if $\charac(k)=2$}.
\end{cases}
\end{equation*}
Furthermore, given a submodule $A\subseteq M$ we denote by $M \boxempty A$ the image of $M\otimes A$ under the quotient map $M\otimes M \to \Box^2 M$.
Let
\begin{align*}
\theta : \sheafHom_T(x^*\T_X \otimes K, C) &\xrightarrow\sim \sheafHom_T(x^*\T_X ,\sheafHom_T(K,C))\\
\bar \theta : \sheafHom_T(K \otimes K, C) &\xrightarrow\sim \sheafHom_T(K,\sheafHom_T(K,C))
\end{align*}
be the natural $\OO_T$-linear isomorphisms, both of which are described by the rule $b\mapsto (v\mapsto b(v\otimes -))$.

\begin{proposition}
\label{prop:2nd-id-symm}
The second-order differentials $\dd(\tilde df)$ and $\dd^2 f$ are symmetric in characteristic different from $2$ and in skew-symmetric in characteristic $2$. More precisely:
\begin{align*}
\theta^{-1} (\dd(df))&\in \Hom_T(x^* \T_X \boxempty K, C)\\
\bar\theta^{-1}(\dd^2 f) &\in \Hom_T(\Box^2 K,C)
\end{align*}
\end{proposition}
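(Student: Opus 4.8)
The plan is to reduce to the case where $X$ is affine with a global coordinate system and the modules $E = \T_X$ and $F = \OO_X^{\oplus r}$ are trivialized, so that everything may be computed with matrices. Since the claimed membership is a statement about a section of a locally free $\OO_T$-module, and since the formation of all the relevant sheaves commutes with restriction to opens of $X$ (pulled back along $x$), it suffices to check the symmetry locally on $X$; I would invoke Remark \ref{rmk:2nd-id-funct} together with the local description of the intrinsic differential in Proposition \ref{prop:id-pp-def} to make this reduction precise. So fix \'etale coordinates $x_1,\dotsc,x_n$ on an open $U\subseteq X$, write $f = (f_1,\dotsc,f_r)$ with $f_\ell\in \Gamma(U,\PP_X^2)$, and compute $\tilde d f$ explicitly in terms of the induced basis of $\PP_X^1\Omega_X$.

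The key computation is the following: unwinding the definition of $\tilde d$ and of the intrinsic differential, the map $\dd_x(\tilde d f)\colon x^*\T_X \to \sheafHom_T(K,C)$ is represented, after composing with the inclusion $K\hookrightarrow x^*\T_X$ and the projection $x^*\OO_X^{\oplus r}\twoheadrightarrow C$, by the matrix of second partial derivatives $\bigl(\partial^2 f_\ell/\partial x_a\partial x_b\bigr)$ evaluated along $x$ — exactly as in Remark \ref{rmk:2nd-id-locally}. The point is that, while the matrix of $\overline\nabla(\tilde d f)$ before passing to $K$ and $C$ need not be symmetric (it picks up first-derivative correction terms coming from the non-$\OO_X$-linearity of $d_X^2$, analogous to the $d\psi\cdot\alpha\cdot\varphi^{-1}$ and $\psi\cdot\alpha\cdot d\varphi^{-1}$ terms in the proof of Proposition \ref{prop:id-def}), those correction terms are killed precisely upon restricting the source to $K = \ker(\bar d f_1)$ and projecting the target to $C = \coker(\bar d f_1)$: they factor through $\bar d f_1$ on one side or the other. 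This mirrors the vanishing arguments $\alpha_0\cdot\varphi^{-1}\cdot\tilde\iota = 0$ and $\tilde q\cdot\psi\cdot\alpha_0 = 0$ used in the proof of Proposition \ref{prop:id-pp-def}, now applied with the roles played by the first-order data. Once one knows $\dd_x(\tilde d f)$ is represented by the Hessian matrix of second partials of $f$, symmetry is the classical fact that mixed partials commute in characteristic $\ne 2$ (giving a symmetric matrix, hence an element of $\Hom_T(x^*\T_X\boxempty K, C)$ with $\Box^2 = \Sym^2$), while in characteristic $2$ the diagonal second-partial terms $\partial^2 f_\ell/\partial x_a^2$ still appear but are squares of operators, and the point is that the relevant quadratic form $v\mapsto \dd_x^2 f(v)(v)$ vanishes on $K$ — equivalently, the matrix, though symmetric, has entries on the diagonal that are forced to lie in the image one quotients out, so it descends to $\wedge^2 K$. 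Concretely in characteristic $2$ one uses that $d_X^2(u^2) = (d_X^2 u)^2$ and that squaring is additive, so the "diagonal" of the Hessian is itself a square and hence lies in $\bar d f_1$'s image modulo $C$; I would make this precise by the same Leibniz-rule bookkeeping (Proposition \ref{prop:leibniz-pp}, extended to order $2$) that produced the formula for $\tilde d f$.

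Granting the symmetry of $\dd_x(\tilde d f)$, i.e. that $\theta^{-1}(\dd(\tilde d f))$ lies in $\Hom_T(x^*\T_X\boxempty K, C)$, the corresponding statement for the second intrinsic differential $\dd^2 f$ is immediate: $\dd^2 f$ is by Definition \ref{def:2d-id-jet} the restriction of $\dd_x(\tilde d f)$ along $K\hookrightarrow x^*\T_X$, so $\bar\theta^{-1}(\dd^2 f)$ is the image of $\theta^{-1}(\dd(\tilde d f))$ under the evident map $\Hom_T(x^*\T_X\boxempty K, C)\to \Hom_T(\Box^2 K, C)$ induced by the inclusion $K\boxempty K \hookrightarrow x^*\T_X\boxempty K$ — here one uses that the image of $K\otimes K$ in $\Box^2(x^*\T_X)$ that is relevant lands in $\Box^2 K$ when both factors come from $K$. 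The main obstacle I anticipate is purely bookkeeping: carefully identifying $\tilde d f$ as a family of first-order principal parts of a $\sheafHom$-valued map and tracking which first-derivative correction terms survive the $(\iota, q)$ projection of Proposition \ref{prop:id-pp-def}, especially keeping the characteristic-$2$ diagonal terms straight. Once the correct local matrix formula for $\dd_x(\tilde d f)$ is in hand — and it should be exactly the Hessian of Remark \ref{rmk:2nd-id-locally}, now with $T$-valued rather than $k$-valued point — the symmetry assertions reduce to the equality of mixed partials and, in characteristic $2$, to the additivity of squaring, which are elementary.
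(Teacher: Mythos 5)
Your overall route---reduce to the case of global \'etale coordinates, identify $\dd_x(\tilde d f)$ with the matrix of second partials restricted to $K$ and projected to $C$, and read off the symmetry---is the same as the paper's, which packages exactly these two steps as Lemma \ref{lemma:2nd-id-locally} and Lemma \ref{lemma:hessian-pp}; in characteristic $\ne 2$ your argument is essentially complete (and your worry about first-derivative correction terms is harmless: with $\nabla$ taken relative to the coordinate bases one gets $\overline\nabla\circ\tilde d=\overline{\Hess}$ on the nose, no corrections to kill).

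The characteristic-$2$ step, however, has a genuine gap. You assert that the diagonal entries $\partial_a\partial_a f_\ell$ of the Hessian survive and are only disposed of by passing to $K$ and $C$, ``because the diagonal of the Hessian is itself a square and hence lies in $\bar d f_1$'s image modulo $C$.'' That inference is unsupported: being a square has nothing to do with lying in the image of $\bar d f_1$, and you give no computation showing that the quadratic form $v\mapsto \dd_x^2 f(v)(v)$ vanishes on $K$. Note also what must actually be killed: in characteristic $2$ the kernel of $x^*\T_X\otimes K\to \wedge^2(x^*\T_X)$ is the submodule generated by the elements $w\otimes w$ with $w\in K$, so the factorization through $x^*\T_X\boxempty K$ is precisely the vanishing of that quadratic form on $K$, which your mechanism does not establish. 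The correct---and simpler---fact is the first part of Lemma \ref{lemma:hessian-pp}: the Hessian of \emph{any} second-order principal part is alternating outright, before any restriction to $K$ or projection to $C$. Indeed, on the $\OO_X$-basis $d_X^2(x_ix_j)$, $d_X^2 x_i$, $d_X^2 1$ of $\PP_X^2$ one computes $\overline{\Hess}(d_X^2(x_ix_j)) = dx_i\otimes dx_j + dx_j\otimes dx_i$, which for $i=j$ equals $2\,dx_i\otimes dx_i=0$ in characteristic $2$; equivalently, $\partial_a\partial_a$ annihilates every monomial because $m(m-1)$ is even. With this, $\overline{\Hess}$ factors through $(\Box^2\T_X)^\vee$, and both memberships $\theta^{-1}(\dd(\tilde d f))\in\Hom_T(x^*\T_X\boxempty K,C)$ and $\bar\theta^{-1}(\dd_x^2 f)\in\Hom_T(\Box^2 K,C)$ follow at once; without it, your characteristic-$2$ case is not proved.
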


\begin{proof}
The question being Zariski-local on $T$, we may assume that there exist global sections $x_1,\dotsc, x_n\in \Gamma(X,\OO_X)$ whose differentials from a basis for $\Omega_X$ as an $\OO_X$-module. Then the result follows from the combination of Lemma \ref{lemma:2nd-id-locally} and the first part of Lemma \ref{lemma:hessian-pp} below.
\end{proof}

Next we consider an additivity property of second-order differentials. 
The case of a single 2-jet, where $T=\Spec k$ and $x : T\to X$ is a rational point, is clearest. 

\begin{example}
  \label{rmk:2nd-id-additivity}
Suppose that $T=\Spec k$ and $x : T\to X$ is a rational point, so that
\begin{align*}
f&\in (\OO_X/\mathfrak m_x^3)^{\oplus r} \\
\tilde d f &\in \sheafHom_X(\T_X, \OO_X^{\oplus r})\otimes \OO_X/\mathfrak m_x^2\\
\bar d f_1 &\in \Hom_k(\T_X(x), k^{\oplus r}).
\end{align*}
Let 
\begin{align*}
\Hess (f) : \T_X(x) \times \T_X(x) \to k^{\oplus r}
\end{align*}
be the \emph{Hessian bilinear map} defined with respect to a choice of \'etale coordinates near $x\in X(k)$.
By Remark \ref{rmk:2nd-id-locally}, we have a commutative diagram as follows:
\begin{equation*}
\begin{tikzcd}
T_X(x)\otimes \T_X(x) \ar[r, "\Hess (f)"] &
k^{\oplus r} \ar[d, two heads]\\
\T_X(x) \otimes K \ar[u, hook] \ar[r, "\dd_x(\tilde df)"] &
C
\end{tikzcd}
\end{equation*}
Let
\begin{equation*}
\begin{tikzcd}
0 \ar[r] &
\Sym^2(\mathfrak m_x/\mathfrak m_x^2) \ar[r, "\iota"] &
\OO_X/\mathfrak m_x^3 \ar[r] &
\OO_X/\mathfrak m_x^2 \ar[r] & 
0
\end{tikzcd}
\end{equation*}
be the canonical short exact sequence.
Let 
\begin{equation*}
\beta : \Sym^2(\mathfrak m_x/\mathfrak m_x^2) \to \Hom(\T_X(x)\otimes \T_X(x), k)
\end{equation*}
be the unique $k$-linear map such that
\begin{equation*}
\beta(uv) = u\otimes v + b\otimes v
\end{equation*}
for all $u, v\in \mathfrak m_x/\mathfrak m_x^2 = \T_X(x)^\vee$. 
Finally, let $\delta \in \Sym^2(\mathfrak m_x/\mathfrak m_x^2)$.
Then 
\begin{align*}
\Hess (f + \iota \delta)
&= \Hess (f) + \Hess (\iota \delta) \\
&= \Hess (f) + \beta(\delta)
\end{align*}
as $k$-linear maps $\T_X(x)\otimes \T_X(x)\to k^{\oplus r}$. 
It follows that
\begin{equation*}
\dd_x(\tilde d(f+\iota\delta)) = \dd_x(\tilde df) + \overline{\beta(\delta)}
\end{equation*}
as $k$-linear maps $\T_X(x)\otimes K \to C$, where $\overline{\beta(\delta)}$ denotes the natural image of $\beta(\delta)$ in $\Hom_k(\T_x(x)\otimes K, C)$.
\end{example}

Returning to the general case, let
\begin{equation*}
\begin{tikzcd}
0 \ar[r] & 
\Sym^2 \Omega_X \ar[r,"\iota"] &
\PP_X^2 \ar[r] &
\PP_X^1\ar[r] &
0
\end{tikzcd}
\end{equation*}
be the canonical short exact sequence.
Let
\begin{equation*}
\beta : \Sym^2 \Omega_X\to (\T_X\otimes \T_X)^\vee
\end{equation*}
be the unique $\OO_X$-linear map such that $\beta(uv) = u\otimes v + v\otimes u$ for all $u,v\in \Omega_X = \T_X^\vee$.
Let $\bar \beta$ be the composition of the following $\OO_T$-linear maps:
\begin{equation*}
\begin{tikzcd}
x^*(\Sym^2 \Omega_X)^{\oplus r} \ar[r, "\beta"] &
\sheafHom_T(x^* \T_X\otimes x^* \T_X, \OO_T^{\oplus r}) \ar[ld, two heads]\\
\sheafHom_T(x^* \T_X \otimes K, C) \ar[r, "\theta", "\sim"'] &
\sheafHom_T(x^*\T_X ,\sheafHom_T(K,C))
\end{tikzcd}
\end{equation*}

\begin{remark}
\label{rmk:quadratic-to-bilinear}
The image of $\beta$ is $(\Box^2 \T_X)^\vee$, so that of $\bar \beta$ is
\begin{equation*}
\theta(\sheafHom_T(x^* \T_X \boxempty K, C)).
\end{equation*}
\end{remark}

\begin{proposition}
\label{prop:2nd-id-equiv}
Let $\delta\in \Gamma(T,x^* (\Sym^2 \Omega_X)^{\oplus r})$ be a section. Then
\begin{equation*}
\dd_x(\tilde d(f+\iota \delta)) = \dd_x(\tilde d f) + \bar \beta \delta
\end{equation*}
as elements of $\Hom_T(x^*\T_X ,\sheafHom_T(K,C))$, and
\begin{equation*}
\dd_x^2 (f+\iota \delta) = \dd_x^2 f + (\bar \beta \delta)|_K
\end{equation*}
as elements of $\Hom_T(K,\sheafHom_T(K,C))$.
\end{proposition}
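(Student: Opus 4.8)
The plan is to reduce the statement to a local computation, using the locality properties of the intrinsic differential established in the previous section, and then to invoke the explicit coordinate formula for second-order differentials (Lemma~\ref{lemma:2nd-id-locally}) together with the additivity of the Hessian recorded in Example~\ref{rmk:2nd-id-additivity}. Since both asserted identities are about equality of global sections of an $\OO_T$-module, and since the formation of $\dd_x(\tilde d(-))$ commutes with restriction to an open $U\subseteq X$ over which $\Omega_X$ is free (this is built into Definition~\ref{def:id-pp}, applied to the family $(x,\tilde d f)$ of principal parts of order $1$), the question is Zariski-local on $T$ and hence on $X$. So I may assume there are global sections $x_1,\dotsc,x_n\in\Gamma(X,\OO_X)$ whose differentials form a basis for $\Omega_X$, and work with the corresponding trivializations.

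First I would unwind the definitions in this trivialized setting. Writing $f=(f_1,\dotsc,f_r)$ with each $f_\ell\in\Gamma(T,x^*\PP_X^2)$ and $\delta=(\delta_1,\dotsc,\delta_r)$ with each $\delta_\ell\in\Gamma(T,x^*\Sym^2\Omega_X)$, the operator $\tilde d$ applied componentwise sends $f+\iota\delta$ to $\tilde d f+\tilde d(\iota\delta)$, by $\OO_X$-linearity of $\tilde d$. The key auxiliary fact is that $\tilde d\circ\iota:\Sym^2\Omega_X\to\PP_X^1\Omega_X$, once we identify $\PP_X^1\Omega_X$ with $\PP_X^1\sheafHom_X(\T_X,\OO_X)$, has image consisting of \emph{truncation-trivial} principal parts — i.e.\ its composition with the truncation $\PP_X^1\sheafHom_X(\T_X,\OO_X)\to\sheafHom_X(\T_X,\OO_X)$ is zero — and that under the map $\overline\nabla$ of Proposition~\ref{prop:id-pp-def} it recovers precisely the bilinear map $\beta(\delta)$ (in its avatar as a map $\T_X\to\sheafHom_X(\T_X,\OO_X^{\oplus r})$). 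This is exactly the second part of Lemma~\ref{lemma:hessian-pp} in coordinates: for a single quadratic form $uv$ with $u,v\in\Omega_X$, one checks $\tilde d(\iota(uv))$ unwinds, via the defining formula $\iota_2(du_1du_2)=(d_X^2u_1-u_1)(d_X^2u_2-u_2)$ and the Leibniz rule, to the principal part whose $\overline\nabla$-image is $u\otimes v+v\otimes u$. In particular $\ker(\overline\nabla(\tilde d f_1))$ is unchanged when we pass from $f$ to $f+\iota\delta$, since the order-$1$ truncation $f_1$ is unchanged; so $K$ and $C$, and the inclusion $\iota:K\hookrightarrow x^*\T_X$ and projection $q:\OO_T^{\oplus r}\to C$ used to cut out the intrinsic differential, are literally the same for both jets.

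With that in hand, the first identity follows by applying $(\iota,q)$-postcomposition to the linear decomposition $\overline\nabla(\tilde d(f+\iota\delta))=\overline\nabla(\tilde d f)+\overline\nabla(\tilde d(\iota\delta))$: the first summand yields $\dd_x(\tilde d f)$ by definition, and the second yields $\bar\beta\delta$ by the computation just described together with the definition of $\bar\beta$ as $\theta\circ q_*\circ(\iota^*)\circ\beta$. The second identity is then immediate: restricting the equality of maps $x^*\T_X\to\sheafHom_T(K,C)$ to the submodule $K\subseteq x^*\T_X$ gives $\dd_x^2(f+\iota\delta)=\dd_x^2 f+(\bar\beta\delta)|_K$, since $\dd_x^2(-)$ is by definition the restriction of $\dd_x(\tilde d(-))$ to $K$.

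The main obstacle is the bookkeeping in the middle step: correctly identifying $\tilde d\circ\iota$ with $\beta$ through the several canonical isomorphisms in play — the identification $\PP_X^1\Omega_X\cong\PP_X^1\otimes_{d_X^1}\Omega_X$, the trivialization $\Omega_X^{\oplus r}\cong\sheafHom_X(\T_X,\OO_X^{\oplus r})$, and the adjunctions $\theta,\bar\theta$ — and checking that the symmetrization $u\otimes v+v\otimes u$ (as opposed to $uv$, which in characteristic $2$ would be problematic) is what genuinely appears. All of this is pinned down by Lemmas~\ref{lemma:2nd-id-locally} and~\ref{lemma:hessian-pp}, so once those are available the argument is a short diagram chase; I would present it as such rather than reproving the coordinate formulas here.
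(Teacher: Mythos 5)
Your proposal is correct and follows essentially the same route as the paper: reduce to the case where $\Omega_X$ is trivialized by global coordinates, then combine Lemma \ref{lemma:2nd-id-locally} (which identifies $\dd_x(\tilde d(-))$ with the image of $\overline{\Hess}(-)$ under the natural maps) with the identity $\overline{\Hess}\circ\iota=\beta$ from the second part of Lemma \ref{lemma:hessian-pp}, and restrict to $K$ for the second assertion. Your extra observation that the first-order truncation of $f+\iota\delta$ equals $f_1$, so $K$ and $C$ are literally unchanged, is a point the paper leaves implicit but is exactly right.
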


\begin{proof}
The question being Zariski-local on $T$, we may assume that there exist global sections $x_1,\dotsc, x_n\in \Gamma(X,\OO_X)$ whose differentials from a basis for $\Omega_X$ as an $\OO_X$-module. Then the result follows from the combination of Lemma \ref{lemma:2nd-id-locally} and the second part of Lemma \ref{lemma:hessian-pp} below.
\end{proof}

In the remainder of this section we prove the two lemmas that were used in the proofs of Propositions \ref{prop:2nd-id-symm} and \ref{prop:2nd-id-equiv}.

\begin{setup}
\label{setup:hessian}
Suppose that there exist sections $x_1,\dotsc, x_n\in \Gamma(X,\OO_X)$ be sections whose differentials form an $\OO_X$-linear basis for $\Omega_X$.
Fix a choice of such sections.
Let $\{\partial_1, \dotsc, \partial_n \}$ be the basis of $\T_X= \Omega_X^\vee$ that is dual to the basis $\{dx_1,\dotsc, dx_n\}$ of $\Omega_X$. 
Let
$\Hess  : \OO_X\to \Omega_X\otimes \Omega_X$
be the second-order $k$-linear differential operator differential operator that sends 
\begin{equation*}
\varphi\mapsto \sum_{a,b=1}^n
\partial_a \partial_b \varphi\cdot
dx_a \otimes dx_b.
\end{equation*}
For each $a=1,\dotsc, n$, we identify $\partial_a\in \Gamma(X,\T_X)$ with a derivation $\partial_a : \OO_X\to \OO_X$, and let
$\bar\partial_a : \PP_X^1\to \OO_X$
and
$\tilde\partial_a : \PP_X^2 \to \PP_X^1$
be the $\OO_X$-linear maps characterized by the conditions that $\bar\partial_a \circ d_X^1 = \partial_a$ and $\tilde \partial_a \circ d_X^2 = d_X^1 \circ \partial_a$.
Let
$\overline{\Hess } : \PP_X^2\to \Omega_X\otimes\Omega_X$
be the unique $\OO_X$-linear such that $\overline{\Hess }\circ d_X^2 = \Hess $.
\end{setup}

\begin{lemma}
\label{lemma:2nd-id-locally}
Assume Setup \ref{setup:hessian}.
Let $\tilde\theta : \Omega_X\otimes\Omega_X \to \sheafHom_X(\T_X,(\T_X)^\vee)$ be the $\OO_X$-linear map that sends $b\mapsto (v\mapsto b(v\otimes-))$. 
The intrinsic differential $\dd_x(\tilde df)$ is equal to the composition of the $\OO_T$-linear maps
\begin{equation*}
\begin{tikzcd}
(x^* \T_X)\ar[r,"\tilde \theta(\overline{\Hess } f)"] &
\sheafHom_T(x^* \T_X, \OO_T^{\oplus r}) \ar[r,two heads] &
\sheafHom_T(\ker(\bar f_1),\coker(\bar df_1)),
\end{tikzcd}
\end{equation*}
where the second arrow is induced by the inclusion $\ker(d\bar f_1)\hookrightarrow x^* \T_X$ and the projection $\OO_T^{\oplus r} \to \coker(d\bar f_1)$.
\end{lemma}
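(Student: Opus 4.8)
The plan is to unwind the definition of $\dd_x(\tilde d f)$ through Proposition \ref{prop:id-pp-def} and reduce the lemma to a single identity between $\OO_X$-linear maps, which can then be verified on generators. By Definition \ref{def:2d-id-jet}, the map $\dd_x(\tilde d f)$ is the intrinsic differential of the family of first-order principal parts $(x,\tilde d f)$ of $\OO_X$-linear maps $\T_X\to\OO_X^{\oplus r}$; its truncation to order zero is $\bar d f_1$, by the commutative square relating $\tilde d$, $\bar d$ and the truncation maps recorded just before Definition \ref{def:2d-id-jet}, so the kernel and cokernel of that truncation are exactly $K$ and $C$. Working with respect to the bases $\{\partial_a\}$ of $\T_X$ and the standard basis of $\OO_X^{\oplus r}$ from Setup \ref{setup:hessian}, Proposition \ref{prop:id-pp-def} identifies $\dd_x(\tilde d f)$ with the image of $\overline{\nabla}(\tilde d f)$ under $\phi\mapsto q\circ\phi\circ\iota$, where $\iota\colon K\hookrightarrow x^*\T_X$ and $q\colon\OO_T^{\oplus r}\twoheadrightarrow C$. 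Since $X$ is smooth I regard $\overline{\nabla}(\tilde d f)$ as an $\OO_T$-linear map $x^*\T_X\to\sheafHom_T(x^*\T_X,\OO_T^{\oplus r})$ via $x^*\Omega_X=(x^*\T_X)^\vee$; with this reading the composition displayed in the statement is precisely the image of $\overline{\nabla}(\tilde d f)$ under post-composition with $q$ and pre-composition with $\iota$. Thus the lemma follows once we know that $\overline{\nabla}(\tilde d f)$, so regarded, equals $\tilde\theta(\overline{\Hess}f)$, which in turn amounts to the identity
\begin{equation*}
\overline{\nabla}\circ\tilde d=\overline{\Hess}\colon\PP_X^2\longrightarrow\Omega_X\otimes\Omega_X
\end{equation*}
(applied componentwise in the $r$ target coordinates and pulled back along $x$); here the matching of the two tensor factors is unambiguous because $\tilde\theta$ reads the first factor of $\Omega_X\otimes\Omega_X$ as the source direction, in accordance with the convention of Definition \ref{def:id-pp}, and because $\Hess\varphi$ is a symmetric tensor.

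To prove the displayed identity I would use that both sides are $\OO_X$-linear and that $\PP_X^2$ is generated as an $\OO_X$-module by the image of $d_X^2$ — a fact that reduces to the analogue for $\PP_X^1$ used in the proof of Proposition \ref{prop:leibniz-pp}, via the exact sequence $0\to\Sym^2\Omega_X\to\PP_X^2\to\PP_X^1\to0$ and the formula for $\iota_2$. It therefore suffices to evaluate both sides on a section $d_X^2\varphi$ with $\varphi\in\OO_X$. There $\overline{\Hess}(d_X^2\varphi)=\Hess\varphi=\sum_{a,b}\partial_a\partial_b\varphi\,dx_a\otimes dx_b$, whereas $\tilde d(d_X^2\varphi)=d_\Omega^1(d\varphi)=d_\Omega^1\bigl(\sum_a\partial_a\varphi\,dx_a\bigr)$; expanding by the Leibniz rule for the order-one operator $d_\Omega^1$ and applying the facts that $\overline{\nabla}$ is $\OO_X$-linear, annihilates each $d_\Omega^1(dx_a)$ (whose coordinates in the basis $\{dx_c\}$ are constant), and restricts to the identity on $\Omega_X\otimes\Omega_X\subseteq\PP_X^1\Omega_X$, one obtains $\overline{\nabla}(\tilde d(d_X^2\varphi))=\sum_{a,b}\partial_b\partial_a\varphi\,dx_b\otimes dx_a$, which is the same tensor after relabelling the summation indices. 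This establishes the identity, hence the lemma.

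The argument is mostly bookkeeping: one must keep straight which copy of $\Omega_X$ plays the role of the differentiation direction and which records the $\T_X$-coordinate of the underlying map, and check that the identifications implicit in $\tilde\theta$ and in the second map of the displayed composition agree with those of Definition \ref{def:id-pp}. The one genuinely substantive point — the part I expect to require most care — is the behaviour of $\overline{\nabla}$ on $\PP_X^1\Omega_X$, namely that it is the projection onto $\Omega_X\otimes\Omega_X$ attached to the trivial connection on $\Omega_X$ determined by the chosen basis; this is where the Leibniz rule enters, and the computation parallels the one already carried out inside the proof of Proposition \ref{prop:id-pp-def}. I would also emphasize that symmetry of the Hessian plays no role here — the two coordinate expressions above coincide by a bare reindexing — and is instead the input to Proposition \ref{prop:2nd-id-symm}, coming from the commutativity of the coordinate vector fields $\partial_a$.
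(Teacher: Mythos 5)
Your proposal is correct and follows essentially the same route as the paper: reduce, via Proposition \ref{prop:id-pp-def} applied in the bases of Setup \ref{setup:hessian}, to the identity $\overline{\nabla}\circ\tilde d=\overline{\Hess}$ on $(\PP_X^2)^{\oplus r}$, and verify it on generators $d_X^2\varphi$ using that $\PP_X^2$ is generated over $\OO_X$ by the image of $d_X^2$. The only (harmless) difference is that the paper evaluates $\overline{\nabla}$ on $\tilde d(d_X^2\varphi)=d^1_{\Omega}(d\varphi)$ directly from the defining property $\overline{\nabla}\circ d^1=\nabla$, whereas you take a slightly longer detour through a Leibniz expansion inside $\PP_X^1\Omega_X$; both computations yield the same tensor.
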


\begin{proof}
Write $H := \sheafHom_X(\T_X, \OO_X^{\oplus r})$. 
Let $\nabla : H\to \Omega_X\otimes H$ be the $k$-linear map given by differentiation of matrix elements with respect to the basis $\{\partial_1,\dotsc, \partial_n\}\subseteq \Gamma(X, \T_X)$ of $\T_X$ and the standard basis of $\OO_X^{\oplus r}$. 
Let $\overline\nabla : \PP_X^1 H \to \Omega_X\otimes H$ 
be the unique $\OO_X$-linear map such that $\overline\nabla \circ d_H^1 = \nabla$. 
Then $\overline\nabla \circ \tilde d = \overline{\Hess }$ as $\OO_X$-linear maps $(\PP_X^2)^{\oplus r} \to \Omega_X\otimes H = (\Omega_X\otimes \Omega_X)^{\oplus r}$. Indeed, $\PP_X^2$ is generated as an $\OO_X$-module by the image of $d_X^2 : \OO_X\to \PP_X^2$ and, for each local section $\varphi\in \OO_X^{\oplus r}$, we have
\begin{align*}
\overline\nabla \tilde d d_X^2 \varphi
&= \overline\nabla d_H^1 d\varphi \\
&= \nabla(d\varphi)\\
&= \textstyle  \nabla( \sum_b \partial_b \varphi\cdot dx_b ) \\
&= \textstyle \sum_b d(\partial_b \varphi) \otimes dx_b \\
&= \textstyle \sum_{a,b} \partial_a \partial_b \varphi \cdot dx_a\otimes dx_b \\
&= \operatorname{Hess}(\varphi)\\
&= \overline{\Hess }(d_X^2 \varphi).
\end{align*} 
In particular, $\overline\nabla(\tilde d f) = \overline{\Hess }(f)$ as elements of $\Gamma(T, x^*(\Omega_X\otimes H))$. 
The result now follows from the definition of the intrinsic differential (Defintion \ref{def:id-pp}).
\end{proof}

\begin{lemma}
\label{lemma:hessian-pp}
Assume Setup \ref{setup:hessian}.
The $\OO_X$-linear map
\begin{equation*}
\overline{\Hess } : \PP_X^2 \to \Omega_X\otimes \Omega_X
\end{equation*}
factors through $\im(\beta) = (\Box^2 \T_X)^\vee$.
Furthermore, 
\begin{equation*}
\overline{\Hess }\circ \iota=\beta
\end{equation*}
as $\OO_X$-linear maps $\Sym^2 \Omega_X\to \Omega_X\otimes \Omega_X$. 
\end{lemma}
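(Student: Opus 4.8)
The statement is purely $\OO_X$-linear and local, so the plan is to reduce everything to an explicit computation in coordinates using Setup \ref{setup:hessian}. First I would recall that $\PP_X^2$ is generated as an $\OO_X$-module by the image of the universal operator $d_X^2 : \OO_X \to \PP_X^2$, together with the fact that the canonical short exact sequence
\begin{equation*}
0 \to \Sym^2 \Omega_X \xrightarrow{\iota} \PP_X^2 \to \PP_X^1 \to 0
\end{equation*}
is exact because $X$ is smooth. Since both $\overline{\Hess}$ and the target map $\beta$ are $\OO_X$-linear, verifying $\overline{\Hess} \circ \iota = \beta$ amounts to checking it on the $\OO_X$-module generators $dx_a\, dx_b$ of $\Sym^2 \Omega_X$. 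The defining formula for $\iota_2$ in section \ref{sec:jets-pp} gives $\iota(dx_a\, dx_b) = (d_X^2 x_a - x_a)(d_X^2 x_b - x_b)$, and expanding this product in $\PP_X^2$ and applying $\overline{\Hess}$ (which, by $\overline{\Hess}\circ d_X^2 = \Hess$ and the $\OO_X$-linearity together with $\Hess$ being a second-order operator, kills the linear and constant terms and sends $d_X^2(x_a x_b)$ to $\Hess(x_a x_b) = dx_a\otimes dx_b + dx_b\otimes dx_a$) should yield exactly $\beta(dx_a\, dx_b) = dx_a\otimes dx_b + dx_b\otimes dx_a$. This is the routine but slightly fiddly computation; the one subtlety is correctly tracking the cross terms $-x_a\, d_X^2 x_b$, $-x_b\, d_X^2 x_a$ and $x_a x_b$ under $\overline{\Hess}$, which all vanish because $\Hess$ annihilates products of an element of $\OO_X$ with a degree-$\le 1$ polynomial in a way that leaves only the genuinely second-order part.

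For the first assertion — that $\overline{\Hess}$ factors through $\im(\beta) = (\Box^2 \T_X)^\vee \subseteq \Omega_X\otimes\Omega_X$ — I would again reduce to generators $d_X^2 \varphi$ of $(\PP_X^2)$, $\varphi\in\OO_X$, and use $\overline{\Hess}(d_X^2\varphi) = \Hess(\varphi) = \sum_{a,b}\partial_a\partial_b\varphi\, dx_a\otimes dx_b$. When $\charac k \neq 2$ the coefficients $\partial_a\partial_b\varphi$ are symmetric in $a,b$, so $\Hess(\varphi)$ lies in $\Sym^2\Omega_X$, which is precisely $\im(\beta)$ in that case. When $\charac k = 2$ the point is instead that the \emph{diagonal} terms behave correctly: one has $\partial_a^2\varphi \cdot dx_a\otimes dx_a$, and $\im(\beta) = (\wedge^2\T_X)^\vee$ is the image of the map sending $uv\mapsto u\otimes v + v\otimes u$, which in characteristic $2$ contains $dx_a\otimes dx_a$ (take $u=v=dx_a$, giving $2\,dx_a\otimes dx_a = 0$ — so actually one must argue more carefully here). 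The correct statement in characteristic $2$ is that $\im(\beta)$ is the subspace spanned by the $dx_a\otimes dx_b + dx_b\otimes dx_a$ for all $a,b$ (including $a=b$, which gives $0$), i.e. the ``symmetric with zero diagonal'' tensors; I would then need the identity $\partial_a^2 = \tfrac{\text{(something)}}{}$ — more honestly, the relevant fact is that in characteristic $2$, $\partial_a\partial_a\varphi$ appears with $dx_a\otimes dx_a$, and I must check this diagonal term is killed, which uses that the skew-symmetrized Hessian really does see $\partial_a\partial_b$ for $a\ne b$ only after passing to the appropriate quotient. This is the genuinely delicate characteristic-$2$ bookkeeping.

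On reflection, the cleanest route for the first assertion is to observe directly that $\overline{\Hess}\circ\iota = \beta$ (the second assertion) combined with the right-exactness of the sequence $\Sym^2\Omega_X \to \PP_X^2 \to \PP_X^1 \to 0$ does \emph{not} immediately give the factorization, since $\overline{\Hess}$ need not kill $\PP_X^1$; rather, one computes $\overline{\Hess}$ on all of $(\PP_X^2)$ via the generators $d_X^2\varphi$ and reads off from $\Hess(\varphi) = \sum_{a,b}\partial_a\partial_b\varphi\, dx_a\otimes dx_b$ that the image lands in the span of $\{dx_a\otimes dx_b : \text{with } \partial_a\partial_b = \partial_b\partial_a\}$, and this span is exactly $\im(\beta)$ by an elementary linear-algebra identification of $(\Box^2\T_X)^\vee$ as a subspace of $\Omega_X\otimes\Omega_X$ (valid in all characteristics once one spells out $\Box^2$). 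I expect the characteristic-$2$ identification of $\im(\beta)$ with the right subspace of $\Omega_X\otimes\Omega_X$ to be the main obstacle, and I would isolate it as a short separate linear-algebra observation about $\wedge^2 V \hookrightarrow V\otimes V$ (via $u\wedge v\mapsto u\otimes v - v\otimes u = u\otimes v + v\otimes u$) rather than letting it clutter the main argument. Everything else is a direct unwinding of the definitions in Setup \ref{setup:hessian} and section \ref{sec:jets-pp}.
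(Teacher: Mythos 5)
Your treatment of the second identity, $\overline{\Hess}\circ\iota=\beta$, is essentially the paper's argument: evaluate on the generators $dx_a\,dx_b$, expand $\iota(dx_a\,dx_b)=(d_X^2x_a-x_a)(d_X^2x_b-x_b)=d_X^2(x_ax_b)-x_a\,d_X^2x_b-x_b\,d_X^2x_a+x_ax_b$, and use $\OO_X$-linearity of $\overline{\Hess}$ together with $\Hess(x_b)=\Hess(1)=0$ to kill the lower-order terms. That part is fine.

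The gap is in your first assertion, in characteristic $2$. You reduce to showing $\Hess(\varphi)=\sum_{a,b}\partial_a\partial_b\varphi\,dx_a\otimes dx_b\in\im(\beta)$ for arbitrary $\varphi\in\OO_X$, and you then claim that the image ``lands in the span of $\{dx_a\otimes dx_b$ with $\partial_a\partial_b=\partial_b\partial_a\}$, and this span is exactly $\im(\beta)$.'' In characteristic $2$ that is false: $\beta(dx_a\,dx_a)=2\,dx_a\otimes dx_a=0$, so $\im(\beta)$ is spanned by the $dx_a\otimes dx_b+dx_b\otimes dx_a$ with $a\ne b$, i.e.\ it consists of symmetric tensors whose \emph{diagonal} coefficients vanish. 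Symmetry of the coefficients $\partial_a\partial_b\varphi$ is therefore not enough; you must additionally prove that $\partial_a\partial_a\varphi=0$ for every $\varphi\in\OO_X$ when $\charac(k)=2$. This is true but needs an argument you never supply (for instance: in characteristic $2$ the operator $\partial_a^2$ is again a derivation, and it kills each $x_i$ since $\partial_a^2(x_i)=\partial_a(\delta_{ai})=0$; as $\Omega_X$ is free on the $dx_i$, a derivation vanishing on all $x_i$ is zero). Your proposed ``separate linear-algebra observation'' about $\wedge^2 V\hookrightarrow V\otimes V$ only identifies $\im(\beta)$; it does not produce the vanishing of the diagonal second derivatives, which is the actual content. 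Note that the paper avoids this issue entirely by a different reduction: using the \'etale map $(x_1,\dotsc,x_n):X\to\A^n$ it shows $\PP_X^2$ is \emph{freely} generated over $\OO_X$ by $d_X^2(x_ix_j)$, $d_X^2x_i$, $d_X^2 1$, and on these generators $\overline{\Hess}$ gives either $0$ or $dx_i\otimes dx_j+dx_j\otimes dx_i=\beta(dx_i\,dx_j)$, which lies in $\im(\beta)$ in every characteristic with no diagonal subtlety; the factorization and the identity $\overline{\Hess}\circ\iota=\beta$ then come out of one and the same computation. Either supply the $\partial_a^2=0$ argument or switch to the paper's generating set.
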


\begin{proof}
First, we claim that $\PP_X^2$ is freely generated as an $\OO_X$-module by the sections
\begin{equation*}
d_X^2(x_i x_j), d_X^2 x_i, d_X^2 1\in \Gamma(X,\PP_X^2),
\end{equation*}
where $i,j=1,\dotsc, n$.
To see this, let $\A_k^n$ denote the affine space over $k$ with coordinates $t_1,\dotsc, t_n$, and let $\varphi : X\to \A^n_k$ be the unique map of $k$-schemes such that $\varphi^\# t_i = x_i$ for all $i=1,\dotsc, n$.
Then $\varphi$ is \'etale, so induces an isomorphism of $\OO_X$-algebras $\varphi^* \PP_{\A^n}^2 \xrightarrow\sim \PP_X^2$.
This isomorphism sends $d_{\A^n}^2 t_i\mapsto d_X^2 x_i$ for all $i=1,\dotsc, n$.
Now let $\varepsilon_1,\dotsc, \varepsilon_n$ be indeterminates.
It is a standard fact that the unique map of $\OO_{\A^n}$-algebras 
\begin{equation*}
\OO_{\A^n}[\varepsilon_1,\dotsc, \varepsilon_n]\to \PP_{\A^n}^2
\end{equation*}
that sends $\varepsilon_i \mapsto d_{\A^n}^2 t_i -t_i$ for all $i=1,\dotsc, n$ is surjective with kernel $\langle \varepsilon_1,\dotsc, \varepsilon_n\rangle^2$.
The claim follows.

Next, we note that
\begin{align*}
\overline{\Hess }( d^2_X(x_i x_j))
&= \sum_{a,b} \partial_a \partial_b(x_i x_j) dx_a \otimes dx_b \\
% &= \sum_{a,b} \partial_a (x_j \partial_b x_i + x_i \partial_b x_j) dx_a \otimes dx_b\\
% &= \sum_a \partial_a x_j dx_a \otimes dx_i + \partial_a x_i dx_a\otimes dx_j \\
&= dx_j \otimes dx_i + dx_i \otimes dx_j.
\end{align*}
Similarly, $\overline{\Hess }(d_X^2 x_i) = 0$ for all $i=1,\dotsc,n$, and $\overline{\Hess }( d_X^2 1 )=0$.
Furthermore, 
\begin{align*}
\overline{\Hess }( \iota (dx_i\cdot dx_j ))
&= \overline{\Hess }((d_X^2 x_i -x_i)(d_X^2 x_j -x_j))\\
&= \overline{\Hess }( d_X^2 (x_i x_j) - x_i d_X^2 x_j 
-x_j d_X^2 x_i + x_i x_j )\\
&= dx_j \otimes dx_i + dx_i \otimes dx_j.
\end{align*}
The result follows.
\end{proof}

\section{Universal second-order singularities}

The proofs of Theorems \ref{thm:intro-smoothness} and \ref{thm:intro-2nd-sings} outlined in section \ref{sec:generic-maps} made use of schemes of singularities inside the second jet scheme. 
In this section we construct these schemes of singularities and reduce the facts about their codimension and smoothness that we used in section \ref{sec:generic-maps}, namely Theorems \ref{thm:univ-bad-locus} and \ref{thm:univ-2nd-sings} below, to corresponding facts about schemes of linear maps satisfying certain symmetry and rank conditions.
We will prove the latter in sections \ref{sec:sym-bil} and \ref{sec:min-codim} below. 

Let $k$ be a field.
Let $X$ be a smooth scheme of pure dimension $n$ over $k$. 
Let $\A^r$ be the affine space of dimension $r$ over $k$.
For $m=1, 2$, let $J^m := J^m(X,\A^r)$ be the $m$th jet scheme, see Definition \ref{def:jet-scheme}, and let $\tau_m \in \Gamma(J^m, (\PP_X)_{J^m}^{\oplus r})$ be the tautological section, see Definition \ref{def:taut-sect}; then the pair $(J^m \to X, \tau_m)$ is a universal family of $m$-jets of morphisms from $X$ to $\A^r$. 

Let $q : J^2\to J^1$ be the morphism of vector bundles over $X$ induced by the $\OO_X$-linear truncation map $\varepsilon : \PP_X^2\to \PP_X^1$. Thus $q$ is the unique morphism of $X$-schemes such that $q^*\tau_1 = \varepsilon(\tau_2)$. 
Let $\bar d : \PP_X^1 \to \Omega_X$ be the unique $\OO_X$-linear map such that $\bar d \circ d_X^1 = d$.
Let $\tilde d : \PP_X^2 \to \PP_X^1 \Omega_X$ be the unique $\OO_X$-linear map such that $\tilde d \circ d_X^2 = d_\Omega^1 \circ d$. 
Then the image of the differential
\begin{equation*}
\tilde d \tau_2 \in \Gamma(J^2, (\PP_X^1 \Omega_X^{\oplus r})_{J^2})
\end{equation*}
under the truncation map $\PP_X^1 \Omega_X\to \Omega_X$ is equal to
\begin{equation*}
q^* \bar d \tau_1 \in \Gamma(J^2, (\Omega_X^{\oplus r})_{J^2}).
\end{equation*}

Let $i$ be a nonnegative integer.
Let $\Sigma^i\subseteq J^1$ be the $i$th universal critical locus, see Definition \ref{def:univ-crit-loci}.
Let $\bar\tau_1 \in \Gamma(\Sigma^i, (\PP_X^1)_{\Sigma^i}^{\oplus r})$ be the restriction of $\tau_1$ to $\Sigma^i$.
Let $K$ and $C$ respectively denote the kernel and cokernel of $\bar d\bar \tau_1 : (\T_X)_{\Sigma^i} \to \OO_{\Sigma^i}^{\oplus r}$. 
Then $K$ and $C$ are locally free $\OO_{\Sigma^i}$-modules, and
\begin{equation*}
q^* K = \ker(q^*\bar d\bar \tau_1)
\qquad\text{and}\qquad
q^* C = \coker(q^*\bar d\bar \tau_1)
\end{equation*}
by Corollary \ref{coker-deg-loci}. 

Let
$
\bar \tau_2 \in \Gamma(q^{-1}\Sigma^i, (\PP_X^2)_{q^{-1}\Sigma^i}^{\oplus r})
$
be the restriction of $\tau_2$ to $q^{-1}\Sigma^i$.
The intrinsic differential of the family of first-order principal parts $(q^{-1}\Sigma^i, \tilde d\bar \tau_2)$ is an $\OO_{q^{-1}\Sigma^i}$-linear map
\begin{equation*}
\dd_{q^{-1}\Sigma^i}(\tilde d\bar \tau_2)
: (\T_X)_{q^{-1}\Sigma^i} \to
\sheafHom_{q^{-1}\Sigma^i}( q^* K, q^* C).
\end{equation*}
The second intrinsic differential of the family of $2$-jets $(q^{-1}\Sigma^i, \bar \tau_2)$ is the restriction 
\begin{equation*}
\dd^2_{q^{-1}\Sigma^i} \bar \tau_2
: q^* K \to
\sheafHom_{q^{-1}\Sigma^i}( q^* K, q^* C)
\end{equation*}
of this map to $q^* K$.

Let $j$ be a nonnegative integer.

\begin{definition}
\label{def:univ-2nd-sings}
The \emph{universal bad locus} $B^i$ is the closed subscheme of $q^{-1}\Sigma^i$ defined as follows.
If $n\ge i(|n-r|+i)$, then $B^i(f)$ is the first degeneracy locus
\begin{equation*}
B^i := \Sigma^1( \dd_{q^{-1}\Sigma^i}(\tilde d\bar \tau_2) ) \subseteq q^{-1}\Sigma^i.
\end{equation*}
Otherwise, $B^i = q^{-1}\Sigma^i$.
The locally closed subscheme
\begin{equation*}
\Sigma^{i,j} :=
\Sigma^j( \dd^2_{q^{-1}\Sigma^i} \bar \tau_2 )
\setminus
\Sigma^{j+1}( \dd^2_{q^{-1}\Sigma^i} \bar \tau_2 )
\subseteq q^{-1}\Sigma^i
\end{equation*}
is called the \emph{universal locus of second-order singularities with symbol $(i,j)$} in $J^2(X,\A^r)$.
\end{definition}

The next result relates Definitions \ref{def:univ-2nd-sings} and \ref{def:2nd-order-sings}.

\begin{proposition}
\label{prop:univ-sings-def-2}
Let $U\subseteq X$ be an open subset and let $f : U\to \A^r$ be a morphism of $k$-schemes.
Let $j^2 f : U\to J^1(X, \A^r)$ be the $2$-jet of $f$.
Then
\begin{equation*}
(j^2 f)^{-1}B^i = B^i(f)
\qquad\text{and}\qquad 
(j^2 f)^{-1}\Sigma^{i,j} = \Sigma^{i,j}(f).
\end{equation*}
as subschemes of $U$.
\end{proposition}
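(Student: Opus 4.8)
The plan is to reduce both identities to the functoriality of the intrinsic differential under pullback (Remarks \ref{rmk:2nd-id-compat} and \ref{rmk:2nd-id-funct}) together with the functoriality of degeneracy loci (Remark \ref{deg-loci-functor}). First I would record that the truncation morphism $q : J^2\to J^1$ satisfies $q\circ j^2 f = j^1 f$: indeed $q$ is induced by the truncation $\varepsilon : \PP_X^2\to \PP_X^1$, and $(j^2 f)^*\tau_2 = (d_X^2 f^\# t_1,\dotsc,d_X^2 f^\# t_r)$ truncates to $(d_X^1 f^\# t_1,\dotsc,d_X^1 f^\# t_r) = (j^1 f)^*\tau_1$. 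Combined with Proposition \ref{prop:univ-sings-def-1}, this gives
\[
(j^2 f)^{-1} q^{-1}\Sigma^i = (j^1 f)^{-1}\Sigma^i = \Sigma^i(f),
\]
so $j^2 f$ restricts to a morphism $g : \Sigma^i(f)\to q^{-1}\Sigma^i$, and it suffices to show $g^{-1}B^i = B^i(f)$ and $g^{-1}\Sigma^{i,j} = \Sigma^{i,j}(f)$ (the inverse image under $j^2 f$ of any subscheme of $q^{-1}\Sigma^i$ coincides with its inverse image under $g$).

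When $n < i(|n-r|+i)$ both $B^i$ and $B^i(f)$ are, by definition, the entire critical loci $q^{-1}\Sigma^i$ and $\Sigma^i(f)$, so in that case the claim about bad loci follows from the displayed equality; I may therefore assume $n\ge i(|n-r|+i)$, so that $B^i = \Sigma^1\bigl(\dd_{q^{-1}\Sigma^i}(\tilde d\bar\tau_2)\bigr)$. The heart of the argument is the computation of $g^*\dd_{q^{-1}\Sigma^i}(\tilde d\bar\tau_2)$. By Corollary \ref{coker-deg-loci} the cokernel $C$ of $\bar d\bar\tau_1$ is locally free over $\Sigma^i$, hence $q^*C$ and its further pullback along $g$ are locally free; this is precisely the hypothesis under which Remark \ref{rmk:2nd-id-funct} applies and yields $g^*\dd_{q^{-1}\Sigma^i}(\tilde d\bar\tau_2) = \dd_{\Sigma^i(f)}\bigl(\tilde d(g^*\bar\tau_2)\bigr)$. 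Since $g^*\bar\tau_2$ is the restriction to $\Sigma^i(f)$ of $(j^2 f)^*\tau_2$, i.e.\ it equals $\iota^*(d_X^2 f)$ with $\iota : \Sigma^i(f)\hookrightarrow U$ the inclusion and $f$ viewed as a section of $\OO_U^{\oplus r}$, Remark \ref{rmk:2nd-id-compat} identifies this with $\dd_{\Sigma^i(f)}(df)$. Here the kernels and cokernels in play are matched up via Corollary \ref{coker-deg-loci}, which in particular gives $g^*(q^*K) = \ker(df|_{\Sigma^i(f)})$.

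Granting this, the first identity follows from Remark \ref{deg-loci-functor}:
\begin{align*}
g^{-1}B^i &= g^{-1}\Sigma^1\bigl(\dd_{q^{-1}\Sigma^i}(\tilde d\bar\tau_2)\bigr) \\
&= \Sigma^1\bigl(g^*\dd_{q^{-1}\Sigma^i}(\tilde d\bar\tau_2)\bigr)
= \Sigma^1\bigl(\dd_{\Sigma^i(f)}(df)\bigr) = B^i(f).
\end{align*}
The second identity is proved in the same way: the second intrinsic differentials are obtained from the intrinsic differentials by restriction along $q^*K\hookrightarrow (\T_X)_{q^{-1}\Sigma^i}$ and $K\hookrightarrow (\T_X)_{\Sigma^i(f)}$ respectively, and since these submodules are compatible under pullback along $g$ (Corollary \ref{coker-deg-loci} again), one obtains $g^*\dd^2_{q^{-1}\Sigma^i}\bar\tau_2 = \dd^2_{\Sigma^i(f)}f$; applying Remark \ref{deg-loci-functor} to the two locally closed degeneracy loci $\Sigma^j(-)\setminus\Sigma^{j+1}(-)$ that cut out $\Sigma^{i,j}$ and $\Sigma^{i,j}(f)$ then gives $g^{-1}\Sigma^{i,j} = \Sigma^{i,j}(f)$. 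I do not expect any genuine difficulty beyond careful bookkeeping; the one point requiring attention is verifying at each step that the local-freeness hypothesis of Remark \ref{rmk:2nd-id-funct} (equivalently, that the map $\mu$ of Remark \ref{rmk:id-funct} is an isomorphism) is in force, which is exactly what Corollary \ref{coker-deg-loci} guarantees over the critical locus $\Sigma^i$.
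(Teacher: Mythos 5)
Your proposal is correct and follows essentially the same route as the paper's proof: both reduce to $q\circ j^2f=j^1f$ together with Proposition \ref{prop:univ-sings-def-1} to identify $(j^2f)^{-1}q^{-1}\Sigma^i$ with $\Sigma^i(f)$, then use Remarks \ref{rmk:2nd-id-compat} and \ref{rmk:2nd-id-funct} to identify the pulled-back (second) intrinsic differentials with $\dd_{\Sigma^i(f)}(df)$ and $\dd^2_{\Sigma^i(f)}f$, handle the case $n<i(|n-r|+i)$ by definition, and conclude via functoriality of degeneracy loci. The extra care you take in checking the local-freeness hypothesis of Remark \ref{rmk:2nd-id-funct} via Corollary \ref{coker-deg-loci} is exactly the bookkeeping the paper leaves implicit.
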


\begin{proof}
By Proposition \ref{prop:univ-sings-def-1} and the observation that $q\circ j^2 f = j^1 f$, 
\begin{equation*}
(j^2 f)^{-1} q^{-1}\Sigma^i = \Sigma^i(f).  
\end{equation*}
By Remarks \ref{rmk:2nd-id-compat} and \ref{rmk:2nd-id-funct},
\begin{align*}
(j^2 f|_{\Sigma^i(f)})^*
\dd_{q^{-1}\Sigma^i} (\tilde d\bar \tau_2)
&= \dd_{\Sigma^i(f)}(\tilde d ( j^2 f|_{\Sigma^i(f)})^* \tau_2 )\\
&= \dd_{\Sigma^i(f)}( \tilde d (d_X^2 f|_{\Sigma^i(f)} ) )\\
&= \dd_{\Sigma^i(f)}(df).
\end{align*}
Thus, if $n\ge i(|n-r|+i)$, then
\begin{align*}
(j^2 f)^{-1} B^i
&= (j^2 f|_{\Sigma^i(f)})^{-1} \Sigma^1( \dd_{q^{-1}\Sigma^i} (\tilde d\bar \tau_2) ) \\
&=  \Sigma^1( (j^2 f|_{\Sigma^i(f)})^* \dd_{q^{-1}\Sigma^i} (\tilde d\bar \tau_2) ) \\
&= \Sigma^1( \dd_{\Sigma^i(f)}(df) ) \\
&= B^i(f).
\end{align*}
If $n<i(|n-r|+i)$, then by definition $B^i = q^{-1}\Sigma^i$ and $B^i(f) = \Sigma^i(f)$, so
\begin{equation*}
(j^2 f)^{-1}B^i = B^i(f)
\end{equation*}
in this case also.
In general, 
\begin{equation*}
(j^2 f|_{\Sigma^i(f)})^*
\dd^2_{q^{-1}\Sigma^i} \bar \tau_2 =
\dd^2_{\Sigma^i(f)} f
\end{equation*}
and therefore
\begin{equation*}
(j^2 f)^{-1} \Sigma^{i,j} = \Sigma^{i,j}(f). \qedhere
\end{equation*}
\end{proof}

\begin{theorem}
\label{thm:univ-bad-locus}
Let $m :=\min(n,r)$. 
The universal bad locus $B^i\subseteq q^{-1}\Sigma^i$ is nonempty if, and only if, $1\le i\le m$.

If $n<i(|n-r|+i)$, then by definition $B^i=q^{-1}\Sigma^i$, and $B^i$ is smooth over $X$ of pure relative codimension $i(|n-r|+i)$ in $J^2(X,\A^r)$.  

Suppose that $n\ge i(|n-r|+i)$.
If $\charac(k)\ne 2$, then $B^i \subseteq q^{-1}\Sigma^i$ has relative codimension $n+1$ in $J^2(X,\A^r)$ over $X$.
The same holds if $\charac(k)=2$, with two exceptions:
\begin{enumerate}
\item The case where $i=1$ and $r\ge n$.
\item The case where $i=1$, $r=1$ and $n$ is odd.
\end{enumerate}
In both cases, the universal bad locus $B^i$ has relative codimension $n$ in $J^2(X,\A^r)$ over $X$.
\end{theorem}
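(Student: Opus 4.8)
The plan is to reduce the theorem to a fibrewise fact about spaces of linear maps that are symmetric (when $\charac k\ne 2$) or alternating (when $\charac k=2$), a fact proved in Section~\ref{sec:min-codim}. Throughout write $c:=i(|n-r|+i)$ and $m:=\min(n,r)$. First I would dispose of the bookkeeping. By Proposition~\ref{prop:univ-crit-loci}, $\Sigma^i\subseteq J^1$ is nonempty if and only if $0\le i\le m$, and in that range it is smooth over $X$ of relative codimension $c$ in $J^1$ over $X$. The morphism $q\colon J^2\to J^1$ is induced by the surjection $(\PP_X^2)^{\oplus r}\twoheadrightarrow(\PP_X^1)^{\oplus r}$ with kernel $(\Sym^2\Omega_X)^{\oplus r}$, which is locally free of rank $r\binom{n+1}{2}$; hence $q$ is smooth and surjective, and $q^{-1}\Sigma^i$ is smooth over $X$ of relative codimension $c$ in $J^2$ over $X$. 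This already settles the case $n<c$, where $B^i=q^{-1}\Sigma^i$ by definition. It also gives one half of the nonemptiness claim: $q^{-1}\Sigma^i=\varnothing$ when $i>m$, and when $i=0$ one of $K$, $C$ has rank $0$ on $\Sigma^0$, so the target $\sheafHom_{q^{-1}\Sigma^i}(q^*K,q^*C)$ of $\dd_{q^{-1}\Sigma^i}(\tilde d\bar\tau_2)$ is zero and its first degeneracy locus is empty.

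Now assume $1\le i\le m$ and $n\ge c$, so that $B^i=\Sigma^1(\dd_{q^{-1}\Sigma^i}(\tilde d\bar\tau_2))$. The crux is to exhibit $\dd_{q^{-1}\Sigma^i}(\tilde d\bar\tau_2)$ as the pullback of a tautological symmetric (resp.\ alternating) map. Over $\Sigma^i$ set $\mathcal A:=\V\big(\sheafHom_{\Sigma^i}(\T_X|_{\Sigma^i}\boxempty K,\,C)\big)$; via the isomorphism $\theta$ this is a subbundle of $\V(\sheafHom_{\Sigma^i}(\T_X|_{\Sigma^i},\sheafHom(K,C)))$, whose tautological section is an $\OO_{\mathcal A}$-linear map $h\colon(\T_X)_{\mathcal A}\to\sheafHom_{\mathcal A}(K_{\mathcal A},C_{\mathcal A})$. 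By Proposition~\ref{prop:2nd-id-symm} the section $\dd_{q^{-1}\Sigma^i}(\tilde d\bar\tau_2)$ takes values in $\mathcal A$, hence is classified by a morphism $\Phi\colon q^{-1}\Sigma^i\to\mathcal A$ over $\Sigma^i$ with $\Phi^*h=\dd_{q^{-1}\Sigma^i}(\tilde d\bar\tau_2)$. By Proposition~\ref{prop:2nd-id-equiv} the restriction of $\Phi$ to each fibre of $q$ is affine with linear part $\bar\beta$, and $\bar\beta$ is surjective onto the corresponding fibre of $\mathcal A$ by Remark~\ref{rmk:quadratic-to-bilinear}; so $\Phi$ is a smooth surjective morphism with fibres of constant dimension. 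It follows from Remark~\ref{deg-loci-functor} that $B^i=\Phi^{-1}\Sigma^1(h)$, which is nonempty because $\Sigma^1(h)$ contains the zero section of $\mathcal A\to\Sigma^i$ (the zero map is not surjective onto $\sheafHom(K,C)\ne 0$), and
\[
\operatorname{codim}(B^i,J^2)=\operatorname{codim}(q^{-1}\Sigma^i,J^2)+\operatorname{codim}(B^i,q^{-1}\Sigma^i)=c+\operatorname{codim}(\Sigma^1(h),\mathcal A),
\]
the last equality using $B^i=\Phi^{-1}\Sigma^1(h)$ and the smoothness of $\Phi$.

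It remains to evaluate $\operatorname{codim}(\Sigma^1(h),\mathcal A)$, which is a fibrewise matter: after trivialising $\T_X|_{\Sigma^i}$, $K$ and $C$ over an open subset of $\Sigma^i$, the pair $(\mathcal A,\Sigma^1(h))$ becomes a product of that open with $(H,H')$, where $H$ is the vector space of linear maps $\T_X(x)\to\sheafHom(K_x,C_x)$ whose restriction to $K_x$ is symmetric (resp.\ alternating), and $H'\subseteq H$ is the locus of non-surjective maps. Since $\dim\T_X(x)=n\ge\operatorname{rank}K\cdot\operatorname{rank}C=(n-m+i)(r-m+i)=c$, the codimension computations of Section~\ref{sec:min-codim} give that $H'$ has pure codimension $n+1-c$ in $H$, with two exceptions, both in characteristic $2$: the case $\operatorname{rank}K=1$, equivalently $i=1$ and $r\ge n$, and the case $\operatorname{rank}C=1$ with $\operatorname{rank}K=n$ odd, equivalently $i=1$, $r=1$ and $n$ odd; in each of these the alternating condition on $K_x$ forces $\operatorname{codim}(H',H)=n-c$. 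Substituting into the displayed equality yields $\operatorname{codim}(B^i,J^2)=n+1$ in general and $n$ in the two exceptional cases, as asserted, and the purity of $H'$ makes $B^i$ of pure codimension, as needed for later applications of Lemma~\ref{serre-bertini}.

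The substantial part of the argument is thus the fibrewise codimension estimate with its characteristic-$2$ anomalies, which is the business of Section~\ref{sec:min-codim}; everything before it is assembling the jet-scheme machinery (Propositions~\ref{prop:2nd-id-symm} and \ref{prop:2nd-id-equiv} in particular) and keeping track of codimensions in the two ambient schemes $J^2$ and $q^{-1}\Sigma^i$. The only point requiring care in the reduction is that $\Phi$ is not a fibre bundle with connected fibres — typically $\bar\beta$ is far from injective, so the fibres of $\Phi$ are positive-dimensional — but this affects neither its smoothness nor the equality of codimensions above.
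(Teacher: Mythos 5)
Your reduction is essentially the paper's: you form the bundle $\mathcal A=\V\bigl(\sheafHom_{\Sigma^i}((\T_X)_{\Sigma^i}\boxempty K,\,C)\bigr)$ (the paper's $H$), factor $\dd_{q^{-1}\Sigma^i}(\tilde d\bar\tau_2)$ through it via Proposition \ref{prop:2nd-id-symm}, and deduce smoothness and surjectivity of the classifying map from Proposition \ref{prop:2nd-id-equiv} and Remark \ref{rmk:quadratic-to-bilinear} (the paper packages the same input as equivariance of $D$ for the principal $\V((\Sym^2\Omega_X)^{\oplus r})$-bundle structure of $q^{-1}\Sigma^i$ over $\Sigma^i$); the bookkeeping $\operatorname{codim}(B^i,J^2)=i(|n-r|+i)+\operatorname{codim}(\Sigma^1(h),\mathcal A)$ and the treatment of $i=0$ and of $n<i(|n-r|+i)$ are also as in the paper.

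The gap is in the final fibrewise step. What Proposition \ref{prop:sym-deg-loci-2} (resting on Lemma \ref{lemma:minimal}) actually provides, with $e=n$, $a=\operatorname{rank}K=n-m+i$, $f=\operatorname{rank}C=r-m+i$, is: codimension $n-c+1$ in the alternating case only if $a>1$ \emph{and}, when $f=1$, additionally $a=e$ with $e$ even; in all other alternating cases the codimension is $n-c$. Your dichotomy --- a drop only when $\operatorname{rank}K=1$ ($i=1$, $r\ge n$) or when $\operatorname{rank}C=1$ with $\operatorname{rank}K=n$ odd ($i=1$, $r=1$, $n$ odd) --- omits the family $f=1$, $1<a<e$, i.e.\ $i=1$, $1<r<n$, where the codimension also drops. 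This is not a technicality: for $n=3$, $r=2$, $i=1$ in characteristic $2$ the fibre of $\mathcal A$ is the space of maps $k^{3}\to\Hom(K,C)\cong k^{2}$ whose restriction to $K$ is alternating, a $3$-dimensional space with coordinates $b_{12},b_{13},b_{23}$, and the non-surjective locus is set-theoretically the hyperplane $b_{23}=0$, of codimension $1=n-c$, not $n-c+1$. So the sentence ``the codimension computations of Section \ref{sec:min-codim} give \dots with two exceptions'' is not a consequence of the cited results and is in fact false as a statement about alternating-constrained maps. (You should be aware that the printed statement is itself inconsistent with the paper's proof, which refers to ``conditions (1), (2) and (3) from the statement''; the list of characteristic-$2$ exceptions that actually comes out of Proposition \ref{prop:sym-deg-loci-2} is $i=1$, $r\ge 2$, together with $i=1$, $r=1$, $n$ odd.) A smaller overreach: $\Sigma^1(h)$ is a union of strata $\Delta^{i',p'}$ of varying codimensions, so your parenthetical claim that it is of \emph{pure} codimension, and hence that $B^i$ is, is not justified by those results.
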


\begin{proof}
By Proposition \ref{prop:univ-crit-loci}, the universal critical locus $\Sigma^i\subseteq J^1$ is nonempty if, and only if, $0\le i\le m$, so we may assume these inequalities hold.
Then $\Sigma^i$ is smooth over $X$ and of pure relative codimension $i(|n-r|+i)$ in $J^1$ over $X$, again by Proposition \ref{prop:univ-crit-loci}.

Recall that $J^m := \V((\PP_X^m)^{\oplus r})$ for $m=1,2$, and that $q : J^2\to J^1$ the morphism of vector bundles over $X$ induced by the truncation map $\varepsilon : \PP_X^2\to \PP_X^1$.
This truncation map is surjective, so $q$ is smooth and surjective.
Therefore $q^{-1}\Sigma^i$ is smooth over over $X$, and of pure relative codimension $i(|n-r|+i)$ in $J^2$ over $X$.
In particular, the result holds if $n<i(|n-r|+i)$.

Suppose that $n\ge i(|n-r|+i)$. 
Let
\begin{equation*}
\Box^2 \T_X :=
\begin{cases}
\Sym^2 \T_X & \text{if }\charac(k)\ne 2\\
\wedge^2 \T_X & \text{if }\charac(k)= 2.
\end{cases}
\end{equation*}
Let $(\T_X)_{\Sigma^i} \boxempty K$ be the image of $(\T_X)_{\Sigma^i} \otimes K$ under the quotient map
\begin{equation*}
(\T_X\otimes \T_X)_{\Sigma^i} \twoheadrightarrow (\Box^2 \T_X)_{\Sigma^i}.
\end{equation*}
By Lemma \ref{lemma:box-free} below, $(\T_X)_{\Sigma^i} \boxempty K$ is a locally free $\OO_{\Sigma^i}$-module.

Let
\begin{equation*}
\theta : \sheafHom_{\Sigma^i}( (\T_X)_{\Sigma^i}\otimes K, C)
\xrightarrow \sim 
\sheafHom_{\Sigma^i}( (\T_X)_{\Sigma^i},
\sheafHom_{\Sigma^i}( K, C))
\end{equation*}
be the natural $\OO_{\Sigma^i}$-linear isomorphism that sends $b\mapsto (v\mapsto b(v\otimes -))$.
By Proposition \ref{prop:2nd-id-symm}, the inverse image $\theta^{-1}(\dd_{q^{-1} \Sigma^i}(\tilde d\bar \tau_2))$ is contained the $\OO_{q^{-1}\Sigma^i}$-submodule
\begin{equation*}
q^* \sheafHom_{\Sigma^i}( (\T_X)_{\Sigma^i}\boxempty K, C)\subseteq
q^* \sheafHom_{\Sigma^i}( (\T_X)_{\Sigma^i}\otimes K, C).
\end{equation*}

Let $H\to \Sigma^i$ be the vector bundle associated with the locally free $\OO_{\Sigma^i}$-module $\sheafHom_{\Sigma^i}( (\T_X)_{\Sigma^i}\boxempty K, C)$. Let $h : ((\T_X)_{\Sigma^i}\boxempty K)_H\to C_H$ be the tautological $\OO_H$-linear map.
Let $D : q^{-1}\Sigma^i\to H$ be the unique morphism of $\Sigma^i$-schemes such that
\begin{equation*}
D^* h = \theta^{-1}(\dd_{q^{-1} \Sigma^i}(\tilde d\bar \tau_2)).
\end{equation*}
The argument that follows is based on the following commutative diagram with Cartesian squares:
\begin{equation*}
\begin{tikzcd}
& B^i\ar[d, hook] \ar[r] \ar[rd, phantom, "\square"] &
\Sigma^1(\theta h) \ar[d, hook]
\\
J^2 \ar[d, "q"'] & \ar[l, hook] q^{-1}\Sigma^i \ar[r, "D"'] \ar[d] \ar[ld, phantom, "\square"] &
H \ar[ld, bend left = 10]
\\
J^1 &\ar[l,hook] \Sigma^i
\end{tikzcd}
\end{equation*}

The respective ranks of $K$ and $C$ as $\OO_{\Sigma^i}$-modules are $n-m+i$ and $r-m+i$.
If $i=0$, then $\sheafHom_{\Sigma^i}(K,C)=0$, so $\Sigma^1(\theta h)$ and $B^i$ are empty. 

Suppose $i>0$.
By Proposition \ref{prop:sym-deg-loci-2} below, if $\charac(k)\ne 2$, then the first degeneracy locus $\Sigma^1(\theta h)$ has relative codimension
\begin{equation*}
n - (n-m+i)(r-m+i) + 1 = n - i(|n-r|+i) + 1
\end{equation*} 
in $H$ over $\Sigma^i$.
Furthermore, if $\charac(k)=2$, the same holds provided that
\begin{enumerate}
\item[(a)] $n-m+i > 1$; and
\item[(b)] if $r-m+i=1$, then $n-m+i=n$ and $n$ is even.
\end{enumerate}
Otherwise, $\Sigma^1(\theta h)$ has relative codimension $n - i(|n-r|+i)$ in $H$ over $\Sigma^i$.

Conditions (a) and (b) may be respectively rephrased as follows:
\begin{enumerate}
\item[(a')] If $i=1$, then $r<n$.
\item[(b')] If $i=1$ and $r\le n$, then $r=1$ and $n$ is even.
\end{enumerate}
Thus (a) and (b) hold if, and only if, conditions (1), (2) and (3) from the statement of the theorem are satisfied.

To prove the result it suffices to show that the relative codimension of $B^i$ in $q^{-1}\Sigma^i$ over $\Sigma^i$ is equal to the relative codimension of $\Sigma^1(\theta h)$ in $H$ over $\Sigma^i$.
We will do this by showing that $D$ is smooth and surjective.

Let $G$ denote the vector bundle
\begin{equation*}
G := \V( (\Sym^2 \Omega_X)^{\oplus r})
\end{equation*}
regarded as an additive group scheme over $X$.
The map $\iota$ in the canonical short exact sequence
\begin{equation*}
\begin{tikzcd}
0 \ar[r] & 
\Sym^2 \Omega_X \ar[r,"\iota"] &
\PP_X^2 \ar[r,"\varepsilon"] &
\PP_X^1\ar[r] &
0
\end{tikzcd}
\end{equation*}
induces an action of the additive group $J^1$-scheme $G_{J^1} := G\times_X J^1$ by translations on $J^2$ that gives $J^2$ the structure of a principal $G_{J^1}$-bundle over $J^1$.
This action restricts to an action of the additive group $\Sigma^i$-scheme $G_{\Sigma^i} := G\times_{J^1}\Sigma^i$ on $q^{-1}\Sigma^i$ that gives $q^{-1}\Sigma^i$ the structure of a principal $G_{\Sigma^i}$-bundle over $\Sigma^i$.

Let
\begin{equation*}
(\Sym^2 \Omega_X)^{\oplus r}_{\Sigma^i} \to
\sheafHom_{\Sigma^i}( (\T_X)_{\Sigma^i}\boxempty K, C).
\end{equation*}
be the natural $\OO_{\Sigma^i}$-linear surjection, see Remark \ref{rmk:quadratic-to-bilinear}.
Let $\varphi : G_{\Sigma^i}\to H$ be the corresponding map of vector bundles over $\Sigma^i$, which we regard as a homomorphism of additive group $\Sigma^i$-schemes.
Then $D : q^{-1}\Sigma^i\to H$ is $\varphi$-equivariant by Proposition \ref{prop:2nd-id-equiv}.
This implies that $D$ is smooth and surjective, since $\varphi$ is smooth and surjective.
\end{proof}

\begin{theorem}
\label{thm:univ-2nd-sings}
The universal locus of second-order singularities $\Sigma^{i,j}$ is nonempty if, and only if,
\begin{enumerate}
\item $i \le m$; and 
\item $j \le n-m+i$; and
\item if $i=0$, then $j=0$; and
\item if $\charac(k)=2$, $i=1$ and $r\le n$, then $n-m+i-j$ is even.
\end{enumerate}
In this case, $\Sigma^{i,j}$ is smooth over $X$ and of pure relative codimension
\begin{equation*}
\label{rel-codim-cases}
i(|n-r|+i) +
j(n-m+i-j)(r-m+i-1) +\tfrac 1 2 j(j\pm1)(r-m+i)
\end{equation*}
in $J^2(X,\A^r)$ over $X$.
The symbol $\pm$ appearing in this expression should be read as ``plus'' if $\charac(k)\ne 2$ and as ``minus'' if $\charac(k)=2$.
\end{theorem}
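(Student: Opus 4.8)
The plan is to run the argument from the proof of Theorem~\ref{thm:univ-bad-locus}, but with the second intrinsic differential $\dd^2_{q^{-1}\Sigma^i}\bar\tau_2$ (a map out of $q^*K$) in place of the full intrinsic differential $\dd_{q^{-1}\Sigma^i}(\tilde d\bar\tau_2)$ (a map out of $\T_X$), the $j$th degeneracy locus in place of the first, and Theorem~\ref{thm:univ-deg-bil} in place of Proposition~\ref{prop:sym-deg-loci-2}. By Proposition~\ref{prop:univ-crit-loci}, the universal critical locus $\Sigma^i\subseteq J^1$ is nonempty precisely when $0\le i\le m$, and it is then smooth over $X$ of pure relative codimension $i(|n-r|+i)$; since the truncation $q:J^2\to J^1$ is smooth and surjective, the same holds for $q^{-1}\Sigma^i\subseteq J^2$. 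If $i=0$ then one of $K$, $C$ vanishes, so $\sheafHom(q^*K,q^*C)=0$, hence $\dd^2_{q^{-1}\Sigma^i}\bar\tau_2=0$; thus $\Sigma^{0,j}=\varemptyset$ for $j>0$ while $\Sigma^{0,0}=q^{-1}\Sigma^0=J^2$, in agreement with conditions (1)--(4) and with the stated codimension formula (which vanishes). So we may assume $i\ge 1$, and write $k_0:=n-m+i\ge 1$ and $c_0:=r-m+i\ge 1$ for the ranks of $K$ and $C$; note $\min(k_0,k_0c_0)=k_0$.

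By Proposition~\ref{prop:2nd-id-symm}, $\bar\theta^{-1}(\dd^2_{q^{-1}\Sigma^i}\bar\tau_2)$ is a section of $q^*\sheafHom_{\Sigma^i}(\Box^2 K, C)$, and $\Box^2 K$ is locally free by Lemma~\ref{lemma:box-free}. Let $N\to\Sigma^i$ be the vector bundle associated with $\sheafHom_{\Sigma^i}(\Box^2 K, C)$, let $h$ be its tautological section, viewed via $\bar\theta$ as an $\OO_N$-linear map $K_N\to\sheafHom_N(K_N,C_N)$, and set $Z^j:=\Sigma^j(h)\setminus\Sigma^{j+1}(h)\subseteq N$. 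Let $D:q^{-1}\Sigma^i\to N$ be the unique morphism of $\Sigma^i$-schemes with $D^*h=\dd^2_{q^{-1}\Sigma^i}\bar\tau_2$; then $\Sigma^{i,j}=D^{-1}Z^j$ by Definition~\ref{def:univ-2nd-sings} and Remark~\ref{deg-loci-functor}. As in the proof of Theorem~\ref{thm:univ-bad-locus}, $q^{-1}\Sigma^i$ is a torsor over $\Sigma^i$ under the additive group scheme $G_{\Sigma^i}$ with $G=\V((\Sym^2\Omega_X)^{\oplus r})$, and Proposition~\ref{prop:2nd-id-equiv} shows that $D$ is equivariant for the homomorphism $\varphi:G_{\Sigma^i}\to N$ induced by the natural $\OO_{\Sigma^i}$-linear map $(\Sym^2\Omega_X)^{\oplus r}_{\Sigma^i}\to\sheafHom_{\Sigma^i}(\Box^2 K, C)$. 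This last map is surjective: it is the surjection of Remark~\ref{rmk:quadratic-to-bilinear} followed by restriction along the local direct summand $\Box^2 K\subseteq (\T_X)_{\Sigma^i}\boxempty K$. Hence $\varphi$, and therefore $D$, is smooth and surjective. Consequently $\Sigma^{i,j}$ is nonempty, resp.\ smooth over $X$, if and only if $Z^j$ is nonempty, resp.\ smooth over $\Sigma^i$; and when this holds, the relative codimension of $\Sigma^{i,j}$ in $J^2$ over $X$ equals $i(|n-r|+i)$ plus the relative codimension of $Z^j$ in $N$ over $\Sigma^i$.

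It remains to understand $Z^j$, which is a fibrewise matter. Over a point of $\Sigma^i$, $N$ is the space of symmetric (if $\charac k\ne 2$) or alternating (if $\charac k=2$) bilinear maps $K(x)\times K(x)\to C(x)$, with $\dim K(x)=k_0$ and $\dim C(x)=c_0$, and $Z^j$ is the stratum on which the associated linear map $K(x)\to\Hom(K(x),C(x))$ has rank $k_0-j$. This is exactly the locally closed subscheme studied in Theorem~\ref{thm:univ-deg-bil} with $E=A=K$ and $F=C$, in the family over $\Sigma^i$, for the stratum of rank $k_0-j$ (since $A=E$, the condition $\dim(\ker\cap A)=j$ there is automatic). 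By that theorem and Lemma~\ref{lemma:nonempty}, $Z^j$ is nonempty if and only if $j\le k_0$ and, in the case $\charac k=2$ with $c_0=1$, also $k_0-j$ is even; and it is then smooth over $\Sigma^i$ of pure relative codimension
\begin{equation*}
j(k_0-j)(c_0-1)+\tfrac 1 2 j(j\pm 1)c_0,
\end{equation*}
with ``$+$'' if $\charac k\ne 2$ and ``$-$'' if $\charac k=2$. Since $c_0=1$ is equivalent to ``$i=1$ and $r\le n$'', and since $k_0=n-m+i$ and $c_0=r-m+i$, these nonemptiness conditions are exactly conditions (2) and (4), conditions (1) and (3) cover the cases already settled, and adding $i(|n-r|+i)$ reproduces the codimension in the statement.

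The genuinely substantive input is Theorem~\ref{thm:univ-deg-bil}, whose proof is carried out later; granting it, the argument above is essentially formal. The two points that need some care are the proof that $D$ is smooth and surjective (which mirrors the proof of Theorem~\ref{thm:univ-bad-locus} nearly word for word, once one checks that $\Box^2 K$ is a local direct summand of $(\T_X)_{\Sigma^i}\boxempty K$) and the purely combinatorial translation of the four nonemptiness conditions and of the codimension formula of Theorem~\ref{thm:univ-deg-bil} into the form stated here — in particular, matching the ``minus'' sign with the alternating, i.e.\ characteristic~$2$, case.
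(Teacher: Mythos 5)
Your proposal is correct and follows essentially the same route as the paper: reduce to the degeneracy strata of the tautological (skew-)symmetric map on the bundle $\V(\sheafHom_{\Sigma^i}(\Box^2 K, C))$, use Proposition \ref{prop:2nd-id-equiv} and the $G_{\Sigma^i}$-action to see that the classifying morphism $D$ is smooth and surjective, and then invoke Theorem \ref{thm:univ-deg-bil} with $A=E=K$, $F=C$ (which is exactly Proposition \ref{prop:sym-deg-loci-1}, the form the paper cites). The only slip is the claim $\Sigma^{0,0}=q^{-1}\Sigma^0=J^2$: the set $q^{-1}\Sigma^0$ is merely an open subscheme of $J^2(X,\A^r)$, not all of it, but this does not affect the nonemptiness, smoothness, or codimension-zero conclusions in the case $i=j=0$.
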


\begin{proof}
Let
\begin{equation*}
\Box^2 K :=
\begin{cases}
\Sym^2 K & \text{if }\charac(k)\ne 2\\
\wedge^2 K & \text{if }\charac(k)= 2.
\end{cases}
\end{equation*}
Let
\begin{equation*}
\theta : \sheafHom_{\Sigma^i}( K\otimes K, C)
\xrightarrow \sim 
\sheafHom_{\Sigma^i}( K,
\sheafHom_{\Sigma^i}( K, C))
\end{equation*}
be the natural $\OO_{\Sigma^i}$-linear isomorphism that sends $b\mapsto (v\mapsto b(v\otimes -))$.
By Proposition \ref{prop:2nd-id-symm}, the inverse image $\theta^{-1}(\dd^2_{q^{-1} \Sigma^i}\bar\tau_2)$ is contained the $\OO_{q^{-1}\Sigma^i}$-submodule
\begin{equation*}
q^* \sheafHom_{\Sigma^i}( \Box^2 K, C)\subseteq
q^* \sheafHom_{\Sigma^i}( K\otimes K, C).
\end{equation*}

Let $H\to \Sigma^i$ be the vector bundle associated with the locally free $\OO_{\Sigma^i}$-module $\sheafHom_{\Sigma^i}( \Box^2 K, C)$. Let $h : (\wedge^2 K)_H\to C_H$ be the tautological $\OO_H$-linear map.
Let $D : q^{-1}\Sigma^i\to H$ be the unique morphism of $\Sigma^i$-schemes such that
\begin{equation*}
D^* h = \theta^{-1}(\dd^2_{q^{-1} \Sigma^i}\bar\tau_2).
\end{equation*}
Consider the following commutative diagram with Cartesian squares:
\begin{equation*}
\begin{tikzcd}
& \Sigma^{i,j} \ar[d, hook] \ar[r] \ar[rd, phantom, "\square"] &
\Sigma^j(\theta h)\setminus \Sigma^{j+1}(\theta h) \ar[d, hook]
\\
J^2 \ar[d, "q"'] & \ar[l, hook] q^{-1}\Sigma^i \ar[r, "D"'] \ar[d] \ar[ld, phantom, "\square"] &
H \ar[ld, bend left = 10]
\\
J^1 &\ar[l,hook] \Sigma^i
\end{tikzcd}
\end{equation*}

As in the proof of Theorem \ref{thm:univ-bad-locus}, the $\Sigma^i$-scheme $q^{-1}\Sigma^i$ is smooth over $X$, is of pure relative codimension $i(|n-r|+i)$ in $J^2$ over $X$, and the morphism $D$ is smooth and surjective.
Thus it suffices to show that $H$ is nonempty if, and only if, (1)--(4) hold, and smooth of relative codimension 
\begin{equation*}
j(n-m+i-j)(r-m+i-1) +\tfrac 1 2 j(j\pm1)(r-m+i)
\end{equation*}
in $H$ over $\Sigma^i$ when these conditions hold.
If $i>0$, this follows from Proposition \ref{prop:sym-deg-loci-1} below and the observation that the ranks of the locally free $\OO_{\Sigma^i}$-modules $K$ and $C$ are $n-m+i$ and $r-m+i$, respectively.
If $i=0$, then $\sheafHom_{\Sigma^i}(K,C)=0$, so the subscheme $\Sigma^j(\theta h)\setminus \Sigma^{j+1}(\theta h) \subseteq H$ is empty if $j>0$ and equal to $H$ if $j=0$.
\end{proof}

\section{Geometry on Grassmannians}

In this section we collect a few facts about Grassmannians that we will use in the proof of Theorem \ref{thm:univ-deg-bil} below.
Here we review the standard affine charts on Grassmannians and special Schubert cells, and the construction often referred to as the Tjurina transform.
The title of this section is taken from the beautiful paper by Kleiman \cite{Kleiman69}.

Let $X$ be a scheme.
Let $E$ be a locally free $\OO_X$-module of finite rank $e$.
Let $n$ an integer such that $0\le n \le e$.
Let $G:= \Gr_n(E)$ be the Grassmannian of rank-$n$ quotients on $E$ over $X$.
Let
\begin{equation*}
\begin{tikzcd}
0 \ar[r] & K \ar[r,"\iota"] &
E_G \ar[r,"q"] & E_G/K \ar[r] & 0
\end{tikzcd}
\end{equation*}
be the tautological short exact sequence on $G$.

We begin by looking at the standard affine charts on $G$.

\begin{remark}
\label{rmk:chart-grass}
Suppose that $E = E'\oplus E''$, where $E'$ and $E''$ are free $\OO_X$-modules and $E'$ has rank $n$.
Let $U\subseteq G$ be the largest open subscheme where $q|_{E'} : E'_G\to E_G/K$ is an isomorphism.
Let $u$ denote the composition of the $\OO_{U}$-linear maps
\begin{equation*}
\begin{tikzcd}
E_U'' \ar[r,"q"] &
(E_G/K)_U \ar[r,"{(q|E')^{-1}}","\sim"'] &
E'_U.
\end{tikzcd}
\end{equation*}
We have an isomorphism of short exact sequences of $\OO_U$-modules
\begin{equation*}
\begin{tikzcd}
0 \ar[r] &
E''_U \ar[r,"\iota'"] \ar[d,"\sim"',dashed] &
E_U \ar[r,"q'"] \ar[d,equals]&
E'_U \ar[r] \ar[d,"q|E'", "\sim"'] &
0 \\
0 \ar[r] &
K_U \ar[r,"\iota"] &
E_U \ar[r,"q"] &
E_U/K_U \ar[r] &
0,
\end{tikzcd}
\end{equation*}
where 
\begin{equation*}
\iota ' =
\begin{bmatrix}
-u \\ \id
\end{bmatrix}
\qquad\text{and}\qquad
q' = 
\begin{bmatrix}
\id & u
\end{bmatrix}
\end{equation*}
relatively to the direct sum decomposition $E=E'\oplus E''$.
Fix bases on $E'$ and $E''$, so that $u$ may be identified with a matrix with $n$ rows, $e-n$ columns and entries in $\Gamma(U,\OO_U)$.
Let $\A_X^{n(e-n)}$ be the affine space of dimension $n(e-n)$ over $X$.
Let
\begin{equation*}
\psi : U\to \A_X^{n(e-n)}
\end{equation*}
be the unique morphism of $X$-schemes that pulls the coordinates on $\A_X^{n(e-n)}$ back to the $n(e-n)$ entries of the matrix representing $u$ (in some order).
It is a standard fact that $\psi$ is an isomorphism.
\end{remark}

We now turn to special Schubert cycles.
Let $A\subseteq E$ be locally free and locally split submodule of rank $a$, and let $p$ be a nonnegative integer.

\begin{definition}
\label{def:schubert}
The \emph{$p$th special Schubert cycle} associated to the subbundle $A\subseteq E$ is the closed subscheme $\sigma_p(A)\subseteq G$ defined as follows: if $p \le a$, then $\sigma_p(A)$ is the subscheme where the $\OO_G$-linear map
\begin{equation*}
\wedge^{a-p+1} (q|_A) : \wedge^{a-p+1} A_G\to
\wedge^{a-p+1} (E_G/K)
\end{equation*}
vanishes; otherwise, $\sigma_p(A)$ is empty.
\end{definition}

Intuitively, $\sigma_p(A)\subseteq G$ is the locus where $K\cap A_G$ has rank at least $p$.

\begin{proposition}
\label{prop:schubert-smooth}
The Schubert cell $\sigma_p(A)\setminus \sigma_{p+1}(A)$ is nonempty if, and only if, $0\le a-p\le n$. 
If these inequalities hold, then $\sigma_p(A)\setminus \sigma_{p+1}(A)$ is smooth of relative dimension $n(e-n) - (n-a+p)p$ over $X$.
\end{proposition}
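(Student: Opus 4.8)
The plan is to reduce to a local computation on the standard affine charts of $\Gr_n(E)$ recalled in Remark \ref{rmk:chart-grass}, on which the Schubert cell becomes a \emph{generic determinantal cell} sitting inside an independent subset of the chart coordinates — a locus whose smoothness and codimension are supplied by Proposition \ref{univ-deg-loci}. To start, observe that by Definition \ref{def:schubert} the cell $\sigma_p(A)\setminus\sigma_{p+1}(A)$ is precisely the locally closed locus on which the $\OO_G$-linear map $q|_A\colon A_G\to E_G/K$ (of locally free sheaves of ranks $a$ and $n$) has rank exactly $a-p$, equivalently on which $\ker(q|_A)=A_G\cap K$ is locally free of rank $p$. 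The claim being local on $X$, I may assume $X$ affine, $E$ free, and — since $A$ is locally split — that the chosen basis of $E$ restricts to a basis of $A$. Then $G$ is covered by the standard charts $U\cong\A^{n(e-n)}_X$, one for each rank-$n$ coordinate summand $E'\subseteq E$; writing $E=E'\oplus E''$, every such chart is automatically compatible with $A$, in the sense that $A=(A\cap E')\oplus(A\cap E'')$, so these charts cover the cell as well.

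On the chart attached to $E=E'\oplus E''$, Remark \ref{rmk:chart-grass} identifies $E_G/K$ with $E'_U$ and the quotient $q$ with $[\,\id_{E'}\ u\,]$, where $u\colon E''_U\to E'_U$ has matrix the $n(e-n)$ coordinate functions. Setting $a_1:=\operatorname{rank}(A\cap E')$ and $a_2:=a-a_1$, and ordering the bases appropriately, the restriction $q|_{A_U}$ is block upper triangular, with an $a_1\times a_1$ identity block in the top left and, in the bottom right, an $(n-a_1)\times a_2$ block $B$ whose entries — together with those of the block above $B$ — are $na_2$ of the $n(e-n)$ coordinate functions, hence algebraically independent over $\OO_X$. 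Multiplying $q|_{A_U}$ on the right by a suitable unipotent matrix clears the block above $B$ and turns $q|_{A_U}$ into the block-diagonal matrix $\operatorname{diag}(\id_{a_1},B)$; since such a multiplication does not change the ideal of $t\times t$ minors, the ideal of $(a-p+1)$-minors of $q|_{A_U}$ equals the ideal of $(a_2-p+1)$-minors of $B$. Hence $(\sigma_p(A)\setminus\sigma_{p+1}(A))\cap U$ is the pullback, along the coordinate projection $\A^{n(e-n)}_X\to\A^{(n-a_1)a_2}_X$ recording the entries of $B$, of the generic degeneracy cell $\{\operatorname{rank}=a_2-p\}$ inside the space of $(n-a_1)\times a_2$ matrices.

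By Proposition \ref{univ-deg-loci} this generic degeneracy cell is smooth over $X$, with normal bundle of rank $\bigl((n-a_1)-(a_2-p)\bigr)\cdot p=(n-a+p)p$; note that $a_1$ cancels, so this number is independent of the chart. Since pullback preserves smoothness and codimension, $(\sigma_p(A)\setminus\sigma_{p+1}(A))\cap U$ is smooth over $X$ of codimension $(n-a+p)p$ in $U$, hence of pure relative dimension $n(e-n)-(n-a+p)p$ over $X$; gluing over the covering charts gives the asserted smoothness and relative dimension for the whole Schubert cell. The nonemptiness criterion then falls out by determining for which admissible pairs $(a_1,a_2)$ — that is, $a_1+a_2=a$, $0\le a_1\le n$, $0\le a_2\le e-n$ — the corresponding cell $\{\operatorname{rank}=a_2-p\}$ of $(n-a_1)\times a_2$ matrices is nonempty.

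The step I expect to be the main obstacle is the chart bookkeeping of the second paragraph: pinning down the exact block form of $q|_{A_U}$, and checking that after clearing the off-diagonal block the Schubert-cell equations become precisely those of a generic determinantal locus in an independent set of chart coordinates, with every remaining coordinate free. Once this normal form is in place the rest is routine. For orientation, there is a coordinate-free, Tjurina-style reformulation of the same structure: writing $G':=\Gr_{a-p}(A)$ with tautological subbundle $K'\subseteq A_{G'}$, the image $\im(q|_A)$ is, on the cell, a locally free rank-$(a-p)$ quotient of $A$ by Corollary \ref{coker-deg-loci}, so the universal property of $G'$ yields a morphism $\rho$ from the cell to $G'$ that exhibits the cell as an open subscheme of the Grassmannian bundle $\Gr_{e-n-p}(E_{G'}/K')$ over $G'$; the relative dimension is recovered as $n(e-n-p)+(a-p)p$. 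The chart computation is, however, the most economical route to both smoothness and the dimension count.
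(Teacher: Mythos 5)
Your proof is correct, but it takes a genuinely different route from the paper's. The paper also computes in the charts of Remark \ref{rmk:chart-grass}, but it adapts the chart to a chosen point $w$ of the cell: after shrinking $X$ it refines the splitting to $E=A'\oplus B'\oplus A''\oplus B''$ with $A'$ of rank $a-p$, $A=A'\oplus A''$, and $(A'\oplus B')_G\to E_G/K$ an isomorphism at $w$, and then Lemma \ref{lemma:prepared-chart} identifies the cell near $w$ with the linear subscheme $\{u_{21}=0\}$ of the chart, i.e.\ with an affine space $\A_X^{n(e-n)-p(n-a+p)}$, from which smoothness and the dimension are read off directly; this point-adapted four-block normal form is precisely what the paper reuses in the proof of Theorem \ref{thm:univ-deg-bil}. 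You instead use only charts adapted to $A$ (which cover the whole cell, with no point-dependent choice), put $q|_{A_U}$ in block-triangular form with an identity block and a bottom-right block $B$ of chart coordinates, and exhibit the cell as the preimage of the generic rank-$(a_2-p)$ stratum of $(n-a_1)\times a_2$ matrices, invoking Proposition \ref{univ-deg-loci} for its smoothness and codimension. Both routes rest on the chart description plus the passage from minors of a block matrix with an invertible block to minors of the complementary block; for that step ("the ideal of $(a-p+1)$-minors of $q|_{A_U}$ equals the ideal of $(a_2-p+1)$-minors of $B$") you should cite Lemma \ref{kleiman}, exactly as the paper does in Lemma \ref{lemma:prepared-chart}. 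What the paper's version buys is the stronger local statement that the cell is locally an affine space over $X$, together with a lemma and an argument reused later; what yours buys is uniformity (finitely many $A$-adapted charts, no localization at a point) and the reduction to an already-available smoothness result, at the cost of losing the explicit linear normal form. One further point: neither the paper's proof nor yours actually carries out the nonemptiness claim, but completing your final chart-by-chart check would give the criterion $\max(0,a-n)\le p\le\min(a,e-n)$; the inequality $p\le e-n$ is genuinely necessary (the kernel $K\cap A_G$ sits inside $K$, of rank $e-n$), so your route in fact detects a condition missing from the stated "if and only if" — the correct combined condition is the one recorded later in Lemma \ref{lemma:nonempty}.
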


Proposition \ref{prop:schubert-smooth} is, of course, standard.
We include here its reduction to the also standard Lemma \ref{lemma:prepared-chart} because later we will use both the lemma and an argument that the reduction isolates.

\begin{proof}
Let $w\in \sigma_p(A)\setminus \sigma_{p+1}(A)$ be a point, so that $q|_A : A_G\to E_G/K$ has rank $a-p$ at $w$.
Shrinking $X$ to a neighborhood of the image of $w$ in $X$, we may assume that $E$ is a free $\OO_X$-module and that $A$ is spanned by the first $a$ elements a basis of $E$.
Partitioning such a basis, we may find a direct sum decomposition 
\begin{equation*}
E = A'\oplus B' \oplus A'' \oplus B'',
\end{equation*}
where $A'$, $B'$, $A''$ and $B''$ are free $\OO_X$-modules such that $A'$ has rank $a-p$,
\begin{equation*}
A = A'\oplus A'',
\end{equation*}
and the natural $\OO_G$-linear map $(A'\oplus B')_G \to E_G/K$ is an isomorphism at $w$.
Applying Lemma \ref{lemma:prepared-chart} below, we obtain an open neighborhood $U$ of $w$ in $G$ such that 
$(\sigma_p(A)\setminus \sigma_{p+1}(A)) \cap U$ is isomorphic to the affine space 
\begin{equation*}
\A_X^{n(e-n)-p(n-a+p)}
\end{equation*}
over $X$.
\end{proof}

\begin{lemma}
\label{lemma:prepared-chart}
Suppose that
\begin{equation*}
E = A'\oplus B' \oplus A'' \oplus B'',
\end{equation*}
where $A'$, $B'$, $A''$ and $B''$ are free $\OO_X$-modules such that $A'$ has rank $a-p$ and $A = A'\oplus A''$.
Let $E' := A'\oplus B'$ and $E '' := A''\oplus B''$, so that $E = E'\oplus E''$.
Let $U \subseteq G$ and $u : E''_U\to E'_U$ be as in Remark \ref{rmk:chart-grass}. 
Let $u_{21}$ denote the composition of the $\OO_U$-linear maps
\begin{equation*}
\begin{tikzcd}
A''_U \ar[r,hook] &
(A''\oplus B'')_U \ar[r,"u"] &
(A' \oplus B')_U \ar[r, "\pr_2"] &
B'_U.
\end{tikzcd}
\end{equation*}
Then 
$(\sigma_p(A)\setminus \sigma_{p+1}(A)) \cap U$
is the subscheme of $U$ where $u_{21}=0$.
\end{lemma}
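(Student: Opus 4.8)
The plan is to trivialize the statement over the chart $U$ using Remark~\ref{rmk:chart-grass}, and then to identify the ideal cutting out $\sigma_p(A)\cap U$ with the ideal generated by the entries of $u_{21}$ by means of Kleiman's Lemma~\ref{kleiman}. Write $U_0\subseteq U$ for the closed subscheme where $u_{21}=0$; this is what we must compare with $(\sigma_p(A)\setminus\sigma_{p+1}(A))\cap U$. Recall that $\operatorname{rank} E' = n$ (this is built into Remark~\ref{rmk:chart-grass}), so $q|_{E'}:E'_U\xrightarrow{\sim}(E_G/K)_U$ is an isomorphism; it lets me replace $q|_A:A_G|_U\to(E_G/K)_U$ by the composite $v:=(q|_{E'})^{-1}\circ(q|_A)|_U:A_U\to E'_U$ without changing fiberwise ranks, so that $\sigma_p(A)\cap U$ and $\sigma_{p+1}(A)\cap U$ are the zero schemes in $U$ of $\wedge^{a-p+1}v$ and $\wedge^{a-p}v$, respectively.

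A direct computation from the definition of $u$ shows that, with respect to the decompositions $A_U=A'_U\oplus A''_U$ and $E'_U=A'_U\oplus B'_U$, the map $v$ restricts to the canonical inclusion on $A'_U$ and to $u|_{A''_U}$ on $A''_U$. In particular $v(A'_U)=A'_U$ is a free direct summand of $E'_U$ of rank $a-p$, and the composite $A''_U\xrightarrow{v}E'_U\twoheadrightarrow E'_U/A'_U=B'_U$ is precisely $u_{21}$. Now I would apply Lemma~\ref{kleiman} with $\beta=v$, with $A'_U,A''_U\subseteq A_U$ in the roles of $A,B$, with $a-p$ in the role of $a$, and with $q=1$; its two equivalent conditions become $\wedge^{a-p+1}v=0$ and $u_{21}=0$. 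To upgrade this equivalence of vanishing statements to the scheme-theoretic identity $\sigma_p(A)\cap U=U_0$, I would run the same argument after an arbitrary base change $t:T\to U$ --- equivalently, over every quotient of a local ring of $U$ --- which is legitimate because the formation of $\sigma_p(A)$ (Remark~\ref{deg-loci-functor}) and of $U_0$, being zero schemes of maps of locally free sheaves, commutes with pullback, and the hypotheses of Lemma~\ref{kleiman} persist since $t^*v$ still carries $t^*A'_U$ onto a free direct summand of $t^*E'_U$. Hence a morphism factors through the zero scheme of $\wedge^{a-p+1}v$ if and only if it factors through $U_0$, i.e.\ these two closed subschemes of $U$ coincide.

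Finally I would check that $U_0$ is disjoint from $\sigma_{p+1}(A)$, so that deleting $\sigma_{p+1}(A)$ from $\sigma_p(A)\cap U=U_0$ has no effect. On $U_0$ the map $v$ takes values in $A'$ and restricts there to the inclusion of the rank-$(a-p)$ direct summand $A'$; hence $\wedge^{a-p}v$, restricted to the line bundle $\wedge^{a-p}A'_U\subseteq\wedge^{a-p}A_U$, is the inclusion of a line subbundle of $\wedge^{a-p}E'_U$ and so vanishes at no point. Thus $\wedge^{a-p}v$ vanishes nowhere on $U_0$, i.e.\ $U_0\cap\sigma_{p+1}(A)=\varemptyset$, and therefore $(\sigma_p(A)\setminus\sigma_{p+1}(A))\cap U=(\sigma_p(A)\cap U)\setminus\sigma_{p+1}(A)=U_0$, which is the assertion.

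The one genuinely delicate point is the passage from the ``the map is zero or it is not'' dichotomy of Lemma~\ref{kleiman} to an equality of closed subschemes; everything else is routine bookkeeping with the chart of Remark~\ref{rmk:chart-grass}. If one prefers to avoid invoking Lemma~\ref{kleiman}, one can instead precompose $v$ with the unipotent automorphism $(a',a'')\mapsto(a'-u_{11}(a''),a'')$ of $A_U$ --- where $u_{11}$ denotes the $A'$-component of $u|_{A''_U}$ --- so as to make it the block-diagonal map $\id_{A'}\oplus u_{21}$, and then verify directly that the ideal of its size-$(a-p+1)$ minors is generated by the entries of $u_{21}$; but this route is more computational.
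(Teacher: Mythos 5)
Your proof is correct and takes exactly the route the paper intends: the paper's own proof of Lemma \ref{lemma:prepared-chart} is just the pointer ``This can be proved with Lemma \ref{kleiman}'', and your application of that lemma with $\beta=v$, $A=A'_U$, $B=A''_U$, $q=1$ (after trivializing via Remark \ref{rmk:chart-grass}), together with the functor-of-points upgrade to a scheme-theoretic equality and the check that $\sigma_{p+1}(A)$ misses the locus $u_{21}=0$, supplies precisely the details the paper omits. No gaps to report.
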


\begin{proof}
This can be proved with Lemma \ref{kleiman}.
\end{proof}

Finally, we consider the Tjurina transform.
Let $F$ be a locally free $\OO_X$-module of rank $f$.
Let $m:=\min(e,f)$ and let $i$ be a nonnegative integer.
Suppose that $n=m-i$, so that
\begin{equation*}
G := \Gr_n(E) = \Gr_{m-i}(E).
\end{equation*}

\begin{definition}
\label{def:tjurina} 
Let $\alpha : E\to F$ be an $\OO_X$-linear map.
The \emph{Tjurina transform} of the degeneracy locus $\Sigma^i(\alpha)\subseteq X$ is the subscheme $Z \subseteq G$ where $\alpha \circ \iota=0$.
\end{definition}

The strategy of proving results about a degeneracy locus by reducing them to assertions about its Tjurina transform is often called the ``Grassmannian trick''.
It is justified by the next proposition.

\begin{proposition}
\label{prop:tjurina}
There exists a unique morphism of schemes $\rho : Z\to \Sigma^i(\alpha)$ such that the following diagram commutes.
\begin{equation*}
\begin{tikzcd}
Z = \{\alpha\circ\iota =0 \}  \ar[hook]{r} \ar[dashed, "\rho"]{d} & G \ar[d,"\pi"]\\
\Sigma^i( \alpha) \ar[hook]{r} & X
\end{tikzcd}
\end{equation*}
This morphism is proper, surjective and induces an isomorphism of schemes
\begin{equation*}
Z\setminus \rho^{-1}(\Sigma^{i+1}(\alpha))
\xrightarrow\sim
\Sigma^i(\alpha)\setminus \Sigma^{i+1}(\alpha).
\end{equation*}
\end{proposition}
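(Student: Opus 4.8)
The plan is to construct $\rho$ by showing that the structure morphism $Z\to X$ factors through the closed subscheme $\Sigma^i(\alpha)$, and then to produce an explicit two-sided inverse of $\rho$ over the open stratum $\Sigma:=\Sigma^i(\alpha)\setminus\Sigma^{i+1}(\alpha)$; properness and surjectivity will then follow by formal arguments. For the construction of $\rho$, write $p\colon Z\hookrightarrow G\xrightarrow{\pi}X$ for the structure morphism, so that $p^*\alpha$ is the restriction $\alpha_G|_Z\colon E_Z\to F_Z$. By Definition \ref{def:tjurina}, the composite $\alpha_G\circ\iota$ vanishes on $Z$, so $\alpha_G|_Z$ annihilates the subbundle $K|_Z\subseteq E_Z$ and therefore factors through the quotient $(E_G/K)|_Z$, which is locally free of rank $n=m-i$. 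Hence $\wedge^{m-i+1}(p^*\alpha)$ factors through $\wedge^{m-i+1}\bigl((E_G/K)|_Z\bigr)=0$, i.e.\ $p^*(\wedge^{m-i+1}\alpha)=0$; by Definition \ref{degeneracy-locus-def} this means $p$ factors through $\Sigma^i(\alpha)$, and the factorization is unique because $\Sigma^i(\alpha)\hookrightarrow X$ is a monomorphism. This factorization is the required $\rho$, and uniqueness of the morphism making the square commute follows for the same reason.

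Properness of $\rho$ is then immediate: $\pi$ is proper (it is a Grassmannian bundle) and $Z\hookrightarrow G$ is a closed immersion, so $p$ is proper; since $\Sigma^i(\alpha)\hookrightarrow X$ is separated, $\rho$ is proper by the cancellation property for proper morphisms. For surjectivity, it suffices to see that the fibre of $\rho$ over each point $x\in\Sigma^i(\alpha)$ is nonempty. After base change to $\Spec k(x)$ this fibre is the subscheme of $\Gr_n(E(x))$ consisting of rank-$n$ quotients $q$ of $E(x)$ with $\ker q\subseteq\ker\alpha(x)$, that is, the Grassmannian of $(e-n)$-planes inside $\ker\alpha(x)$; it is nonempty since $\dim_{k(x)}\ker\alpha(x)=e-\operatorname{rank}\alpha(x)\ge e-(m-i)=e-n$.

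It remains to show that $\rho$ restricts to an isomorphism over $\Sigma$, i.e.\ that $\rho^\circ\colon Z^\circ:=\rho^{-1}(\Sigma)=Z\setminus\rho^{-1}(\Sigma^{i+1}(\alpha))\to\Sigma$ is an isomorphism. Over $\Sigma$ the map $\alpha|_\Sigma\colon E_\Sigma\to F_\Sigma$ has locally free kernel, image and cokernel, of ranks $e-n$, $n$ and $f-m+i$, and $\ker(\alpha|_\Sigma)$ is a locally split submodule of $E_\Sigma$ by Corollary \ref{coker-deg-loci}. Thus $E_\Sigma\twoheadrightarrow E_\Sigma/\ker(\alpha|_\Sigma)$ is a rank-$n$ quotient, classified by a morphism $s\colon\Sigma\to G$ over $X$ with $s^*K=\ker(\alpha|_\Sigma)$; then $s^*(\alpha_G\circ\iota)=0$, so $s$ factors through $Z$, and $\rho\circ s=\id_\Sigma$ because both sides become the monomorphism $\Sigma\hookrightarrow X$. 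For the other composite, it is enough, by the universal property of $G$, to check that for any morphism $t\colon T\to Z^\circ$ with induced morphism $p\colon T\to\Sigma$, the pulled-back tautological subbundle $t^*K$ equals $\ker\bigl(p^*(\alpha|_\Sigma)\bigr)$. The inclusion $t^*K\subseteq\ker(p^*(\alpha|_\Sigma))$ holds by definition of $Z$; both modules are locally free of rank $e-n$ (the right-hand one by Corollary \ref{coker-deg-loci}); and the induced map $E_T/t^*K\to F_T$ surjects onto $\im(p^*(\alpha|_\Sigma))=p^*\im(\alpha|_\Sigma)$, a locally free module of rank $n=\operatorname{rank}(E_T/t^*K)$, so it is a surjection of locally free modules of the same rank, hence an isomorphism; therefore $\ker(p^*(\alpha|_\Sigma))=t^*K$. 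Consequently the tautological rank-$n$ quotient of $E_T$ agrees with $p^*s^*(E_G\twoheadrightarrow E_G/K)$, so $t=s\circ p$ and hence $s\circ\rho^\circ=\id_{Z^\circ}$. The step I expect to be the crux is precisely this last identification — upgrading the tautological inclusion $t^*K\subseteq\ker(p^*(\alpha|_\Sigma))$ to an equality — which relies essentially on the local freeness and the compatibility with pullback of kernels and images over the open stratum (Corollary \ref{coker-deg-loci}); one could alternatively package the same containment-and-rank argument through Lemma \ref{kleiman}.
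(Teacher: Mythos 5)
Your proposal is correct and follows essentially the same route as the paper: factor $\alpha_Z$ through the rank-$(m-i)$ quotient $(E_G/K)_Z$ to get $\rho$, use the dimension bound $\dim\ker\alpha(x)\ge e-n$ for surjectivity, and over the open stratum identify the tautological subbundle with $\ker(\alpha|_\Sigma)$ via Corollary \ref{coker-deg-loci} and a rank comparison (the paper phrases this as the set $\mathscr L(x)$ of locally split rank-$(e-n)$ submodules of the kernel being a singleton). Your explicit treatment of uniqueness and properness only adds detail the paper leaves implicit.
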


\begin{proof}
Uniqueness of $\rho$ is clear.
Existence follows from the fact that $\alpha_Z:E_Z\to F_Z$ factors though $(E_G/K)_Z$, which has rank $m-i$, so that $\wedge^{m-i+1} \alpha_Z=0$.

Now let $x : T\to \Sigma^i(\alpha)$ be a morphism of schemes. 
The set of morphisms of schemes $s : T\to Z$ such that $\rho\circ s = x$ is in bijection with the set $\mathscr L(x)$  of rank-$(e-n)$ locally free and locally split submodules $\tilde K\subseteq x^* E$ contained in $\ker(x^* \alpha)$.

If $T$ is the spectrum of a field, then $x^* \alpha : x^* E\to x^* F$ is a linear map of rank at most $m-i$, so $\ker(x^* E)\subseteq x^* E$ is a linear subspace of dimension at least $e-m+i = e-n$. 
Hence $\mathscr L(x)$ is nonempty in this case, which shows that $\rho$ is surjective.

If instead $T$ is arbitrary, but the image of $x$ is contained in $\Sigma^i(\alpha)\setminus \Sigma^{i+1}(\alpha)$, then $\ker(x^*\alpha) \subseteq x^* E$ is itself a locally free and locally split submodule of rank $(e-n)$ by Proposition \ref{coker-deg-loci}.
Hence $\mathscr L(x)$ is a singleton in this case, which shows that $\rho$ is an isomorphism away from $\Sigma^{i+1}(\alpha)$. 
\end{proof}

\section{Linear maps satisyfing symmetry and rank conditions}
\label{sec:sym-bil}

In this section and the next we will complete the proofs of the main results of this paper, Theorems \ref{thm:intro-smoothness} and \ref{thm:intro-2nd-sings}. 
Propositions \ref{prop:sym-deg-loci-2} and \ref{prop:sym-deg-loci-1} are all that remains to prove.
We will deduce both propositions from a single result, Theorem \ref{thm:univ-deg-bil}, which we will prove using the facts about Grassmannins reviewed in the preceding section.  

Let $X$ be a scheme.
Let $E$ and $F$ be finite, locally free $\OO_X$-modules of respective ranks $e$ and $f$.
Let $A\subseteq E$ be a rank-$a$ subbundle.
Let $\Box^2 E$ denote one of two $\OO_X$-modules: either $\Sym^2 E$ or $\wedge^2 E$.
Let $A\boxempty E$ be the image of $A\otimes E$ under the natural map $E\otimes E\twoheadrightarrow \Box^2 E$.

Thoughout this section, the symbol $\pm$ should be read as ``plus'' if $\Box^2 E= \Sym^2 E$, and as ``minus'' if $\Box^2 E = \wedge^2 E$.

\begin{lemma}
\label{lemma:box-free}
The $\OO_X$-module $A\boxempty E$ is a locally free of rank
\begin{equation*}
\tfrac 1 2 a(a\pm 1) + a (e-a).
\end{equation*}
\end{lemma}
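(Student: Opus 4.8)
The plan is to reduce the statement to a local computation. Being locally free of a prescribed rank is a Zariski-local property, and the formation of $A\boxempty E$ from the pair $A\subseteq E$ commutes with restriction to open subschemes, so I may replace $X$ by an affine open over which $E$ is free and the locally split subbundle $A$ admits a free complement: thus $E=A\oplus B$ with $A$ and $B$ free $\OO_X$-modules of ranks $a$ and $e-a$.

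With such a decomposition fixed, I would appeal to the standard splitting of the symmetric (resp.\ exterior) square of a direct sum, valid over any commutative ring,
\[
\Box^2(A\oplus B)=\Box^2 A\;\oplus\;(A\otimes B)\;\oplus\;\Box^2 B,
\]
which one checks at once on monomial bases. Under this identification the canonical surjection $E\otimes E\twoheadrightarrow\Box^2 E$ restricts, on $A\otimes E=(A\otimes A)\oplus(A\otimes B)$, to the usual surjection $A\otimes A\twoheadrightarrow\Box^2 A$ on the first factor and to an isomorphism of $A\otimes B$ onto the middle summand on the second factor (on bases, $e_i\otimes e_j$ with $i\le a<j$ goes to the distinct basis vector $e_ie_j$, resp.\ $e_i\wedge e_j$, of $\Box^2 E$). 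Consequently $A\boxempty E=\Box^2 A\oplus(A\otimes B)$ as an $\OO_X$-submodule of $\Box^2 E$; in particular $A\boxempty E$ is a direct summand of the free module $\Box^2 E$, hence locally free, and its rank is
\[
\operatorname{rank}(\Box^2 A)+\operatorname{rank}(A\otimes B)=\tfrac12 a(a\pm1)+a(e-a),
\]
since $\Box^2$ of a free rank-$a$ module is free of rank $\tfrac12 a(a\pm1)$.

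There is no real obstacle here; the proof is routine. The only two points deserving explicit mention are that a locally split subbundle does admit a free complement Zariski-locally, so that the reduction to $E=A\oplus B$ is legitimate, and the verification on bases that $A\boxempty E$ is precisely the span of the monomials of $\Box^2 E$ involving at least one basis vector of $A$, that is, the first two summands above.
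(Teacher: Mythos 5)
Your proof is correct and follows essentially the same route as the paper: reduce to the case where $E$ is free and $A$ is spanned by part of a basis, then identify a monomial basis of $A\boxempty E$ and count it. Your packaging via the splitting $\Box^2(A\oplus B)=\Box^2 A\oplus(A\otimes B)\oplus\Box^2 B$ is just a slightly more structured way of organizing the same basis computation.
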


\begin{proof}
The question being local on $X$, we may assume that $E$ is free with basis $\{v_1,\dotsc, v_e\} \subseteq \Gamma(X,E)$ and that $A$ is freely generated by $v_1,\dotsc,v_a$.
Then $A\boxempty E$ is freely generated by the images of the products $v_i \otimes v_j$ where $0\le i\le a$, $0\le j\le e$, and 
\begin{equation*}
\begin{cases}
i\le j &\text{if } \Box^2 E = \Sym^2 E \\
i < j & \text{if } \Box^2 E = \wedge^2 E.
\end{cases}
\end{equation*}
The result follows.
\end{proof}

Let $\pi : V\to X$ be the vector bundle corresponding to the locally free $\OO_X$-module $\sheafHom_X(A\boxempty E, F)$.
In symbols:
\begin{equation*}
V := \V(\sheafHom_X(A\boxempty E, F))
\end{equation*}
Let $\tilde h : (A\boxempty E)_V\to F_V$ be the tautological map.
Let
\begin{equation*}
h : E_V \to \sheafHom_X(A,F)_V
\end{equation*}
be the image of $h$ under the natural isomorphism
\begin{equation*}
\theta : \sheafHom_X(A\otimes E, F) \xrightarrow\sim
\sheafHom_X(E,\sheafHom(A,F)).
\end{equation*}

Fix nonnegative integers $i$ and $j$.
The object of the main result in this section, Theorem \ref{thm:univ-deg-bil}, is the locally closed subscheme $\Delta^{i,p} \subseteq V$ defined by
\begin{equation*}
\Delta^{i,p} := (\Sigma^i(h) \cap \Sigma^p(h|_A))\setminus(\Sigma^{i+1}(h)\cup \Sigma^{p+1}(h|_A)).
\end{equation*}
In this formula, $h|_A$ denotes the $\OO_V$-linear map $A_V \to \sheafHom_X(A, F)_V$ obtained by restricting $h$. 
Before stating the theorem, let us make a few preliminary observations.

\begin{remark}
\label{rmk:pts-of-sigma}
To understand the fibers of $\Delta^{i,p}\to X$ we may assume that $X$ is the spectrum of a field $k$. 
Then the set of $k$-rational points of $\Delta^{i,p}$ is in natural bijection with the set of $k$-linear maps
\begin{equation*}
\alpha : E \to \Hom_k(A, F)
\end{equation*}
such that 
\begin{enumerate}
\item the bilinear map $A\times A\to F$ that sends $(v, w)\mapsto h(v)(w)$ is symmetric if $\Box^2 E = \Sym^2 E$ and skew-symmetric otherwise;
\item $h$ has rank $\min(e,af)-i$; and
\item $h|_A$ has rank $a-p$. 
\end{enumerate}
\end{remark}

Let $n := \min(e, af)-i$.

\begin{lemma}
\label{lemma:nonempty}
The scheme $\Delta^{i,p}$ is nonempty if, and only if, 
\begin{enumerate}
\item $0 \le n$; 
\item $\max(a-n,0) \le p\le \min(a,e-n)$; and
\item $a-p$ is even if $\Box^2 E= \wedge^2 E$ and $f=1$.
\end{enumerate}
If $\Delta^{i,p}$ is nonempty, then the projection $\pi : \Delta^{i,p}\to X$ is surjective.
\end{lemma}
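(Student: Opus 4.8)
The plan is to reduce everything to a question of linear algebra over a field. First I would note that the formation of $\Delta^{i,p}$ commutes with base change on $X$: the vector bundle $V=\V(\sheafHom_X(A\boxempty E,F))$ is compatible with pullback, and so are the degeneracy loci $\Sigma^i(h)$ and $\Sigma^p(h|_A)$ by Remark~\ref{deg-loci-functor}. Hence $\Delta^{i,p}$ is nonempty if and only if some fibre $\Delta^{i,p}_x$ ($x\in X$) is nonempty, the projection $\pi$ is surjective if and only if every fibre is nonempty, and each fibre is the analogous scheme built over the residue field $\kappa(x)$ with the same numerical invariants $e,f,a,i,p$. Since nonemptiness of a scheme over a field is unaffected by extending the field, I may pass to $\overline{\kappa(x)}$ and invoke Remark~\ref{rmk:pts-of-sigma}: a point of the fibre is exactly a linear map $\alpha\colon E\to\Hom(A,F)$ (over $\overline{\kappa(x)}$) whose associated form $A\times A\to F$ is symmetric when $\Box^2E=\Sym^2E$ and alternating when $\Box^2E=\wedge^2E$, with $\operatorname{rank}\alpha=n$ and $\operatorname{rank}(\alpha|_A)=a-p$. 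Because conditions (1)--(3) depend only on the numerical invariants, it suffices to prove: over an algebraically closed field, such an $\alpha$ exists if and only if (1)--(3) hold.

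For the ``only if'' direction, let $\alpha$ be such a map. Condition (1) is just $n=\operatorname{rank}\alpha\ge 0$. For (2), since $\alpha|_A$ is the restriction of $\alpha$ along $A\hookrightarrow E$ we have $\operatorname{rank}\alpha-\dim(E/A)\le\operatorname{rank}(\alpha|_A)\le\operatorname{rank}\alpha$, which together with $0\le\operatorname{rank}(\alpha|_A)\le a$ and $\dim(E/A)=e-a$ gives precisely $\max(a-n,0)\le p\le\min(a,e-n)$. For (3), when $\Box^2E=\wedge^2E$ the map $\alpha|_A$ is induced by an alternating $F$-valued form on $A$; the rank of such a map can never equal $1$, and it equals an even number when $f=1$ (a scalar alternating form), which gives the stated parity constraint on $a-p$.

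For the ``if'' direction --- the substantive part --- I would assume (1)--(3), fix a splitting $E=A\oplus A'$ with $\dim A'=e-a$, and assemble $\alpha$ from two pieces. The first is a symmetric (resp.\ alternating) form $A\times A\to F$ whose induced map $\alpha_1\colon A\to\Hom(A,F)$ has rank exactly $a-p$: in the symmetric case a diagonal form $b(v_s,v_t)=\delta_{st}w_1$ for $s,t\le a-p$ does this for every $0\le a-p\le a$; in the alternating case one takes $\tfrac12(a-p)$ hyperbolic blocks $b(v_{2\ell-1},v_{2\ell})=w_1$ (extended by alternation) when $a-p$ is even, and when $a-p$ is odd --- so that $f\ge 2$ and $a-p\ge 3$ by (3) --- one instead uses $\tfrac12(a-p-3)$ hyperbolic blocks together with a single ``triple block'' $b(v_1,v_2)=w_1$, $b(v_1,v_3)=w_2$ on a $3$-dimensional subspace, which contributes $3$ to the rank. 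The second piece is a completely unconstrained map $\alpha_2\colon A'\to\Hom(A,F)$, which I would choose so that $\operatorname{im}\alpha_1+\operatorname{im}\alpha_2$ equals a prescribed subspace $S\subseteq\Hom(A,F)$ of dimension $n$; this is possible because $a-p=\dim\operatorname{im}\alpha_1\le n\le af$ (so some $S\supseteq\operatorname{im}\alpha_1$ of dimension $n$ exists, using $n=\min(e,af)-i\le af$) and $\dim(S/\operatorname{im}\alpha_1)=n-(a-p)\le e-a=\dim A'$ (so $\alpha_2$ can be made to surject onto $S/\operatorname{im}\alpha_1$) --- both inequalities being exactly the content of (2). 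Then $\alpha:=\alpha_1\oplus\alpha_2\colon A\oplus A'\to\Hom(A,F)$ has associated form on $A\times A$ the chosen symmetric/alternating one, restriction $\alpha|_A=\alpha_1$ of rank $a-p$, and image $S$, hence rank $n$. Finally, since (1)--(3) are numerical they hold over every $\kappa(x)$ or over none, so the existence result yields both that $\Delta^{i,p}$ is nonempty exactly under (1)--(3) and that $\pi$ is then surjective.

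The step I expect to be the main obstacle is the alternating case of this construction: the attainable ranks of $\alpha_1$ are genuinely restricted (never $1$, only even when $f=1$), so one must identify the correct parity-type constraint --- condition (3) --- and then produce a normal form realizing every admissible rank, for which the ``triple block'' is the one ingredient not supplied by the classical symplectic normal form. The remainder is routine manipulation of rank inequalities.
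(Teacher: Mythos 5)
Your proposal follows essentially the same route as the paper's proof: reduce to the geometric fibres of $\pi$ over an algebraically closed field, obtain (1)--(3) from the standard rank inequalities for a restriction $\alpha|_A$ together with the fact that an alternating form induces a map of even rank, and prove sufficiency by writing $E=A\oplus A'$, putting an explicit symmetric or alternating normal form of rank $a-p$ on $A$, and extending freely on $A'$ to reach total rank $n$ (the inequalities of (2) being exactly what makes the extension possible). The only difference is cosmetic: your ``triple block'' $b(v_1,v_2)=w_1$, $b(v_1,v_3)=w_2$ plays the role of the paper's single alternating form built from two vectors $w_1,w_2$, and you spell out the rank inequalities behind (2) which the paper leaves implicit.

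There is, however, one step that is not justified: the parenthetical ``so that $f\ge 2$ and $a-p\ge 3$ by (3)''. Condition (3) as stated only excludes odd $a-p$ when $f=1$; it does not exclude $a-p=1$ when $f\ge 2$ (for instance $e=4$, $a=2$, $f=2$, $i=3$, $p=1$ satisfies (1)--(3) with $n=1$). As you yourself observe in the necessity direction, an alternating $F$-valued form never induces a map of rank $1$, so in that case $\Delta^{i,p}$ is empty even though (1)--(3) hold; your construction cannot (and should not) produce a point there. This is in fact a defect of the lemma as stated rather than of your argument alone: the paper's own construction for the case ``$\Box^2E=\wedge^2E$, $a-p$ odd, $f\ge 2$'' silently degenerates when $a-p=1$ (the displayed $\alpha$ is then zero, so $\alpha|_A$ has rank $0$, not $1$). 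So your proof is exactly as complete as the paper's, but the appeal to (3) for $a-p\ge 3$ is a false step; the honest repair is to strengthen (3) to exclude $a-p=1$ whenever $\Box^2E=\wedge^2E$ (in addition to the parity requirement when $f=1$), after which your construction covers all remaining cases.
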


\begin{proof}
By considering the geometric fibers of the projection $\Delta^{i,p}\to X$, we may assume that $X$ is the spectrum of an algebraically closed field $k$. 

If $\Delta^{i,p}$ is nonempty, then (1), (2) and (3) hold by Remark \ref{rmk:pts-of-sigma} and the observation that a skew-symmetric matrix has necessarily even rank. 
Conversely, suppose that (1), (2) and (3) hold. By (3), one of the following alternatives holds:
\begin{itemize}
\item $\Box^2 E = \Sym^2 E$.
\item $\Box^2 E = \wedge^2 E$ and $a-p$ is even.
\item $\Box^2 E = \wedge^2 E$, $a-p$ is odd and $f\ge 2$.
\end{itemize}
Let us exhibit a point of $\Delta^{i,p}$ assuming the third alternative holds; the other two cases are slightly simpler and left to the reader.

Let $\{v_1,\dotsc, v_e \} \subseteq E$ be a basis of $E$ such that the vectors $v_1,\dotsc, v_a$ freely generate $A$.
Let $\{ v_1^\vee,\dotsc, v_e^\vee\} \subseteq E^\vee$ be the dual basis of $E^\vee$.
Let $w_1, w_2\in F$ be linearly independent vectors.
Then
\begin{equation*}
\alpha := \sum_{j=1}^{a-p-2}
(v_j^\vee\otimes v_{j+1}^\vee - v_{j+1}^\vee\otimes v_{j}^\vee)\otimes w_1
+ (v_{a-p}^\vee\otimes v_{1}^\vee - v_{1}^\vee\otimes v_{a-p}^\vee)
\otimes w_2
\end{equation*}
is an element of $\Hom_k(E, \Hom_k(A, F))$ that is contained in the image of $\Hom_k(A\boxempty E, F)$ and is such that $\alpha|_A$ has rank $a-p$. 
Let
\begin{equation*}
\beta_{1}, \dotsc, \beta_{n-a+p}
\in \Hom_k(A,F)
\end{equation*}
be maps which extend $\alpha(v_1), \dotsc, \alpha(v_{a-p})$ to a basis of an $n$-dimensional linear subspace of $\Hom_k(A, F)$. 
Then
\begin{equation*}
\alpha + \sum_{\ell=1}^{n-a+p} v_{a+\ell}^\vee \otimes \beta_\ell
\end{equation*}
is an element of $\Hom_k(E, \Hom_k(A, F))$ that corresponds to a closed point of $\Delta^{i,p}$.
\end{proof}

\begin{theorem}
\label{thm:univ-deg-bil}
If the scheme $\Delta^{i,p}$ is nonempty (see Lemma \ref{lemma:nonempty}), then it is smooth of pure relative codimension
\begin{equation*}
p(n-a+p) +f\cdot 
[ \tfrac 1 2 (-p^2 \pm p)
+ (e-n)a ] - n(e-n)
\end{equation*}
in $V$ over $X$.
\end{theorem}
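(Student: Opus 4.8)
The plan is to reduce the theorem to the ordinary Tjurina transform of a degeneracy locus, via the ``Grassmannian trick'' of Proposition \ref{prop:tjurina}, and then to read off smoothness and relative dimension from the prepared affine charts on a special Schubert cell furnished by Lemma \ref{lemma:prepared-chart}. Since the assertion is local on $X$ and all objects in sight are stable under base change, I would first shrink $X$ so that it is affine, $E$ and $F$ are free, and $A$ is spanned by the first $a$ members of a basis of $E$. Write $n:=\min(e,af)-i$, so that the two conditions cutting out $\Delta^{i,p}$ read $\operatorname{rank} h=n$ and $\dim(\ker h\cap A)=p$. Let $G:=\Gr_{n}(E)$ with tautological sequence $0\to K\to E_G\xrightarrow{q}E_G/K\to 0$, let $S:=\sigma_p(A)\setminus\sigma_{p+1}(A)\subseteq G$ be the Schubert cell of Definition \ref{def:schubert}, and over $V\times_X S$ form the subscheme
\begin{equation*}
Z:=\{\,h|_{K}=0\,\}\ \subseteq\ V\times_X S,
\end{equation*}
i.e.\ the locus where $K\hookrightarrow E_{V\times_X S}\xrightarrow{h}\sheafHom_X(A,F)_{V\times_X S}$ vanishes.

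Next I would identify $\Delta^{i,p}$ with an open subscheme of $Z$. Let $\hat Z\subseteq V\times_X G$ be the honest Tjurina transform $\{h|_K=0\}$ of $\Sigma^i(h)\subseteq V$ in the sense of Definition \ref{def:tjurina}, applied to $h$ over the base $V$, so that $Z=\hat Z\cap(V\times_X S)$. By Proposition \ref{prop:tjurina}, the projection $\rho\colon\hat Z\to V$ restricts to an isomorphism from $\hat Z^\circ:=\hat Z\setminus\rho^{-1}(\Sigma^{i+1}(h))$ onto $\Sigma^i(h)\setminus\Sigma^{i+1}(h)$, under which the tautological subbundle $K$ corresponds to $\ker h$ (which is locally free and locally split of rank $e-n$ there, by Corollary \ref{coker-deg-loci}, hence is the unique rank-$(e-n)$ subbundle on which $h$ vanishes). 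Intersecting both sides with $V\times_X S$, and using that a point of $\Sigma^i(h)\setminus\Sigma^{i+1}(h)$ has $\ker h$ lying in the Schubert cell $S$ precisely when $\dim(\ker h\cap A)=p$, I obtain an isomorphism $Z^\circ:=Z\setminus\rho^{-1}(\Sigma^{i+1}(h))\xrightarrow{\sim}\Delta^{i,p}$. It therefore remains to show that $Z$ is smooth over $X$ of pure relative dimension equal to that of $V$ over $X$ minus the asserted codimension; given this, $Z^\circ$ is an open subscheme of $Z$, it is nonempty over every point of $X$ because $\Delta^{i,p}\to X$ is surjective (Lemma \ref{lemma:nonempty}), and the theorem follows.

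To compute the relative dimension of $Z$, fix a decomposition into free modules $E=A'\oplus B'\oplus A''\oplus B''$ with $A=A'\oplus A''$, $\operatorname{rank} A'=a-p$, $\operatorname{rank} B'=n-a+p$ and $\operatorname{rank} B''=e-n-p$, let $U\subseteq G$ be the associated standard chart, and recall from Lemma \ref{lemma:prepared-chart} that $S\cap U$ is the subscheme $\{u_{21}=0\}$ of $U$; as the decomposition varies these charts cover $S$, by the argument in the proof of Proposition \ref{prop:schubert-smooth}. Over $S\cap U$ the subbundle $K$ is the graph of the block matrix $u$ with $u_{21}=0$, and it has an explicit basis indexed by the bases of $A''$ and $B''$; the equations $h|_K=0$ then express each value $h(v)(-)$, for $v$ in the bases of $A''$ and $B''$, as an explicit expression in the entries of $u$ and the values $h(v)(-)$ for $v$ in the bases of $A'$ and $B'$. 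Using these equations to eliminate coordinates exhibits $Z\cap(V\times_X(S\cap U))$ as an affine space over $X$, so $Z$ is smooth over $X$; counting the surviving coordinates gives the relative dimension of $Z$ over $X$ as
\begin{equation*}
\bigl[\,n(e-n)-p(n-a+p)\,\bigr]+f\cdot\Bigl[\tfrac12(a-p)\bigl((a-p)\pm1\bigr)+(n-a+p)a\Bigr],
\end{equation*}
and subtracting this from the relative dimension $f\cdot\bigl[\tfrac12 a(a\pm1)+a(e-a)\bigr]$ of $V$ over $X$ (Lemma \ref{lemma:box-free}) yields, after a routine simplification, the codimension in the statement.

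The main obstacle lies precisely in that last count. Among the equations $h|_K=0$, those obtained by pairing with the directions of $K$ that lie inside $A$ (the ``$A''$-directions'') express values $h(v)(w)$ with \emph{both} $v,w\in A$, and these are not free coordinates of $V$: they are subject to the symmetry relations $h(v)(w)=\pm h(w)(v)$ imposed by $A\boxempty E$. One must check that the Tjurina equations and the symmetry relations are consistent --- so that $Z$ is genuinely smooth, cut out by independent ``graph'' equations, rather than overdetermined --- and keep careful track of how many coordinates survive. This is exactly where the term $\tfrac12(-p^2\pm p)$, and with it the characteristic-$2$ sign, enters the codimension (and hence the term $\tfrac12 j(j\pm1)(r-m+i)$ in Theorems \ref{thm:intro-2nd-sings} and \ref{thm:univ-2nd-sings}); everything else is bookkeeping with the standard charts of Remark \ref{rmk:chart-grass} and the elementary identities sketched above.
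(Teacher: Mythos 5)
Your proposal is correct and follows essentially the same route as the paper's own proof: intersect the Tjurina transform of $\Sigma^i(h)$ with the open Schubert cell $\sigma_p(A)\setminus\sigma_{p+1}(A)$, identify $\Delta^{i,p}$ with the resulting open locus via Proposition \ref{prop:tjurina}, and then use the prepared charts of Remark \ref{rmk:chart-grass} and Lemma \ref{lemma:prepared-chart} to exhibit this incidence scheme as an affine bundle over $X$, with the symmetry of the $A''\times A''$ block accounting for the $\tfrac12(-p^2\pm p)$ term. The dimension count you give matches the paper's codimension count exactly, and the consistency check you flag is precisely the observation the paper makes about equation (\ref{eqn:deg-bil-3}) having the same symmetry type as $\alpha_{22}^\tau$.
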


\begin{proof}
Let $G := \Gr_n(E)$ be the Grassmannian of rank-$n$ quotients of $E$ over $X$.
Let
\begin{equation*}
\begin{tikzcd}
0 \ar[r] & K \ar[r,"\iota"] &
E_G \ar[r,"q"] & E_G/K \ar[r] & 0
\end{tikzcd}
\end{equation*}
be the tautological short exact sequence on $G$.
The fiber product $G' := G\times_X V$ is the Grassmannian of rank-$n$ quotients of $E_V$ over $V$. 
Let $Z\subseteq G'$ be the Tjurina transform of the degeneracy locus $\Sigma^i(h)\subseteq V$.
Thus $Z$ is the closed subscheme of $G'$ where $h \circ \iota = 0$.
Let $W$ denote the scheme-theoretic intersection of $Z$ with the Schubert cell
\begin{equation*}
(\sigma_p(A_V)\setminus \sigma_{p+1}(A_V)) =
(\sigma_p(A)\setminus \sigma_{p+1}(A))\times_X V
\subseteq G'.
\end{equation*}

By Proposition \ref{prop:tjurina}, the second projection $\pr_2 : G'\to V$ induces an isomorphism
\begin{equation*}
Z\setminus \pr_2^{-1} (\Sigma^{i+1}(h) ) \xrightarrow\sim \Sigma^i(h)\setminus \Sigma^{i+1}(h).
\end{equation*}
We claim this isomorphism maps $W\setminus \pr_2^{-1}(\Sigma^{i+1}(h))$ onto $\Delta^{i,p}$.
To see this, let $\tilde Z : = Z\setminus \pr_2^{-1} (\Sigma^{i+1}(h) )$. By the proof of Proposition \ref{prop:tjurina} the natural inclusion
$
K_Z \hookrightarrow \ker(h_Z)
$
is an isomorphism over $\tilde Z$.
This implies that $h_{\tilde Z}$
factors through a locally split injection
\begin{equation*}
(E_G/K)_{\tilde Z} \hookrightarrow \sheafHom_X(A,F)_{\tilde Z},
\end{equation*}
which in turn implies that the subschemes of $\tilde Z$ where $\wedge^{a-p'+1}(q|_A)=0$ and where $\wedge^{a-p'+1}(h|_A) =0$ coincide for all $0\le p'\le a+1$.
The claim follows.

Thus it suffices to show that $W$ is either empty or smooth of pure relative codimension
\begin{equation*}
p(n-a+p) +f\cdot 
[ \tfrac 1 2 (-p^2 \pm p)
+ (e-n)a ]
\end{equation*}
in $G'=G\times_S V$ over $X$.

Suppose that $W$ is nonempty and let $w\in W$ be a point.
Then $\pr_1(w)\in \sigma_p(A)\setminus \sigma_{p+1}(A) \subseteq G$.
As in the proof of Proposition \ref{prop:schubert-smooth}, after shrinking $X$ to a neighborhood of the image of $w$ in $X$, we may assume that the $\OO_X$-module $F$ is free, and find a direct sum decomposition
\begin{equation*}
E = A' \oplus B' \oplus A'' \oplus B'',
\end{equation*}
where $A'$, $B'$, $A''$ and $B''$ are free $\OO_X$-modules such that $A'$ has rank $a-p$,
\begin{equation*}
A=A'\oplus A'',
\end{equation*}
and the natural $\OO_G$-linear map $(A'\oplus B')_G \to E_G/K$ is an isomorphism at $\pr_1(w)$. 
Let $E' := A'\oplus B'$ and $E'':= A''\oplus B''$, so that $E=E'\oplus E''$. 
Let
\begin{equation*}
U\subseteq G, \quad u: E_U''\to E'_U
\quad\text{and}\quad
\iota' : E_U''\to E_U
\end{equation*}
be as in Remark \ref{rmk:chart-grass}.

Let
\begin{equation*}
u \sim 
\begin{bmatrix}
u_{11} & u_{12} \\
u_{21} & u_{22}
\end{bmatrix}
\end{equation*}
be the block decomposition of the $\OO_U$-linear map $u : E''_U\to E'_U$ coming from the direct sum decompositions of its source and target.
Thus
\begin{align*}
u_{11} &: A''_U \to A'_U &
u_{12} &: B''_U \to A'_U \\
u_{21} &: A''_U \to B'_U &
u_{22} &: B''_U \to B'_U 
\end{align*}
are $\OO_U$-linear maps and, for example, $u_{21}$ is the restriction of $u : E''_U\to E'_U$ to $A''_U$ followed by the second projection $E'_U= A'_U\oplus B'_U \to B'_U$.

The fiber product $U\times_X V$ is an open subset of $G'$ that contains $w$.
By Lemma \ref{lemma:prepared-chart}, the intersection $W\cap (U\times_X V)$ is the closed subscheme of $U\times_X V$ where
\begin{equation*}
u_{21} = 0
\qquad\text{and}\qquad
h \circ \iota' =0.
\end{equation*}

Fix a basis for the free $\OO_X$-module $F$ and let $\mathcal B \subseteq \Gamma(X,F^\vee)$ be the corresponding dual basis.
The subscheme of $U\times_X V$ where $h\circ \iota'=0$ is the same as the subscheme where $\tau h\circ \iota'=0$ for all $\tau\in \mathcal B$.

Given a basis element $\tau\in \mathcal B$, let 
\begin{equation*}
\tau h \sim
\begin{bmatrix}
\alpha_{11}^\tau & \beta_{11}^\tau & \alpha_{12}^\tau & \beta_{12}^\tau \\
\alpha_{21}^\tau & \beta_{21}^\tau & \alpha_{22}^\tau & \beta_{22}^\tau
\end{bmatrix}
\end{equation*}
be the block decomposition of the $\OO_V$-linear map $\tau h : E_V \to A^\vee_V$ coming from the direct sum decompositions $E = A' \oplus B' \oplus A'' \oplus B''$ and $A = A' \oplus A''$. 
Thus
\begin{align*}
\alpha_{11}^\tau &: A'_V\to (A'_V)^\vee &
\beta_{11}^\tau &: B'_V\to (A'_V)^\vee \\
\alpha_{12}^\tau &: A''_V\to (A'_V)^\vee &
\beta_{12}^\tau &: B''_V\to (A'_V)^\vee \\
\alpha_{21}^\tau &: A'_V\to (A''_V)^\vee &
\beta_{21}^\tau &: B'_V\to (A''_V)^\vee\\
\alpha_{22}^\tau &: A''_V\to (A''_V)^\vee &
\beta_{22}^\tau &: B''_V\to (A''_V)^\vee
\end{align*}
are $\OO_V$-linear maps.
The maps $\alpha_{rs}^\tau$ are either symmetric or skew-symmetric, because $h =\theta(\tilde h)$ and $\tilde h \in \Hom_V((A\boxempty E)_V, F_V)$. In particular:
\begin{align*}
(\alpha_{11}^\tau)^\vee &= \pm\alpha_{11}^\tau &
(\alpha_{22}^\tau)^\vee &= \pm\alpha_{22}^\tau &
(\alpha_{12}^\tau)^\vee &= \pm\alpha_{21}^\tau
\end{align*}

Having these block decompositions in place, we can say that $W$ is the subscheme of $U\times_X V$ where $u_{21}=0$ and 
\begin{equation*}
\begin{bmatrix}
\alpha_{11}^\tau & \beta_{11}^\tau & \alpha_{12}^\tau & \beta_{12}^\tau \\
\alpha_{21}^\tau & \beta_{21}^\tau & \alpha_{22}^\tau & \beta_{22}^\tau
\end{bmatrix}
\begin{bmatrix}
-u_{11} & -u_{12} \\
-u_{21} & -u_{22} \\
1 & 0 \\
0 & 1
\end{bmatrix} =
\begin{bmatrix}
0 & 0 \\
0 & 0
\end{bmatrix}
\end{equation*}
for all $\tau\in \mathcal B$.
Working out the matrix product and using the equation $u_{21} = 0$ and the symmetries from the preceding paragraph, we obtain the following system of equations, which also define $W$ as a subscheme of $U\times_X V$:
\begin{align}
u_{21} &= 0 \label{eqn:deg-bil-1}\\
\alpha_{12}^\tau &= \alpha_{11}^\tau u_{11} \label{eqn:deg-bil-2}\\
\alpha_{22}^\tau &= u_{11}^\vee \alpha_{11}^\tau u_{11} \label{eqn:deg-bil-3}\\
\beta_{12}^\tau &= \alpha_{11}^\tau u_{12} + \beta_{11}^\tau u_{22} \label{eqn:deg-bil-4}\\
\beta_{22}^\tau &= u_{11}^\vee \alpha_{11}^\tau u_{12} + \beta_{21}^\tau u_{22} \label{eqn:deg-bil-5}
\end{align}
Equations (\ref{eqn:deg-bil-2}-\ref{eqn:deg-bil-5}) depend on a basis element $\tau \in \mathcal B$ and must hold for all such elements. 

Fix $\OO_X$-linear bases on $A', B', A''$, and $B''$.
Then, by Remark \ref{rmk:chart-grass} and Example \ref{vb-trivial}, we may identify the schemes $U$ and $V$ with affine spaces over $X$;
the $\OO_U$-linear map $u : E''_U\to E'_U$ with a matrix whose entries are the coordinates on $U$;
and the $\OO_V$-linear map $\tilde h : (E\boxempty A)_V \to F_V$ with a matrix whose entries are the coordinates on $V$.

With these identifications, (\ref{eqn:deg-bil-2}-\ref{eqn:deg-bil-5}) become equations between matrices with entries in $\Gamma(U\times_X V, \OO_{U\times V})$.
Let us make four observations about these matrices.
First, the entries of the matrix on the left-hand side of (\ref{eqn:deg-bil-1}) are distinct coordinates on $U$.
Second, the entries of the matrices on the right-hand sides of (\ref{eqn:deg-bil-2}-\ref{eqn:deg-bil-5}) are distinct coordinates on $V$, as much as that is allowed by the symmetry or skew-symmetry of the matrices $\alpha_{22}^\tau$.
Third, the set of coordinates appearing in the left-hand sides of the equations (\ref{eqn:deg-bil-1}-\ref{eqn:deg-bil-5}) is disjoint from the set of coordinates appearing the right-hand sides of these equations.
And fourth, for each $\tau\in \mathcal B$, the right-hand side of (\ref{eqn:deg-bil-3}) has the same type of symmetry as $\alpha_{22}^\tau$.

These observations imply that $W\cap (U\times_X V)$ is isomorphic to an affine space over $X$.
Indeed, an isomorphism is given by the coordinates on $U\times_X V$ that do not appear in the left-hand sides of (\ref{eqn:deg-bil-1}-\ref{eqn:deg-bil-5}).
It follows that $W$ is smooth over $X$, because $W\cap (U\times_X V)$ was constructed as a neighborhood of the arbitrary point $w\in W$.

Let us compute the relative codimension of $W$ in $G' := G\times_X V$.
The equations (\ref{eqn:deg-bil-1}-\ref{eqn:deg-bil-5}) relate maps betwen the pullbacks to $V$ of $A'$, $A''$, $B'$, $B''$ and the duals of these $\OO_X$-modules.
The respective ranks of $A'$, $A''$, $B'$ and $B''$ are $a-p$, $p$, $n-a+p$ and $e-n+p$. 
\begin{itemize}
\item Equation (\ref{eqn:deg-bil-1}) is between elements of $\Hom_V(A''_V, B'_V)$ and contributes $p(n-a+p)$ to the relative codimension of $W$ in $G'$.
\item Equation (\ref{eqn:deg-bil-2}) is between elements of $\Hom_V(A''_V, (A'_V)^\vee)$ and contributes $p(a-p)$ to the relative codimension of $W$ in $G'$ for each $\tau \in \mathcal B$.
\item Equation (\ref{eqn:deg-bil-3}) is between symmetric or skew-symmetric elements of $\Hom_V(A''_V, (A''_V)^\vee)$ and contributes $\tfrac 1 2 p(p\pm 1)$ to the relative codimension of $W$ in $G'$ for each $\tau \in \mathcal B$.
\item Equation (\ref{eqn:deg-bil-4}) is between elements of $\Hom_V(B''_V, (A'_V)^\vee)$ and contributes $(e-n-p)(a-p)$ to the relative codimension of $W$ in $G'$ for each $\tau \in \mathcal B$.
\item Equation (\ref{eqn:deg-bil-5}) is between elements of $\Hom_V(B''_V, (A''_V)^\vee)$ and contributes $(e-n-p)p$ to the relative codimension of $W$ in $G'$ for each $\tau \in \mathcal B$.
\end{itemize}
We conclude that $W$ has relative codimension
\begin{equation*}
p(n-a+p) +f\cdot 
[ p(a-p)
+ \tfrac 1 2 p(p\pm 1)
+ (e-n-p)a ]
\end{equation*}
in $G'$ over $X$, as we set out to show.
\end{proof}

\begin{proposition}[{\cite[Proposition 2.53]{braune-thesis}}]
\label{prop:sym-deg-loci-1}
Suppose that $A=E$.
Then the constant-rank locus 
\begin{equation*}
\Sigma^i(h)\setminus \Sigma^{i+1}(h)\subseteq 
V=\V(\sheafHom_X(\Box^2 E,F))
\end{equation*}
is nonempty if, and only if,
\begin{enumerate}
\item $i\le e$; and
\item $e-i$ is even if $\Box^2 E = \wedge^2 E$ and $f=1$.
\end{enumerate}
In this case, $\Sigma^i(h)\setminus \Sigma^{i+1}(h)$ is smooth of pure relative codimension
\begin{equation*}
i(e-i)(f-1) + \tfrac 1 2 i(i\pm 1) f
\end{equation*}
in $V$ over $X$.
\end{proposition}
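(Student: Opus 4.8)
The plan is to derive Proposition \ref{prop:sym-deg-loci-1} as the special case $A = E$ of Theorem \ref{thm:univ-deg-bil} and Lemma \ref{lemma:nonempty}, taking the parameter $p$ there to be $p = i$.

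First I would record what happens when $A = E$. In this case $a = e$; since $f \ge 1$ (the case $f = 0$ being trivial, as then $V = X$ and $h = 0$) we have $m := \min(e, af) = e$, hence $n := \min(e,af) - i = e - i$. Moreover the restriction $h|_A$ is literally $h$, with the same source and target, so $\Sigma^q(h|_A) = \Sigma^q(h)$ for all $q$. Therefore
\begin{equation*}
\Delta^{i,p} = \bigl(\Sigma^i(h)\cap\Sigma^p(h)\bigr)\setminus\bigl(\Sigma^{i+1}(h)\cup\Sigma^{p+1}(h)\bigr)
\end{equation*}
is empty unless $p = i$, in which case $\Delta^{i,i} = \Sigma^i(h)\setminus\Sigma^{i+1}(h)$ is precisely the constant-rank locus of the proposition, sitting inside $V = \V(\sheafHom_X(\Box^2 E, F))$.

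Next I would substitute $a = e$, $n = e - i$, $p = i$ into the conclusions of Lemma \ref{lemma:nonempty} and Theorem \ref{thm:univ-deg-bil}. In Lemma \ref{lemma:nonempty}, condition (1) becomes $i \le e$, condition (2) becomes the automatic chain $i \le i \le i$, and condition (3) becomes ``$e - i$ even when $\Box^2 E = \wedge^2 E$ and $f = 1$''; these match conditions (1) and (2) of the proposition exactly. In the codimension formula
\begin{equation*}
p(n - a + p) + f\cdot\bigl[\tfrac12(-p^2 \pm p) + (e - n)a\bigr] - n(e - n),
\end{equation*}
the first summand vanishes since $n - a + p = 0$, while $(e - n)a = ie$ and $n(e - n) = i(e - i)$, so the whole expression becomes $f\bigl[\tfrac12(-i^2 \pm i) + ie\bigr] - i(e - i)$; a routine rearrangement turns this into $i(e - i)(f - 1) + \tfrac12 i(i \pm 1)f$, the claimed value. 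Smoothness and relative purity over $X$ are inherited verbatim from Theorem \ref{thm:univ-deg-bil}.

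I do not expect a genuine obstacle here: all the substance is packed into Theorem \ref{thm:univ-deg-bil}, and what remains is bookkeeping --- confirming the identification $\Delta^{i,i} = \Sigma^i(h)\setminus\Sigma^{i+1}(h)$, the vanishing of $\Delta^{i,p}$ for $p \ne i$, and the elementary identity between the two codimension expressions. The one point deserving a moment's care is that the symbol $\pm$ in Theorem \ref{thm:univ-deg-bil} and the symbol $\pm$ fixed at the start of this section refer to the same convention (plus for $\Sym^2$, minus for $\wedge^2$), so that the signs propagate correctly into the final formula.
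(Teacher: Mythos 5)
Your proposal is correct and matches the paper's own argument, which proves the proposition in one line by invoking Lemma \ref{lemma:nonempty} and Theorem \ref{thm:univ-deg-bil} with $A=E$, where $\Sigma^i(h)\setminus\Sigma^{i+1}(h)=\Delta^{i,i}$. Your version simply spells out the bookkeeping (the vanishing of $\Delta^{i,p}$ for $p\ne i$, the substitution $a=e$, $n=e-i$, $p=i$, and the rearrangement of the codimension formula), all of which checks out.
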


\begin{proof}
Follows from Lemma \ref{lemma:nonempty} and Theorem \ref{thm:univ-deg-bil}, because $\Sigma^i(h)=\Delta^{i,i}$ when $A=E$. 
\end{proof}
  
\begin{proposition}
\label{prop:sym-deg-loci-2}
Suppose that $af\le e$.
If $\Box^2 E = \Sym^2 E$, then the first degeneracy locus 
\begin{equation*}
\Sigma^1(h) \subseteq V = \V(\sheafHom_X(A\boxempty E, F))
\end{equation*}
has relative codimension $e-af+1$ in $V$ over $X$.
If $\Box^2 E = \wedge^2 E$, then the same holds provided that 
\begin{enumerate}
\item $a>1$; and
\item if $f=1$, then $a=e$ and $e$ is even.
\end{enumerate}
Otherwise, $\Sigma^1(h)$ has relative codimension $e-af$ in $V$ over $X$.
\end{proposition}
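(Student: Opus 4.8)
The plan is to decompose $\Sigma^1(h)$ into the locally closed strata $\Delta^{i,p}$ and to minimise their relative codimensions. Since $af\le e$, the map $h$ has generic rank $\min(e,af)=af$, so $\Sigma^1(h)$ is the (set-theoretic) union of the locally closed subschemes $\Delta^{i,p}$ over all integers $i\ge1$ and $p\ge0$. As each nonempty $\Delta^{i,p}$ is smooth over $X$ of the relative codimension computed in Theorem~\ref{thm:univ-deg-bil}, the relative codimension of $\Sigma^1(h)$ in $V$ over $X$ equals the minimum of the relative codimensions of the nonempty $\Delta^{i,p}$ with $i\ge1$; the whole proof is then a bounded optimisation using Theorem~\ref{thm:univ-deg-bil} and the nonemptiness criterion of Lemma~\ref{lemma:nonempty}.

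Write $n:=\min(e,af)-i=af-i$. Substituting into the codimension formula of Theorem~\ref{thm:univ-deg-bil} and using the identities $fa-n=i$ and $e-n=e-af+i$, a short computation gives
\begin{equation*}
\operatorname{codim}\Delta^{i,p}=i(e-af+i)+G(p)-(i-1)p,\qquad G(p):=p^2\bigl(1-\tfrac f2\bigr)+p\bigl(a(f-1)-1\pm\tfrac f2\bigr),
\end{equation*}
where $\pm$ is read as ``$+$'' when $\Box^2E=\Sym^2E$ and ``$-$'' when $\Box^2E=\wedge^2E$. By Lemma~\ref{lemma:nonempty} a nonempty $\Delta^{i,p}$ has $0\le p\le\min(a,e-n)$, hence $p\le a$ and $p\le e-af+i$; the last bound yields $i(e-af+i)-(i-1)p\ge e-af+i$, with equality exactly when $i=1$, so
\begin{equation*}
\operatorname{codim}\Delta^{i,p}\ge(e-af+i)+G(p)\ge(e-af+1)+G(p).
\end{equation*}
The next step is an elementary estimate of $G$ on $0\le p\le a$: treat $f=1$ (where $G=\tfrac12p(p-1)$ or $\tfrac12p(p-3)$), $f=2$ (where $G$ is linear), and $f\ge3$ (where $G$ is concave, hence controlled by $G(0)=0$ and $G(a)=a(\tfrac12 f(a\pm1)-1)$); this shows $G\ge0$ in the symmetric case and $G\ge-1$ in the skew case.

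Consequently every stratum with $i\ge2$ has codimension at least $e-af+1$, so the answer is governed by the $i=1$ strata, for which $\operatorname{codim}\Delta^{1,p}=(e-af+1)+G(p)$. In the symmetric case $G\ge0$ gives the lower bound $e-af+1$, attained because $\Delta^{1,0}$ (if $f\ge2$) or $\Delta^{1,1}$ (if $f=1$) is nonempty with $G=0$; this settles that case. In the skew case $\Sigma^1(h)$ always has codimension $\ge e-af$, and the dichotomy is whether some nonempty $\Delta^{1,p}$ realises $G(p)=-1$. If $a=1$, or if $f=1$ and ($a\ne e$ or $e$ odd), one exhibits such a stratum — taking $p=1$ when $a$ is odd or $a=1$, and $p=2$ when $a$ is even and $a<e$, each compatible with the parity constraint $a-p$ even of Lemma~\ref{lemma:nonempty}(3) — so the codimension is $e-af$. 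If instead $a>1$ and ($f\ge2$ or $a=e$ with $e$ even), then every nonempty $\Delta^{1,p}$ has $G(p)\ge0$ and the codimension is $e-af+1$, attained by $\Delta^{1,0}$ when $f\ge2$ and by $\Delta^{2,2}$ when $f=1$.

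The delicate point — and the reason hypothesis (2) takes the shape it does — is the borderline case $f=1$, $a=e$, $e$ even: there the parity requirement of Lemma~\ref{lemma:nonempty}(3) is incompatible with the forced range (which leaves only $p=1$), so \emph{no} stratum $\Delta^{1,p}$ exists, and the relative codimension of $\Sigma^1(h)$ is instead dictated by $\Delta^{2,2}$, which turns out to have codimension $e-af+1=1$ as well. I expect this interplay between the parity condition and the admissible range of $p$, rather than the quadratic estimate itself, to be the main obstacle.
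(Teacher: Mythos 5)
Your proposal is correct and takes essentially the paper's route: decompose $|\Sigma^1(h)|$ into the strata $\Delta^{i,p}$, feed in the codimension formula of Theorem \ref{thm:univ-deg-bil} and the nonemptiness criterion of Lemma \ref{lemma:nonempty}, and minimize over admissible $(i,p)$ — the paper packages this minimization as Lemma \ref{lemma:minimal} (reducing to the boundary $i=1$ and $p=e-af+i$ and splitting into the cases $f=1,2,\ge 3$ by convexity), which your bound for the $i\ge 2$ strata plus the estimate of $G$ on $[0,a]$ reproduces. Your handling of the borderline case $f=1$, $a=e$, $e$ even via the stratum $\Delta^{2,2}$ matches the paper's minimizer at $i=2$ in part (3) of Lemma \ref{lemma:minimal}, so the argument is sound.
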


\begin{proof}
The support of the degeneracy locus $\Sigma^1(h)\subseteq V$ is the disjoint union of the supports of the subschemes $\Delta^{i',p'} \subseteq V$ with $i'\ge 1$ and $p'\ge 0$. In symbols:
\begin{equation*}
|\Sigma^1(h)| = \bigsqcup_{i'\ge 1;~ p'\ge 0} |\Delta^{i',p'}|
\end{equation*}
By Lemma \ref{lemma:nonempty}, Theorem \ref{thm:univ-deg-bil} and Lemma \ref{lemma:minimal} below, the minimum of the codimensions of the subschemes $\Delta^{i,p}\subseteq V$ with $i\ge 1$ and $p\ge 0$ is either $e-af+1$ or $e-af$, as in the statement of the proposition.
\end{proof}

\section{The minimal codimension}
\label{sec:min-codim}

The following lemma was used in the proof of Proposition \ref{prop:sym-deg-loci-2}.
Its proof consists of tedious, but straightforward, case-by-case analysis. 

\begin{lemma}
  \label{lemma:minimal}
Let $e, a, f$ be positive integers such that $af\le e$.
Let $R$ be the polygonal region
\begin{equation*}
\{ (i,p)\in \R^2 ~:~ 1\le i\le af;~
\max(a-af+i,0) \le p\le \min(a,e-af+i) \},
\end{equation*}
see Figure \ref{fig:R}.
Let $C_+ : \R^2 \to \R$ and $C_- : \R^2 \to \R$ be the  functions defined by
\begin{equation*}
C_\pm (i,p) = p(af-i-a+p)
+ f\cdot [ \tfrac 1 2 (-p^2 \pm p) 
+ (e-af+i)a ]
- (af-i)(e-af+i).
\end{equation*}
\begin{enumerate}
\item The minimum value achieved by $C_+$ on $R\cap \Z^2$ is $e-af+1$.

\item If $f>1$, then the minimum value achieved by $C_-$ on $R\cap \Z$ is
\begin{equation*}
\begin{cases}
e-af+1  & \text{if $a>1$} \\
e-af    & \text{if $a=1$.}
\end{cases}
\end{equation*}

\item If $f=1$, then the minimum value achieved by $C_-$ on $R\cap (\Z\times(a + 2\Z))$ is
\begin{equation*}
\begin{cases}
e-af+1  & \text{if $a=e$ and $e$ is even} \\
e-af    & \text{otherwise.}
\end{cases}
\end{equation*}
\end{enumerate}
\end{lemma}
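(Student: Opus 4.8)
## Proof proposal for Lemma \ref{lemma:minimal}

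The plan is to treat $C_\pm$ as explicit quadratic functions of the two integer variables $i$ and $p$ and locate their minima over the polygon $R$ (or the slice $R\cap(\Z\times(a+2\Z))$ in part (3)) by the standard two-step recipe: first minimize over $p$ for each fixed $i$, reducing to a one-variable problem, and then minimize the resulting function of $i$. Before doing this it helps to simplify the formula. Writing $c := af - i$ (so $c$ ranges over $0 \le c \le af-1$ as $i$ runs over $1 \le i \le af$), one has $C_\pm(i,p) = p(c - a + p) + f\bigl[\tfrac12(-p^2 \pm p) + (e-c)a\bigr] - c(e-c)$. Expanding, the $p$-dependent part is $(1-\tfrac12 f)p^2 + (c - a \pm \tfrac12 f)p$, a quadratic in $p$ whose leading coefficient is positive iff $f = 1$, zero if $f = 2$, and negative if $f \ge 3$; this trichotomy in $f$ is what forces the case split in the statement. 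Meanwhile the part of $C_\pm$ independent of $p$ is $f(e-c)a - c(e-c) = (e-c)(af - c)$, which as a function of $c \in [0, af]$ is a downward parabola vanishing at $c = af$ and $c = e$; since $af \le e$, on the relevant range $0 \le c \le af-1$ it is increasing in $c$ away from $c=af$, i.e. decreasing in $i$... but one must combine this with the $p$-terms rather than optimize separately.

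The key steps, in order, would be: (i) rewrite $C_\pm$ in the $(c,p)$ coordinates as above and record the boundary of $R$ in these coordinates, namely $0 \le c \le af-1$ and $\max(a-c,0) \le p \le \min(a, e-c)$ (using $af \le e$ to see $e - af + i = e - c$ and $a - af + i = a - c$); (ii) for part (1), where the $p$-coefficient of $p^2$ is $1 - \tfrac12 f$: if $f = 1$ the $p$-parabola opens upward and the unconstrained minimum in $p$ is at $p^* = \tfrac12(a - c + \tfrac12)\cdot 2 = \ldots$ — one clamps $p$ to the interval $[\max(a-c,0), \min(a,e-c)]$ and checks which endpoint or interior integer is optimal, then plugs back and minimizes the resulting piecewise-quadratic in $c$; for $f \ge 2$ the $p$-parabola is concave (or linear), so the minimum over $p$ is attained at an endpoint of the $p$-interval, and one evaluates $C_+$ at $p = \max(a-c,0)$ and at $p = \min(a, e-c)$, obtaining in each case a concave or affine function of $c$ whose minimum over $c \in \{0,\dots,af-1\}$ sits at an endpoint $c = 0$ or $c = af-1$ (equivalently $i = af$ or $i = 1$); a short computation at these corner points should produce the value $e - af + 1$, and one checks it is never undercut; (iii) for parts (2) and (3) repeat the endpoint analysis with the sign $\mp$, now tracking that when $f > 1$ the extra $-\tfrac12 f p \cdot 2 = -fp$... rather, the linear-in-$p$ coefficient changes by $\mp f$, which shifts where the endpoint minimum lands and accounts for the split $a > 1$ versus $a = 1$; for part (3), $f = 1$, one additionally restricts $p$ to the arithmetic progression $p \equiv a \pmod 2$, so the relevant $p$-values are the largest such $p \le \min(a, e-c)$ and the smallest such $p \ge \max(a-c, 0)$, and the parity of $e$ and the equality $a = e$ enter precisely in deciding whether the optimal $p$ can reach the value that would give $e - af$ versus being forced up by one, yielding $e - af + 1$.

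The main obstacle I expect is purely bookkeeping: there is no single slick argument, and the honest route is the "tedious but straightforward case-by-case analysis" the statement already advertises. Concretely, the clamping of the optimal $p$ to an interval whose endpoints are themselves $\max$'s and $\min$'s produces three or four subregions of $R$ in each of the six cases (two signs $\times$ the trichotomy in $f$, further subdivided by $a = 1$ vs.\ $a > 1$ and by parity), and in each subregion one has a genuinely different quadratic to minimize over $c$. The one genuinely delicate point — and the place where an error is easiest to make — is part (3): getting the parity clamp right, i.e. correctly identifying the nearest integer $p \equiv a \pmod 2$ inside $[\max(a-c,0),\min(a,e-c)]$ and verifying that the extremal value $e - af$ is achievable exactly when it is \emph{not} the case that ($a = e$ and $e$ even). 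I would organize the write-up as a table: one row per case, columns for the optimal $p$, the reduced function of $c$, its minimizing $c$, and the resulting value, with the routine algebra suppressed and only the corner evaluations displayed.
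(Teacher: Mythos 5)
Your overall strategy---reduce to one-variable quadratic minimizations and run a case analysis keyed to the trichotomy $f=1$ (convex in $p$), $f=2$ (affine), $f\ge 3$ (concave), with a parity restriction when $f=1$---is the same kind of argument the paper uses, but as written the proposal has a genuine gap: it is a plan whose organizing claims are partly incorrect and whose decisive computations are never carried out. Concretely: (a) the aside about the $p$-independent term is backwards: $(e-c)(af-c)=c^2-(e+af)c+eaf$ is an upward parabola, decreasing in $c$ on $[0,af-1]$, hence increasing in $i$, not decreasing; (b) the claim that substituting the endpoint values $p=\max(a-c,0)$ and $p=\min(a,e-c)$ yields ``a concave or affine function of $c$'' whose minimum sits at $c=0$ or $c=af-1$ fails on the clamped pieces $p=a$ and $p=0$, where the $c^2$-coefficient is $+1$ (convex), and it also omits the clamp breakpoints $c=e-a$ and $c=a$ from the list of candidate minimizers; this can be repaired by a monotonicity argument, but that argument is not in the proposal; (c) most importantly, the actual content of the lemma---the values $e-af+1$ versus $e-af$, the split $a>1$ vs.\ $a=1$ in part (2), and the condition ``$a=e$ and $e$ even'' together with the parity clamp in part (3)---is deferred to a table of corner evaluations that is never produced, so none of the stated minima is verified.

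For comparison, the paper eliminates the other variable first and thereby avoids clamping altogether: on $R$ one has $\partial C_\pm/\partial i = e-af+2i-p \ge i>0$ (since $p\le e-af+i$), so the minimum is attained on the two boundary segments $\{i=1\}$ and $\{p=e-af+i\}$; restricting $C_\pm$ to these gives one-variable quadratics $q_{\pm,1}(p)$ and $q_{\pm,2}(i)$, and the convex/linear/concave trichotomy in $f$ then dictates a short endpoint-or-nearest-vertex analysis, including the constraint $p\equiv a \pmod 2$ when $f=1$. If you keep your elimination order (minimize over $p$ first), you must track the piecewise minimizer $p^*(c)$ and the extra subcases its breakpoints create, and in either order the proof is not complete until the corner values are computed and shown to equal $e-af+1$ or $e-af$ in exactly the cases stated in the lemma.
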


\begin{figure}
\includegraphics{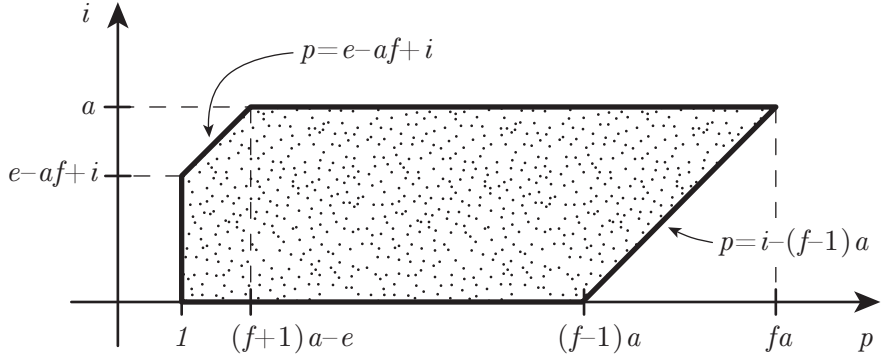}
\caption{The polygonal region of Lemma \ref{lemma:minimal}.}
\label{fig:R}
\end{figure}

\begin{proof}
If $(i,p)\in R$, then $1\le i$ and $p\le e-af+i$, so
\begin{align*}
\dfrac{\partial C_\pm}{\partial i}(i,p) = e-af+2i-p > 0.
\end{align*}
This implies that the various minima are achieved on the union of two possibly degenerate line segments $L_1\cup L_2$ in the boundary of $R$.
Namely:
\begin{align*}
L_1 &= \{(1,p) ~|~ \max(a-af+1, 0) \le p \le \min(a, e-af+1)\} \\
L_2 &= \{ (i, e-af+i)~|~ 1\le i \le (f+1)a-e \}
\end{align*}
Let $m := \max(a-af+1, 0)$ and $M := \min(a, e-af+1)$.

To study the restrictions of $C_\pm$ to $L_1$ and $L_2$, let $q_{\pm, 1} : \R\to \R$ and $q_{\pm, 2} : \R\to\R$ be the functions defined by
\begin{equation*}
q_{\pm, 1}(p) = C_\pm(1, p)
\qquad\text{and}\qquad
q_{\pm, 2} = C_\pm(i, e-af+i).
\end{equation*}
Let $I_1 := [m, M]$ and $I_2 := [1, (f+1)a -e]$. 
For $j=1, 2$, we wish to minimize $q_{+, j}$ over $I_j\cap \Z$, and $q_{-,j}$ over
\begin{equation*}
\begin{cases}
I_j\cap \Z & \text{if $f>1$}\\
I_j\cap (a+2\Z) & \text{if $f=1$}.
\end{cases}
\end{equation*}
The quadratic functions $q_{\pm, j}$ are convex if $f=1$, linear or constant if $f=2$, and concave if $f\ge 2$.
We consider each case separately.

Suppose that $f=1$.
To minimize a convex quadratic function over a finite set, we look for the point in the set that is closest to the global minimizer of the function.
The global minimizer of $q_{\pm, 1}$ is $p=1\mp \tfrac 1 2$, while that of $q_{\pm, 2}$ is $i = -(e-a)\mp \tfrac 1 2$.
Therefore:
\begin{itemize}
\item The restriction of $q_{+, 1}$ to $I_1\cap \Z = [1, M]\cap \Z$ achieves its minimum at $p=1$.
\item The restriction of $q_{-, 1}$ to $I_1 \cap (a + 2\Z)$ achieves its minimum at $p=1$ if $a$ is odd, and at $p=2$ if $a$ is even.
\item The restriction of $q_{+, 2}$ to $I_2\cap \Z = [1, 2a-e]\cap \Z$ achieves its maximum at $i=1$.
\item The restriction of $q_{-, 2}$ to $I_2\cap (a + 2\Z)$ achieves its minimum at $i=2$ if $a=e$ and $e$ is even, and at $i=1$ otherwise.
\end{itemize}

Next, suppose that $f=2$.
\begin{itemize}
\item If $a=1$, then $q_{\pm, 1}$ is strictly decreasing and $I_1 = [0, 1]$, so the restriction of $q_{\pm, 1}$ to $I_1 \cap \Z$ achieves its minimum at $p=1$.
\item If $a>1$, then $q_{\pm, 1}$ is nondecreasing and $I_1 = [0, M]$, so the restriction of $q_{\pm, 1}$ to $I_1\cap \Z$ achieves its minimum at $p=0$. 
\item The function $q_{\pm, 2}$ is nondecreasing, so the restriction of $q_{\pm, 2}$ to $I_2\cap \Z$ achieves its minimum at $i=1$.
\end{itemize}

Finally, suppose that $f\ge 3$. 
To minimize a concave quadratic function over a finite set, we look for the point in the set that is furthest from the global maximizer of the function.
\begin{itemize}
\item If $a=1$, then the global maximizer of of $q_{\pm, 1}$ lies to the left of the midpoint of the interval $I_2=[0,1]$, so the restriction of $q_{\pm, 1}$ to $I_2\cap \Z$ achieves its minimum at $p=1$.
\item If $a>1$, then the global maximizer of of $q_{\pm, 1}$ lies to the right of the midpoint of the interval $I_2=[0,M]$, so the restriction of $q_{\pm, 1}$ to $I_2\cap \Z$ achieves its minimum at $p=0$.
\item The global maximizer of $q_{\pm, 2}$ lies to the right of the midpoint of the interval $I_2 = [1, (f+1)a-e]$, so the restriction of $q_{\pm, 2}$ to $I_2\cap \Z$ achieves its minimum at $i=1$. \qedhere
\end{itemize}
\end{proof}

\section{Power series with finite Milnor number}

In this section we state basic facts about power series with finite Milnor number and use these facts to prove a version of Morse's Lemma with Parameters, namely Proposition \ref{prop:morse-params}. 
In the next section we will use this proposition to prove Theorem \ref{thm:intro-morse} from the introduction.

Let $k$ be a field. 
Let $x=(x_1,\dotsc,x_n)$ be a finite set of indeterminates.
Let $f\in k[[x]]$ be a power series.

\begin{definition}
The \emph{Jacobian ideal} of $f$, denoted $\operatorname{jac}(f)$, is the ideal generated in the power series ring $k[[x]]$ by the partial derivatives $\partial f/\partial x_1,\dotsc, \partial f/\partial x_n$.
The quotient $k[[x]]/\operatorname{jac}(f)$ is called the \emph{Milnor algebra} of $f$.
Its (possibly infinite) dimension as a vector space over $k$ is called the \emph{Milnor number} of $f$ and denoted by $\mu(f)$.
\end{definition}

\begin{definition}
Let $r$ be a positive integer.
We say that a power series $f\in k[[x]]$ is \emph{$r$-determined} if for every power series $g\in k[[x]]$ such that $f-g\in \langle x\rangle^{r+1}$, there exists an automorphism of $k[[x]]$ as a local $k$-algebra that sends $g$ to $f$.
We say that $f$ is \emph{finitely determined} if it is $r$-determined for some $r\ge 1$.
\end{definition}

\begin{proposition}
\label{DeterminacyBound}
If $f\in k[[x]]$ has finite Milnor number, then $f$ is finitely determined.
More precisely, let $r$ be the largest positive integer such that $\langle x\rangle^r\subseteq \operatorname{jac}(f)$.
Then $f$ is $2r$-determined.
\end{proposition}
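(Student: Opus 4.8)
The plan is to establish $2r$-determinacy by the ``successive approximation'' method, arranged so that no integer is ever inverted, hence valid in every characteristic. First I would record the only input beyond elementary algebra: since $\mu(f)<\infty$, the Milnor algebra $k[[x]]/\operatorname{jac}(f)$ is an Artinian local ring, so its maximal ideal is nilpotent and $\langle x\rangle^N\subseteq\operatorname{jac}(f)$ for $N\gg 0$; in particular the integer $r$ of the statement satisfies $\langle x\rangle^{r}\subseteq\operatorname{jac}(f)$, and I will use only this. Fix $g\in k[[x]]$ with $f-g\in\langle x\rangle^{2r+1}$, and put $h:=g-f$; the goal is a local $k$-algebra automorphism $\psi$ of $k[[x]]$ with $\psi(g)=f$.

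The heart of the argument is one ``order-improving step'': given $h'\in\langle x\rangle^{2r+j}$ with $j\ge 1$, produce a local automorphism $\varphi$ with $\varphi\equiv\operatorname{id}$ modulo $\langle x\rangle^{r+j}$ and $\varphi(f+h')=f+h''$ for some $h''\in\langle x\rangle^{2r+j+1}$. To do this, note $\langle x\rangle^{2r+j}=\langle x\rangle^{r+j}\cdot\langle x\rangle^{r}\subseteq\langle x\rangle^{r+j}\operatorname{jac}(f)=\sum_i\langle x\rangle^{r+j}\,\partial f/\partial x_i$, so $h'=\sum_{i=1}^{n}a_i\,\partial f/\partial x_i$ with all $a_i\in\langle x\rangle^{r+j}\subseteq\langle x\rangle^{2}$; then $x_i\mapsto x_i-a_i$ defines such a $\varphi$ (its linear part at the origin is the identity). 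The characteristic-free second-order Taylor formula — in $k[[x,y]]$ one has $f(y)-f(x)-\sum_i(y_i-x_i)\,\partial f/\partial x_i(x)\in\langle y_1-x_1,\dots,y_n-x_n\rangle^{2}$, and likewise to first order for $h'$ — gives $\varphi(f)=f-\sum_i a_i\,\partial f/\partial x_i+R=f-h'+R$ with $R\in\sum_{i,l}a_ia_l\,k[[x]]\subseteq\langle x\rangle^{2(r+j)}$, and $\varphi(h')-h'\in\sum_i a_i\cdot\langle x\rangle^{2r+j-1}\subseteq\langle x\rangle^{3r+2j-1}$ (here $\partial h'/\partial x_i\in\langle x\rangle^{2r+j-1}$). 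Hence $\varphi(f+h')=\varphi(f)+\varphi(h')=f+R+(\varphi(h')-h')=:f+h''$ with $h''\in\langle x\rangle^{\min(2r+2j,\,3r+2j-1)}\subseteq\langle x\rangle^{2r+j+1}$, the last step using $r\ge 1$ and $j\ge 1$.

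Now I would iterate. Starting from $h^{(0)}:=h\in\langle x\rangle^{2r+1}$, the step yields automorphisms $\varphi_1,\varphi_2,\dots$ and series $h^{(j)}\in\langle x\rangle^{2r+1+j}$ with $\varphi_j(f+h^{(j-1)})=f+h^{(j)}$ and $\varphi_j(x_i)-x_i\in\langle x\rangle^{r+j}$. Set $\Psi_N:=\varphi_N\circ\cdots\circ\varphi_1$, so $\Psi_N(f+h)=f+h^{(N)}$. Because $\varphi_j\equiv\operatorname{id}$ modulo $\langle x\rangle^{r+j}$ with $r+j\to\infty$, the sequence $(\Psi_N)$ is Cauchy for the $\langle x\rangle$-adic topology and converges to a local $k$-algebra automorphism $\Psi$ of $k[[x]]$; since $h^{(N)}\to 0$, continuity of $\Psi$ gives $\Psi(f+h)=\lim_N\bigl(f+h^{(N)}\bigr)=f$. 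Taking $\psi=\Psi$ (or $\Psi^{-1}$, according to the direction convention for determinacy) shows $g$ is right-equivalent to $f$; as $g$ was arbitrary with $f-g\in\langle x\rangle^{2r+1}$, this proves $f$ is $2r$-determined.

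I expect no single step to be a genuine obstacle; the only thing needing care is the order bookkeeping in the middle paragraph — checking that both error terms $R$ and $\varphi(h')-h'$ land in $\langle x\rangle^{2r+j+1}$, which is precisely where the quadratic exponent $2r$ (rather than something smaller) is consumed, via $2(r+j)\ge 2r+j+1$ and $3r+2j-1\ge 2r+j+1$ for $j\ge 1$ — together with the observation that the construction of $\varphi$ and the Taylor expansions divide by no integers, so the argument goes through verbatim in positive characteristic. Convergence of the infinite composition of automorphisms and the integral form of Taylor's formula are standard, and I would only sketch them.
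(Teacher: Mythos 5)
Your proposal is correct: the Tougeron-type order bookkeeping checks out ($R\in\langle x\rangle^{2(r+j)}$ and $\varphi(h')-h'\in\langle x\rangle^{3r+2j-1}$ both land in $\langle x\rangle^{2r+j+1}$ since $r,j\ge 1$), no integer is ever inverted, and the $\langle x\rangle$-adic convergence of the composed automorphisms is standard. The paper does not give an argument of its own but defers to \cite{BGM12} and the direct proof in \cite{Milnor68}, and your successive-approximation argument is essentially that same standard proof, written out in a characteristic-free way.
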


\begin{proof}
This result follows from \cite[Theorem 2.1]{BGM12}.
For a simple, direct argument, see the proof of \cite[Lemma 10.8]{Milnor68}.
\end{proof}

The analogue of Proposition \ref{DeterminacyBound} for germs of smooth functions on Euclidean space is a very special case of \cite[Theorem 1.2]{Wall81}.

Let $\CC$ be the category whose objects are complete, Noetherian, local $k$-algebras with residue field $k$, and whose morphisms are maps of local $k$-algebras.

\begin{definition}
\label{def:unfolding}
Let $R$ be a complete local $k$-algebra in $\CC$. 
\begin{enumerate}
\item An \emph{unfolding} (or \emph{deformation}) of $f$ over $R$ is a power series $F\in R[[x]]$ that maps to $f\in k[[x]]$ under the quotient map $R\to k$.
\item Let $F, F' \in R[[x]]$ be unfoldings of $f$ over $R$.
A \emph{right-equivalence} (or \emph{morphism}) $F\to F'$ is a local $R$-algebra map $\varphi : R[[x]]\to R[[x]]$ that lifts the identity of $k[[x]]$ and sends $F$ to $F'$.
\end{enumerate}
\end{definition}

Unfoldings of $f$ over $R$ and right-equivalences between them form a category (in fact, a groupoid) that we denote by $\mathscr D(R)$.
A map $b: R\to R'$ of complete local $k$-algebras in $\CC$ induces an obvious functor functor $b_*: \mathscr D(R)\to \mathscr D(R')$.

\begin{definition}
The \emph{functor of unfoldings} of $f$ is the functor
\begin{equation*}
D: \CC\to (\mathrm{Sets})
\end{equation*}
that sends a complete local $k$-algebra $R\in \CC$ to the set $D(R)$ of right-equivalence classes of unfoldings of $f$ over $R$, and acts on morphisms in the obvious way.
\end{definition}

\begin{definition}
Let $R$ be a complete local $k$-algebra in $\CC$. Let $F\in R[[x]]$ be a unfolding of $f$ over $R$. We say that $F$ is \emph{right-complete} (or \emph{versal}) if, for every complete local $k$-algebra $A$ in $\CC$, the map 
\begin{equation*}
\Hom_{\CC}(R,A)\to D(A)
\end{equation*}
that sends $b\mapsto b_*F$ is surjective.
\end{definition}

\begin{proposition}
\label{versal-deform}
Suppose that $f$ has finite Milnor number.
Let $g_1, \dotsc, g_\mu \in k[[x]]$ be power series whose images span the Milnor algebra $k[[x]]/\jac(f)$ as a vector space over $k$.
Let $s= (s_1,\dotsc, s_\mu)$ be a set of $\mu$ indeterminates.
Then
\begin{equation*}
F := f + s_1 g_1 + \dotsb + s_\mu g_\mu \in k[[s,x]]
\end{equation*}
is a right-complete unfolding of $f$ over $k[[s]]$.
\end{proposition}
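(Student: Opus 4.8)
The plan is to prove versality by verifying the infinitesimal criterion, which reduces the surjectivity statement (for all $A\in\CC$) to a single first-order lifting property, and then to use completeness of the rings together with Proposition \ref{DeterminacyBound} to upgrade first-order liftings to genuine liftings. Concretely, I would argue as follows.

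\emph{Step 1: Reduce to the infinitesimal criterion.} It suffices to prove that for every surjection $A'\twoheadrightarrow A$ in $\CC$ with square-zero kernel (equivalently, kernel killed by the maximal ideal of $A'$), every unfolding of $f$ over $A$ that lifts to $k[[s]]$-wise\ldots more precisely: the deformation functor $D$ has a hull and $F$ is versal iff the Kodaira--Spencer map of $F$ is surjective onto $T^1 := k[[x]]/\jac(f)$ and $D$ is ``less obstructed'' than the base. Since $\jac(f)$ has finite colength, $T^1$ is finite-dimensional, and the standard deformation-theory machinery (Schlessinger's criteria) applies: $D$ satisfies $(H_1)$--$(H_3)$, so it has a hull, and the tangent space to $D$ is canonically $T^1$. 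The unfolding $F = f + \sum s_\ell g_\ell$ has Kodaira--Spencer map $k^\mu \to T^1$ sending the $\ell$th basis vector to $[g_\ell]$, which is surjective by hypothesis. Hence it is enough to check that $F$ is versal, which by the infinitesimal lifting criterion amounts to: given $A'\twoheadrightarrow A$ square-zero in $\CC$, a map $b: k[[s]]\to A$, and an unfolding $G'$ over $A'$ with $b_*F$ right-equivalent to the image of $G'$ in $A[[x]]$, one can lift $b$ to $b': k[[s]]\to A'$ with $b'_*F$ right-equivalent to $G'$.

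\emph{Step 2: Solve the infinitesimal lifting problem.} Write $I = \ker(A'\to A)$, so $\mathfrak m_{A'} I = 0$. After replacing $G'$ by a right-equivalent unfolding we may assume its image in $A[[x]]$ equals $b_*F$. Then $G' = b_*F + \sum_\ell c_\ell$, thinking of $b_*F$ via any set-theoretic lift $A\to A'$; here the ambiguity and the correction terms live in $I\otimes_k k[[x]]$. Using that $[g_1],\dots,[g_\mu]$ span $k[[x]]/\jac(f)$, and that $I$ is a $k$-vector space, I can write the obstruction class of $G'$ — the discrepancy between $G'$ and the naive lift of $b_*F$, modulo the action of infinitesimal coordinate changes (i.e.\ modulo $I\cdot\jac(f)$) — as a combination $\sum_\ell i_\ell g_\ell$ with $i_\ell \in I$, and define $b'(s_\ell) := \tilde b(s_\ell) + i_\ell$ for any fixed lift $\tilde b$. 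The part of the discrepancy lying in $I\cdot\jac(f) \subseteq I[[x]]$ is absorbed by a coordinate change $x_a\mapsto x_a + \sum_\ell i_\ell \,\partial h_\ell/\partial x_a$-type automorphism of $A'[[x]]$ lifting the identity, exactly as in the proof of the determinacy bound. This exhibits the required right-equivalence $b'_*F \simeq G'$.

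\emph{Step 3: From formal versality to versality over all of $\CC$.} The infinitesimal criterion gives liftings against square-zero extensions; iterating over the tower $A/\mathfrak m_A^{n+1}$ and passing to the limit (both $R = k[[s]]$ and $A$ are complete, and a compatible system of $k$-algebra maps $k[[s]]\to A/\mathfrak m_A^{n+1}$ assembles to $k[[s]]\to A$) promotes this to surjectivity of $\Hom_\CC(k[[s]],A)\to D(A)$ for every $A\in\CC$, which is the definition of right-completeness. The main obstacle is Step 2: making precise the claim that, modulo infinitesimal coordinate changes, every first-order deformation is accounted for by the $g_\ell$, i.e.\ verifying that the ``tangent space to the orbit'' is exactly $\jac(f)\otimes I$ and that the complementary directions are spanned by the $g_\ell$; this is the computation underlying the identification of $T^1$ with the Milnor algebra and uses the finiteness of $\mu(f)$ (via Proposition \ref{DeterminacyBound}) to know that higher-order terms do not introduce new obstructions. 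Once this linear-algebra core is in place, Steps 1 and 3 are formal.
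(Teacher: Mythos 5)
Your proposal is correct in substance, and its core computation (Step 2) is exactly the linear-algebra heart of the matter: decompose $k[[x]]=\operatorname{span}_k(g_1,\dotsc,g_\mu)+\jac(f)$, put the $\jac(f)$-part of the first-order discrepancy into an infinitesimal coordinate change (using $\mathfrak m_{A'}I=0$, so only the residue $f$ contributes to the derivative terms), and put the rest into a modification of the base map $s_\ell\mapsto \tilde b(s_\ell)+i_\ell$. But the route is organized differently from the paper's. The paper avoids Schlessinger's machinery and the ``formal versality $\Rightarrow$ versality'' step entirely: given $A\in\CC$ it chooses a surjection $P\twoheadrightarrow A$ with $P$ a power series ring, observes that both $\Hom_\CC(k[[s]],-)$ and $D(-)$ are surjective along $P\to A$ (power series coefficients and images of the $s_\ell$ lift), and thereby reduces to the case of a smooth base $P$, where the order-by-order construction of the base change and right-equivalence is carried out as in the cited method of Greuel--Lossen--Shustin. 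Your version instead verifies the infinitesimal lifting criterion for arbitrary small extensions in $\CC$ and then passes to the limit along $A/\mathfrak m_A^{n+1}$. That buys a statement phrased in standard deformation-functor language, at the cost of the extra bookkeeping your Step 3 glosses over: you must choose the maps $b_n$ \emph{and} the right-equivalences compatibly up the tower so that they converge to a single right-equivalence over $A$ (this works because automorphisms of $(A/\mathfrak m^n)[[x]]$ lift along surjections in $\CC$ --- lift coefficients and invoke the formal inverse function theorem --- the same fact you implicitly use in Step 2 when you replace $G'$ so that its image equals $b_*F$). The paper's reduction to a smooth base makes this compatibility issue invisible.

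Two smaller corrections. First, the opening of your Step 1 (``versal iff Kodaira--Spencer surjective and $D$ less obstructed than the base'') is not a correct criterion as stated, but it does no harm since you never use it: you verify the lifting property directly. Second, Proposition \ref{DeterminacyBound} is not what controls ``higher-order obstructions'' here; what the argument needs is only that $\mu(f)<\infty$ so that finitely many $g_\ell$ span the Milnor algebra, together with the fact that unfoldings themselves lift along any surjection in $\CC$ (the functor is unobstructed because one can lift coefficients). Finite determinacy enters the paper elsewhere (Lemma \ref{lemma:morse}), not in the proof of Proposition \ref{versal-deform}.
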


\begin{proof}
Let $A\in \CC$ be a local $k$-algebra.
Let $P\twoheadrightarrow A$ be a surjective map of local $k$-algebras, where $P$ is a ring of power series in finintely many variables and coefficients in $k$.
We have a commutative diagram
\begin{equation*}
\begin{tikzcd}
\Hom_\CC(k[[s]], P) \ar[r] \ar[d, two heads] &
D(P) \ar[d, two heads] \\
\Hom_\CC(k[[s]], A) \ar[r] &
D(A)
\end{tikzcd}
\end{equation*}
where the vertical maps are surjective and the horizontal maps are induced by $F$.
It suffices to show that the top horizontal map is surjective.
This can be done using the method of the proof of \cite[Corollary 1.17]{GLS2007}.
\end{proof}

The analogue of Proposition \ref{DeterminacyBound} for unfoldings of germs of smooth functions on Euclidean space is a very special case of \cite[Theorem 3.4]{Wall81}.

We now turn to generalization of Morse's lemma that we will use in the next section.
Suppose that $f\in \langle x\rangle^2$ and that the Hessian matrix of $f$ has rank $r$ at the origin. Then $r$ is even if $p=2$.
Let
\begin{equation*}
q=\begin{cases}
x_1^2 + \dotsb + x_r^2 & \text{if $p\ne 2$}\\
x_1x_2 + \dotsb + x_{r-1}x_r & \text{if $p=2$}.
\end{cases}
\end{equation*}

\begin{lemma}
\label{morse-approximation}
If $p\ne 2$, then there exists a local $k$-algebra automorphism $\varphi:k[[x]]\to k[[x]]$ such that $\varphi(f)\equiv q$ modulo $\langle x\rangle^3$.
If $p= 2$, then there exists a local $k$-algebra  automorphism $\varphi:k[[x]]\to k[[x]]$ such that either $\varphi(f)\equiv q$ or $\varphi(f)\equiv q+x_{r+1}^2$ modulo $\langle x\rangle^3$.
\end{lemma}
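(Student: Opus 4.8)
The plan is to reduce the statement to linear algebra over $k$ applied to the degree-$2$ part of $f$, using a linear change of coordinates to put the quadratic form into its normal form, and then to absorb any cubic-and-higher error into the quadratic part via a second, purely formal change of coordinates. Write $f = f_2 + f_{\ge 3}$ where $f_2$ is the quadratic part of $f$ and $f_{\ge 3}\in\langle x\rangle^3$. The quadratic form $f_2$ is by hypothesis associated to a symmetric (if $p\ne 2$) or alternating-in-the-relevant-sense bilinear form whose rank at the origin is $r$; concretely, the Hessian matrix $\Hess(f)(0)$ has rank $r$.

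First I would handle the linear normalization of $f_2$. If $p\ne 2$, the form $f_2$ is a quadratic form over $k$ of rank $r$, so after a $k$-linear change of the variables $x_1,\dots,x_n$ (which extends to a local $k$-algebra automorphism of $k[[x]]$) we may assume $f_2 = a_1 x_1^2 + \dots + a_r x_r^2$ with each $a_i\in k^\times$. If $p=2$, the situation is the classical one for quadratic forms in characteristic $2$: the polarization $B(u,v) = f_2(u+v)-f_2(u)-f_2(v)$ is an alternating bilinear form, and the rank-$r$ hypothesis on $\Hess(f)(0)$ says $B$ has rank $r$ (so $r$ is even). By the structure theory of quadratic forms in characteristic $2$, $f_2$ can be brought by a $k$-linear change of coordinates into the shape $f_2 = (x_1 x_2 + c_1 x_1^2 + c_1' x_2^2) + \dots + (x_{r-1}x_r + c_{r/2}x_{r-1}^2 + c_{r/2}'x_r^2) + \ell(x_{r+1},\dots,x_n)^2$ where $\ell$ is linear; the last square term is present or absent according to whether $f_2$ restricted to the radical of $B$ is or is not identically zero, and this is exactly the dichotomy $\varphi(f)\equiv q$ versus $\varphi(f)\equiv q + x_{r+1}^2$ in the statement (after a further linear change renaming $\ell$ to a coordinate and, in the $p\ne2$ case, scaling $x_i\mapsto a_i^{-1/2}x_i$, which requires the square roots $a_i^{1/2}$ — see below).

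Second, with $f_2$ in the stated normal form $q$ (respectively $q$ or $q+x_{r+1}^2$), I would kill the cubic-and-higher error $f_{\ge 3}$. The point is that $q$ (resp. $q+x_{r+1}^2$) is a nondegenerate quadratic form in the variables $x_1,\dots,x_r$ (resp. $x_1,\dots,x_{r+1}$), so its Jacobian ideal in $k[[x_1,\dots,x_r]]$ (resp. with $x_{r+1}$) is the maximal ideal, hence $q$ has Milnor number $1$; in particular $q$ is finitely determined by Proposition \ref{DeterminacyBound}. But I need something slightly sharper and in a relative form: I want to move $q + (\text{error in }\langle x\rangle^3)$ back to $q$ without disturbing the variables not appearing in $q$. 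I would argue this directly by a Newton-iteration / completing-the-square argument: write the error as $x_i h_i(x)$ using the fact that $\partial q/\partial x_i$ ($i=1,\dots,r$) together span an ideal containing $\langle x_1,\dots,x_r\rangle$, and perform successive substitutions $x_i \mapsto x_i + (\text{correction in }\langle x\rangle^2)$ that lower the order of the error; the substitutions only involve $x_1,\dots,x_r$ (resp. $x_{r+1}$). Since we only need congruence modulo $\langle x\rangle^3$ — not full equivalence to $q$ — a single such step already suffices: one substitution $x_i\mapsto x_i + (\text{quadratic})$ removes the cubic part of $f$ that is divisible by $x_i$, and since every monomial of degree $3$ in $n$ variables is divisible by some $x_i$ with $i\le r$ \emph{provided} the cubic terms not involving $x_1,\dots,x_r$ also get handled — here I must be slightly careful: cubic terms purely in $x_{r+1},\dots,x_n$ are \emph{not} divisible by any $x_i$ with $\partial q/\partial x_i\ne 0$.

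The main obstacle, then, is precisely this last point, together with the square-root issue. For the square roots: over a general (not perfect) field of characteristic $\ne 2$, the $a_i$ need not be squares, so one cannot literally scale $x_i$ to make the coefficients $1$; the honest normal form of $f_2$ is a diagonal form $\langle a_1,\dots,a_r\rangle$, and the statement as written with $q = x_1^2+\dots+x_r^2$ must therefore be read up to this diagonalization, or the hypotheses of the ambient Theorem \ref{thm:intro-morse} (where $k$ is algebraically closed) must be in force — I would state the lemma under the standing hypothesis that $k$ is algebraically closed, or carry the diagonal coefficients through, whichever matches how the lemma is invoked. For the cubic-terms-in-the-free-variables point: I would note that the automorphism $\varphi$ we produce is allowed to change the higher-order part of $f$ arbitrarily, so after the linear step bringing $f_2$ to $q$ (resp. $q+x_{r+1}^2$), there is \emph{nothing further to prove} — we already have $\varphi(f) = q + f_{\ge 3} \equiv q$ modulo... no: $f_{\ge3}\in\langle x\rangle^3$, so $\varphi(f)\equiv q \pmod{\langle x\rangle^3}$ is immediate, with $\varphi$ just the linear automorphism from step one. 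So in fact the second step is unnecessary for this lemma as stated, and the entire content is the linear classification of the quadratic part $f_2$ in both characteristics — the real work is the characteristic-$2$ quadratic form normal form and identifying when the extra $x_{r+1}^2$ appears, which is where I would concentrate the write-up.
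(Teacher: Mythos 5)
Your final, distilled argument --- only the quadratic part of $f$ matters modulo $\langle x\rangle^3$, so the lemma reduces to the classification of quadratic forms in each characteristic, realized by a $k$-linear change of coordinates on $\mathfrak n/\mathfrak n^2$ lifted to a local automorphism of $k[[x]]$ --- is exactly the paper's proof, so once you discard the (correctly retracted) detour about killing cubic terms via determinacy, the proposal is correct and takes the same route. Your caveat about square roots is well taken: the normal forms $x_1^2+\dotsb+x_r^2$ and $q+x_{r+1}^2$ require $k$ to be at least quadratically closed (resp.\ perfect in characteristic $2$), a hypothesis the paper leaves implicit in this section but which holds in the intended application, since Theorem \ref{thm:intro-morse} assumes $k$ algebraically closed.
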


\begin{proof}
Let $\mathfrak n$ denote the maximal ideal $\langle x\rangle \subset k[[x]]$. Let $q(f)$ denote the image of $f\in \mathfrak n^2$ inside $\mathfrak n^2/\mathfrak n^3 = \Sym^2 (\mathfrak n/\mathfrak n^2)$. Then $q(f)$ is a quadratic form whose associated bilinear form is represented by the Hessian matrix of $f$ at the origin. 
By the classification of quadratic forms, there exists a $k$-linear automorphism $\varphi_1$ of $\mathfrak n/\mathfrak n^2$ such that $\Sym^2(\varphi_1)$ sends $q(f)$ to either $q$ or $q+x_{r+1}^2$.
We may take $\varphi$ to be the local $k$-algebra automorphism of $k[[x]]$ induced by $\varphi_1$, which characterized by the following property: for all nonnegative integers $i$, the self-map of $\mathfrak n^i/\mathfrak n^{i+1} = \Sym^i(\mathfrak n/\mathfrak n^2)$ induced by $\varphi$ is equal to $\Sym^i(\varphi_1)$.
\end{proof}

\begin{lemma}[Morse's Lemma]
\label{lemma:morse}
If $r=n$, then there exists an automorphism of $k[[x]]$ as a local $k$-algebra that maps $f$ to $q$.
\end{lemma}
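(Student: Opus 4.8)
The plan is to obtain the lemma from the finite-determinacy results above, using Proposition \ref{DeterminacyBound} and Lemma \ref{morse-approximation} as black boxes.

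First I would pin down the Jacobian ideal of $f$. Since $f\in\langle x\rangle^2$, each partial derivative $\partial f/\partial x_j$ lies in $\langle x\rangle$, and its image in $\langle x\rangle/\langle x\rangle^2$ is $\sum_{i=1}^{n}(\partial^2 f/\partial x_i\partial x_j)(0)\,\overline{x_i}$. The hypothesis that the Hessian of $f$ has rank $n$ at the origin thus says exactly that the classes of $\partial f/\partial x_1,\dotsc,\partial f/\partial x_n$ form a basis of $\langle x\rangle/\langle x\rangle^2$; by completeness of $k[[x]]$ (Nakayama), these elements then generate $\langle x\rangle$, so they form a regular system of parameters and $\operatorname{jac}(f)=\langle x\rangle$. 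In particular the Milnor number $\mu(f)=1$ is finite, and since $\langle x\rangle\subseteq\operatorname{jac}(f)$, Proposition \ref{DeterminacyBound} shows that $f$ is $2$-determined.

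Next I would bring in the quadratic approximation. Apply Lemma \ref{morse-approximation} to $f$, with $r=n$ there. If $\operatorname{char}(k)\ne 2$ this directly produces a local $k$-algebra automorphism $\varphi$ of $k[[x]]$ with $\varphi(f)\equiv q\pmod{\langle x\rangle^3}$. If $\operatorname{char}(k)=2$ then $n=r$ is even, so $q=x_1x_2+\dotsb+x_{n-1}x_n$ makes sense, and the other alternative $q+x_{r+1}^2$ allowed by that lemma cannot occur here, there being no variable $x_{n+1}$ among $x_1,\dotsc,x_n$; so again there is a $\varphi$ with $\varphi(f)\equiv q\pmod{\langle x\rangle^3}$. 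Applying $\varphi^{-1}$, which preserves $\langle x\rangle^3$, gives $f-\varphi^{-1}(q)\in\langle x\rangle^3$. Now invoke $2$-determinacy of $f$: there is a local $k$-algebra automorphism $\rho$ of $k[[x]]$ with $\rho(\varphi^{-1}(q))=f$, and hence $\varphi\circ\rho^{-1}$ is a local $k$-algebra automorphism of $k[[x]]$ sending $f$ to $q$, which is the assertion of the lemma.

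No step here is genuinely difficult: the substance is contained in Proposition \ref{DeterminacyBound} and Lemma \ref{morse-approximation}, and the only point requiring care is the characteristic-$2$ bookkeeping — the evenness of $n$, the split form of $q$, and the fact that the extra square term in Lemma \ref{morse-approximation} has nowhere to live once $r=n$.
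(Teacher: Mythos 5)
Your proof is correct and takes essentially the same route as the paper: combine the quadratic normal form provided by Lemma \ref{morse-approximation} with $2$-determinacy from Proposition \ref{DeterminacyBound}. The only (harmless) difference is that the paper applies $2$-determinacy to $q$, via $\jac(q)=\langle x\rangle$, whereas you establish it for $f$ itself by showing the invertible Hessian forces $\jac(f)=\langle x\rangle$; both arguments close the gap between $f$ and $q$ modulo $\langle x\rangle^3$ in the same way, and your characteristic-$2$ bookkeeping is accurate.
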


\begin{proof}
Because $r=n$, we have $\langle x\rangle = \operatorname{jac}(q)$. It follows from Proposition \ref{DeterminacyBound} that $q$ is 2-determined. Hence it suffices to show that there exists an automorphism of $k[[x]]$ as local $k$-algebra that sends $f$ to $q$ modulo $\langle x\rangle^3$. This follows from Lemma \ref{morse-approximation} above.
\end{proof}

\begin{proposition}[Morse's Lemma with Parameters]
\label{prop:morse-params}
Let $R$ be a complete local $k$-algebra with residue field $k$.
Let $F\in R[[x]]$ be a power series with residue $f$ in $k[[x]]$.
Then there exist a power series $h\in R[[x_{r+1},\dotsc,x_n]]$ and an automorphism of $R[[x]]$ as a local $R$-algebra that sends $F$ to $q+h$.
\end{proposition}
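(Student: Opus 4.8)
The plan is to view $F$ as an unfolding (deformation) of the Morse singularity $q$ in the variables $x_1,\dots,x_r$, with the remaining variables $x_{r+1},\dots,x_n$ playing the role of deformation parameters, and then to appeal to the versality statement of Proposition~\ref{versal-deform}. Set $S:=R[[x_{r+1},\dots,x_n]]$, so that $R[[x]]=S[[x_1,\dots,x_r]]$; then $S$ is again a complete local $k$-algebra with residue field $k$, lying in $\CC$ whenever $R$ does (we may assume $R\in\CC$, as in the intended application to Theorem~\ref{thm:intro-morse}). The key elementary observation is that, in every characteristic, the Jacobian ideal of $q\in k[[x_1,\dots,x_r]]$ is $\langle x_1,\dots,x_r\rangle$, so $q$ has Milnor number $\mu(q)=1$, with Milnor algebra spanned by the image of the constant series $1$. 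Hence, by Proposition~\ref{versal-deform}, $q+s_1\in k[[s_1,x_1,\dots,x_r]]$ is a right-complete unfolding of $q$ over $k[[s_1]]$.

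Before invoking versality I would normalize $F$ so that it becomes an unfolding of $q$, i.e. so that $F$ reduces modulo $\mathfrak m_S$ to $q$ itself. I use twice the remark that a local $k$-algebra automorphism of a formal power series ring over $k$ lifts, by applying its defining formulas over any complete local $k$-algebra $A$, to a local $A$-algebra automorphism of the corresponding power series ring over $A$ (a continuous $A$-algebra endomorphism sending each variable into the maximal ideal and having invertible Jacobian at the origin is an automorphism, by the formal inverse function theorem). First, applying the lift to $R[[x]]$ of the automorphism of $k[[x]]$ furnished by Lemma~\ref{morse-approximation}, I may assume the residue $f$ of $F$ satisfies $f\equiv q\pmod{\langle x\rangle^3}$, or $f\equiv q+x_{r+1}^2\pmod{\langle x\rangle^3}$ if $\charac(k)=2$; in either case the restriction $\bar f:=f|_{x_{r+1}=\dots=x_n=0}$ then lies in $\langle x_1,\dots,x_r\rangle^2$ with quadratic part $q$, hence is a Morse singularity of rank $r$ in $x_1,\dots,x_r$. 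By Morse's Lemma (Lemma~\ref{lemma:morse}) there is an automorphism of $k[[x_1,\dots,x_r]]$ carrying $\bar f$ to $q$; applying its lift to $S[[x_1,\dots,x_r]]=R[[x]]$ — an $S$-algebra, hence $R$-algebra, automorphism whose reduction modulo $\mathfrak m_S$ is the chosen automorphism — I may further assume that $F\in S[[x_1,\dots,x_r]]$ reduces to $q$ modulo $\mathfrak m_S$, i.e. that $F$ is an unfolding of $q$ over $S$.

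With this normalization in place, right-completeness of $q+s_1$ over $k[[s_1]]$ produces a morphism $b\colon k[[s_1]]\to S$ in $\CC$ such that $b_*(q+s_1)=q+h$, where $h:=b(s_1)\in\mathfrak m_S\subseteq S=R[[x_{r+1},\dots,x_n]]$, is right-equivalent in $\mathscr D(S)$ to $F$; thus there is a local $S$-algebra automorphism $\Phi$ of $S[[x_1,\dots,x_r]]$, lifting the identity of $k[[x_1,\dots,x_r]]$, with $\Phi(F)=q+h$. Composing $\Phi$ with the two preliminary automorphisms produces a local $R$-algebra automorphism of $R[[x]]$ carrying the original $F$ to $q+h$ with $h\in R[[x_{r+1},\dots,x_n]]$, which is the assertion.

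I expect the only real work to lie in the preliminary normalization — arranging that, after a change of the $x_1,\dots,x_r$ over the base $S$, the Morse part of $F$ is literally $q$ rather than merely right-equivalent to it — together with the routine verification of the lifting remark used there; the substantive input, that a singularity of Milnor number $\mu$ admits a right-complete unfolding over a $\mu$-dimensional smooth base, is already supplied by Proposition~\ref{versal-deform}, and once $\mu(q)=1$ is recorded the one-parameter base forces the correction term $h$ to be a single power series in the parameter variables. A minor point still to address is the hypothesis that $R$, and hence $S$, be Noetherian, which holds in the application and to which the general case reduces.
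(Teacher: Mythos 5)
Your proposal is correct and follows essentially the same route as the paper's proof: after lifting the automorphisms from Lemma \ref{morse-approximation} and Morse's Lemma over the base $R[[x_{r+1},\dotsc,x_n]]$ so that $F$ becomes an unfolding of $q$ in $x_1,\dotsc,x_r$, both arguments use $\jac(q)=\langle x_1,\dotsc,x_r\rangle$ (so $\mu(q)=1$) and Proposition \ref{versal-deform} to pull back the one-parameter versal unfolding $q+t$, producing $h$ and the required $R$-algebra automorphism. Your extra remark about the Noetherian hypothesis on $R$ is a point the paper passes over silently, but it does not change the argument.
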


\begin{proof}
By Lemma \ref{morse-approximation}, there exists a local $k$-algebra automorphism of $k[[x]]$ that maps $f$ to either $q$ or $q + x_{r+1}^2$ modulo $\langle x\rangle^3$.
Lifting such an automorphism to a local $R$-algebra automorphism of $R[[x]]$ and replacing $F$ with its image under the lift, we may assume that $f$ is congruent to either $q$ or $q + x_{r+1}^2$ modulo $\langle x\rangle^3$.

Let $R'$ denote the complete local $k$-algebra $R[[x_{r+1},\dotsc,x_n]]$.
Let $\bar f$ denote the image of $f$ under the map $k[[x_1,\dotsc,x_n]]\to k[[x_1,\dotsc,x_r]]$ that sends $x_i\mapsto x_i$ for $i\le r$ and $x_i\mapsto 0$ for $i>r$.
After replacing $R$ by $R'$ and $f$ by $\bar f$, we may assume that $r=n$ and $f\equiv q$ modulo $\langle x\rangle^3$.

By Morse's Lemma (Lemma \ref{lemma:morse}), there exists a local $k$-algebra automorphism of $k[[x_1,\dotsc,x_r]]$ that sends $f$ to $q$. After lifting such an automorphism to a local $R$-algebra automorphism of $R[[x]]$, we may assume that $f=q$. In other words, we may assume that $F$ is a unfolding of $q$ over $R$.

By Proposition \ref{versal-deform} and the assumption that $r=n$, the power series
\begin{equation*}
q + t\in k[[t,x]],
\end{equation*}
is a versal unfolding of $q$ over $k[[t]]$.
We may therefore find a map of local $k$-algebras $a : k[[t]]\to R$ and a right-equivalence of unfoldings $\varphi : q+ a(t) \to F$. The element $h := a(t)\in R$ and the automorphism of $R[[x]]$ underlying $\varphi$ satisfy the conclusions of the proposition.
\end{proof}

\section{Local description of corank-1 singularities}

\label{sec:proof-intro-morse}

\begin{proof}[Proof of Theorem \ref{thm:intro-morse}]
We note that $r := \dim_{f(x)}Y$ and $n := \dim_x X$.
The assumptions that $x\in \Sigma^1(f)$ and $r \le n$ imply that the differential $df(x) : \T_X(x)\to \T_Y(y)$ has rank $r-1$.
Using this, we can find a reordering of the coordinates $y_1,\dotsc, y_r\in \OO_{Y,y}$ and a system of parameters $x_1,\dotsc,x_n\in \OO_{X,x}$ with the desired properties.

The integer $n-r+1-j$ is the rank of the second intrinsic differential
\begin{equation*}
\dd_x^2 f : \ker(df(x)) \to \Hom_k(\ker(df(x)),\coker(df(x))).
\end{equation*}
The kernel of the differential $df(x) : \T_X(x)\to \T_Y(x)$ is freely generated by the vectors $\partial/\partial x_a$ with $a=r,\dotsc, n$, while its cokernel is freely generated by the image of $\partial/\partial y_r$. 
By Remark \ref{rmk:2nd-id-locally} we have
\begin{equation*}
\dd_x^2 f\left(\dfrac \partial {\partial x_a}\right) 
= \sum_{b=r}^n \dfrac{\partial^2 f_r}{\partial x_a \partial x_b} (x) \cdot dx_b \otimes \dfrac \partial {\partial y_r}
\end{equation*}
for all $a=r,\dotsc, n$.
Thus $\dd_x^2 f$ is represented by the square submatrix of size $n-r+1$ in the bottom-right corner of the Hessian matrix of $f_r$.
If $k$ has characteristic 2, then this submatrix is skew-symmetric, and therefore its rank $n-r+1-j$ is even.

Write $f_r = f_r(x) + g_1 + g_2$, where $g_1$ is homogeneous polynomial of degree 1 in $x_1,\dotsc,x_n$, and $g_2\in \langle x_1,\dotsc, x_n\rangle^2$.
Then $g_1$ only involves the variables $x_1,\dotsc,x_{r-1}$ by the assumption the differential $df(x) : \T_X(x)\to \T_Y(x)$ has rank $r-1$.
Let $\bar g_2 := g_2(0,\dotsc,0,x_r,\dotsc,x_n)\in k[[x_r,\dotsc,x_n]]$.
The Hessian matrix of $\bar g_2$ is the square submatrix of size $n-r+1$ in the bottom-right corner of the Hessian matrix of $f_r$.
By the preceding paragraph, it has rank $n-r+1-j$.
Viewing $g_2$ as an unfolding of $\bar g_2$ over $R:= k[[x_1,\dotsc,x_{r-1}]]$ and applying Morse's Lemma with Parameters (Proposition \ref{prop:morse-params}), we may find an automorphism $\varphi$ of $k[[x_1,\dotsc,x_n]]$ as a local $k[[x_1,\dotsc,x_{r-1}]]$-algebra that sends $g_2$ to $q + h'$ for some power series $h'\in k[[x_1,\dotsc,x_n]]$ that does not involve the variables $x_r,\dotsc,x_{n-j}$. Setting $h := g_1 + h'$, the result follows. 
\end{proof}

% \bibliographystyle{amsplain}
% \bibliography{../MasterBib}

\end{document}